\theoremstyle{plain}
\newtheorem{theorem}{Theorem}[section]
\newtheorem*{theoremA}{Theorem~A}
\newtheorem*{theoremB}{Theorem~B}
\newtheorem*{theoremC}{Theorem~C}
\newtheorem{lemma}[theorem]{Lemma}
\newtheorem{proposition}[theorem]{Proposition}
\newtheorem{proposition-definition}[theorem]{Proposition-Definition}
\newtheorem{corollary}[theorem]{Corollary}
\newtheorem{claim}[theorem]{Claim}
\theoremstyle{definition}
\newtheorem{definition}{Definition}[section]
\newtheorem{example}{Example}[section]
\theoremstyle{remark}
\newtheorem{remark}[theorem]{Remark}
\newcommand{\NN}{\mathbb{N}}
\newcommand{\ZZ}{\mathbb{Z}}
\newcommand{\QQ}{\mathbb{Q}}
\newcommand{\RR}{\mathbb{R}}
\newcommand{\CC}{\mathbb{C}}
\newcommand{\FF}{\mathbb{F}}
\newcommand{\PP}{\mathbb{P}}
\newcommand{\esssup}{\mathop{\operator@font ess.sup}\displaylimits}
\newcommand{\essinf}{\mathop{\operator@font ess.inf}\displaylimits}
\let\@@pmod\pmod 
\DeclareRobustCommand{\pmod}{\@ifstar\@pmods\@@pmod} 
\def\@pmods#1{\mkern4mu({\operator@font mod}\mkern 6mu#1)} 
\DeclareRobustCommand\widecheck[1]{{\mathpalette\@widecheck{#1}}}
\def\@widecheck#1#2{%
    \setbox\z@\hbox{\m@th$#1#2$}%
    \setbox\tw@\hbox{\m@th$#1%
       \widehat{%
          \vrule\@width\z@\@height\ht\z@
          \vrule\@height\z@\@width\wd\z@}$}%
    \dp\tw@-\ht\z@
    \@tempdima\ht\z@ \advance\@tempdima2\ht\tw@ \divide\@tempdima\thr@@
    \setbox\tw@\hbox{%
       \raise\@tempdima\hbox{\scalebox{1}[-1]{\lower\@tempdima\box
\tw@}}}%
    {\ooalign{\box\tw@ \cr \box\z@}}}
\renewcommand{\div}{\mathop{\mathrm{div}}\nolimits}
\newcommand{\Image}{\mathop{\mathrm{Image}}\nolimits}
\renewcommand{\leq}{\leqslant}
\renewcommand{\geq}{\geqslant}
\DeclareMathOperator{\trdeg}{tr.\! deg}
\DeclareMathOperator{\depth}{depth}
\DeclareMathOperator{\height}{ht}
\DeclareMathOperator{\Frac}{Frac}
\DeclareMathOperator{\Ker}{Ker}
\DeclareMathOperator{\Coker}{Coker}
\DeclareMathOperator{\aHzf}{\widehat{\Gamma}^{\rm f}}
\DeclareMathOperator{\aHzst}{\widehat{\Gamma}^{\rm ss}}
\DeclareMathOperator{\aHzsm}{\widehat{\Gamma}^{\rm s}}
\DeclareMathOperator{\Sym}{Sym}
\DeclareMathOperator{\Spec}{Spec}
\DeclareMathOperator{\Supp}{Supp}
\DeclareMathOperator{\Ass}{Ass}
\DeclareMathOperator{\Aff}{Aff}
\DeclareMathOperator{\Rat}{Rat}
\DeclareMathOperator{\Exc}{Ex}
\DeclareMathOperator{\SHom}{\mathcal{H}om}
\DeclareMathOperator{\Bs}{Bs}
\DeclareMathOperator{\aBs}{\widehat{Bs}^{\rm ss}}
\DeclareMathOperator{\aPic}{\widehat{Pic}}
\DeclareMathOperator{\aInt}{\widehat{Int}}
\DeclareMathOperator{\Conv}{Conv}
\DeclareMathOperator{\adeg}{\widehat{deg}}
\DeclareMathOperator{\ord}{ord}
\DeclareMathOperator{\vol}{vol}
\DeclareMathOperator{\avol}{\widehat{vol}}
\DeclareMathOperator{\rk}{rk}
\newcommand{\Hz}{\mathit{H}^0}
\newcommand{\id}{{\rm id}}
\newcommand{\pr}{{\rm pr}}
\newcommand{\aN}{\widehat{\mathbf{N}}}
\newcommand{\aDelta}{\widehat{\Delta}}
\newcommand{\aTheta}{\widehat{\Theta}}
\newcommand{\Red}{{\rm Red}}
\newcommand{\CL}{{\it CL}}
\newcommand{\sbullet}{{\scriptscriptstyle\bullet}}
\newcommand{\aPhi}{\widehat{\Phi}}
\newcommand{\vAff}{\overrightarrow{\Aff}}
\newcommand{\aS}{\widehat{S}}
\newcommand{\akappa}{\widehat{\kappa}}
\newcommand{\SBs}{\mathbf{B}}
\newcommand{\aSBs}{\widehat{\mathbf{B}}^{\rm ss}}
\newcommand{\aSBssm}{\widehat{\mathbf{B}}^{\rm s}}
\newcommand{\aSBsn}{\widehat{\mathbf{B}}}
\newcommand{\aBsp}{\widehat{\mathbf{B}}_+}
\newcommand{\Bsp}{\mathbf{B}_+}
\def\aSpan#1#2{\langle#2\rangle_{#1}}
\def\Hzq#1{\mathit{H}^0_{#1}}
\def\aHzstq#1{\widehat{\Gamma}_{#1}^{\rm ss}}
\def\aHzsmq#1{\widehat{\Gamma}_{#1}^{\rm s}}
\def\aHzsq#1#2{\widehat{\Gamma}_{#2}^{#1}}
\def\aSBss#1{\widehat{\mathbf{B}}^{#1}}
\def\aBss#1{\widehat{\Bs}^{#1}}
\def\aHzs#1{\widehat{\Gamma}^{#1}}
\def\CLq#1{\widehat{\it CL}_{#1}}
\def\akappaq#1{\widehat{\kappa}_{#1}}
\def\aRq#1{\widehat{R}_{#1}}
\def\volq#1{\vol_{#1}}
\def\avolq#1{\widehat{\vol}_{#1}}
\def\aeq#1{\widehat{e}_{#1}}
\def\avolsmq#1{\widehat{\vol}_{#1}'}
\title[Restricted volumes and base loci]{Remarks on the arithmetic restricted volumes and the arithmetic base loci}
\author{Hideaki Ikoma}
\thanks{Communicated by S.\ Mochizuki. Revised December 12, 2015, June 11, 2016.}
\thanks{This research is supported by JSPS KAKENHI 25$\cdot$1895.}
\address{Department of Mathematics, Faculty of Science, Kyoto University, Kitashirakawa Oiwake-cho, Sakyo-ku, Kyoto, 606-8502, Japan}
\email{ikoma@math.kyoto-u.ac.jp}
\subjclass{Primary 14G40; Secondary 11G50}
\keywords{Arakelov theory, linear series, restricted volumes, base loci}
\begin{document}

\begin{abstract}
In this paper, we collect some fundamental properties of the arithmetic restricted volumes (or the arithmetic multiplicities) of the adelically metrized line bundles.
The arithmetic restricted volume has the concavity property and characterizes the arithmetic augmented base locus as the null locus.
We also show a generalized Fujita approximation for the arithmetic restricted volumes.
\end{abstract}

\maketitle
\tableofcontents

\section{Introduction}

Let $K$ be a number field.
We denote the set of all the finite places of $K$ by $M_K^{\rm f}$, and set $M_K:=M_K^{\rm f}\cup\{\infty\}$.
Let $X$ be a projective variety over $K$.
According to \cite{ZhangAdelic,Moriwaki13}, we consider an \emph{adelically metrized line bundle} $\overline{L}=\left(L,(|\cdot|_v^{\overline{L}})_{v\in M_K}\right)$ on $X$, which is defined as a pair of a line bundle $L$ on $X$ and an adelic metric $(|\cdot|_v^{\overline{L}})_{v\in M_K}$ on $L$ (see \S\ref{sec:arithaugbs} for detail).
For each place $v\in M_K$, we denote by $\|\cdot\|_{v,\sup}^{\overline{L}}$ the supremum norm defined by $|\cdot|_v^{\overline{L}}$.
Let
\[
 \aHzf(\overline{L}):=\left\{s\in\Hz(L)\,:\,\text{$\|s\|_{v,\sup}^{\overline{L}}\leq 1$, $\forall v\in M_K^{\rm f}$}\right\}
\]
and
\[
 \aHzst(\overline{L}):=\left\{s\in\aHzf(\overline{L})\,:\,\|s\|_{\infty,\sup}^{\overline{L}}<1\right\}.
\]
We call an element in $\aHzst(\overline{L})$ a \emph{strictly small section of $\overline{L}$}, and set
\[
 \aSBs(\overline{L}):=\left\{x\in X\,:\,\text{$s(x)=0$, $\forall s\in\aHzst(m\overline{L})$, $\forall m\geq 1$}\right\}.
\]
An adelically metrized $\QQ$-line bundle $\overline{L}$ is said to be \emph{weakly ample}, or \emph{w-ample} for short, if $L$ is ample and $\aSpan{K}{\aHzst(m\overline{L})}=\Hz(mL)$ for every sufficiently large $m$ such that $mL$ is a line bundle (see the beginning of \S\ref{sec:adelicallymetrizedlbd} for the notation $\aSpan{K}{\,\cdot\,}$).
Let $Y$ be a closed subvariety of $X$.
We say that $\overline{L}$ is \emph{$Y$-big} if there exist a positive integer $m$, a w-ample adelically metrized line bundle $\overline{A}$, and an $s\in\aHzst(m\overline{L}-\overline{A})$ such that $s|_Y$ is non-zero.

The purpose of this paper is twofold.
First, we give an elementary proof to Theorem~A below, which was first proved by Moriwaki \cite[Theorem~A and Corollary~B]{MoriwakiFree} by using Zhang's technique \cite{ZhangVar}.
(More precisely, Moriwaki \cite{MoriwakiFree} treated only continuous Hermitian line bundles but it is easy to generalize it to the following form).
Following Moriwaki's suggestion, we call this result a ``Zhang--Moriwaki theorem''.
The proof given in this paper is very simple and based on some elementary properties of the base loci.
We do not need neither induction on dimension nor estimates of the last minima.

\begin{theoremA}[Theorem~\ref{thm:reprove}]
Let $X$ be a projective variety over $K$, and let $\overline{L}$ be an adelically metrized line bundle on $X$ such that $L$ is semiample.
Then the following are equivalent.
\begin{enumerate}
\item[\textup{(a)}] $\aSBs(\overline{L})=\emptyset$.
\item[\textup{(b)}] $\Hz(mL)=\aSpan{K}{\aHzst(m\overline{L})}$ for every $m\gg 1$.
\end{enumerate}
\end{theoremA}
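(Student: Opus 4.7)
I would handle (b)$\Rightarrow$(a) directly: given $x\in X$, semiampleness of $L$ furnishes some $m\geq 1$ and $s\in\Hz(mL)$ with $s(x)\neq 0$; expressing $s$ as a $K$-linear combination of strictly small sections of $m\overline{L}$ via (b) shows that at least one summand is nonvanishing at $x$, so $x\notin\aSBs(\overline{L})$.

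For the main direction (a)$\Rightarrow$(b), I would first run a Noetherian reduction: because $\sigma\in\aHzst(m\overline{L})$ implies $\sigma^{n}\in\aHzst(nm\overline{L})$ for every $n\geq 1$, the common zero loci of $\aHzst(N!\,\overline{L})$ form a descending chain on the Noetherian topological space $X$ whose eventual value is $\aSBs(\overline{L})=\emptyset$. Hence there exist $M\geq 1$ and $s_1,\ldots,s_r\in\aHzst(M\overline{L})$ with no common zero on $X$; they globally generate $ML$ and define a morphism $\phi\colon X\to\PP^{r-1}$ with $\phi^*\mathcal{O}_{\PP^{r-1}}(1)=ML$.

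The core of the argument will be Castelnuovo--Mumford regularity applied to the finitely many coherent sheaves $\mathcal{F}_j:=\phi_*(jL)$ on $\PP^{r-1}$ for $j=0,1,\ldots,M-1$. Choosing $k_0$ larger than all their CM regularities, the projection formula identifies $\Hz(\PP^{r-1},\mathcal{F}_j(k))\cong\Hz(X,(j+kM)L)$, and CM regularity yields, for every $k\geq k_0$, a surjective multiplication map
\[
 \Hz(X,(j+k_0 M)L)\otimes_K\Hz(\PP^{r-1},\mathcal{O}(k-k_0))\twoheadrightarrow\Hz(X,(j+kM)L),\quad T\otimes p\mapsto T\cdot p(s_1,\ldots,s_r).
\]
Now I pick a $K$-basis $\{e_p\}$ of the left factor lying inside $\aHzf((j+k_0 M)\overline{L})$, and set $\epsilon:=\max_i\|s_i\|_{\infty,\sup}^{\overline{L}}<1$ and $C_j:=\max_p\|e_p\|_{\infty,\sup}^{\overline{L}}$. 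Then each generator $e_p\cdot s_{i_1}\cdots s_{i_{k-k_0}}$ of $\Hz(X,(j+kM)L)$ has supremum norm $\leq 1$ at every finite place and archimedean supremum norm at most $C_j\cdot\epsilon^{k-k_0}$, which falls below $1$ once $k-k_0>\log C_j/\log(1/\epsilon)$. For such $k$ these generators all lie in $\aHzst((j+kM)\overline{L})$, so $\aSpan{K}{\aHzst(m\overline{L})}=\Hz(mL)$ whenever $m=j+kM$ with $k$ above the threshold for every residue $j\in\{0,\ldots,M-1\}$, i.e., for every $m\gg 1$.

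\textbf{Main obstacle.} The essential input is the Castelnuovo--Mumford regularity of the $M$ sheaves $\phi_*(jL)$ on $\PP^{r-1}$, which packages all the needed surjectivity uniformly; the remainder is norm bookkeeping exploiting that high-degree monomials in strictly small sections become arbitrarily small at infinity. The only delicate point is securing a single threshold on $k$ that handles all residues of $m$ modulo $M$ at once, but this is immediate from their finiteness.
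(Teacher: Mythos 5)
Your proof is correct, and it takes a genuinely different route from the paper's. The paper proves a more general statement (Theorem~\ref{thm:reprove}) for an arbitrary Noetherian adelically normed graded $K$-linear series $\overline{V}_{\sbullet}$: after a Veronese reduction (Lemma~\ref{lem:Veronese}), it maps $X$ to the image $Y$ of the Kodaira map $\Phi_{V_1}$, transplants the problem to a graded linear series $W_{\sbullet}$ on $Y$, shows $\Bsp(W_{\sbullet})=\emptyset$, and invokes Lemma~\ref{lem:Kodairatype} to get a closed immersion $Y\hookrightarrow\PP_K(W_c)$, from which surjectivity of $\Sym^m W_c\to V_{mc}$ falls out. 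You instead work directly with $V_m=\Hz(mL)$, pick finitely many $s_1,\dots,s_r\in\aHzst(M\overline{L})$ without common zero (via the Noetherian descending chain of $\aBs(N!\,\overline{L})$), map to $\PP^{r-1}$, and apply Castelnuovo--Mumford regularity to the pushforwards $\phi_*(jL)$ to obtain uniform surjectivity of the multiplication maps; the norm estimate exploiting $\|s_i\|_{\infty,\sup}^{M\overline{L}}<1$ then finishes. Your route is more self-contained (CM regularity rather than the augmented-base-locus machinery of Section~3), makes the threshold in ``$m\gg1$'' explicit in terms of regularity bounds, and even avoids needing the finite generation of $\bigoplus_m\Hz(mL)$; the trade-off is that it is tailored to the case $V_m=\Hz(mL)$ and does not directly yield the paper's more general Theorem~\ref{thm:reprove} (needed later for Corollary~\ref{cor:reproveZM} and Theorem~\ref{thm:charaaugbs}). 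Two small cleanups: in (b)$\Rightarrow$(a), semiampleness gives a nonvanishing $s\in\Hz(mL)$ only for suitably divisible $m$, so you should pass to $s^{\otimes n}\in\Hz(mnL)$ with $n\gg1$ before invoking (b); and the norms $\|s_i\|_{\infty,\sup}$ and $\|e_p\|_{\infty,\sup}$ are of course taken with respect to $M\overline{L}$ and $(j+k_0M)\overline{L}$ respectively, but this does not affect the argument.
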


The celebrated arithmetic Nakai-Moishezon criterion for the arithmetic ampleness was first proved by Zhang \cite{ZhangVar, ZhangAdelic}, and later was slightly generalized by Moriwaki \cite{MoriwakiFree} by using the same technique.
Theorem~A itself is not so powerful as recovering Zhang's criterion (\cite[Corollary~(4.8)]{ZhangVar}, \cite[Theorem~(1.8)]{ZhangAdelic}, \cite[Theorem~4.2]{MoriwakiFree}).

Next, we establish some fundamental properties of arithmetic restricted volumes (or arithmetic multiplicities) of adelically metrized line bundles along a closed subvariety.
A \emph{CL-subset} of a free $\ZZ$-module of finite rank is a subset written as an intersection of a $\ZZ$-submodule and a convex subset (see \cite[\S 1.2]{MoriwakiEst} for detail, where such a subset is referred to as a \emph{convex lattice}).
Given an adelically metrized line bundle $\overline{L}$ and a closed subvariety $Y$ of $X$, we define $\CLq{X|Y}(\overline{L})$ as the smallest CL-subset of $\aHzf(\overline{L}|_Y)$ containing the image of $\aHzst(\overline{L})$.
Denote by $\NN$ the semigroup of all the positive integers and set $\aN_{X|Y}(\overline{L}):=\left\{m\in\NN\,:\,\CLq{X|Y}(m\overline{L})\neq\{0\}\right\}$.
We define the \emph{arithmetic restricted volume of $\overline{L}$ along $Y$} as
\[
 \avolq{X|Y}(\overline{L}):=\limsup_{m\to+\infty}\frac{\log\sharp\CLq{X|Y}(m\overline{L})}{m^{\dim Y+1}/(\dim Y+1)!},
\]
and the \emph{arithmetic augmented base locus} of $\overline{L}$ as
\[
 \aBsp(\overline{L}):=\bigcap_{\text{$\overline{A}$: w-ample}}\aSBs(\overline{L}-\overline{A}),
\]
where $\overline{A}$ runs over all the w-ample adelically metrized $\QQ$-line bundles on $X$.
In the literature, we can find several other definitions of $\aBsp(\overline{L})$.
For example, in \cite[\S 3]{MoriwakiEst}, Moriwaki defined it by using the ``small sections'' and, in \cite[\S 4]{ChenSesh}, Chen treated the sections having normalized Arakelov degrees not less than zero.
In important cases, these three definitions all coincide (Remark~\ref{rem:aaugbscompare}).
The definitions given above have some desirable properties.
For example, if $X$ is normal, then
\[
 \aBsp(\overline{L})=\bigcup_{\substack{Z\subset X, \\ \avolq{X|Z}(\overline{L})=0}}Z
\]
(Corollary~\ref{cor:propertiesarestvol}).
By the theory of Okounkov bodies \cite{Kaveh_Khovanskii12,BoucksomBourbaki}, if $Y\not\subset\SBs(L)$, we have the following limit called the \emph{multiplicity of $L$ along $Y$},
\[
 e_{X|Y}(L):=\lim_{\substack{m\in\aN_{X|Y}(\overline{L}) \\ m\to+\infty}}\frac{\dim_{\Hz(\mathcal{O}_Y)}\aSpan{\Hz(\mathcal{O}_Y)}{\Image\left(\Hz(mL)\to\Hz(mL|_Y)\right)}}{m^{\kappa_{X|Y}(L)}/\kappa_{X|Y}(L)!},
\]
where $\kappa_{X|Y}(L)$ is defined as
\[
 \kappa_{X|Y}(L):=\trdeg_{\Hz(\mathcal{O}_Y)}\left(\bigoplus_{m\geq 0}\aSpan{\Hz(\mathcal{O}_Y)}{\Image\left(\Hz(mL)\to\Hz(mL|_Y)\right)}\right)-1.
\]
The following is an arithmetic analogue of this limit, which we call the \emph{arithmetic multiplicity of $\overline{L}$ along $Y$} and denote by $\aeq{X|Y}(\overline{L})$.

\begin{theoremB}[Theorem~\ref{thm:convergence} and Lemma~\ref{lem:kappaqequal}]
Let $X$ be a projective variety over a number field, let $Y$ be a closed subvariety of $X$, and let $\overline{L}$ be an adelically metrized line bundle on $X$.
If $Y\not\subset\aSBs(\overline{L})$, then the sequence
\[
 \left(\frac{\log\sharp\CLq{X|Y}(m\overline{L})}{m^{\kappa_{X|Y}(L)+1}/(\kappa_{X|Y}(L)+1)!}\right)_{m\in\aN_{X|Y}(\overline{L})}
\]
converges to a positive real number.
\end{theoremB}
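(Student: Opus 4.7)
The plan is to deduce the convergence from the arithmetic Okounkov-body machinery of Boucksom--Chen, applied to the restricted graded linear series along $Y$. Fix an admissible flag $Y=Y_0\supset Y_1\supset\cdots\supset Y_\kappa$ of subvarieties of $Y$ with $\kappa:=\kappa_{X|Y}(L)$, which defines a rank-$\kappa$ valuation $\nu$ on the function field of $Y$ with values in $\ZZ^\kappa$. Consider the graded $K$-subalgebra
\[
W_\bullet:=\bigoplus_{m\ge 0}\aSpan{K}{\Image\bigl(\Hz(mL)\to\Hz(mL|_Y)\bigr)}
\]
of $\bigoplus_m\Hz(mL|_Y)$, whose Iitaka dimension is $\kappa$ by definition. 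Its Okounkov body $\Delta^Y\subset\RR^\kappa$ is then a convex body; the hypothesis $Y\not\subset\aSBs(\overline{L})$ guarantees that for $m\gg 1$ the space $W_m$ is generated by restrictions of elements of $\aHzst(m\overline{L})$, so that $W_\bullet$ satisfies the Fujita-type ``ample subseries'' condition required to apply the volume formula on $\Delta^Y$.

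Next attach the arithmetic structure. For each $m\in\aN_{X|Y}(\overline{L})$ and $t\in\RR$, set
\[
 \mathcal{F}^t W_m:=\aSpan{K}{\bigl\{s|_Y\,:\,s\in\aHzf(m\overline{L}),\ \|s\|_{\infty,\sup}^{m\overline{L}}\le e^{-t}\bigr\}},
\]
a decreasing, left-continuous $\RR$-filtration of $W_m$. Comparison with a fixed w-ample $\overline{A}$ (via Gromov's inequality at infinity and uniform boundedness of the finite-place norms) shows that the filtration is linearly bounded in $m$. The Boucksom--Chen theorem then produces a concave, upper semicontinuous, bounded function $G:\Delta^Y\to\RR$, the \emph{concave transform} of $\mathcal{F}^\bullet$, such that for every $t\in\RR$,
\[
 \lim_{\substack{m\in\aN_{X|Y}(\overline{L})\\m\to\infty}}\frac{\dim_K\mathcal{F}^{mt}W_m}{m^\kappa/\kappa!}=\kappa!\cdot\vol\bigl\{x\in\Delta^Y\,:\,G(x)\ge t\bigr\}.
\]

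The counting step is a Riemann-sum / successive-minima argument. Choosing a $K$-basis of $W_m$ simultaneously adapted to $\nu$ (so that $\dim W_m=\sharp\nu(W_m)$) and to the filtration $\mathcal{F}^\bullet$, and combining Minkowski's second theorem with a Blichfeldt-type estimate for the symmetric convex body in $W_m\otimes_K\RR$ cut out by the supremum norms, one obtains
\[
 \log\sharp\CLq{X|Y}(m\overline{L})=m^{\kappa+1}\int_{\Delta^Y}\max(G,0)\,dx+o(m^{\kappa+1}).
\]
Dividing by $m^{\kappa+1}/(\kappa+1)!$ yields convergence to $(\kappa+1)!\int_{\Delta^Y}\max(G,0)\,dx$. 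For positivity, $Y\not\subset\aSBs(\overline{L})$ gives some $m_0\ge 1$ and $s\in\aHzst(m_0\overline{L})$ with $s|_Y\neq 0$, which produces a point of $\Delta^Y$ at which $G$ is strictly positive; concavity and upper semicontinuity then force $G>0$ on a non-empty open subset, so the limiting integral is positive.

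The main obstacle is the counting step above. One must simultaneously (i) exhibit a basis of $W_m$ orthogonal for the Okounkov valuation and compatible with $\mathcal{F}^\bullet$, (ii) control the interaction between the non-archimedean norms defining $\aHzf$ and the archimedean filtration, so that the lattice-point count in the symmetric convex hull is comparable to $\sum_\alpha mG(\alpha/m)$, and (iii) make the Riemann-sum error $o(m^{\kappa+1})$ uniformly along $\aN_{X|Y}(\overline{L})$. The identification of $\kappa_{X|Y}(L)$ as the correct exponent (the lemma on the equality of the arithmetic and geometric Iitaka dimensions) is needed both to normalize correctly and to guarantee that $\aN_{X|Y}(\overline{L})$ contains all sufficiently large multiples of some fixed integer, so that the Boucksom--Chen limit is meaningful along this subset.
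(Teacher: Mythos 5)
Your proposal follows the Boucksom--Chen route (geometric Okounkov body of the restricted series plus an Archimedean concave transform), whereas the paper follows Yuan--Moriwaki: it works with a single $\ZZ^{\dim Y+1}$-valued valuation arising from a \emph{good flag on an integral model} $\mathscr{Y}'_U$, whose top stratum $F_0$ is a vertical fibre over a prime $p$, so that the non-Archimedean and Archimedean data are counted simultaneously. The paper then compares $\log\sharp\CLq{X|Y}(m\overline{L})$ with $\sharp\bm{w}_{F_\sbullet}(\aHzstq{X|Y}(m\overline{L})\setminus\{0\})\log p$ by Theorem~\ref{thm:Yuan}, obtaining an explicit error $\leq D/\log p$ \emph{uniform in $m$}, and lets $p\to\infty$. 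Your route avoids the free parameter $p$ because the Boucksom--Chen limit is exact, but it requires the counting step that you yourself flag as ``the main obstacle,'' and that step is genuinely unresolved in your write-up.

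The difficulty is more serious than a missing Riemann sum. The object to be counted, $\CLq{X|Y}(m\overline{L})=\CL_{\aHzf(m\overline{L}|_Y)}(\aHzstq{X|Y}(m\overline{L}))$, is the CL-hull of the \emph{image} of the CL-subset $\aHzst(m\overline{L})$ under the restriction map. Images of CL-subsets are not CL-subsets, and the CL-hull both saturates the convex envelope and re-intersects with the sublattice $\aSpan{\ZZ}{\aHzstq{X|Y}(m\overline{L})}$; none of this is visible from your filtration
$\mathcal{F}^tW_m=\aSpan{K}{\{s|_Y:\|s\|^{m\overline{L}}_{\infty,\sup}\leq e^{-t}\}}$, which is a $K$-span and therefore forgets the integral structure entirely. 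To recover $\log\sharp\CLq{X|Y}(m\overline{L})$ from the jumps of $\mathcal{F}^\sbullet$ you would need successive-minima control on the sublattice $\aSpan{\ZZ}{\aHzstq{X|Y}(m\overline{L})}$ inside $\aHzf(m\overline{L}|_Y)$ together with a comparison of the two adelic norm structures (the one induced from $\overline{L}$ via restriction and the intrinsic one of $\overline{L}|_Y$) uniformly in $m$. The paper's flag-on-a-model device does all of this in one stroke: Theorem~\ref{thm:Yuan} is stated for an arbitrary symmetric CL-subset $\Gamma$ of $\aHzf(\overline{L})$ and delivers an explicit two-sided estimate, which is precisely what is missing from your sketch. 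So the proposal identifies a plausible alternative framework (one the paper acknowledges in Section~\ref{sec:YuansEst}), but as written it does not constitute a proof: the decisive comparison between the CL-count and the concave transform is assumed, not established.
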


As an application, we show a generalized Fujita approximation for the arithmetic restricted volumes, which can be viewed as an arithmetic analogue of \cite[Theorem~2.13]{Ein_Laz_Mus_Nak_Pop06}.
Let $X$ be a normal projective variety over a number field, let $\overline{L}$ be an adelically metrized $\QQ$-line bundle on $X$, and let $Z$ be a closed subvariety of $X$.
A \emph{$Z$-compatible approximation for $\overline{L}$} is a pair $(\mu:X'\to X,\overline{M})$ consisting of a projective birational morphism $\mu:X'\to X$ and a nef adelically metrized $\QQ$-line bundle $\overline{M}$ on $X'$ having the following properties.
\begin{enumerate}
\item[\textup{(a)}] $X'$ is smooth and $\mu$ is isomorphic around the generic point of $Z$.
\item[\textup{(b)}] Denote the strict transform of $Z$ via $\mu$ by $\mu_*^{-1}(Z)$.
Then $\overline{M}$ is $\mu_*^{-1}(Z)$-big and $\mu^*\overline{L}-\overline{M}$ is a $\mu_*^{-1}(Z)$-pseudoeffective adelically metrized $\QQ$-line bundle.
\end{enumerate}
We denote the set of all the $Z$-compatible approximations for $\overline{L}$ by $\aTheta_Z(\overline{L})$.

\begin{theoremC}[Theorem~\ref{thm:genFujita}]
Let $X$ be a normal projective variety over a number field, let $Z$ be a closed subvariety of $X$, and let $\overline{L}$ be an adelically metrized $\QQ$-line bundle on $X$.
If $\overline{L}$ is $Z$-big, then for every closed subvariety $Y$ containing $Z$
\[
 \avolq{X|Y}(\overline{L})=\sup_{(\mu,\overline{M})\in\aTheta_Z(\overline{L})}\adeg\left((\overline{M}|_{\mu_*^{-1}(Y)})^{\cdot(\dim Y+1)}\right).
\]
\end{theoremC}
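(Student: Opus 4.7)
The plan is to establish the two inequalities separately.

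For the inequality $(\geq)$: Fix $(\mu,\overline{M})\in\aTheta_Z(\overline{L})$. Because $\mu^*\overline{L}-\overline{M}$ is $\mu_*^{-1}(Z)$-pseudoeffective, for every w-ample $\overline{A}$ we can find a positive integer $n$ and a strictly small section $t\in\aHzst(n(\mu^*\overline{L}-\overline{M}+\overline{A}))$ non-vanishing at the generic point of $\mu_*^{-1}(Z)$. Since $Y\supset Z$ and $\mu$ is an isomorphism around the generic point of $Z$, the induced map $\mu:\mu_*^{-1}(Y)\to Y$ is also birational and $t|_{\mu_*^{-1}(Y)}\neq 0$. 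For each $k,m\geq 1$, multiplication by $t^k$ and restriction to $\mu_*^{-1}(Y)$ embed $\aHzst(km\overline{M})|_{\mu_*^{-1}(Y)}$ into $\aHzst(km\mu^*\overline{L}+kn\overline{A})|_{\mu_*^{-1}(Y)}$, and pushing forward by $\mu$ identifies this with a sublattice of $\CLq{X|Y}(km\overline{L}+kn\mu_*\overline{A})$. Taking $\log\sharp$, dividing by $(km)^{\dim Y+1}/(\dim Y+1)!$, letting $m\to\infty$ (Theorem~B ensures convergence of the left-hand side), and then allowing $\overline{A}\to 0$, the arithmetic Hilbert--Samuel formula for nef adelic $\QQ$-line bundles that are big along a subvariety---applied to $\overline{M}|_{\mu_*^{-1}(Y)}$, which is nef and $\mu_*^{-1}(Y)$-big because $\overline{M}$ is $\mu_*^{-1}(Z)$-big and $\mu_*^{-1}(Y)\supset\mu_*^{-1}(Z)$---converts the left-hand side into $\adeg((\overline{M}|_{\mu_*^{-1}(Y)})^{\cdot(\dim Y+1)})$, yielding the inequality.

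For the inequality $(\leq)$: Fix $\epsilon>0$. By Theorem~B, choose $m_0\gg 1$ with $m_0L$ a line bundle, with $\aHzst(m_0\overline{L})$ containing an element non-vanishing at the generic point of $Z$ (using the $Z$-bigness of $\overline{L}$), and satisfying
\[
 \log\sharp\CLq{X|Y}(m_0\overline{L})\geq\left(\avolq{X|Y}(\overline{L})-\epsilon\right)\frac{m_0^{\dim Y+1}}{(\dim Y+1)!}.
\]
Let $V_{m_0}:=\aSpan{K}{\aHzst(m_0\overline{L})}\subset\Hz(m_0L)$ and take a log-resolution $\mu_{m_0}:X_{m_0}'\to X$ of its base ideal that is an isomorphism around the generic point of $Z$. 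Writing $\mu_{m_0}^*(m_0L)=M_{m_0}+F_{m_0}$ with $M_{m_0}$ base-point-free, equip $M_{m_0}$ with the quotient adelic metric induced by the surjection $\mu_{m_0}^*V_{m_0}\otimes\mathcal{O}_{X_{m_0}'}\twoheadrightarrow M_{m_0}$ where $V_{m_0}$ is endowed with the supremum norms from $m_0\overline{L}$. Then $\overline{M}:=\tfrac{1}{m_0}\overline{M}_{m_0}$ is nef at every place, $\mu_{m_0}^*\overline{L}-\overline{M}$ equals the effective $\tfrac{1}{m_0}F_{m_0}$ with its tautological metric (hence $\mu_{m_0,*}^{-1}(Z)$-pseudoeffective), and $\overline{M}$ is $\mu_{m_0,*}^{-1}(Z)$-big by construction, so $(\mu_{m_0},\overline{M})\in\aTheta_Z(\overline{L})$. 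Multiplication by the $k$-th power of the canonical section of $F_{m_0}$ then injects $\CLq{X|Y}(km_0\overline{L})$ into $\aHzst(k\overline{M}_{m_0})|_{\mu_{m_0,*}^{-1}(Y)}$; passing to $k\to\infty$ and again invoking the Hilbert--Samuel formula yields
\[
 \adeg\left((\overline{M}|_{\mu_{m_0,*}^{-1}(Y)})^{\cdot(\dim Y+1)}\right)\geq\avolq{X|Y}(\overline{L})-\epsilon,
\]
and letting $\epsilon\to 0$ concludes the argument.

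The principal technical obstacle lies in the $(\leq)$ direction: the construction of the quotient adelic metric on $\overline{M}_{m_0}$ and the verification that it is nef at every place (this rests on semipositivity of the Fubini--Study type metric induced by a lattice of strictly small sections), together with the confirmation that the resulting pair $(\mu_{m_0},\overline{M})$ genuinely lies in $\aTheta_Z(\overline{L})$ with the correct pseudoeffectivity and bigness along $\mu_{m_0,*}^{-1}(Z)$. A secondary difficulty is the invocation of an arithmetic Hilbert--Samuel formula for restricted volumes of nef line bundles along a subvariety that $\mu_{m_0}$ may contract in part; the formula is standard for nef and big arithmetic line bundles, but some care is required to transfer it to this relative setting.
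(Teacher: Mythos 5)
The inequality $\geq$ in your proposal is essentially correct and follows the paper's route (monotonicity, Lemma~\ref{lem:effectiveincrease}, Corollary~\ref{cor:continuity}(2), and the arithmetic Hilbert--Samuel formula of Proposition~\ref{prop:nefcase}). The inequality $\leq$, however, contains a genuine gap at its central step.

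You choose a single $m_0$ with $\log\sharp\CLq{X|Y}(m_0\overline{L})\geq(\avolq{X|Y}(\overline{L})-\epsilon)\,m_0^{\dim Y+1}/(\dim Y+1)!$, blow up the base ideal $\widehat{\mathfrak{b}}_{m_0}$ to get $\mu_{m_0}^*(m_0 L)=M_{m_0}+F_{m_0}$, and then claim that ``multiplication by the $k$-th power of the canonical section of $F_{m_0}$ injects $\CLq{X|Y}(km_0\overline{L})$ into $\aHzst(k\overline{M}_{m_0})|_{\mu_{m_0,*}^{-1}(Y)}$''. This inclusion is false. A strictly small section $s\in\aHzst(km_0\overline{L})$ only satisfies $\mu_{m_0}^*s\in\widehat{\mathfrak{b}}_{km_0}\cdot\mu_{m_0}^*(km_0 L)$, and in general $\widehat{\mathfrak{b}}_{km_0}\supsetneq(\widehat{\mathfrak{b}}_{m_0})^k$; on the blow-up $X_{m_0}$ this means $\mu_{m_0}^*s$ need not vanish to order $k$ along the exceptional divisor of $\widehat{\mathfrak{b}}_{m_0}$, so dividing by $1_{F_{m_0}}^{\otimes k}$ does not produce a regular section of $kM_{m_0}$. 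Proposition~\ref{prop:adelicmetric}(1) identifies $\aHzst(\overline{M}_m)\otimes 1_{F_m}$ with $\Image\bigl(\aHzst(m\overline{L})\to\Hz(\mu_m^*(mL))\bigr)$ only in degree one; for $k\geq 2$ you can only say $\aHzst(k\overline{M}_m)\otimes 1_{F_m}^{\otimes k}\subset\aHzst(km\overline{L})$, which goes the wrong way for the inequality you need. Consequently, there is no reason that a single blow-up $X_{m_0}$ satisfies $\avolq{X_{m_0}|Y_{m_0}}(\overline{M}_{m_0})\geq m_0^{\dim Y+1}(\avolq{X|Y}(\overline{L})-\epsilon)$, no matter how large $m_0$ is.

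The paper repairs exactly this gap with two nontrivial ingredients absent from your argument. First, a super-additivity statement (Claim~\ref{clm:genFujitaconv}): by comparing the ideals $\widehat{\mathfrak{b}}_m$, $\widehat{\mathfrak{b}}_n$, $\widehat{\mathfrak{b}}_{m+n}$ on a common model, one shows $\nu_1^*\overline{F}_m+\nu_2^*\overline{F}_n\geq_{\widetilde Z}\nu_3^*\overline{F}_{m+n}$, which with the Brunn--Minkowski inequality of Theorem~\ref{thm:propertiesavol}(3) forces the sequence $m\mapsto\avolq{X_m|Y_m}(\overline{M}_m)^{1/(\dim Y+1)}/m$ to be super-additive, hence convergent to its supremum. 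Second, the identification of that limit with $\avolq{X|Y}(\overline{L})$ uses the valuation semigroups $T(m)\subset\aS_{X|Y}(a\overline{L})$ of Proposition~\ref{prop:genFujita}, the containment $\aS_{X|Y}(a\overline{L})_m\subset T(m)$ (which is exactly the degree-one statement from Proposition~\ref{prop:adelicmetric}(1)), Yuan's estimate (Proposition~\ref{prop:Yuan}), and the convergence theorem for Okounkov bodies of sub-semigroups (\cite[Th\'eor\`eme~1.15]{BoucksomBourbaki}). Your proposal skips both of these and would not close the gap without them.
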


This paper is organized as follows: we give a definition and properties of the adelically metrized line bundles in section \ref{sec:adelicallymetrizedlbd} (Definition~\ref{defn:adelicallymetrizedlinebdl}) and of the augmented base loci of general graded linear series in section \ref{sec:prelim} (Definition~\ref{defn:Chens}).
As an interlude, we give the proof of Theorem~A (Theorem~\ref{thm:reprove}) in section \ref{sec:reprove}, which is independent of the previous sections except Lemma~\ref{lem:Kodairatype}.
In section \ref{sec:arithaugbs}, we give a definition of the arithmetic augmented base locus of an adelically metrized line bundle (Definition~\ref{defn:aaugbs}) and characterize it as the exceptional loci of the Kodaira maps (Theorem~\ref{thm:charaaugbs}).
Sections \ref{sec:YuansEst} and \ref{sec:arithrestvol} are devoted to giving a definition of the arithmetic restricted volume of an adelically metrized line bundle along a closed subvariety, and a proof of Theorem~B (Theorem~\ref{thm:convergence}).
The arguments here are based on Yuan's idea \cite{Yuan09} and the general theory of Okounkov bodies \cite{Kaveh_Khovanskii12,BoucksomBourbaki}.
In section \ref{sec:genFujita}, we give a proof of the generalized Fujita approximation for arithmetic restricted volumes (Theorem~\ref{thm:genFujita}).

\section{Preliminaries}\label{sec:adelicallymetrizedlbd}

Let $M$ be a module over a ring $R$ and let $\Gamma$ be a subset of $M$.
We denote by $\langle\Gamma\rangle_R$ the $R$-submodule of $M$ generated by $\Gamma$.
Let $K$ be a number field and let $O_K$ be the ring of integers.
Denote by $M_K^{\rm f}$ the set of all finite places of $K$ and set $M_K:=M_K^{\rm f}\cup\{\infty\}$.
For each $P\in M_K^{\rm f}$, we denote the $P$-adic completion of $K$ by $K_P$, a uniformizer of $K_P$ by $\varpi_P$, and set
\begin{equation}\label{eqn:KPnorm}
 |a|_P:=\begin{cases} \sharp(O_K/P)^{-\ord_P(a)} & \text{if $a\in K_P^{\times}$,} \\ 0 & \text{if $a=0$} \end{cases}
\end{equation}
for $a\in K_P$.
We denote the valuation ring of $K_P$ by $O_{K_P}$, and the residue field of $K_P$ by $\widetilde{K}_P$.
For $v=\infty$, we set $K_{\infty}:=\CC$ and denote by $|\cdot|_{\infty}$ the absolute value on $\CC$.

\paragraph{Adelically normed linear series}

Let $V$ be a finite-dimensional $K$-vector space.
We set for $v\in M_K^{\rm f}$ $V_{K_v}:=V\otimes_KK_v$, and for $v=\infty$ $V_{K_v}:=V\otimes_{\QQ}\CC$.
An \emph{adelic norm} on $V$ is a collection of norms $(\|\cdot\|_v)_{v\in M_K}$ such that,
\begin{enumerate}
\item[(a)] if $v\in M_K^{\rm f}$, $\|\cdot\|_v$ is a non-Archimedean $(K_v,|\cdot|_v)$-norm on $V_{K_v}$,
\item[(b)] if $v=\infty$, $\|\cdot\|_{\infty}$ is a $(K_{\infty},|\cdot|_{\infty})$-norm on $V_{K_{\infty}}$, and
\item[(c)] for each $s\in V$, $\|s\|_v\leq 1$ for all but finitely many $v\in M_K^{\rm f}$.
\end{enumerate}
An \emph{adelically normed $K$-vector space} is a pair $\overline{V}=\left(V,(\|\cdot\|_v^{\overline{V}})_{v\in M_K}\right)$ of a finite-dimensional $K$-vector space $V$ and an adelic norm $(\|\cdot\|_v^{\overline{V}})_{v\in M_K}$ on $V$ such that the following equivalent conditions are satisfied (see \cite[Proposition~2.4]{Boucksom_Chen}).
\begin{enumerate}
\item[(a)] $\aHzf(\overline{V}):=\left\{s\in V\,:\,\text{$\|s\|_v^{\overline{V}}\leq 1$, $\forall v\in M_K^{\rm f}$}\right\}$ is a finitely generated $O_K$-module that spans $V$ over $K$.
\item[(b)] $\aHzsm(\overline{V}):=\left\{s\in\aHzf(\overline{V})\,:\,\|s\|_{\infty}^{\overline{V}}\leq 1\right\}$ is a finite set.
\item[(c)] $\aHzst(\overline{V}):=\left\{s\in\aHzf(\overline{V})\,:\,\|s\|_{\infty}^{\overline{V}}<1\right\}$ is a finite set.
\end{enumerate}
Note that the above definition of an adelically normed $K$-vector space is strictly weaker than the classic one as given in \cite[(1.6)]{ZhangAdelic}, \cite[Definition~3.1]{Gaudron07}.
We do not require the existence of an integral model of $V$ that induces all but finitely many $\|\cdot\|_v$, which corresponds to the condition (b) of \cite[(1.6)]{ZhangAdelic} and to the condition 1) of \cite[Definition~3.1]{Gaudron07}.

Let $X$ be a projective variety over $K$, that is, a projective, reduced, and irreducible scheme over $K$.
Let $L$ be a line bundle on $X$.
A \emph{$K$-linear series belonging to $L$} is a $K$-subspace $V$ of $\Hz(L)$.
The base locus of $V$ is defined as
\[
 \Bs V:=\bigcap_{s\in V}\left\{x\in X\,:\,s(x)=0\right\}.
\]
We consider endowing $V$ with an adelic norm $(\|\cdot\|_v)_{v\in M_K}$ on $V$ such that $\overline{V}:=\left(V,(\|\cdot\|_v)_{v\in M_K}\right)$ is an adelically normed $K$-vector space in the above sense.
An \emph{adelically normed graded $K$-linear series $\overline{V}_{\sbullet}$ belonging to $L$} is a graded $K$-linear series $V_{\sbullet}=\bigoplus_{m\geq 0}V_m$ endowed for each $m\geq 0$ with an adelic norm $(\|\cdot\|_v^{\overline{V}_m})_{v\in M_K}$ on $V_m$ in such a way that
\begin{equation}
 \|s\otimes t\|_v^{\overline{V}_{m+n}}\leq\|s\|_v^{\overline{V}_m}\cdot\|t\|_v^{\overline{V}_n}
\end{equation}
for every $v\in M_K$, $s\in V_m$, $t\in V_n$, and $m,n\geq 0$.
Given an adelically normed graded $K$-linear series $\overline{V}_{\sbullet}$, we have a graded $K$-linear series $\bigoplus_{m\geq 0}\aSpan{K}{\aHzst(\overline{V}_m)}$ belonging to $L$, and set
\begin{equation}
 \aBss{?}(\overline{V}_m):=\Bs\aSpan{K}{\aHzs{?}(\overline{V}_m)}\quad\text{and}\quad\aSBss{?}(\overline{V}_{\sbullet}):=\bigcap_{m\geq 1}\Bs\aSpan{K}{\aHzs{?}(\overline{V}_m)}
\end{equation}
for $m\geq 1$ and $?=\text{ss}$ and s.

\paragraph{Adelically metrized line bundles}

Let $X$ be a projective variety over $K$.
For each $v\in M_K^{\rm f}$ (respectively, $v=\infty$), we denote the Berkovich analytic space (respectively, complex analytic space) associated to $X_{K_v}:=X\times_{\Spec(K)}\Spec(K_v)$ (respectively, $X_{\CC}:=\coprod_{\sigma:K\to\CC}X\times_{\Spec(K),\sigma}\Spec(\CC)$) by $(X_v^{\rm an},\rho_v:X_v^{\rm an}\to X_{K_v})$ (see \cite[\S 3.4]{BerkovichBook}).
If $W=\Spec(A)$ is affine open subscheme of $X_{K_v}$, then a point $x\in W^{\rm an}=\rho_v^{-1}(W)$ corresponds to a multiplicative seminorm $|\cdot|_x$ on $A$ whose restriction to $K_v$ is $|\cdot|_v$, and the morphism $\rho_v|_{W^{\rm an}}:W^{\rm an}\to W$ is given by $x\to\Ker|\cdot|_x$ (see \cite[Remark~3.4.2]{BerkovichBook}).
In particular, the multiplicative seminorm $|\cdot|_x$ defines a norm on the residue field $k(\rho_v(x))$ at $\rho_v(x)$ (that is, the field of fractions of $A/\Ker|\cdot|_x$), which we denote by the same $|\cdot|_x$.

Let $L$ be a line bundle on $X$.
For each $v\in M_K^{\rm f}$, we put $L_v^{\rm an}:=\rho_v^*L_{K_v}$.
A \emph{continuous metric} $|\cdot|^{L_v}$ on $L_v^{\rm an}$ is a collection $(|\cdot|_x^{L_v})_{x\in X^{\rm an}}$ such that, for every $x\in X_v^{\rm an}$, $|\cdot|_x^{L_v}$ is a $(k(\rho_v(x)),|\cdot|_x)$-norm on $L_v^{\rm an}(x)=L_{K_v}(\rho_v(x))$ and, for every local section $s$ of $L_{K_v}$, the map $x\mapsto |s|^{L_v}(x):=|s(\rho_v(x))|_x^{L_v}$ is continuous in the Berkovich topology.

We consider a finite subset $S$ of $M_K^{\rm f}$ and set $U=\Spec(O_K)\setminus S$.
A \emph{$U$-model} of $X$ is a flat, projective, reduced, and irreducible $U$-scheme $\mathscr{X}_U$ such that $\mathscr{X}_U\times_{U}\Spec(K)$ is $K$-isomorphic to $X$.
A \emph{$U$-model} of the pair $(X,L)$ is a pair $(\mathscr{X}_U,\mathscr{L}_U)$ of a $U$-model $\mathscr{X}_U$ of $X$ and a $\QQ$-line bundle $\mathscr{L}_U$ on $\mathscr{X}_U$ such that $\mathscr{L}_U|_X=L$ as $\QQ$-line bundles.
Let $(\mathscr{X}_U,\mathscr{L}_U)$ be a $U$-model of $(X,L)$, and let $v\in U$.
Given an $x\in X_v^{\rm an}$, we can uniquely extend by the valuative criterion the natural morphism $\Spec(k(\rho_v(x)))\to X_{K_v}$ to
\begin{equation}
 t_x:\Spec(O_{k(\rho_v(x))})\to\mathscr{X}_{O_{K_v}}=\mathscr{X}_U\times_U\Spec(O_{K_v}),
\end{equation}
where $O_{k(\rho_v(x))}:=\{f\in k(\rho_v(x))\,:\,|f|_x\leq 1\}$.
Let $n\geq 1$ be an integer such that $n\mathscr{L}_{O_{K_v}}$ is a line bundle on $\mathscr{X}_{O_{K_v}}$.
For $\ell\in L_v^{\rm an}(x)$, we have $\ell^{\otimes n}\in(n\mathscr{L}_{O_{K_v}})\otimes_{O_{K_v}}k(\rho_v(x))$ and put
\begin{equation}\label{eqn:adelicassociatedtomodel}
 |\ell|^{\mathscr{L}_{O_{K_v}}}(x)=|\ell|_x^{\mathscr{L}_{O_{K_v}}}:=\inf\left\{|f|_x^{1/n}\,:\,f\in k(\rho_v(x)),\,\ell^{\otimes n}\in ft_x^*(n\mathscr{L}_{O_{K_v}})\right\}
\end{equation}
which does not depend on the choice of $n$.
Put $\Red_{\mathscr{X}_{O_{K_v}}}(x)$ as the image of the closed point of $\Spec(O_{k(\rho(x))})$ via $t_x$.
The map
\begin{equation}
 \Red_{\mathscr{X}_{O_{K_v}}}:X_v^{\rm an}\to\mathscr{X}_{O_{K_v}}\times_{\Spec(O_{K_v})}\Spec(\widetilde{K}_v)
\end{equation}
is called the \emph{reduction map} (see \cite[\S 2.4]{BerkovichBook}).
Let $\eta$ be a local frame of $n\mathscr{L}_{O_{K_v}}$ around $\Red_{\mathscr{X}_{O_{K_v}}}(x)$.
For $\ell\in L_v^{\rm an}(x)$, choose an $f\in k(\rho_v(x))$ such that $\ell^{\otimes n}=f\eta(x)$ in $(nL)_v^{\rm an}(x)$.
We then have
\begin{equation}
 |\ell|^{\mathscr{L}_{O_{K_v}}}(x)=|f|_x^{1/n}.
\end{equation}

\begin{lemma}\label{lem:propertyadelicnorm}
\begin{enumerate}
\item[\textup{(1)}] $\Red_{\mathscr{X}_{O_{K_v}}}$ is anti-continuous.
\item[\textup{(2)}] $|\cdot|^{\mathscr{L}_{O_{K_v}}}$ is a continuous metric on $L_v^{\rm an}$.
\item[\textup{(3)}] Let $\iota:X\to\mathscr{X}_U$ be the natural morphism and assume that $\mathscr{X}_U$ is integrally closed in $\iota_*\mathcal{O}_X$ \textup{(}see \cite[\S 6.3]{EGAII}\textup{)}.
Then $s^{\otimes n}$ extends to a global section of $n\mathscr{L}_U$ if and only if
\[
 \sup_{x\in X_v^{\rm an}}|s|^{\mathscr{L}_{O_{K_v}}}(x)\leq 1
\]
for every $v\in U$.
\end{enumerate}
\end{lemma}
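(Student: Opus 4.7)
The plan is to treat the three claims in order, each building on the previous one.

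For (1), the statement is local on $\mathscr{X}_{O_{K_v}}$, so I would restrict to an affine open $W=\Spec A\subset\mathscr{X}_{O_{K_v}}$. For $x\in\Red_{\mathscr{X}_{O_{K_v}}}^{-1}(W)$ the morphism $t_x$ is induced by $A\to O_{k(\rho_v(x))}$, and unwinding the definitions identifies $\Red_{\mathscr{X}_{O_{K_v}}}(x)$ with the prime $\{a\in A\,:\,|a|_x<1\}$ of $A$. Any closed subset of the special fiber intersected with $W$ has the form $V(I)$ for some ideal $I\subset A$ containing $\varpi_v$, so
\[
 \Red_{\mathscr{X}_{O_{K_v}}}^{-1}\bigl(V(I)\bigr)=\{x\in W^{\rm an}\,:\,|a|_x<1,\ \forall a\in I\},
\]
which is a Berkovich-open condition, giving anti-continuity.

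For (2), I would fix $x_0\in X_v^{\rm an}$, choose an integer $n$ with $n\mathscr{L}_{O_{K_v}}$ a line bundle and a local frame $\eta$ of $n\mathscr{L}_{O_{K_v}}$ on an affine open $W\ni\Red_{\mathscr{X}_{O_{K_v}}}(x_0)$. By (1) the preimage $\Red_{\mathscr{X}_{O_{K_v}}}^{-1}(W)$ is an open neighborhood of $x_0$ in $X_v^{\rm an}$, and on it $s^{\otimes n}=f\eta$ with $f\in\Gamma(W_K,\mathcal{O})$. The formula in \eqref{eqn:adelicassociatedtomodel} then simplifies to $|s|^{\mathscr{L}_{O_{K_v}}}(x)=|f|_x^{1/n}$, and continuity of $x\mapsto|f|_x$ is part of the definition of the Berkovich topology.

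For (3), the forward direction is immediate: a global extension of $s^{\otimes n}$ can be written on any local frame $\eta$ of $n\mathscr{L}_{O_{K_v}}$ as $s^{\otimes n}=f\eta$ with $f\in\Gamma(W,\mathcal{O}_W)$, so $|f|_x\leq 1$ for all $x$ in the Berkovich fiber. For the converse I would regard $s^{\otimes n}$ as a rational section of $n\mathscr{L}_U$ that is regular on the generic fiber $X$; the assumption that $\mathscr{X}_U$ is integrally closed in $\iota_*\mathcal{O}_X$ provides a Hartogs-type extension principle, reducing global regularity to regularity at the codimension-one points of $\mathscr{X}_U$ not contained in $X$. Such points are the generic points $\xi$ of irreducible components of the special fibers $\mathscr{X}_{O_{K_v}}\otimes\widetilde{K}_v$, and each of them corresponds to a canonical divisorial (Shilov) point $x_\xi\in X_v^{\rm an}$ via the reduction map. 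Regularity of $s^{\otimes n}$ at the discrete valuation ring $\mathcal{O}_{\mathscr{X}_U,\xi}$ translates, through the local frame computation above, exactly into $|s|^{\mathscr{L}_{O_{K_v}}}(x_\xi)\leq 1$, which holds by hypothesis.

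The only real obstacle is the codimension-one extension step in (3): one must identify generic points of the special fiber with divisorial points of the Berkovich space and invoke the relative normality hypothesis to bridge the Hartogs-type extension with the analytic supremum bound. Parts (1) and (2) are formal consequences of the affine-local description of the reduction map and of the formula for the model metric.
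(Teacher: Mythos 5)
Parts (1) and (3) of your proposal follow the paper's route. For (1), the paper does exactly what you sketch: restrict to an affine chart $\Spec\mathscr{A}_i$, lift a \emph{finite} set of generators $\widetilde f_j$ of the defining ideal of the closed subset of $\Spec\widetilde{\mathscr{A}}_i$ to $f_j\in\mathscr{A}_i$, and observe that the preimage of the closed set is $\{x\,:\,|f_j|_x<1,\ \forall j\}$, which is open (your phrasing quantifies over all of $I$, so do make the reduction to finitely many generators explicit). For (3), the paper likewise proves the nontrivial direction by regarding $s^{\otimes n}$ as a rational section of $n\mathscr{L}_U$, reducing via the relative-normality hypothesis — through Claim~\ref{clm:normal} and Lemma~\ref{lem:ring}(3), which together supply precisely the ``Hartogs-type extension principle'' you invoke — to effectivity of $\div(s^{\otimes n})$ at every vertical prime divisor $Z$, and then evaluating at the Shilov point $x_Z\in X_v^{\rm an}$ determined by $\ord_Z$, exactly as you describe.

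Part (2), however, has a genuine gap. You invoke (1) to conclude that $\Red_{\mathscr{X}_{O_{K_v}}}^{-1}(W)$ is an \emph{open} neighbourhood of $x_0$. This is backwards: anti-continuity means preimages of \emph{closed} sets are open, hence preimages of \emph{open} sets are \emph{closed}. Concretely, for $W=\Spec\mathscr{A}$ one has
\[
 \Red_{\mathscr{X}_{O_{K_v}}}^{-1}(W)=\bigl\{x\in (W_{K_v})^{\rm an}\,:\,|a|_x\leq 1\ \text{for all}\ a\in\mathscr{A}\bigr\},
\]
cut out by non-strict inequalities, hence closed. As stated, your argument therefore produces only a closed set on which the frame formula $|s|^{\mathscr{L}_{O_{K_v}}}=|f|^{1/n}$ holds, and continuity at $x_0$ does not follow from continuity of the restriction to a closed set through $x_0$. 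The repair is standard: cover $\mathscr{X}_{O_{K_v}}$ by finitely many affine opens $\mathscr{W}_1,\dots,\mathscr{W}_r$ trivializing $n\mathscr{L}_{O_{K_v}}$, note that the closed sets $\Red_{\mathscr{X}_{O_{K_v}}}^{-1}(\mathscr{W}_i)$ cover $X_v^{\rm an}$ (this is the correct use of (1)), verify the frame formula on each, and conclude by the pasting lemma for finite closed covers. (The paper's own proof instead uses the Zariski-open neighbourhood $(\mathscr{W}_{K_v})^{\rm an}=\rho_v^{-1}(\mathscr{W}_{K_v})$, whose openness comes from continuity of $\rho_v$, not from anti-continuity of the reduction map — a different mechanism than the one you appealed to.)
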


\begin{definition}\label{defn:relativelynormal}
We say that $\mathscr{X}_U$ is \emph{relatively normal} in $X$ if the condition in Lemma~\ref{lem:propertyadelicnorm}(3) is satisfied.
Given any $U$-model $\mathscr{X}_U$, there exists an integral morphism $\nu:\mathscr{X}_U'\to\mathscr{X}_U$ of $U$-models of $X$ such that $\mathscr{X}_U'$ is relatively normal in $X$ (see \cite[\S 6.3]{EGAII}).
We call $\mathscr{X}_U'$ a \emph{relative normalization} of $\mathscr{X}_U$ in $X$.
Since the normalization $\widetilde{\nu}:\widetilde{\mathscr{X}}_U\to\mathscr{X}_U$ is a finite morphism, we can see that $\nu$ is also finite.
\end{definition}

\begin{proof}[Proof of Lemma~\ref{lem:propertyadelicnorm}]
(1): We cover $\mathscr{X}_{O_{K_v}}$ with finitely many affine open subschemes $\Spec(\mathscr{A}_i)$ such that each $\mathscr{A}_i$ is a finitely generated $O_{K_v}$-algebra.
Put $A_i:=\mathscr{A}_i\otimes_{O_{K_v}}K_v$ and $\widetilde{\mathscr{A}_i}:=\mathscr{A}_i\otimes_{O_{K_v}}\widetilde{K}_v$.
It is sufficient to show that $\Red_{\mathscr{X}_{O_{K_v}}}|_{\Spec(A_i)^{\rm an}}:\Spec(A_i)^{\rm an}\to\Spec(\widetilde{\mathscr{A}}_i)$ is anti-continuous for each $i$.
A closed subset of $\Spec(\widetilde{\mathscr{A}}_i)$ is written as
\[
 V(\widetilde{\mathfrak{a}})=\left\{p\in\Spec(\widetilde{\mathscr{A}}_i)\,:\,\text{$\widetilde{f}_j(p)=0$ for all $j$}\right\},
\]
where $\widetilde{\mathfrak{a}}$ is an ideal of $\widetilde{\mathscr{A}}_i$ generated by finitely many $\widetilde{f}_j\in\widetilde{\mathscr{A}}_i$.
Choose $f_j\in \mathscr{A}_i$ such that the image in $\widetilde{\mathscr{A}}_i$ is $\widetilde{f}_j$.
One can see that $x\in\Spec(A_i)^{\rm an}$ satisfies $\widetilde{f}_j(\Red_{\mathscr{X}_{O_{K_v}}}(x))=0$ if and only if $f_j\in O_{k(\rho_v(x))}$ belongs to the maximal ideal.
So
\[
 \Red_{\mathscr{X}_{O_{K_v}}}^{-1}(V(\widetilde{\mathfrak{a}}))=\left\{x\in\Spec(A_i)^{\rm an}\,:\,\text{$|f_j|_x<1$ for all $j$}\right\},
\]
which is open.

(2): Let $n\geq 1$ be an integer such that $n\mathscr{L}_{O_{K_v}}$ is a line bundle on $\mathscr{X}_{O_{K_v}}$.
Let $W'$ be an open subscheme of $X_{K_v}$ and let $s$ be a local section of $L_{K_v}$ over $W'$.
We are going to show that ${W'}^{\rm an}\to \RR$, $x\mapsto |s|^{\mathscr{L}_{O_{K_v}}}(x)$, is continuous.

For each $x\in {W'}^{\rm an}$, we choose an affine open neighborhood $\mathscr{W}=\Spec(\mathscr{A})$ of $\Red_{\mathscr{X}_{O_{K_v}}}(x)$ such that $n\mathscr{L}_{O_{K_v}}$ is trivial over $\mathscr{W}$.
Set $A:=\mathscr{A}\otimes_{O_{K_v}}K_v$ and $W:=\Spec(A)$.
Since $\Red_{\mathscr{X}_{O_{K_v}}}(x)$ is contained in the Zariski closure of $\rho_v(x)$ in $\mathscr{X}_{O_{K_v}}$, $W$ contains $\rho_v(x)$.
Fix a local frame $\eta$ of $n\mathscr{L}_{O_{K_v}}$ over $\mathscr{W}$, and write $s^{\otimes n}=f\eta|_W$ with $f\in A$.
Then
\[
 |s|^{\mathscr{L}_{O_{K_v}}}(x)=|f|_x^{1/n}
\]
is continuous over $W^{\rm an}$ by definition of the Berkovich topology.
So we are done.

(3): It suffices to show the ``if'' part.
We can assume $s\neq 0$.
The section $s^{\otimes n}$ can be regarded as a non-zero rational section of $n\mathscr{L}_U$ on $\mathscr{X}_U$, so it suffices to show that the Cartier divisor $\div_{\mathscr{X}_U}(s^{\otimes n})$ on $\mathscr{X}_U$ is effective.

\begin{claim}\label{clm:normal}
It suffices to say
\begin{equation}
 \ord_{Z}(\div_{\mathscr{X}}(s^{\otimes n}))\geq 0
\end{equation}
for every vertical prime divisor $Z$ on $\mathscr{X}_U$.
\end{claim}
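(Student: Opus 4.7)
Since effectiveness of the Cartier divisor $D:=\div_{\mathscr{X}_U}(s^{\otimes n})$ can be tested locally, I would fix an affine open $\mathscr{W}=\Spec(\mathscr{A})\subseteq\mathscr{X}_U$ on which $n\mathscr{L}_U$ admits a trivializing frame $\eta$, and write $s^{\otimes n}=f\,\eta$ for some $f\in K(\mathscr{X}_U)=K(X)$; the goal reduces to showing $f\in\mathscr{A}$. Since $s\in\Hz(L)$, the function $f$ is regular on the generic fibre $\mathscr{W}\cap X$, so $f\in(\iota_*\mathcal{O}_X)(\mathscr{W})$. The relative-normality hypothesis on $\mathscr{X}_U$ then reduces the task to proving that $f$ is integral over $\mathscr{A}$.

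To establish this integrality, I would pass to the normalization $\widetilde{\nu}\colon\widetilde{\mathscr{X}}_U\to\mathscr{X}_U$, which is a finite morphism of integral schemes (since $\mathscr{X}_U$ is of finite type over the Dedekind ring $\mathcal{O}(U)$). Integrality of $f$ over $\mathscr{A}$ is equivalent to $f$ lying in the coordinate ring of $\widetilde{\nu}^{-1}(\mathscr{W})$, i.e.\ to $\widetilde{\nu}^*D$ being effective above $\mathscr{W}$. Because $\widetilde{\mathscr{X}}_U$ is normal and Noetherian, the latter is checked one codimension-one prime at a time.

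The codimension-one primes $\widetilde{Z}$ of $\widetilde{\mathscr{X}}_U$ split into horizontal and vertical ones. If $\widetilde{Z}$ is horizontal, it maps to a codimension-one prime of the generic fibre, which is the normalization $\widetilde{X}$ of $X$; the regularity of $s^{\otimes n}$ on $X$ pulls back to regularity on $\widetilde{X}$, forcing $\ord_{\widetilde{Z}}(f)\geq 0$. If $\widetilde{Z}$ is vertical, then by finiteness of $\widetilde{\nu}$ the image $Z:=\widetilde{\nu}(\widetilde{Z})$ is a vertical prime divisor of $\mathscr{X}_U$, and the pullback identity $\ord_{\widetilde{Z}}(\widetilde{\nu}^*D)=e(\widetilde{Z}/Z)\cdot\ord_Z(D)$ combined with the assumption $\ord_Z(D)\geq 0$ yields $\ord_{\widetilde{Z}}(\widetilde{\nu}^*D)\geq 0$.

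The principal obstacle, as I see it, is the divisor-theoretic bookkeeping on the possibly non-normal $\mathscr{X}_U$: one has to verify that the pullback along $\widetilde{\nu}$ preserves non-negativity of orders and that the resulting integrality really descends via relative normality. Once these foundations are in place, the substantive content is simply that regularity of $s$ on $X$ automatically absorbs all horizontal contributions, so only vertical primes remain to control.
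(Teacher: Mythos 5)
Your argument is correct but takes a genuinely different route from the paper's. The paper never passes to the full normalization: it stays on $\mathscr{X}_U$ and applies its own Lemma~\ref{lem:ring}(3), which shows that a Noetherian domain $A$ integrally closed in a localization $S^{-1}A$ satisfies $A=S^{-1}A\cap\bigcap_{\mathfrak{P}\cap S\neq\emptyset,\ \height(\mathfrak{P})=1}A_{\mathfrak{P}}$; with $S$ chosen so that $S^{-1}\mathscr{A}$ is the ring of functions over (an open containing) the generic point of $U$, this is precisely the statement that $f\in\mathscr{A}$ once $f$ is regular there and at the vertical codimension-one points, and the lemma is proved by a primary-decomposition argument that nowhere needs finiteness of the normalization. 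You instead push $f$ up to $\widetilde{\mathscr{X}}_U$ and invoke the classical algebraic Hartogs lemma there, disposing of the horizontal height-one primes via $s\in\Hz(L)$ and of the vertical ones via the hypothesis. Both routes work; yours is more geometric, while the paper's is self-contained and avoids the normalization entirely. One imprecision worth flagging: the ``pullback identity'' $\ord_{\widetilde{Z}}(\widetilde{\nu}^*D)=e(\widetilde{Z}/Z)\,\ord_Z(D)$ tacitly assumes that $\ord_Z$ is a bona fide discrete valuation with a single $\widetilde{Z}$ over $Z$. All that is actually needed is the containment of local rings $\mathcal{O}_{\mathscr{X}_U,Z}\subset\mathcal{O}_{\widetilde{\mathscr{X}}_U,\widetilde{Z}}$, which carries ``$f$ regular at the generic point of $Z$'' over to ``$f$ regular at the generic point of $\widetilde{Z}$'' with no mention of ramification. (By Lemma~\ref{lem:ring}(1), under relative normality the local ring at every vertical height-one point of $\mathscr{X}_U$ is in fact already a DVR, so $\widetilde{\nu}$ is even an isomorphism over that point and $e(\widetilde{Z}/Z)=1$; but your argument reads most cleanly if it does not rely on this.)
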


\begin{proof}[Proof of Claim~\ref{clm:normal}]
There exists a non-empty open subset $U'\subset U$ such that $s^{\otimes n}$ extends to a global section on $\mathscr{X}_{U'}$.
Let $\mathscr{W}=\Spec(\mathscr{A})$ be an affine open subscheme of $\mathscr{X}_U$ such that $n\mathscr{L}_U$ is trivial with local frame $\eta$.
We can write $s^{\otimes n}=f\eta$ with a rational function $f$ on $\mathscr{W}$.
The domain of $f$ contains $\mathscr{W}\cap\mathscr{X}_{U'}$ and all the generic points of the vertical fibers of $\mathscr{W}$.
So, by Lemma~\ref{lem:ring}(3), we have $f\in\mathscr{A}$.
\end{proof}

Suppose that $Z$ is lying over a place $v\in U$.
We define $|\cdot|_Z$ by
\[
 |\phi|_Z:=\begin{cases} (\sharp\widetilde{K}_v)^{-\frac{\ord_Z(\phi)}{\ord_Z(\varpi_v)}} & \text{if $\phi\neq 0$,} \\ 0 & \text{if $\phi=0$} \end{cases}
\]
for $\phi\in \Rat(X_{K_v})$, where $\Rat(X_{K_v})$ denotes the field of rational functions on $X_{K_v}$.
Then $|\cdot|_Z$ gives a norm on $\Rat(X_{K_v})$ extending $|\cdot|_v$ on $K_v$, so $|\cdot|_Z$ corresponds to a point $x_Z\in X_v^{\rm an}$.
Note that $\rho_v(x_Z)$ is the generic point of $X_{K_v}$ and $\Red_{\mathscr{X}_{O_{K_v}}}(x_Z)$ is the generic point of $Z$, so $x_Z$ belongs to the Shilov boundary of $X_v^{\rm an}$ (see \cite[Proposition~2.4.4]{BerkovichBook}).
Let $f$ be a local equation defining $\div_{\mathscr{X}_U}(s^{\otimes n})$ around the generic point of $Z$.
We then have
\[
 \left(|s|^{\mathscr{L}_{O_{K_v}}}(x_Z)\right)^n=|s^{\otimes n}|^{n\mathscr{L}_{O_{K_v}}}(x_Z)=|f|_Z=(\sharp\widetilde{K}_v)^{-\frac{\ord_Z(f)}{\ord_Z(\varpi_v)}}\leq 1,
\]
so $\ord_Z(\div_{\mathscr{X}}(s^{\otimes n}))=\ord_Z(f)\geq 0$.
\end{proof}

\begin{lemma}\label{lem:ring}
Let $A$ be a Noetherian integral domain and let $S$ be a multiplicative subset of $A\setminus\{0\}$.
Suppose that $A$ is integrally closed in $S^{-1}A$.
\begin{enumerate}
\item[\textup{(1)}] Let $a\in A\setminus\{0\}$ and let $\mathfrak{p}$ be a prime ideal of $A$ associated to $aA$.
If $\mathfrak{p}\cap S$ is non-empty, then $A_{\mathfrak{p}}$ is a discrete valuation ring and $\mathfrak{p}$ has height one.
\item[\textup{(2)}] For any prime ideal $\mathfrak{P}$ of $A$ such that $\mathfrak{P}\cap S\neq\emptyset$, one has
\[
 \depth(A_{\mathfrak{P}})\geq\min\left\{\height(\mathfrak{P}),2\right\}.
\]
\item[\textup{(3)}] One has
\[
 A=S^{-1}A\cap\bigcap_{\substack{\mathfrak{P}\cap S\neq\emptyset \\ \height(\mathfrak{P})=1}}A_{\mathfrak{P}}.
\]
\end{enumerate}
\end{lemma}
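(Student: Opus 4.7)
The plan is to establish part~(1) as the main step by combining a dichotomy on whether $c\mathfrak{p}\subseteq\mathfrak{p}$ with the determinant trick, and then to deduce parts~(2) and~(3) from it via associated-prime arguments that reduce each to the principal-ideal case handled by~(1).

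For part~(1), write $\mathfrak{p}=(aA:b)$ with $b\in A\setminus aA$ and set $c:=b/a\in\Frac(A)\setminus A$, so that $c\mathfrak{p}\subseteq A$. Picking $s\in\mathfrak{p}\cap S$ gives $sc\in A$, hence $c\in S^{-1}A$. If $c\mathfrak{p}\subseteq\mathfrak{p}$, then multiplication by $c$ is an $A$-linear endomorphism of the finitely generated $A$-module $\mathfrak{p}$; Cayley--Hamilton yields a monic $P(T)\in A[T]$ with $P(c)\cdot\mathfrak{p}=0$ in $\Frac(A)$, and since $\mathfrak{p}\ne 0$ we obtain $P(c)=0$. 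Thus $c$ is integral over $A$, and the integral-closedness hypothesis forces $c\in A$, contradicting $b\notin aA$. Otherwise some $p\in\mathfrak{p}$ satisfies $cp\in A\setminus\mathfrak{p}$; since $cp$ is then a unit of $A_{\mathfrak{p}}$, we get $c\mathfrak{p}A_{\mathfrak{p}}=A_{\mathfrak{p}}$, whence $\mathfrak{p}A_{\mathfrak{p}}=(1/c)A_{\mathfrak{p}}$ is principal and nonzero, so by Krull's principal ideal theorem $A_{\mathfrak{p}}$ is a discrete valuation ring and $\height(\mathfrak{p})=1$.

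For part~(2), since $\mathfrak{P}\cap S\ne\emptyset$ we have $\mathfrak{P}\ne(0)$, and $\depth(A_{\mathfrak{P}})\ge 1$ is automatic because $A_{\mathfrak{P}}$ is a domain; the only case to rule out is $\height(\mathfrak{P})\ge 2$ with $\depth(A_{\mathfrak{P}})=1$. Choosing a regular $a\in\mathfrak{P}A_{\mathfrak{P}}$ we would have $\mathfrak{P}A_{\mathfrak{P}}\in\Ass(A_{\mathfrak{P}}/aA_{\mathfrak{P}})$; clearing the denominator this gives $a_0\in\mathfrak{P}$ with $\mathfrak{P}\in\Ass(A/a_0A)$, and part~(1) yields the contradiction $\height(\mathfrak{P})=1$. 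For part~(3), the inclusion $\subseteq$ is immediate. Conversely, suppose $c$ lies in the right-hand side but $c\notin A$; the ideal $I:=\{x\in A:xc\in A\}$ is then proper and meets $S$, so we may pick $\mathfrak{P}\in\Ass(A/I)$ and $y\in A$ with $\mathfrak{P}=(I:y)$, and set $\xi:=yc\in S^{-1}A\setminus A$, so that $\xi\mathfrak{P}\subseteq A$ and $\mathfrak{P}\cap S\ne\emptyset$. Applying the same dichotomy as in part~(1) with $\xi$ in place of $c$: either $\xi\mathfrak{P}\subseteq\mathfrak{P}$, whence $\xi$ is integral over $A$ and lies in $A$ by hypothesis, contradicting $\xi\notin A$; or $\height(\mathfrak{P})=1$ and $\xi\mathfrak{p}A_{\mathfrak{P}}=A_{\mathfrak{P}}$, in which case the assumption $c\in A_{\mathfrak{P}}$ gives $\xi\in A_{\mathfrak{P}}$, contradicting $\xi\mathfrak{P}A_{\mathfrak{P}}=A_{\mathfrak{P}}\not\subseteq\mathfrak{P}A_{\mathfrak{P}}$.

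The main obstacle is the integrality step in part~(1): the faithfulness of $\mathfrak{p}$ as an $A$-module and the passage from $P(c)\mathfrak{p}=0$ to $P(c)=0$ must be handled carefully in $\Frac(A)$ so that one genuinely lands in $S^{-1}A$ and can invoke the hypothesis. Once that is in place, parts~(2) and (3) are essentially formal consequences of the same dichotomy.
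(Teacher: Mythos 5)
Your proof is correct. Parts (1) and (2) take essentially the same approach as the paper: part (1) is the same dichotomy (writing $\mathfrak{p}=(aA:b)$, setting $c=b/a$, and using the determinant trick to exclude $c\mathfrak{p}\subseteq\mathfrak{p}$), differing only in that you run Cayley--Hamilton over $A$ before localizing, whereas the paper works in $A_{\mathfrak{p}}$ directly; part (2) is the same reduction to (1) via associated primes and prime avoidance, phrased as a contradiction from $\depth(A_{\mathfrak{P}})=1$ rather than by explicitly building a regular sequence $x_1,x_2$. Part (3) takes a genuinely different route. The paper takes a reduced primary decomposition $aA=\bigcap I(\mathfrak{p})$, separates the components according to whether $\mathfrak{p}\cap S\neq\emptyset$, and invokes (1) to show $I(\mathfrak{p})=aA_{\mathfrak{p}}\cap A$ for the height-one primes meeting $S$. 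You instead argue by contradiction through the conductor $I=\{x\in A:xc\in A\}$, choose $\mathfrak{P}\in\Ass(A/I)$ with $\mathfrak{P}=(I:y)$, and rerun the part-(1) dichotomy on $\xi=yc$. Your version avoids primary decomposition and makes (1) and (3) rest visibly on a single mechanism; the paper's makes the structural decomposition of $aA$ explicit. Both are correct and of comparable difficulty. One small imprecision in your part (1): invoking Krull's principal ideal theorem only yields $\height(\mathfrak{p})\leq 1$; to conclude that $A_{\mathfrak{p}}$ is a discrete valuation ring you also need the separate fact, as the paper cites, that a Noetherian local domain whose maximal ideal is nonzero and principal is a DVR.
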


\begin{proof}
(1): Let $\varpi\in\mathfrak{p}\cap S$.
There exists a $b\in A$ such that $\mathfrak{p}=\{x\in A\,:\,xb\in aA\}$.
So $\mathfrak{p}A_{\mathfrak{p}}=\{x\in A_{\mathfrak{p}}\,:\,xb\in aA_{\mathfrak{p}}\}$ and $\varpi b\in aA_{\mathfrak{p}}$.

\begin{claim}\label{clm:ring1}
$\mathfrak{p}A_{\mathfrak{p}}$ is an invertible ideal of $A_{\mathfrak{p}}$.
\end{claim}

\begin{proof}[Proof of Claim~\ref{clm:ring1}]
We have $ba^{-1}\in(\mathfrak{p}A_{\mathfrak{p}})^{-1}\cap A_{\mathfrak{p}}[1/\varpi]$ and $ba^{-1}\notin A_{\mathfrak{p}}$.
Suppose that $(ba^{-1})\mathfrak{p}A_{\mathfrak{p}}$ is contained in $\mathfrak{p}A_{\mathfrak{p}}$.
Since $A_{\mathfrak{p}}$ is Noetherian, we can see by the ``determinant trick'' (see \cite[Theorem~2.1]{MatsumuraBook}) that $ba^{-1}\in A_{\mathfrak{p}}[1/\varpi]$ is integral over $A_{\mathfrak{p}}$.
On the other hand, by the hypothesis, $A_{\mathfrak{p}}$ is integrally closed in $A_{\mathfrak{p}}[1/\varpi]$ and it is a contradiction.
Hence $(ba^{-1})\mathfrak{p}A_{\mathfrak{p}}=A_{\mathfrak{p}}$.
\end{proof}

By Claim~\ref{clm:ring1} and \cite[Theorem~11.4]{MatsumuraBook}, $A_{\mathfrak{p}}$ is a discrete valuation ring and $\mathfrak{p}$ has height one.

(2): Since $\mathfrak{P}\cap S\neq\emptyset$, $A_{\mathfrak{P}}$ has depth $\geq 1$.
Suppose that $\height(\mathfrak{P})\geq 2$.
Let $x_1\in\mathfrak{P}\cap S$.
If $\mathfrak{P}\subset\bigcup_{\mathfrak{p}\in\Ass_A(A/x_1A)}\mathfrak{p}$, then $\mathfrak{P}\subset\mathfrak{p}$ for a $\mathfrak{p}\in\Ass_A(A/x_1A)$ (see \cite[Exercise~1.6]{MatsumuraBook}) and it is a contradiction by the assertion (1).
So one can take an
\[
 x_2\in\mathfrak{P}\setminus\left(\bigcup_{\mathfrak{p}\in\Ass_A(A/x_1A)}\mathfrak{p}\right).
\]
By \cite[Theorem~6.1(ii)]{MatsumuraBook}, $x_2\in\mathfrak{P}A_{\mathfrak{P}}$ is ($A_{\mathfrak{P}}/x_1A_{\mathfrak{P}}$)-regular.
So, $A_{\mathfrak{P}}$ has depth $\geq 2$.

(3): Let $a\in A\setminus\{0\}$ and let
\[
 aA=\bigcap_{\mathfrak{p}\in\Ass_A(A/aA)}I(\mathfrak{p})
\]
be a reduced primary decomposition of $aA$, where $I(\mathfrak{p})$ is $\mathfrak{p}$-primary in $A$.

\begin{claim}
One has
\begin{equation}
 S^{-1}(aA)\cap A=\bigcap_{\mathfrak{p}\in\Ass_A(A/aA)\cap\Spec(S^{-1}A)}I(\mathfrak{p}).
\end{equation}
\end{claim}

\begin{proof}
Since
\[
 S^{-1}(aA)=\bigcap_{\mathfrak{p}\in\Ass_A(A/aA)\cap\Spec(S^{-1}A)}S^{-1}I(\mathfrak{p}),
\]
it suffices to show $S^{-1}I(\mathfrak{p})\cap A=I(\mathfrak{p})$ for $\mathfrak{p}\in\Ass_A(A/aA)\cap\Spec(S^{-1}A)$.
Suppose that $sx\in I(\mathfrak{p})$ for $s\in S$ and $x\in A$.
If $x\notin I(\mathfrak{p})$, then $s$ is a zero-divisor for $A/I(\mathfrak{p})$ and $s\notin\mathfrak{p}$, so it is a contradiction.
Hence, $x\in I(\mathfrak{p})$.
\end{proof}

On the other hand, one has by the assertion (1)
\begin{equation}
 I(\mathfrak{p})=aA_{\mathfrak{p}}\cap A
\end{equation}
for every $\mathfrak{p}\in\Ass_A(A/aA)\setminus\Spec(S^{-1}A)$.
So
\begin{equation}\label{eqn:ring_aA}
 aA=(aS^{-1}A\cap A)\cap\bigcap_{\mathfrak{p}\in\Ass_A(A/aA)\setminus\Spec(S^{-1}A)}(aA_{\mathfrak{p}}\cap A).
\end{equation}
If
\[
 ba^{-1}\in S^{-1}A\cap\bigcap_{\substack{\mathfrak{P}\cap S\neq\emptyset \\ \height(\mathfrak{P})=1}}A_{\mathfrak{P}},
\]
then, by the equation (\ref{eqn:ring_aA}), we have
\[
 b\in (aS^{-1}A\cap A)\cap\bigcap_{\mathfrak{P}\in\Ass_A(A/aA)\setminus\Spec(S^{-1}A)}(aA_{\mathfrak{P}}\cap A)=aA
\]
and $ba^{-1}\in A$.
\end{proof}

\begin{lemma}\label{lem:modelofdefn}
Let $L$ be a line bundle on $X$.
\begin{enumerate}
\item[\textup{(1)}] There exists an $O_K$-model $(\mathscr{X},\mathscr{L})$ of $(X,L)$ such that $\mathscr{L}$ is a line bundle on $\mathscr{X}$.
\item[\textup{(2)}] Let $U$ be a non-empty open subset of $\Spec(O_K)$ and let $(\mathscr{X}_U,\mathscr{L}_U)$ and $(\mathscr{X}_U',\mathscr{L}_U')$ be two $U$-models of $(X,L)$.
There then exists a non-empty open subset $U_0\subset U$ such that $\mathscr{X}_{U_0}$ and $\mathscr{X}_{U_0}'$ are $U_0$-isomorphic and $\mathscr{L}_{U_0}=\mathscr{L}_{U_0}'$ as $\QQ$-line bundles.
\end{enumerate}
\end{lemma}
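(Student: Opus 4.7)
The plan is to prove the two parts by projective embeddings combined with standard spreading-out arguments.

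For part (1), I would fix an ample line bundle $H$ on $X$ and choose $n\gg 1$ so that both $nH$ and $L+nH$ are very ample on $X$. Together they define a closed immersion
\[
 \iota:=(\varphi_{L+nH},\varphi_{nH}):X\hookrightarrow\PP^{N_1}_K\times_K\PP^{N_2}_K.
\]
Let $\mathscr{X}\subset\PP^{N_1}_{O_K}\times_{O_K}\PP^{N_2}_{O_K}$ be the schematic closure of $\iota(X)$. Then $\mathscr{X}$ is projective (as a closed subscheme of a projective $O_K$-scheme), reduced and irreducible (because the schematic closure of a reduced and irreducible subscheme retains both properties), and flat over $O_K$ (as it is torsion-free over the Dedekind domain $O_K$). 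Setting
\[
 \mathscr{L}:=p_1^*\mathcal{O}(1)|_{\mathscr{X}}-p_2^*\mathcal{O}(1)|_{\mathscr{X}},
\]
I obtain an honest line bundle on $\mathscr{X}$ whose restriction to $X$ equals $(L+nH)-nH=L$, as required.

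For part (2), I would apply spreading-out in two stages. Both generic fibers $(\mathscr{X}_U)_K$ and $(\mathscr{X}_U')_K$ are canonically $K$-isomorphic to $X$, so there is a tautological $K$-isomorphism $\alpha_K:(\mathscr{X}_U)_K\xrightarrow{\sim}(\mathscr{X}_U')_K$. Since both models are of finite presentation over $U$, the standard limit-descent results (EGA IV, \S8) guarantee that $\alpha_K$ extends to a $U_1$-isomorphism $\alpha:\mathscr{X}_{U_1}\xrightarrow{\sim}\mathscr{X}_{U_1}'$ over some non-empty open $U_1\subset U$. Using $\alpha$ to identify the two models, I reduce to the case where $\mathscr{X}_{U_1}=\mathscr{X}_{U_1}'$ and $\mathscr{L}_{U_1}$, $\mathscr{L}_{U_1}'$ are $\QQ$-line bundles on the same scheme agreeing on the generic fiber $X$. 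Choosing $n\geq 1$ so that both $n\mathscr{L}_{U_1}$ and $n\mathscr{L}_{U_1}'$ are line bundles, the identity $nL=nL$ on $X$ is represented by a nowhere-vanishing rational section of the line bundle $(n\mathscr{L}_{U_1})-(n\mathscr{L}_{U_1}')$ on $\mathscr{X}_{U_1}$; by finite presentation this section extends to a nowhere-vanishing regular section over some non-empty open $U_0\subset U_1$, which yields the desired equality $\mathscr{L}_{U_0}=\mathscr{L}_{U_0}'$ of $\QQ$-line bundles.

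The main obstacle to flag is small but deliberate: in part (1), to secure an honest line bundle rather than only a $\QQ$-line bundle, I am forced to embed $X$ via a pair of very ample line bundles into a product of projective spaces and take the difference, rather than using a single projective embedding of $L$ itself. Part (2) is then routine, provided one sequences the spreading-out correctly, first for the underlying schemes and afterwards for the line bundles, each time allowing a further shrinking of the base.
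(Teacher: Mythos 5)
Your proof is correct, but both parts take a genuinely different route from the paper's. For part (1), the paper writes $L$ as a difference $L_2-L_1$ of effective line bundles, regards each $-L_i$ as an ideal sheaf $I_i\subset\mathcal{O}_X$, and then performs two successive blow-ups of an initial $O_K$-model along the ideal sheaves $J_i=\Ker(\mathcal{O}_{\mathscr{X}_i}\to\mathcal{O}_X/I_i)$ so that the strict transforms become invertible; $\mathscr{L}$ is then built from these invertible ideal sheaves. Your embedding of $X$ into $\PP^{N_1}_K\times_K\PP^{N_2}_K$ via the two very ample bundles $L+nH$ and $nH$, followed by taking the schematic closure, is cleaner in that it avoids the two-stage blow-up entirely and yields the integral model and the line bundle in one step; the paper's approach, by contrast, has the side benefit of producing a model dominating any prescribed initial model $\mathscr{X}_1$ of $X$, which can be convenient in inductive or compatibility arguments. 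For part (2), the paper forms the Zariski closure $\mathscr{X}_U''$ of the diagonal in $\mathscr{X}_U\times_U\mathscr{X}_U'$, yielding two projective birational $U$-morphisms to the given models, and only then shrinks $U$ using \cite[(8.8.2.4), (8.8.2.5)]{EGAIV_3} to make these isomorphisms and the pulled-back line bundles coincide. You instead invoke EGA IV \S 8 directly to spread out the tautological generic-fiber isomorphism $\alpha_K$ into a $U_1$-isomorphism, identify the two schemes, and then spread out the trivializing rational section in a second shrinking. Both are standard applications of limit/spreading-out; the diagonal-closure device has the advantage of having a well-defined correspondence over all of $U$ from the start, whereas your two-step shrinking is somewhat more direct. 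Either argument is fully adequate.
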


\begin{proof}
(1): Since $X$ is projective, $L$ can be written as a difference $L_2-L_1$ of two effective line bundles.
Each of $-L_i$ can be regarded as a locally principal ideal sheaf $I_i$ of $\mathcal{O}_X$ by a non-zero global section of $L_i$.

Let $\mathscr{X}_1$ be an $O_K$-model of $X$, let $J_1$ be the kernel of $\mathcal{O}_{\mathscr{X}_1}\to\mathcal{O}_X/I_1$, and let $\mu_1:\mathscr{X}_2\to\mathscr{X}_1$ be the blow-up along $J_1$.
So $\mathscr{I}_1:=J_1\mathcal{O}_{\mathscr{X}_2}$ is invertible and $\mathscr{X}_2$ is an $O_K$-model of $X$.
Let $J_2$ be the kernel of $\mathscr{O}_{\mathscr{X}_2}\to\mathcal{O}_X/I_2$, let $\mu_2:\mathscr{X}_3\to\mathscr{X}_2$ be the blow-up along $J_2$, and let $\mathscr{I}_2:=J_2\mathcal{O}_{\mathscr{X}_3}$.
Then $\mathscr{X}:=\mathscr{X}_3$ is an $O_K$-model of $X$ and $\mathscr{L}:=\mu_2^*\mathscr{I}_1\otimes\mathscr{I}_2^{\otimes (-1)}$ is an invertible sheaf extending $L$.

(2): Let $n\geq 1$ be an integer such that both $n\mathscr{L}_U$ and $n\mathscr{L}_U'$ are line bundles.
Let $\mathscr{X}_U''$ be the Zariski closure of the diagonal $\id_X\times\id_X:X\to\mathscr{X}_U\times_U\mathscr{X}_U'$ in $\mathscr{X}_U\times_U\mathscr{X}_U'$.
We then have two birational projective $U$-morphisms $\mathscr{X}_U\xleftarrow{\varphi_U}\mathscr{X}_U''\xrightarrow{\psi_U}\mathscr{X}_U'$.
The two line bundles $\varphi_{U}^*(n\mathscr{L}_{U})$ and $\psi^*_{U}(n\mathscr{L}_{U}')$ on $\mathscr{X}_U''$ are isomorphic over the generic fiber $X$.
So, by \cite[Corollaires~(8.8.2.4) et (8.8.2.5)]{EGAIV_3}, one can find a $U_0\subset U$ such that $\varphi_{U_0}$ and $\psi_{U_0}$ are isomorphisms and $\varphi_{U_0}^*(n\mathscr{L}_{U_0})$ and $\psi^*_{U_0}(n\mathscr{L}_{U_0}')$ are isomorphic.
\end{proof}

\begin{lemma}\label{lem:pullbackadelic}
Let $j:Y\to X$ be a morphism of projective varieties over $K$ and let $L$ be a line bundle on $X$.
Let $U\subset\Spec(O_K)$ be a non-empty open subset.
Let $(\mathscr{X}_U,\mathscr{L}_U)$ be a $U$-model of $(X,L)$, let $\mathscr{Y}_U$ be a $U$-model of $Y$, and let $j_U:\mathscr{Y}_U\to\mathscr{X}_U$ be a $U$-morphism that extends $j$.
Then, for any $P\in U$, any local section $s$ of $L$, and any $y\in Y_P^{\rm an}$,
\[
 |j^*s|^{j_U^*\mathscr{L}_{O_{K_P}}}(y)=|s|^{\mathscr{L}_{O_{K_P}}}(j^{\rm an}_P(y)).
\]
\end{lemma}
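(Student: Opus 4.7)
The plan is to reduce the identity to a local computation using a frame of $n\mathscr{L}_{O_{K_P}}$, and to invoke the valuative criterion to match up the two constructions of the extensions $t_x$ and $t_y$.

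First I would set $x := j^{\rm an}_P(y) \in X_P^{\rm an}$ and record that, by the very definition of the analytic morphism $j^{\rm an}_P$, the Berkovich seminorm $|\cdot|_y$ restricts along the induced inclusion of residue fields $k(\rho_P(x)) \hookrightarrow k(\rho_P(y))$ to the seminorm $|\cdot|_x$. In particular $|f|_y = |f|_x$ for every $f \in k(\rho_P(x))$, and this is essentially the only analytic ingredient I will need.

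Next I would establish the compatibility of the integral extensions produced by the valuative criterion. Namely, the two compositions
\[
 j_{O_{K_P}} \circ t_y, \quad t_x \circ \iota \;:\; \Spec(O_{k(\rho_P(y))}) \longrightarrow \mathscr{X}_{O_{K_P}},
\]
where $\iota$ is the morphism induced by the inclusion $O_{k(\rho_P(x))} \hookrightarrow O_{k(\rho_P(y))}$, are both extensions of the same generic morphism $\Spec(k(\rho_P(y))) \to X_{K_P}$ (obtained from the generic fiber of $t_y$, composed with $j$). Since $\mathscr{X}_{O_{K_P}} \to \Spec(O_{K_P})$ is proper and $O_{k(\rho_P(y))}$ is a valuation ring, the uniqueness clause of the valuative criterion forces these two morphisms to coincide. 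As an immediate corollary, $\Red_{\mathscr{X}_{O_{K_P}}}(x) = j_{O_{K_P}}(\Red_{\mathscr{Y}_{O_{K_P}}}(y))$, and pullback of sections along $j_{O_{K_P}}$ commutes with evaluation of $t_x^*$ and $t_y^*$ on $n\mathscr{L}_{O_{K_P}}$ and its pullback. This is the substantive step, but the work is essentially done by the valuative criterion; I do not anticipate any real obstacle.

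Finally I would compute both sides in coordinates. Fix $n \geq 1$ such that $n\mathscr{L}_{O_{K_P}}$ is a line bundle, and pick a local frame $\eta$ of $n\mathscr{L}_{O_{K_P}}$ on an affine open neighborhood of $\Red_{\mathscr{X}_{O_{K_P}}}(x)$, so that $s^{\otimes n} = f\eta$ with $f$ a rational function on that neighborhood. By the previous paragraph, $j_{O_{K_P}}^*\eta$ is then a local frame of $j_U^*(n\mathscr{L}_{O_{K_P}}) = n\, j_U^*\mathscr{L}_{O_{K_P}}$ around $\Red_{\mathscr{Y}_{O_{K_P}}}(y)$, and pulling the equality $s^{\otimes n} = f\eta$ back through $j$ yields $(j^*s)^{\otimes n} = (j^*f) \cdot j_{O_{K_P}}^*\eta$. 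Applying the explicit formula for the metric recalled just before the lemma on each side gives
\[
 |s|^{\mathscr{L}_{O_{K_P}}}(x) = |f|_x^{1/n}, \qquad |j^*s|^{j_U^*\mathscr{L}_{O_{K_P}}}(y) = |j^*f|_y^{1/n},
\]
and these two quantities are equal by the norm-preserving property from the first step, completing the proof.
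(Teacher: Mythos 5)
Your proof is correct and follows essentially the same route as the paper's: set $x:=j^{\rm an}_P(y)$, reduce to a local frame $\eta$ of $n\mathscr{L}_{O_{K_P}}$ near $\Red_{\mathscr{X}_{O_{K_P}}}(x)$, and compare $|f|_x$ with $|j^*f|_y$. The only difference is that you spell out, via the uniqueness clause of the valuative criterion, the compatibility $\Red_{\mathscr{X}_{O_{K_P}}}(x)=j_U(\Red_{\mathscr{Y}_{O_{K_P}}}(y))$ that the paper simply records as a ``Note that,'' which is a welcome bit of extra rigor rather than a different argument.
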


\begin{proof}
Set $x:=j_P^{\rm an}(y)$.
Note that
\[
 \rho_P(x)=j(\rho_P(y))\quad\text{and}\quad\Red_{\mathscr{X}_{O_{K_P}}}(x)=j_U(\Red_{\mathscr{Y}_{O_{K_P}}}(y)).
\]
Let $n\geq 1$ be an integer such that $n\mathscr{L}_U$ is a line bundle.
Let $\eta$ be a local frame of $n\mathscr{L}_{O_{K_P}}$ around $\Red_{\mathscr{X}_{O_{K_P}}}(x)$ and set $s(\rho_P(x))=f\eta(\rho_P(x))$ in $L_P^{\rm an}(x)$ with $f\in k(\rho_P(x))$.
Then $j_U^*\eta$ is a local frame of $j_U^*(n\mathscr{L}_{O_{K_P}})$ around $\Red_{\mathscr{Y}_{O_{K_P}}}(y)$ and $(j^*s)(\rho_P(y))=(j^*f)(j_U^*\eta)(\rho_P(y))$.
So
\[
 |j^*s|^{j_U^*\mathscr{L}_{O_{K_P}}}(y)=|j^*f|_y=|f|_x=|s|^{\mathscr{L}_{O_{K_P}}}(x).
\]
\end{proof}

\begin{definition}\label{defn:adelicallymetrizedlinebdl}
Let $S$ be a finite subset of $M_K$ containing $\infty$.
An \emph{$S$-adelically metrized line bundle} on $X$ is a pair $\overline{L}=\left(L,(|\cdot|_v^{\overline{L}})_{v\in M_K\setminus S}\right)$ of a line bundle $L$ on $X$ and a collection of metrics $(|\cdot|_v^{\overline{L}})_{v\in M_K\setminus S}$ having the following properties.
\begin{enumerate}
\item[(a)] For every $v\in M_K\setminus S$, $|\cdot|_v^{\overline{L}}$ is a continuous metric on $L_v^{\rm an}$.
\item[(b)] There exist a non-empty open set $U\subset\Spec(O_K)\setminus S$ and a $U$-model $(\mathscr{X}_U,\mathscr{L}_U)$ of $(X,L)$ such that, for every $P\in U$, $|\cdot|_P^{\overline{L}}(x)=|\cdot|^{\mathscr{L}_{O_{K_P}}}(x)$ holds on $X_P^{\rm an}$.
\end{enumerate}
By Lemma~\ref{lem:modelofdefn}, there exist a non-empty open subset $U\subset\Spec(O_K)$ and a $U$-model $(\mathscr{X}_U,\mathscr{L}_U)$ of $(X,L)$ such that $(\mathscr{X}_U,\mathscr{L}_U)$ satisfies the condition (b) above and $\mathscr{L}_U$ is a line bundle on $\mathscr{X}_U$.
We call such a $U$-model a \emph{$U$-model of definition for $\overline{L}$}.

An \emph{adelically metrized line bundle} on $X$ is a pair $\overline{L}=\left(L,(|\cdot|_v^{\overline{L}})_{v\in M_K}\right)$ such that $\overline{L}^{\{\infty\}}:=\left(L,(|\cdot|_v^{\overline{L}})_{v\in M_K^{\rm f}}\right)$ is an $\{\infty\}$-adelically metrized line bundle on $X$ and $|\cdot|_{\infty}^{\overline{L}}$ is a continuous Hermitian metric on $L_{\infty}^{\rm an}$ such that for every $x\in X_{\infty}^{\rm an}$ and every local section $s$ of $L_{\infty}^{\rm an}$
\begin{equation}
 |F_{\infty}^*s|(\overline{x})=|s|(x)
\end{equation}
holds, where $F_{\infty}:x\mapsto\overline{x}$ is the complex conjugation on $X_{\infty}^{\rm an}$.
The $\ZZ$-module of all the adelically metrized line bundles on $X$ is denoted by $\aPic(X)$, and an element in $\aPic_{\QQ}(X):=\aPic(X)\otimes_{\ZZ}\QQ$ is called an \emph{adelically metrized $\QQ$-line bundle} on $X$.
\end{definition}

Let $\overline{L}=\left(L,(|\cdot|_v^{\overline{L}})_{v\in M_K\setminus S}\right)$ be an adelically metrized line bundle on $X$, let $s$ be a non-zero rational section of $L$, and let $v\in M_K$.
Suppose that $x\notin\Supp(\div(s))_v^{\rm an}$, so $\rho_v(x)\notin\Supp(\div s)$.
Let $f$ be a regular function defined around $\rho_v(x)$ such that $fs$ is a local section of $L$ around $\rho_v(x)$ and $f(\rho_v(x))\neq 0$.
Then we can define
\[
 |s|_v^{\overline{L}}(x):=\frac{|fs|_v^{\overline{L}}(x)}{|f|_x},
\]
which does not defend on the choice of $f$.

\begin{remark}\label{rem:remadelic}
\begin{enumerate}
\item[(1)] 
Let $\overline{L}$ be an adelically metrized $\QQ$-line bundle in our sense, let $n\geq 1$ be an integer such that $n\overline{L}$ is an adelically metrized line bundle, and let $s$ be a non-zero rational section of $nL$.
Then
\[
 (1/n)\left(\div(s),(-2\log|s|_v^{n\overline{L}})_{v\in M_K}\right)
\]
is an adelic arithmetic $\RR$-divisor in the sense of Moriwaki \cite{Moriwaki13}.
\item[(2)] Let $K_X:=\Hz(\mathcal{O}_X)$ and let $K\subset K'\subset K_X$ be a subextension.
Given a finite place $v$ of $K$ and a point $x\in X_v^{\rm an}$, we can restrict $|\cdot|_x$ to $K'$ and obtain a place $w$ of $K'$ lying over $v$.
Thus we have
\[
 X_v^{\rm an}=\bigcup_{w|v}X_w^{\rm an},
\]
where $w$ runs over all the finite places of $K'$ lying over $v$.
So, in particular, the notion of adelically metrized line bundles does not depend on the choice of $K$.
\item[(3)] Let $\mathscr{X}$ be a projective arithmetic variety over $O_K$, let $\mathscr{L}$ be a $\QQ$-line bundle on $\mathscr{X}$, and let $n\geq 1$ be an integer such that $n\mathscr{L}_K$ is a line bundle on $\mathscr{X}_K$.
To $\mathscr{L}$, we can associate an $\{\infty\}$-adelically metrized line bundle
\[
 \mathscr{L}^{\rm ad}:=(1/n)\left(n\mathscr{L}_{K},(|\cdot|^{n\mathscr{L}_{O_{K_P}}})_{P\in M_K^{\rm f}}\right)
\]
and, to a continuous Hermitian $\QQ$-line bundle $\overline{\mathscr{L}}$ on $\mathscr{X}$, we can associate an adelically metrized $\QQ$-line bundle
\[
 \overline{\mathscr{L}}^{\rm ad}:=(1/n)\left(n\mathscr{L}_{K},(|\cdot|^{n\mathscr{L}_{O_{K_P}}})_{P\in M_K^{\rm f}}\cup(|\cdot|_{\infty}^{n\overline{\mathscr{L}}})\right).
\]
\item[(4)] For each $v\in M_K$, we define the \emph{trivial metric} $|\cdot|_v^{\rm triv}$ on $\mathcal{O}_{X,v}^{\rm an}$ as the collection $(|\cdot|_x)_{x\in X_v^{\rm an}}$ of the norms $|\cdot|_x$ on $k(\rho_v(x))$.
Then $\overline{\mathcal{O}}_X^{\rm triv}:=\left(\mathcal{O}_X,(|\cdot|_v^{\rm triv})_{v\in M_K}\right)$ is an adelically metrized line bundle on $X$.
For any continuous function $\lambda:X_P^{\rm an}\to\RR$, we set
\[
 \overline{\mathcal{O}}_X(\lambda[P]):=\left(\mathcal{O}_X,(|\varpi_P|_P^{\lambda}|\cdot|_P^{\rm triv})\cup(|\cdot|_v^{\rm triv})_{v\in M_K\setminus\{P\}}\right)
\]
and, for any continuous function $\lambda:X_{\infty}^{\rm an}\to\RR$ that is invariant under the complex conjugation, we set
\[
 \overline{\mathcal{O}}_X(\lambda[\infty]):=\left(\mathcal{O}_X,(\exp(-\lambda)|\cdot|_{\infty}^{\rm triv})\cup(|\cdot|_v^{\rm triv})_{v\in M_K^{\rm f}}\right).
\]
Let $\pi:\mathscr{X}\to\Spec(O_K)$ be an $O_K$-model of $X$.
If $\lambda\in\QQ$, then the adelically metrized $\QQ$-line bundle on $X$ associated to $\lambda\left(\mathcal{O}_{\mathscr{X}}(\pi^{-1}(P)),|\cdot|^{\rm triv}_{\infty}\right)$ is $\overline{\mathcal{O}}_X(\lambda[P])$.
\end{enumerate}
\end{remark}

Let $\overline{L}=\left(L,(|\cdot|_v^{\overline{L}})_{v\in M_K}\right)$ be an adelically metrized line bundle on $X$.
For each $v\in M_K$, we define the supremum norm of $s\in\Hz(L)\otimes_{K}K_v$ by
\begin{equation}
 \|s\|_{v,\sup}^{\overline{L}}:=\sup_{x\in X_v^{\rm an}}\left\{|s|_v^{\overline{L}}(x)\right\}.
\end{equation}
Then
\begin{equation}\label{eqn:adelicgradedlinser}
 \overline{V}_{\sbullet}=\bigoplus_{m\geq 0}\left(\Hz(mL),(\|\cdot\|_{v,\sup}^{\overline{L}})_{v\in M_K}\right)
\end{equation}
is an adelically normed graded $K$-linear series belonging to $L$, and we write, for short, $\aHzf(m\overline{L}):=\aHzf(\overline{V}_m)$,
\[
 \aHzs{?}(m\overline{L}):=\aHzs{?}(\overline{V}_m),\quad \aBss{?}(m\overline{L}):=\aBss{?}(\overline{V}_m),\quad\text{and}\quad\aSBss{?}(\overline{L}):=\aSBss{?}(\overline{V}_{\sbullet}),
\]
where $?=\text{ss}$ or s.
We refer to a section in $\aHzst(\overline{L})$ as a \emph{strictly small section of $\overline{L}$}.
Since $\aSBss{?}(m\overline{L})=\aSBss{?}(\overline{L})$ for every $m\geq 1$, we can define $\aSBss{?}(\overline{L})$ for every adelically metrized $\QQ$-line bundle $\overline{L}$.

Let $(\mathscr{X}_U,\mathscr{L}_U)$ be a $U$-model of definition for $\overline{L}^{\{\infty\}}$ and let $\nu_U:\mathscr{X}_U'\to\mathscr{X}_U$ be a relative normalization in $X$ (Definition~\ref{defn:relativelynormal}).
Then we have a natural injection
\begin{equation}\label{eqn:injmodelofdef}
 \aHzf(\overline{L})\to\Hz(\nu_U^*\mathscr{L}_U)
\end{equation}
(Lemma~\ref{lem:propertyadelicnorm}(3)).

\begin{definition}
The \emph{arithmetic volume} of an adelically metrized line bundle $\overline{L}$ is defined as
\begin{equation}\label{eqn:defnarithvolume}
 \avol(\overline{L}):=\limsup_{m\to+\infty}\frac{\log\sharp\aHzst(m\overline{L})}{m^{\dim X+1}/(\dim X+1)!}.
\end{equation}
(Note that in the original definition in \cite{Moriwaki13}, Moriwaki uses $\aHzsm(m\overline{L})$ instead of $\aHzst(m\overline{L})$.
These two definitions are actually equivalent by continuity of the volume function.)
We recall the following results (see \cite{Moriwaki13} for detail).
\begin{enumerate}
\item[(1)] The $\limsup$ in (\ref{eqn:defnarithvolume}) is actually a limit.
\item[(2)] For any adelically metrized line bundle $\overline{L}$ and for any $a\geq 1$, $\avol(a\overline{L})=a^{\dim X+1}\avol(\overline{L})$.
In particular, we can define $\avol$ for adelically metrized $\QQ$-line bundles on $X$.
\end{enumerate}
Positivity notions for adelically metrized $\QQ$-line bundles are defined as follows.
\begin{description}
\item[(effective)] An adelically metrized line bundle $\overline{L}$ on $X$ is said to be \emph{effective} if $\aHzsm(\overline{L})\neq\{0\}$.
For $\overline{L}_1,\overline{L}_2\in\aPic(X)$, we write $\overline{L}_1\leq\overline{L}_2$ if $\overline{L}_2-\overline{L}_1$ is effective.
\item[(big)] We say that an adelically metrized $\QQ$-line bundle $\overline{L}$ is \emph{big} if $\avol(\overline{L})>0$.
\item[(pseudoeffective)] We say that an adelically metrized $\QQ$-line bundle $\overline{L}$ is \emph{pseudoeffective} if $\overline{L}+\overline{A}$ is big for every big adelically metrized $\QQ$-line bundle $\overline{A}$.
For $\overline{L}_1,\overline{L}_2\in\aPic_{\QQ}(X)$, we write $\overline{L}_1\preceq \overline{L}_2$ if $\overline{L}_2-\overline{L}_1$ is pseudoeffective.
\end{description}
\end{definition}

\begin{example}
Unlike the geometric case (Zariski's theorem on removable base loci), $\aSBs(\overline{L})$ can contain an isolated closed point.
Let $\mathscr{X}:=\PP_{\ZZ}^d$ be the projective space and let $\mathscr{L}:=\mathcal{O}_{\mathscr{X}}(1)$ be the hyperplane line bundle.
We consider the Hermitian metric $|\cdot|^{\overline{\mathscr{L}}}$ on $\mathscr{L}$ defined by
\[
 |X_0|^{\overline{\mathscr{L}}}(x_0:\dots:x_d)^2:=\frac{|x_0|^2}{\max\left\{a_0^2|x_0|^2,\dots,a_d^2|x_d|^2\right\}},
\]
and set $\overline{\mathscr{L}}:=(\mathscr{L},|\cdot|^{\overline{\mathscr{L}}})$.
If $a_j\geq 1$ for $j\neq i$ and $0<a_i<1$, then $\aSBs(\overline{\mathscr{L}}^{\rm ad})=\{(0:\cdots:0:\overset{i}{1}:0:\cdots:0)\}$.
\end{example}

\begin{proposition}\label{prop:adelicapproximation}
Let $\overline{L}=\left(L,(|\cdot|_v^{\overline{L}})_{v\in M_K^{\rm f}}\right)$ be an $\{\infty\}$-adelically metrized line bundle on $X$ and let $U$ be a non-empty open subset of $\Spec(O_K)$.
\begin{enumerate}
\item[\textup{(1)}] For any $U$-model of definition $(\mathscr{X}_U,\mathscr{L}_U)$ for $\overline{L}$ and for any $\varepsilon>0$, there exists an $O_K$-model $(\mathscr{X}_{\varepsilon},\mathscr{L}_{\varepsilon})$ such that $\mathscr{X}_{\varepsilon}\times_{\Spec(O_K)}U$ is $U$-isomorphic to $\mathscr{X}_U$, $\mathscr{L}_{\varepsilon}|_{\mathscr{X}_U}=\mathscr{L}_U$ as $\QQ$-line bundles, and
\[
 \exp(-\varepsilon)|\cdot|^{\mathscr{L}_{\varepsilon,O_{K_P}}}(x)\leq |\cdot|_P^{\overline{L}}(x)\leq \exp(\varepsilon)|\cdot|^{\mathscr{L}_{\varepsilon,O_{K_P}}}(x)
\]
for every $P\in\Spec(O_K)\setminus U$ and every $x\in X_P^{\rm an}$.
\item[\textup{(2)}] If $L$ is nef, then the following are equivalent.
\begin{enumerate}
\item[\textup{(a)}] For any $U$-model of definition $(\mathscr{X}_U,\mathscr{L}_U)$ for $\overline{L}$ and for any $\varepsilon>0$, there exists an $O_K$-model $(\mathscr{X}_{\varepsilon},\mathscr{L}_{\varepsilon})$ such that $\mathscr{X}_{\varepsilon}\times_{\Spec(O_K)}U$ is $U$-isomorphic to $\mathscr{X}_U$, $\mathscr{L}_{\varepsilon}|_{\mathscr{X}_U}=\mathscr{L}_U$ as $\QQ$-line bundles, $\mathscr{L}_{\varepsilon}$ is relatively nef, and
\[
 \exp(-\varepsilon)|\cdot|^{\mathscr{L}_{\varepsilon,O_{K_P}}}(x)\leq |\cdot|_P^{\overline{L}}(x)\leq \exp(\varepsilon)|\cdot|^{\mathscr{L}_{\varepsilon,O_{K_P}}}(x)
\]
for every $P\in\Spec(O_K)\setminus U$ and every $x\in X_P^{\rm an}$.
\item[\textup{(b)}] For any $U$-model of definition $(\mathscr{X}_U,\mathscr{L}_U)$ for $\overline{L}$ and for any rational number $\varepsilon>0$, there exist $O_K$-models $(\mathscr{X}_{\varepsilon},\mathscr{L}_{\varepsilon,1})$ and $(\mathscr{X}_{\varepsilon},\mathscr{L}_{\varepsilon,1})$ such that $\mathscr{X}_{\varepsilon}\times_{\Spec(O_K)}U$ is $U$-isomorphic to $\mathscr{X}_U$, $\mathscr{L}_{\varepsilon,i}|_{\mathscr{X}_U}=\mathscr{L}_U$ as $\QQ$-line bundles, $\mathscr{L}_{\varepsilon,i}$ are relatively nef, and
\[
 \exp(-\varepsilon)|\cdot|_P^{\overline{L}}(x)\leq |\cdot|^{\mathscr{L}_{\varepsilon,1,O_{K_P}}}(x)\leq |\cdot|_P^{\overline{L}}(x)\leq |\cdot|^{\mathscr{L}_{\varepsilon,2,O_{K_P}}}(x)\leq\exp(\varepsilon)|\cdot|_P^{\overline{L}}(x)
\]
for every $P\in\Spec(O_K)\setminus U$ and every $x\in X_P^{\rm an}$.
\end{enumerate}
\end{enumerate}
\end{proposition}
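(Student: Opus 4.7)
The plan is to handle the two parts separately, with a density result for model functions providing the bridge from continuous metric estimates to actual models, and with the one-dimensionality of $\Spec O_K$ making the passage to relatively nef models a matter of simple twists by vertical fibres.

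For part (1), I would first extend $(\mathscr{X}_U,\mathscr{L}_U)$ to a global $O_K$-model $(\mathscr{X}_0,\mathscr{L}_0)$ of $(X,L)$ using Lemma~\ref{lem:modelofdefn}(1); for instance, embed $\mathscr{X}_U\hookrightarrow\PP^N_U$ via a relatively very ample line bundle, take the scheme-theoretic closure in $\PP^N_{O_K}$, and extend $\mathscr{L}_U$ as a difference of effective integral $\QQ$-models produced by the lemma. At each of the finitely many bad places $P\in S:=\Spec(O_K)\setminus U$, the logarithmic ratio $\phi_P:=\log(|\cdot|^{\mathscr{L}_{0,O_{K_P}}}/|\cdot|_P^{\overline{L}})$ is a continuous function on the compact Berkovich space $X_P^{\mathrm{an}}$. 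I would then invoke Gubler's density theorem for model functions (functions induced by vertical $\QQ$-Cartier divisors on blow-ups of $\mathscr{X}_{0,O_{K_P}}$ centred in the special fibre) to approximate $-\phi_P$ uniformly on $X_P^{\mathrm{an}}$ within $\varepsilon$. Since these local modifications at distinct $P$ are supported in disjoint fibres, they can be performed simultaneously and glued with $\mathscr{X}_U$ (which is left untouched) to produce the required $(\mathscr{X}_\varepsilon,\mathscr{L}_\varepsilon)$.

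For part (2), the key observation is that $\mathcal{O}_{\mathscr{X}}(\pi^{-1}(P))$ is the pullback of $\mathcal{O}_{\Spec O_K}(P)$ from the one-dimensional base, so it has trivial restriction to every vertical curve and hence its addition with any rational coefficient preserves relative nefness. Moreover, twisting a model $\mathscr{L}$ by $a\pi^{-1}(P)$ leaves the model metric unchanged away from $P$ and multiplies it by $|\varpi_P|_P^{a}=(\sharp\widetilde{K}_P)^{-a}$ at $P$. For the implication (a)$\Rightarrow$(b), I would choose rational $\delta<\varepsilon/2$, apply (a) to obtain a relatively nef $\mathscr{L}_\delta$ with $e^{-\delta}|\cdot|_P^{\overline{L}}\leq|\cdot|^{\mathscr{L}_{\delta,O_{K_P}}}\leq e^{\delta}|\cdot|_P^{\overline{L}}$, and set $\mathscr{L}_{\varepsilon,i}:=\mathscr{L}_\delta+\sum_{P\in S}a_{P,i}\pi^{-1}(P)$ with $a_{P,1}$ rational in the non-empty interval $(\delta/\log\sharp\widetilde{K}_P,(\varepsilon-\delta)/\log\sharp\widetilde{K}_P)$ and $a_{P,2}$ in the symmetric negative interval; a direct calculation yields the required sandwich. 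Conversely, for (b)$\Rightarrow$(a) with any real $\varepsilon>0$, I would choose a rational $\varepsilon'<\varepsilon$, apply (b) to obtain $\mathscr{L}_{\varepsilon',1},\mathscr{L}_{\varepsilon',2}$ on a common $\mathscr{X}_{\varepsilon'}$, and set $\mathscr{L}_\varepsilon:=\tfrac{1}{2}(\mathscr{L}_{\varepsilon',1}+\mathscr{L}_{\varepsilon',2})$; its associated model metric is the geometric mean of the sandwich bounds, hence lies within $e^{\pm\varepsilon'/2}\subset e^{\pm\varepsilon}$ of $|\cdot|_P^{\overline{L}}$, and relative nefness is inherited from the positive $\QQ$-linear combination.

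The main obstacle will be justifying the density of model functions invoked in part~(1): although this is a classical result following Zhang and Gubler, one needs a version adapted to the Berkovich-adelic formalism of Definition~\ref{defn:adelicallymetrizedlinebdl} and carried out $U$-trivially, so that $\mathscr{X}_\varepsilon$ continues to restrict exactly to $\mathscr{X}_U$ over $U$ and $\mathscr{L}_\varepsilon|_{\mathscr{X}_U}=\mathscr{L}_U$ as $\QQ$-line bundles. Once this tool is in hand, the entire proposition reduces to the formal manipulation of vertical divisors on $\mathscr{X}$ and the stability of relative nefness under positive $\QQ$-linear combinations and under $\QQ$-linear pullbacks from the one-dimensional base.
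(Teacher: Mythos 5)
Your proposal is correct and takes essentially the same route as the paper. For part~(1), the paper does not supply a proof but simply cites Moriwaki (\cite[Theorem~4.1.3]{Moriwaki13}, see also \cite{Bou_Fav_Mat11}); your sketch via density of model functions is precisely the content of that reference, and you correctly flag that the only genuine work is getting the approximation done $U$-trivially, which is exactly what the cited theorem provides. For part~(2), your implication (a)$\Rightarrow$(b) via twists by vertical fibres $\pi^{-1}(P)$ is the paper's argument, and in fact your careful choice of $a_{P,1}$ in $(\delta/\log\sharp\widetilde{K}_P,\,(\varepsilon-\delta)/\log\sharp\widetilde{K}_P)$ (with $\delta<\varepsilon/2$) pins down the sign of the twist, whereas the paper's displayed formula $|\cdot|^{\mathscr{L}_{\varepsilon,1,O_{K_P}}}=|\varpi_P|_P^{\varepsilon'}|\cdot|^{\mathscr{L}_{\varepsilon,O_{K_P}}}$ for $\mathscr{L}_{\varepsilon,1}=\mathscr{L}_\varepsilon-\varepsilon'\sum_P\mathcal{O}(\pi^{-1}(P))$ has the exponent sign wrong relative to the convention in Remark~\ref{rem:remadelic}(4) (the labels $\mathscr{L}_{\varepsilon,1}$ and $\mathscr{L}_{\varepsilon,2}$ should be interchanged), so your bookkeeping is the more reliable version. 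Your implication (b)$\Rightarrow$(a) via averaging $\mathscr{L}_\varepsilon:=\tfrac12(\mathscr{L}_{\varepsilon',1}+\mathscr{L}_{\varepsilon',2})$ is correct but more than is needed: the paper simply sets $\mathscr{L}_\varepsilon:=\mathscr{L}_{\varepsilon',1}$ for any rational $0<\varepsilon'\leq\varepsilon$, since the one-sided sandwich $\exp(-\varepsilon')|\cdot|_P^{\overline{L}}\leq|\cdot|^{\mathscr{L}_{\varepsilon',1,O_{K_P}}}\leq|\cdot|_P^{\overline{L}}$ already yields the symmetric $\exp(\pm\varepsilon)$ estimate required by (a).
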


\begin{proof}
For the assertion (1), we refer to \cite[Theorem~4.1.3]{Moriwaki13} (see also \cite{Bou_Fav_Mat11}).
We are going to show the equivalence (a) $\Leftrightarrow$ (b) in the assertion (2).
The implication (b) $\Rightarrow$ (a) is clear since we can take a rational number $0<\varepsilon'\leq\varepsilon$ and set $\mathscr{L}_{\varepsilon}:=\mathscr{L}_{\varepsilon',1}$.

(a) $\Rightarrow$ (b): By the condition (a), given any rational number $\varepsilon>0$ we can find an $O_K$-model $(\mathscr{X}_{\varepsilon},\mathscr{L}_{\varepsilon})$ such that $\mathscr{X}_{\varepsilon}\times_{\Spec(O_K)}U$ is $U$-isomorphic to $\mathscr{X}_U$, $\mathscr{L}_{\varepsilon}|_{\mathscr{X}_U}=\mathscr{L}_U$ as $\QQ$-line bundles, $\mathscr{L}_{\varepsilon}$ is relatively nef, and
\[
 |\varpi_P|_P^{\varepsilon'}|\cdot|^{\mathscr{L}_{\varepsilon,O_{K_P}}}(x)\leq |\cdot|_P^{\overline{L}}(x)\leq |\varpi_P|_P^{-\varepsilon'}|\cdot|^{\mathscr{L}_{\varepsilon,O_{K_P}}}(x)
\]
for every $P\in\Spec(O_K)\setminus U$ and $x\in X_P^{\rm an}$, where $\varepsilon'$ is a rational number such that
\[
 0<\varepsilon'\leq\frac{\varepsilon}{-\log|\varpi_P|_P}.
\]
Let $\pi_{\varepsilon}:\mathscr{X}_{\varepsilon}\to\Spec(O_K)$ denote the structure morphism and set
\[
 \mathscr{L}_{\varepsilon,1}:=\mathscr{L}_{\varepsilon}-\varepsilon'\sum_{P\notin U}\mathcal{O}_{\mathscr{\mathscr{X}_{\varepsilon}}}(\pi_{\varepsilon}^{-1}(P))\quad\text{and}\quad\mathscr{L}_{\varepsilon,2}:=\mathscr{L}_{\varepsilon}+\varepsilon'\sum_{P\notin U}\mathcal{O}_{\mathscr{\mathscr{X}_{\varepsilon}}}(\pi_{\varepsilon}^{-1}(P)).
\]
Then
\[
 |\cdot|^{\mathscr{L}_{\varepsilon,1,O_{K_P}}}=|\varpi_P|_P^{\varepsilon'}|\cdot|^{\mathscr{L}_{\varepsilon,O_{K_P}}}\quad\text{and}\quad|\cdot|^{\mathscr{L}_{\varepsilon,2,O_{K_P}}}=|\varpi_P|_P^{-\varepsilon'}|\cdot|^{\mathscr{L}_{\varepsilon,O_{K_P}}}.
\]
\end{proof}

\begin{definition}\label{defn:verticallynef}
Let $\overline{L}=\left(L,(|\cdot|_v^{\overline{L}})_{v\in M_K}\right)$ be an adelically metrized line bundle on $X$ and let $x\in X(\overline{K})$ be an algebraic point on $X$.
Let $K(x)$ be a field of definition for $x$ such that $K(x)/\QQ$ is finite, that is, $K(x)$ is a number field that contains the residue field $k(x)$ of the image of $x$.
For each $P\in M_K^{\rm f}$, we have a canonical isomorphism $K(x)\otimes_KK_P=\bigoplus_{Q|P}K(x)_Q$ (see \cite[Chap.\ VI, \S 8.2, Corollary~2]{BourbakiCA72}), so
\begin{equation}
 [K(x):K]=[K(x)\otimes_KK_P:K_P]=\sum_{Q|P}[K(x)_Q:K_P].
\end{equation}
We can choose a non-zero rational section $s$ of $L$ such that $x\notin\Supp(\div(s))$.
For $v\in M_K^{\rm f}$, we set
\begin{equation}\label{eqn:defnheight1}
 \adeg_v\left(\widehat{\div}(s)|_x\right):=-\sum_{w|v} [K(x)_w:K_v]\log|s|_v^{\overline{L}}(x^w),
\end{equation}
where $w$ runs over all the finite places of $K(x)$ lying over $v$ and $x^w\in X_v^{\rm an}$ is the point corresponding to $(k(x)_w,w|_{k(x)})$.
For $v=\infty$,
\begin{equation}\label{eqn:defnheight2}
 \adeg_{\infty}\left(\widehat{\div}(s)|_x\right):=-\sum_{\sigma:K(x)\to\CC}\log|s|_{\infty}^{\overline{L}}(x^{\sigma}).
\end{equation}
We then define the \emph{height} of $x$ by
\begin{equation}\label{eqn:defnheight}
 h_{\overline{L}}(x):=\frac{1}{[K(x):K]}\sum_{v\in M_K}\adeg_v\left(\widehat{\div}(s)|_x\right),
\end{equation}
which does not depend on the choice of $K(x)$ and $s$.
For $\overline{L},\overline{M}\in\aPic(X)$, we have $h_{\overline{L}+\overline{M}}(x)=h_{\overline{L}}(x)+h_{\overline{M}}(x)$, so we can define $h_{\overline{L}}(x)$ for every $\overline{L}\in\aPic_{\QQ}(X)$.

\begin{description}
\item[(nef)] We say that $\overline{L}\in\aPic(X)$ is \emph{nef} if the following are satisfied.
\begin{enumerate}
\item[\textup{(a)}] $L$ is nef.
\item[\textup{(b)}] The $\{\infty\}$-adelically metrized line bundle $\overline{L}^{\{\infty\}}$ satisfies the equivalent conditions in Proposition~\ref{prop:adelicapproximation}(2).
\item[\textup{(c)}] The curvature current of $(L_{\infty}^{\rm an},|\cdot|_{\infty}^{\overline{L}})$ is semipositive.
\item[\textup{(d)}] For every $x\in X(\overline{K})$, the height $h_{\overline{L}}(x)$ is non-negative.
\end{enumerate}
An adelically metrized $\QQ$-line bundle $\overline{L}$ is said to be \emph{nef} if some multiple of $\overline{L}$ is nef.
\item[(integrable)] We say that $\overline{L}\in\aPic_{\QQ}(X)$ is \emph{integrable} if $\overline{L}$ is a difference of two nef adelically metrized $\QQ$-line bundles.
Denote the $\QQ$-vector space of all the integrable adelically metrized $\QQ$-line bundles on $X$ by $\aInt(X)$.
\end{description}
\end{definition}

\begin{proposition}\label{prop:aintnum}
\begin{enumerate}
\item[\textup{(1)}] There exists a unique map
\begin{gather*}
 \adeg:\aPic_{\QQ}(X)\times\aInt(X)^{\times \dim X}\to\RR, \\
 (\overline{L}_0;\overline{L}_1\dots,\overline{L}_{\dim X})\to\adeg(\overline{L}_0\cdot\overline{L}_1\cdots\overline{L}_{\dim X}), \nonumber
\end{gather*}
having the following properties.
\begin{enumerate}
\item[\textup{(a)}] $\adeg$ is multilinear and the restriction $\adeg:\aInt(X)^{\times (\dim X+1)}\to\RR$ is symmetric.
\item[\textup{(b)}] If $\overline{L}\in\aPic_{\QQ}(X)$ is nef, then $\adeg(\overline{L}^{\cdot (\dim X+1)})=\avol(\overline{L})$.
\item[\textup{(c)}] If $\overline{L}_0$ is pseudoeffective and $\overline{L}_1,\dots,\overline{L}_{\dim X}$ are nef, then
\[
 \adeg(\overline{L}_0\cdot\overline{L}_1\cdots\overline{L}_{\dim X})\geq 0.
\]
\end{enumerate}
\item[\textup{(2)}] For $\lambda\in\RR$ and for $\overline{L}_1,\dots,\overline{L}_{\dim X}\in\aInt(X)$, we have
\[
 \adeg\left(\overline{\mathcal{O}}_X(\lambda[\infty])\cdot\overline{L}_1\cdots\overline{L}_{\dim X}\right)=[K:\QQ]\lambda\deg_K(L_1\cdots L_{\dim X}).
\]
\item[\textup{(3)}] Suppose that $X$ is normal, and let $\varphi:X'\to X$ be a birational projective $K$-morphism.
Then
\[
\xymatrix{ \aPic_{\QQ}(X')\times\aInt(X')^{\times \dim X} \ar[r]^-{\adeg} & \RR \\ \aPic_{\QQ}(X)\times\aInt(X)^{\times \dim X} \ar[r]^-{\adeg} \ar[u]^-{\varphi^{*\times (\dim X+1)}} & \RR \ar@{=}[u]
}
\]
is commutative.
\end{enumerate}
\end{proposition}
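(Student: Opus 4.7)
The plan is to reduce everything to classical arithmetic intersection theory of continuous Hermitian $\QQ$-line bundles on arithmetic varieties via the approximation furnished by Proposition~\ref{prop:adelicapproximation}, and then to extend by continuity and multilinearity. I will first treat $(1)$, then deduce $(2)$ and $(3)$.

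\emph{Step 1: intersection numbers for model line bundles.} If $(\mathscr{X},\overline{\mathscr{L}}_0,\dots,\overline{\mathscr{L}}_{\dim X})$ consists of continuous Hermitian $\QQ$-line bundles on a common $O_K$-model of $X$, let $\adeg(\overline{\mathscr{L}}_0\cdots\overline{\mathscr{L}}_{\dim X})$ denote the classical arithmetic intersection number. This is multilinear, symmetric, satisfies the projection formula under birational morphisms of models, and, when all $\mathscr{L}_i$ are nef and equal to a single $\overline{\mathscr{L}}$, equals $\avol(\overline{\mathscr{L}}^{\rm ad})$ by the arithmetic Hilbert--Samuel theorem. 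Moreover it is non-negative when $\overline{\mathscr{L}}_0$ is arithmetically pseudoeffective and the remaining $\overline{\mathscr{L}}_i$ are nef.

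\emph{Step 2: extension to nef adelic classes by continuity.} Given nef $\overline{L}_1,\dots,\overline{L}_{\dim X}\in\aPic_{\QQ}(X)$, choose, using Proposition~\ref{prop:adelicapproximation}(2)(b), sequences of relatively nef model Hermitian $\QQ$-line bundles $(\overline{\mathscr{L}}_{i,n,1},\overline{\mathscr{L}}_{i,n,2})$ on common $O_K$-models sandwiching $\overline{L}_i$ within $\varepsilon_n\to 0$ at every place. The key technical step is to prove that the model intersection numbers $\adeg(\overline{\mathscr{L}}_{0,n,?}\cdots\overline{\mathscr{L}}_{\dim X,n,?})$ form a Cauchy sequence whose limit is independent of the chosen approximations; this is essentially the content of the \emph{continuity of the arithmetic intersection number along uniform limits of semipositive metrics}, which on the Archimedean side is Zhang's/Maillot's estimate and on the non-Archimedean side is handled place-by-place via the inequality
\[
 \bigl|\adeg(\overline{\mathscr{L}}_0\cdots\overline{\mathscr{L}}_{\dim X})-\adeg(\overline{\mathscr{L}}_0'\cdots\overline{\mathscr{L}}_{\dim X})\bigr|\leq C\cdot\max_v\|\overline{\mathscr{L}}_0-\overline{\mathscr{L}}_0'\|_{v,\sup}\cdot\prod_{i\geq 1}\deg_K(L_1\cdots\widehat{L}_i\cdots L_{\dim X}),
\]
applied successively to each slot. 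The resulting limit defines $\adeg(\overline{L}_0\cdots\overline{L}_{\dim X})$ on tuples of nef classes; symmetry and multilinearity pass to the limit, and (b) and (c) follow from their model analogues together with continuity of $\avol$ (Moriwaki) and by sandwiching. This continuity step is the main obstacle.

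\emph{Step 3: extension to integrable and pseudoeffective slots.} For $\overline{L}_1,\dots,\overline{L}_{\dim X}\in\aInt(X)$ write each as $\overline{L}_i=\overline{M}_i-\overline{N}_i$ with $\overline{M}_i,\overline{N}_i$ nef, and define the intersection product by multilinear expansion; well-definedness follows from symmetry in Step 2. To incorporate $\overline{L}_0\in\aPic_{\QQ}(X)$, write $\overline{L}_0=\overline{A}_1-\overline{A}_2$ with $\overline{A}_i$ nef (after twisting by a sufficiently positive nef class) and extend by linearity; again well-definedness is forced by symmetry on $\aInt(X)^{\dim X+1}$. Uniqueness in (1) follows because the density of model metrics plus multilinearity determines $\adeg$ uniquely from its values on tuples of nef model line bundles, which in turn are fixed by (b) via polarization.

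\emph{Step 4: proofs of (2) and (3).} For (2), the class $\overline{\mathcal{O}}_X(\lambda[\infty])$ corresponds at the Archimedean place to the trivial bundle with metric $\exp(-\lambda)|\cdot|^{\rm triv}$, and is trivial at all finite places. By multilinearity it suffices to treat $\lambda\in\QQ_{>0}$; then $\lambda^{-1}\overline{\mathcal{O}}_X(\lambda[\infty])$ is associated to the model Hermitian $\QQ$-line bundle $(\mathcal{O}_{\mathscr{X}},\exp(-1)|\cdot|^{\rm triv})$ on any $O_K$-model $\mathscr{X}$, and the classical computation
\[
 \adeg\bigl((\mathcal{O}_{\mathscr{X}},\exp(-1)|\cdot|^{\rm triv})\cdot\overline{\mathscr{L}}_1\cdots\overline{\mathscr{L}}_{\dim X}\bigr)=[K:\QQ]\deg_K(L_1\cdots L_{\dim X})
\]
passes to the limit via Step 2, yielding the formula. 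For (3), pick compatible $O_K$-models $\mathscr{X}',\mathscr{X}$ with a birational morphism $\Phi:\mathscr{X}'\to\mathscr{X}$ extending $\varphi$ (existence by taking the closure of the graph), and approximate nef classes as above. The projection formula $\Phi_*\Phi^*=\id$ holds for model Hermitian intersection numbers because $X$ is normal and $\varphi$ is birational, so $\varphi^{\rm an}_{v,*}\mathbf 1=\mathbf 1$ at every place. Passing to the limit along the approximations and then extending by multilinearity to integrable and arbitrary $\aPic_{\QQ}$ classes yields the commutative diagram in (3).
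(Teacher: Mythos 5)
Your overall strategy (reduce to model Hermitian line bundles, pass to the limit via Proposition~\ref{prop:adelicapproximation}, then polarize) is essentially the same as the paper's, which defers the technical construction to Moriwaki. However, Step~3 has a genuine gap where you try to extend the first slot from $\aInt(X)$ to all of $\aPic_{\QQ}(X)$. You write ``write $\overline{L}_0=\overline{A}_1-\overline{A}_2$ with $\overline{A}_i$ nef (after twisting by a sufficiently positive nef class).'' But this would assert that every $\overline{L}_0\in\aPic_{\QQ}(X)$ is integrable, which is false: nefness imposes conditions (b) and (c) of Definition~\ref{defn:verticallynef} on the \emph{metric}, in particular that the curvature current at infinity is semipositive. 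For a generic continuous Hermitian metric the curvature $dd^c\phi$ need not be a difference of positive currents, and adding a nef class does not cure that. Note also that Lemma~\ref{lem:w-ample}(1) only yields a decomposition into \emph{w-ample} classes, and w-ampleness (a statement about strictly small sections) does not imply nefness in the metric sense. The actual mechanism, in Moriwaki's construction that the paper cites, is different: for the first slot one fixes a rational section $s_0$ of $L_0$ and writes $\adeg(\overline{L}_0\cdot\overline{L}_1\cdots\overline{L}_{\dim X})$ as the height of $\div(s_0)$ plus the integrals $\int_{X_v^{\rm an}}(-\log|s_0|_v^{\overline{L}_0})\,c_1(\overline{L}_1)_v\wedge\cdots\wedge c_1(\overline{L}_{\dim X})_v$; integrability of $\overline{L}_1,\dots,\overline{L}_{\dim X}$ ensures the Monge--Amp\`ere measures at each place are well defined (Bedford--Taylor / Chambert-Loir), and any continuous metric on $L_0$ supplies an integrable Green function. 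No nef decomposition of $\overline{L}_0$ is needed.

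A secondary issue in Step~2: the Lipschitz estimate you write,
\[
 \bigl|\adeg(\overline{\mathscr{L}}_0\cdots\overline{\mathscr{L}}_{\dim X})-\adeg(\overline{\mathscr{L}}_0'\cdots\overline{\mathscr{L}}_{\dim X})\bigr|\leq C\cdot\max_v\|\overline{\mathscr{L}}_0-\overline{\mathscr{L}}_0'\|_{v,\sup}\cdot\prod_{i\geq 1}\deg_K(L_1\cdots\widehat{L}_i\cdots L_{\dim X}),
\]
is dimensionally wrong. When $\overline{\mathscr{L}}_0$ and $\overline{\mathscr{L}}_0'$ share the same underlying $L_0$ and differ only by metrics $e^{-\phi_v}$ at finitely many places, the correct bound is of the form $\sum_v\|\phi_v\|_{v,\sup}\cdot[K:\QQ]\,\deg_K(L_1\cdots L_{\dim X})$, i.e.\ a single $\dim X$-fold geometric intersection degree (not a product of $(\dim X-1)$-fold degrees) and a sum over the contributing places (not a maximum). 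The overall continuity idea is right, and once corrected the argument in Step~2 goes through; the real obstruction to your proof as written is the first-slot extension above.
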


\begin{remark}
\begin{enumerate}
\item[\textup{(1)}] The above map $\adeg$ gives a unique extension of the classical arithmetic intersection numbers of $C^{\infty}$-Hermitian line bundles.
\item[\textup{(2)}] If $\overline{L}_0,\dots,\overline{L}_{\dim X}$ and $\overline{M}_0,\dots,\overline{M}_{\dim X}$ are nef adelically metrized $\QQ$-line bundle on $X$ and $\overline{L}_i\preceq\overline{M}_i$ for every $i$, then, by successively use of the property (c) of Proposition~\ref{prop:aintnum}(1), we have
\[
 \adeg(\overline{L}_0\cdots\overline{L}_{\dim X})\leq\adeg(\overline{M}_0\cdots\overline{M}_{\dim X}).
\]
\end{enumerate}
\end{remark}

\begin{proof}
(1): We refer to \cite[\S 4.5]{Moriwaki13}, \cite[Proof of Lemma~2.6]{IkomaCon}, and \cite[Proposition~4.5.4]{Moriwaki13}.

(2): If $\overline{L}_i$ are associated to $C^{\infty}$-Hermitian line bundles on some $O_K$-model of $X$, then the assertion is clear.
In general, we can show the result by approximation (Proposition~\ref{prop:adelicapproximation}).

(3): If $\overline{L}_i$ are associated to continuous Hermitian line bundles on an $O_K$-model of $X$, then the assertion follows from the projection formula \cite[Proposition~2.4.1]{Kawaguchi_Moriwaki} (see also \cite[Lemma~2.3]{IkomaCon}).
In general, we can assume that  $\overline{L}_0,\dots,\overline{L}_{\dim X}$ are nef and approximate them by nef continuous Hermitian line bundles on a suitable $O_K$-model of $X$.
\end{proof}

\section{Basic properties of base loci}\label{sec:prelim}

In this section, we collect some elementary properties of the augmented base loci of general graded linear series.
Let $X$ be a projective variety over a field $k$, and let $L$ be a line bundle on $X$.
Recall that the base locus, the stable base locus, and the augmented base locus of $L$ are respectively defined as
\[
 \Bs(L):=\Bs\Hz(L),\quad\SBs(L):=\bigcap_{m\geq 1}\Bs(mL),\quad\text{and}\quad\Bsp(L):=\bigcap_{a\geq 1}\SBs(aL-A),
\]
where $A$ is a fixed ample line bundle on $X$ and $\Bsp(L)$ does not depend on the choice of $A$.
By homogeneity, we can define the stable base locus and the augmented base locus for all $\QQ$-line bundles on $X$.

To a $k$-linear series $V\subset\Hz(L)$, we can associate a $k$-morphism denoted by
\begin{equation}
 \Phi_{V}:X\setminus\Bs V\to\PP_k(V).
\end{equation}
A \emph{graded $k$-linear series $V_{\sbullet}=\bigoplus_{m\geq 0}V_m$ belonging to $L$} is a sub-graded $k$-algebra of $\bigoplus_{m\geq 0}\Hz(mL)$.
Let $V_{\sbullet}$ be a graded $k$-linear series belonging to $L$, let $A$ be a line bundle on $X$, and let $a\geq 1$ be an integer.
We define a $k$-linear series belonging to $aL-A$ as
\begin{equation}
 \Lambda(V_{\sbullet};A,a):=\left\langle s\,:\,\begin{array}{l} \Image\left(\Hz(pA)\xrightarrow{\otimes s^{\otimes p}}\Hz(paL)\right)\subset V_{pa}, \\ \text{for every sufficiently large $p$}\end{array}\right\rangle_k.
\end{equation}
It is clear that $\Bs\Lambda(V_{\sbullet};nA,na)\subset\Bs\Lambda(V_{\sbullet};A,a)$ for every $n\geq 1$.
Thus, if we set
\begin{equation}
 \SBs(V_{\sbullet};A,a):=\bigcap_{n\geq 1}\Bs\Lambda(V_{\sbullet};nA,na),
\end{equation}
then $\SBs(V_{\sbullet};A,a)=\Bs\Lambda(V_{\sbullet};nA,na)$ for every sufficiently divisible $n$.

Let $\mu:X'\to X$ be a birational $k$-morphism of projective $k$-varieties and let $V_{\sbullet}$ be a graded $k$-linear series belonging to a line bundle $L$ on $X$.
If $X$ is normal, then $\Hz(mL)\overset{\mu^*}{=}\Hz(\mu^*(mL))$ for every $m\geq 1$.
We define the \emph{pull-back} of $V_{\sbullet}$ via $\mu$ as
\begin{equation}
 \mu^*V_{\sbullet}:=\bigoplus_{m\geq 0}\Image\left(V_m\xrightarrow{\mu^*}\Hz(\mu^*(mL))\right).
\end{equation}

\begin{definition}[\text{see \cite[Definition~2.2]{ChenSesh}}]\label{defn:Chens}
We define the \emph{augmented base locus} of $V_{\sbullet}$ as
\begin{equation}
 \Bsp(V_{\sbullet}):=\bigcap_{\substack{\text{$A$: ample,} \\ a\geq 1}}\Bs\Lambda(V_{\sbullet};A,a),
\end{equation}
where the intersection is taken over all the ample line bundles $A$ on $X$ and all the positive integers $a$.
\end{definition}

\begin{lemma}\label{lem:augbs}
Let $V_{\sbullet}$ be a graded linear series belonging to $L$.
\begin{enumerate}
\item[\textup{(1)}] If $A,B$ are two line bundles on $X$, then for $a,b\geq 1$
\[
 \SBs(V_{\sbullet};A,a)\subset\SBs(V_{\sbullet};B,b)\cup\SBs((1/b)B-(1/a)A).
\]
\item[\textup{(2)}] For any ample line bundle $A$, $\Bsp(V_{\sbullet})=\SBs(V_{\sbullet};A,a)$ holds for every $a\gg 1$.
\item[\textup{(3)}] If $A$ is semiample, then $\Bs\Lambda(V_{\sbullet};A,a)\supset\SBs(V_{\sbullet})$ for every $a\geq 1$.
\item[\textup{(4)}] Let $\mu:X'\to X$ be a birational $k$-morphism of projective $k$-varieties.
If $X$ is normal, then
\[
 \Bs\Lambda(\mu^*V_{\sbullet};\mu^*A,a)=\mu^{-1}\Bs\Lambda(V_{\sbullet};A,a).
\]
\item[\textup{(5)}] The following are equivalent.
\begin{enumerate}
\item[\textup{(a)}] $L$ is ample and $V_m=\Hz(mL)$ for every sufficiently divisible $m\gg 1$.
\item[\textup{(b)}] For some $a\geq 1$, $\Phi_{V_a}:X\to\PP_k(V_a)$ is a closed immersion.
\item[\textup{(c)}] $\Bsp(V_{\sbullet})=\emptyset$.
\end{enumerate}
\end{enumerate}
\end{lemma}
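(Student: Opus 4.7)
For part (1), the plan is to fix $x \notin \SBs(V_{\sbullet};B,b) \cup \SBs((1/b)B - (1/a)A)$ and build an element of $\Lambda(V_{\sbullet}; NA, Na)$ nonzero at $x$ for a suitable $N$. Pick a generator $s \in \Lambda(V_{\sbullet}; n_1 B, n_1 b)$ of its defining spanning set with $s(x) \neq 0$, and a section $t \in \Hz(m(aB - bA))$ with $t(x) \neq 0$. The candidate is
\[
 u := s^{\otimes ma} \otimes t^{\otimes n_1} \in \Hz(N a L - N A), \qquad N := m n_1 b,
\]
which satisfies $u(x)\neq 0$ tautologically. To check $u \in \Lambda(V_{\sbullet}; NA, Na)$, for any $r \in \Hz(pNA)$ with $p$ large one reassociates
\[
 r \otimes u^{\otimes p} \;=\; (r \otimes t^{\otimes pn_1}) \otimes s^{\otimes pma},
\]
observes $r \otimes t^{\otimes pn_1} \in \Hz(pma \cdot n_1 B)$, and applies the defining property of $s$ with exponent $pma$ to land in $V_{pma\cdot n_1 b} = V_{pNa}$.

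Part (2) is extracted from (1) by Noetherian stabilization. Setting $(B,b) = (A, a_0)$ in (1) and using that $(1/a_0 - 1/a)A$ is ample for $a \geq a_0$ gives a decreasing chain $\SBs(V_{\sbullet}; A, a)$ in $a$, which stabilizes at some $a_\ast$. For arbitrary $(A', a')$, choosing $a$ further large so that $(1/a')A' - (1/a)A$ is ample yields $\SBs(V_{\sbullet}; A, a_\ast) \subset \SBs(V_{\sbullet}; A', a')$, hence $\SBs(V_{\sbullet}; A, a_\ast) = \Bsp(V_{\sbullet})$. Part (3) is immediate: if $s$ is a generator of $\Lambda(V_{\sbullet}; A, a)$ non-vanishing at $x$ and $r \in \Hz(qA)$ is non-vanishing at $x$ for some $q$ with $qA$ globally generated, then $r \otimes s^{\otimes q} \in V_{qa}$ is non-vanishing at $x$, whence $x \notin \SBs(V_{\sbullet})$. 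Part (4) is formal from normality of $X$, which identifies $\Hz(mL) = \Hz(\mu^*(mL))$ and hence the two $\Lambda$'s, together with the tautological identity $\Bs(\mu^*W) = \mu^{-1}\Bs W$.

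For part (5), (a) $\Rightarrow$ (b) uses that $|mL|$ defines a closed immersion for $m \gg 0$ when $L$ is ample, combined with $V_m = \Hz(mL)$ for sufficiently divisible $m$. Its converse (b) $\Rightarrow$ (a) invokes Serre vanishing / Castelnuovo--Mumford regularity: $\Phi_{V_a}$ being a closed immersion forces $aL$ to be very ample and $\Sym^m V_a \twoheadrightarrow \Hz(amL)$ for $m \gg 0$, so $V_{am} = \Hz(amL)$. For (a) $\Rightarrow$ (c), fix ample $A$; for $a$ sufficiently divisible, $V_{pa} = \Hz(paL)$ for all large $p$, whence $\Lambda(V_{\sbullet}; A, a) = \Hz(aL - A)$, which is globally generated for $a$ further large. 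The most substantive direction is (c) $\Rightarrow$ (b): by (2), choose an ample $A$ and $a$ with $\Bs \Lambda(V_{\sbullet}; A, a) = \emptyset$, and take $m \gg 0$ so that $mA$ is very ample. A basis $\{s_1, \dots, s_r\}$ of generators of $\Lambda$ has no common zero; given two distinct points $x, y \in X$ (or a nonzero tangent vector at $x$), pick $s_i$ with $s_i(x) \neq 0$ and use very ampleness of $mA$ to find $r \in \Hz(mA)$ separating $x, y$ (resp.\ the tangent), so that $s_i^{\otimes m} \otimes r \in V_{am}$ separates them. Hence $\Phi_{V_{am}}$ is a closed immersion.

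I expect the main technical obstacle to be part (1), where careful bookkeeping of grading and the reassociation step is needed to recognize $u$ as an element of $\Lambda$ rather than merely of $\Hz(NaL - NA)$; the subtlety is that the defining multiplicative condition is only known for the spanning set, so one must insist on choosing $s$ from that generating family rather than as a general element of $\Lambda$. Once (1) is established, (2) reduces to a stabilization argument, (3)--(4) are routine, and (5) follows by combining (2) with classical closed-immersion criteria.
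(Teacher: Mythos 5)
Your proof is correct and follows essentially the same route as the paper's. The only notable simplification you miss is in part~(1): the paper first chooses a single integer $n$ large enough that simultaneously $x\notin\Bs(n(aB-bA))$ and $x\notin\Bs\Lambda(V_{\sbullet};naB,nab)$, so that the plain tensor $s'=s\otimes t$ already lies in $\Lambda(V_{\sbullet};nbA,nba)$, avoiding the extra bookkeeping of raising $s$ and $t$ to the powers $ma$ and $n_1$; your version still works but is less tidy. In part~(5)(c)$\Rightarrow$(b) the paper packages the separation argument into one closed immersion $X\xrightarrow{\Phi_{pA}\times\Phi}\PP(\Hz(pA))\times\PP^r\hookrightarrow\PP(\Hz(pA)\otimes\langle s_i^{\otimes p}\rangle)$ via the Segre embedding, whereas you separate points and tangent vectors directly; these are equivalent.
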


\begin{proof}
(1): Suppose that $x\notin\SBs(V_{\sbullet};B,b)\cup\SBs((1/b)B-(1/a)A)$.
There exists an $n\geq 1$ such that $x\notin\Bs(n(aB-bA))$ and $x\notin\Bs\Lambda(V_{\sbullet};naB,nab)$.
We can find an $s\in\Lambda(V_{\sbullet};naB,nab)$ and a $t\in\Hz(n(aB-bA))$ such that $s(x)\neq 0$ and $t(x)\neq 0$.
Set $s':=s\otimes t\in\Hz(nb(aL-A))$.
Then $s'(x)\neq 0$ and
\[
 \Image\left(\Hz(pnbA)\xrightarrow{\otimes t^{\otimes p}}\Hz(pnaB)\xrightarrow{\otimes s^{\otimes p}}\Hz(pnabL)\right)\subset V_{pnab}
\]
for every $p\gg 1$.

(2): Since $X$ is a Noetherian topological space, there exist ample line bundles $B_1,\dots,B_r$ and positive integers $b_1,\dots,b_r$ such that
\[
 \Bsp(V_{\sbullet})=\bigcap_{i=1}^r\Bs\Lambda(V_{\sbullet};B_i,b_i).
\]
We can find an $a_0\geq 1$ such that $(1/b_i)B_i-(1/a_0)A$ are ample for all $i$.
Then by the assertion (1), we have $\Bsp(V_{\sbullet})=\Bs\Lambda(V_{\sbullet};A,a)$ for every $a\geq a_0$.

(3): Suppose that $x\notin\Bs\Lambda(V_{\sbullet};A,a)$.
Since $A$ is semiample, there exist a $p\geq 1$, a $t\in\Hz(pA)$, and an $s\in\Hz(aL-A)$ such that $t(x)\neq 0$, $s(x)\neq 0$, and $t\otimes s^{\otimes p}\in V_{pa}$.
Thus $x\notin\SBs(V_{\sbullet})$.
The assertion (4) is clear.

(5): The implications (a) $\Rightarrow$ (b) and (a) $\Rightarrow$ (c) are clear.

(b) $\Rightarrow$ (a) Let $P:=\PP_k(V_a)$ and let $\mathcal{O}_P(1)$ be the hyperplane line bundle on $P$.
Since $aL=\Phi_{V_a}^*\mathcal{O}_P(1)$, $\Hz(\mathcal{O}_P(p))=\Sym_k^pV_a\to\Hz(paL)$ is surjective for every $p\gg 1$.
Thus $V_{pa}=\Hz(paL)$ for every $p\gg 1$.

(c) $\Rightarrow$ (b) One can find a very ample line bundle $A$ and an $a\geq 1$ such that
\[
 \Bsp(V_{\sbullet})=\Bs\Lambda(V_{\sbullet};A,a)=\emptyset.
\]
There exist $s_0,\dots,s_r\in\Hz(aL-A)$ and a $p\geq 1$ such that
\[
 \left\{x\in X\,:\,s_0(x)=\dots=s_r(x)=0\right\}=\emptyset
\]
and
\[
 \Image\left(\Hz(pA)\xrightarrow{\otimes s_i^{\otimes p}}\Hz(paL)\right)\subset V_{pa}
\]
for all $i$.
Denote the $k$-morphism defined by $s_0^{\otimes p},\dots,s_r^{\otimes p}$ by $\Phi:X\to\PP_k^r$.
Since $X\overset{\Phi_{pA}\times\Phi}{\longrightarrow}\PP_k(\Hz(pA))\times_k\PP_k^r\xrightarrow{\text{Segre emb.}}\PP_k(\Hz(pA)\otimes_k\aSpan{k}{s_0^{\otimes p},\dots,s_r^{\otimes p}})$ is a closed immersion, so is $\Phi_{V_{pa}}$.
\end{proof}

Let $\mu:X'\to X$ be a morphism of $k$-varieties.
The \emph{exceptional locus} of $\mu$ is defined as the minimal Zariski closed subset $\Exc(\mu)\subset X'$ such that
\begin{equation}
 \mu|_{X'\setminus \Exc(\mu)}:X'\setminus\Exc(\mu)\to X
\end{equation}
is an immersion.
If $\mu:X'\to \overline{\mu(X')}$ is not birational, then $\Exc(\mu)$ is defined as $X'$.

\begin{lemma}
Let $\mu:X'\to X$ be a birational projective $k$-morphism of $k$-varieties.
If $X$ is normal, then the exceptional locus of $\mu$ is given as
\[
 \Exc(\mu)=\bigcup_{\substack{Z'\subset X', \\ \dim Z'>\dim\mu(Z')}}Z'.
\]
\end{lemma}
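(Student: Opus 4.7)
The plan is to prove the two inclusions $F \subset \Exc(\mu)$ and $\Exc(\mu) \subset F$ separately, where I abbreviate
\[
F := \bigcup_{\substack{Z'\subset X',\\ \dim Z' > \dim \mu(Z')}} Z'.
\]
As a preliminary, I would verify that $F$ coincides with $\{x' \in X' : \dim_{x'}\mu^{-1}(\mu(x'))\geq 1\}$, which is closed by upper semi-continuity of fiber dimension for the proper morphism $\mu$. Indeed, if $\dim_{x'}\mu^{-1}(\mu(x')) \geq 1$, an irreducible component $Z'$ of the fiber through $x'$ gives $\dim Z' \geq 1 > 0 = \dim \mu(Z')$; conversely, if $x' \in Z'$ with $\dim Z' > \dim \mu(Z')$, then $\mu(Z')$ is closed by properness of $\mu$, and upper semi-continuity of fiber dimension for $\mu|_{Z'}: Z' \to \mu(Z')$ forces the component of $(\mu|_{Z'})^{-1}(\mu(x'))$ through $x'$ to have positive dimension.

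For the inclusion $F \subset \Exc(\mu)$, I would take $x'\in Z'$ with $\dim Z' > \dim \mu(Z')$ and argue by contradiction: if $V\ni x'$ were an open neighborhood on which $\mu|_V$ is a locally closed embedding, then it would restrict to an isomorphism $V\cap Z' \xrightarrow{\sim} \mu(V\cap Z')$, and since $V\cap Z'$ is a non-empty open subset of $Z'$ we would obtain
\[
\dim Z' = \dim(V\cap Z') = \dim \mu(V\cap Z') \leq \dim \mu(Z'),
\]
contradicting the hypothesis.

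For the inclusion $\Exc(\mu) \subset F$, I would produce the required open subset by setting $V := X'\setminus F$ and showing that $\mu|_V$ is an immersion, so that minimality of $\Exc(\mu)$ forces $\Exc(\mu)\subset F$. For $x'\in V$, the point $x'$ is isolated in the fiber $\mu^{-1}(\mu(x'))$; by Zariski's connectedness theorem, which applies because $\mu$ is a projective birational morphism to the normal variety $X$ (so that $\mu_*\mathcal{O}_{X'}=\mathcal{O}_X$), this fiber is connected and hence reduces to the singleton $\{x'\}$. Thus $\mu|_V$ is quasi-finite and proper, therefore finite, and being birational onto its image with $X$ normal it is an open immersion by Zariski's main theorem.

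The main delicate point is this last step, where the normality hypothesis on $X$ is essential: it is precisely what allows us to pass from an isolated fiber point to a local isomorphism via Zariski's main theorem. Without normality, finite fibers are not enough to guarantee that $\mu$ is an immersion nearby, and the stated equality can fail.
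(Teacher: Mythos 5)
Your proof is correct and follows the same fundamental strategy as the paper: both arguments reduce the lemma to the identification of $\Exc(\mu)$ with the locus $\{x'\in X' : \dim_{x'}\mu^{-1}(\mu(x'))\geq 1\}$ of positive local fiber dimension. The difference is one of self-containedness: the paper cites EGA III, Proposition~(4.4.1) for this identification and dismisses the inclusion $\supset$ as clear, whereas you prove both parts from scratch --- the inclusion $F\subset\Exc(\mu)$ via the dimension count on $V\cap Z'$, and the inclusion $\Exc(\mu)\subset F$ via Zariski's connectedness theorem combined with the fact that a finite birational morphism onto a normal variety is an isomorphism. Two minor points deserve tightening. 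First, in the preliminary identification of $F$, for a non-closed point $x'$ an irreducible component of $\mu^{-1}(\mu(x'))$ is not itself a closed subvariety of $X'$; you should take $Z'$ to be the closure in $X'$ of a positive-dimensional fiber component through $x'$, in which case $\mu(Z')=\overline{\{\mu(x')\}}$ and one verifies $\dim Z'-\dim\mu(Z')$ equals the fiber-component dimension by the usual transcendence-degree count (alternatively, since both sides are closed subsets of a Jacobson scheme, it suffices to compare them on closed points, where your argument applies verbatim). Second, the phrase ``$\mu|_V$ is quasi-finite and proper'' is imprecise as written, because $V$ is open in $X'$ and $\mu|_V:V\to X$ is not proper. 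What makes the argument work is the consequence of the connectedness step: since every fiber of $\mu$ meeting $V$ is the singleton $\{x'\}\subset V$, one gets $V=\mu^{-1}(\mu(V))$ and $\mu(V)=X\setminus\mu(X'\setminus V)$ is open, so that $\mu|_V:V\to\mu(V)$ is proper by base change of $\mu$; it is then quasi-finite, hence finite, and birational onto the normal open subset $\mu(V)$, hence an isomorphism. With those clarifications inserted, the argument is complete and somewhat more self-contained than the paper's, at the cost of extra length.
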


\begin{proof}
The inclusion $\supset$ is clear, so we are going to show the reverse.
By \cite[Proposition~(4.4.1)]{EGAIII_1}, one has
\[
 \Exc(\mu)=\left\{x'\in X'\,:\,\dim_{x'}\mu^{-1}\left(\mu(x')\right)\geq 1\right\}
\]
and $\mu^{-1}\left(\mu\left(\Exc(\mu)\right)\right)=\Exc(\mu)$.
Given any closed point $x'\in\Exc(\mu)$, there exists an irreducible component $Z'$ of $\mu^{-1}\left(\mu(x')\right)$ such that $Z'$ passes through $x'$ and $\dim Z'\geq 1$.
Since $\mu(x')$ is a closed point of $X$, one has $\dim Z'>\dim\mu(Z')$.
\end{proof}

\begin{lemma}\label{lem:augbs_main}
Suppose that $X$ is normal.
Let $V_{\sbullet}$ be a graded $k$-linear series belonging to a line bundle $L$ on $X$.
\begin{enumerate}
\item[\textup{(1)}] $\Bsp(V_{\sbullet})=\bigcap_{\mu:X'\to X}\mu\left(\Bsp(\mu^*V_{\sbullet})\right)$, where the intersection is taken over all the projective birational $k$-morphisms $\mu$ onto $X$.
\item[\textup{(2)}] Let $\mu:X'\to X$ be a birational $k$-morphism of projective $k$-varieties.
Then
\[
 \Bsp(\mu^*V_{\sbullet})=\mu^{-1}\Bsp(V_{\sbullet})\cup\Exc(\mu).
\]
\end{enumerate}
\end{lemma}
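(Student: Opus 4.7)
The plan is to prove (2) first, from which (1) follows easily. For (2) the two inclusions require different tools from Lemma~\ref{lem:augbs}, so I would treat them separately. For $\mu^{-1}\Bsp(V_{\sbullet}) \subset \Bsp(\mu^*V_{\sbullet})$, I apply Lemma~\ref{lem:augbs}(1) to $\mu^*V_{\sbullet}$ with $A = \mu^*A_0$ (for $A_0$ an ample line bundle on $X$) and $B = A'$ ample on $X'$, obtaining
\[
 \SBs(\mu^*V_{\sbullet}; \mu^*A_0, a) \subset \SBs(\mu^*V_{\sbullet}; A', a') \cup \SBs((1/a')A' - (1/a)\mu^*A_0).
\]
By Lemma~\ref{lem:augbs}(4) the left side equals $\mu^{-1}\SBs(V_{\sbullet}; A_0, a)$. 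Picking $a$ and $a'$ both large (so that Lemma~\ref{lem:augbs}(2) identifies each $\SBs$ with the corresponding augmented base locus) and with $a/a'$ large enough that $aA' - a'\mu^*A_0$ is ample on $X'$ (possible since $A'$ is ample and $\mu^*A_0$ is nef) kills the last term and yields the inclusion.

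The inclusion $\Exc(\mu) \subset \Bsp(\mu^*V_{\sbullet})$ I would argue by contradiction. Suppose $x' \in \Exc(\mu) \setminus \Bsp(\mu^*V_{\sbullet})$, and pick an ample $A'$, an integer $a \geq 1$, and $s \in \Lambda(\mu^*V_{\sbullet}; A', a)$ with $s(x') \neq 0$. Choose $p$ so large that $pA'$ is very ample and let $t_0, \ldots, t_r$ be a basis of $\Hz(pA')$. The sections $u_i := t_i \otimes s^p$ lie in $\mu^*V_{pa}$ by the $\Lambda$-condition; since $X$ is normal (so $\mu_*\mathcal{O}_{X'} = \mathcal{O}_X$), each $u_i = \mu^*v_i$ for some $v_i \in V_{pa}$. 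The morphism $\Phi'$ on $X' \setminus \{s = 0\}$ defined by $(u_0 : \cdots : u_r)$ projectively coincides with $\Phi_{pA'}$ (the common factor $s^p$ cancels) and is therefore an immersion; but $\Phi' = \Phi_v \circ \mu$ factors through $\mu$ and must contract the positive-dimensional component of $\mu^{-1}(\mu(x')) \cap \{s \neq 0\}$ through $x'$, a contradiction.

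For the reverse inclusion $\Bsp(\mu^*V_{\sbullet}) \subset \mu^{-1}\Bsp(V_{\sbullet}) \cup \Exc(\mu)$, I use Lemma~\ref{lem:augbs}(1) with the roles of $A$ and $B$ swapped:
\[
 \SBs(\mu^*V_{\sbullet}; A', a') \subset \SBs(\mu^*V_{\sbullet}; \mu^*A_0, a) \cup \SBs((1/a)\mu^*A_0 - (1/a')A').
\]
For $a$, $a'$ large, Lemma~\ref{lem:augbs}(2), (4) identifies the left side with $\Bsp(\mu^*V_{\sbullet})$ and the first term on the right with $\mu^{-1}\Bsp(V_{\sbullet})$. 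As $a' \to \infty$ with $a$ fixed, the remaining $\SBs$-term is a decreasing family of closed subsets of $X'$ whose intersection is $\Bsp(\mu^*A_0)$ by definition; by Noetherianity the family stabilizes, so $\SBs((1/a)\mu^*A_0 - (1/a')A') = \Bsp(\mu^*A_0)$ for $a'$ sufficiently large. The identification $\Bsp(\mu^*A_0) = \Exc(\mu)$, valid for $A_0$ ample on $X$ and $\mu$ a birational projective morphism onto a normal variety (see, e.g., \cite{Ein_Laz_Mus_Nak_Pop06}), then completes the inclusion; this classical input, which lies outside the scope of Lemma~\ref{lem:augbs}, is what I expect to be the main technical obstacle.

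Finally, (1) is immediate. Taking $\mu = \id_X$ in the intersection gives $\bigcap_{\mu} \mu(\Bsp(\mu^*V_{\sbullet})) \subset \Bsp(V_{\sbullet})$, while the $\supset$ direction of (2) just proved, combined with the surjectivity of every birational projective $\mu$, gives $\mu(\Bsp(\mu^*V_{\sbullet})) \supset \mu(\mu^{-1}\Bsp(V_{\sbullet})) = \Bsp(V_{\sbullet})$ for every such $\mu$, hence the reverse inclusion.
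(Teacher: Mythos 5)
Your proof is correct and takes a genuinely different route from the paper's. The paper proves part (1) first, via Claim~\ref{clm:inclusion}, and then deduces the inclusion $\Bsp(\mu^*V_{\sbullet}) \supset \mu^{-1}\Bsp(V_{\sbullet}) \cup \Exc(\mu)$ in (2) as a quick consequence of (1) together with the containment $\SBs(\mu^*V_{\sbullet};A',a)\supset\SBs(\mu^*L-(1/a)A')\supset\Exc(\mu)$ obtained from the negativity lemma \cite[Lemma~3.39(2)]{Kollar_Mori}; the reverse inclusion in (2) is then handled by citing \cite[Proposition~2.3]{Bou_Bro_Pac13}, which produces $A$ on $X$, $A'$ on $X'$ with $\Bs(\mu^*A-A')=\Exc(\mu)$. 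You instead prove (2) directly as three inclusions and then derive (1) from it. Your first and third steps parallel the paper's use of Lemma~\ref{lem:augbs}(1),(2),(4); the interesting divergence is the second step, where in place of the negativity lemma you give a self-contained Kodaira-map argument: if $s\in\Lambda(\mu^*V_{\sbullet};A',a)$ with $s(x')\neq 0$ and $pA'$ is very ample, the map defined by $t_i\otimes s^{\otimes p}=\mu^*v_i$ on $\{s\neq 0\}$ is simultaneously a closed immersion (projectively it is $\Phi_{pA'}$) and constant along fibers of $\mu$ (it factors through $\mu$ by normality of $X$), which is impossible on a positive-dimensional fiber component through $x'\in\Exc(\mu)$. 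This argument is essentially a special case of Lemma~\ref{lem:Kodairatype} applied to $\mu^*V_{\sbullet}$, combined with the observation that $\Phi_{(\mu^*V)_m}$ factors through $\mu$, so you might as well quote that lemma to shorten the step and emphasize the parallel. Your third step replaces the paper's citation of $\Bs(\mu^*A-A')=\Exc(\mu)$ with the identity $\Bsp(\mu^*A_0)=\Exc(\mu)$, which is a piece of Nakamaye-type theory (in this paper's setting, essentially Theorem~\ref{thm:charaugbs}(2) applied on $X'$). One caveat worth noting: Theorem~\ref{thm:charaugbs} is stated for normal varieties, and the lemma under discussion allows $X'$ arbitrary, so if you invoke that theorem directly you should either first pass to a normalization $X''\to X'\to X$ and compare exceptional loci, or appeal to a version of Nakamaye valid in this generality; the paper's choice of \cite[Proposition~2.3]{Bou_Bro_Pac13} sidesteps this by working with the ordinary base locus of a fixed line bundle.
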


\begin{proof}
(1): The inclusion $\supset$ is obvious by definition.
The reverse follows from the following claim.

\begin{claim}\label{clm:inclusion}
For any $\mu:X'\to X$, we have $\Bsp(V_{\sbullet})\subset\mu\left(\Bsp(\mu^*V_{\sbullet})\right)$.
\end{claim}

\begin{proof}[Proof of Claim~\ref{clm:inclusion}]
Let $A$ be an ample line bundle on $X$.
Suppose that $x\notin\mu\left(\SBs(\mu^*V_{\sbullet};A',a)\right)$ for a positive integer $a$ and an ample line bundle $A'$ on $X'$ such that $A'-\mu^*A$ is ample.
Since
\[
 \mu^{-1}\SBs(V_{\sbullet};A,a)=\SBs(\mu^*V_{\sbullet};\mu^*A,a)\subset\SBs(\mu^*V_{\sbullet};A',a)
\]
by Lemma~\ref{lem:augbs}(1),(4), we have $x\notin\SBs(V_{\sbullet};A,a)$.
\end{proof}

(2): Suppose that $x'\notin\SBs(\mu^*V_{\sbullet};A',a)$ for an ample line bundle $A'$ on $X'$ and an $a\geq 1$.
Since $\SBs(\mu^*V_{\sbullet};A',a)\supset\SBs\left(\mu^*L-\frac{1}{a}A'\right)\supset\Exc(\mu)$ (see \cite[Lemma~3.39(2)]{Kollar_Mori}), $x'\notin\Exc(\mu)$.
Thus by the assertion (1), we have $\mu(x')\notin\Bsp(V_{\sbullet})$.

To show the reverse, we assume that $x'\notin\mu^{-1}\Bsp(V_{\sbullet})\cup\Exc(\mu)$.
By \cite[Proposition~2.3]{Bou_Bro_Pac13}, we can find an ample line bundle $A$ on $X$, an ample line bundle $A'$ on $X'$, and an $a\geq 1$ such that $x'\notin\Bs\left(\mu^*A-A'\right)=\Exc(\mu)$ and that $x'\notin\SBs(\mu^*V_{\sbullet};\mu^*A,a)=\mu^{-1}\Bsp(V_{\sbullet})$.
Thus we have $x'\notin\SBs(\mu^*V_{\sbullet};A',a)$ by Lemma~\ref{lem:augbs}(1).
\end{proof}

\begin{lemma}\label{lem:excep}
\begin{enumerate}
\item[\textup{(1)}] For positive integers $m,n$, we have
\[
 \Exc(\Phi_{V_{mn}})\cup\Bs(V_{mn})\subset\Exc(\Phi_{V_m})\cup\Bs(V_m).
\]
\item[\textup{(2)}] For every sufficiently divisible $n$, we have
\[
 \Exc(\Phi_{V_n})\cup\Bs(V_n)=\bigcap_{m\geq 1}\left(\Exc(\Phi_{V_m})\cup\Bs(V_m)\right).
\]
\end{enumerate}
\end{lemma}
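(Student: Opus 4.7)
The plan is to route the comparison of $\Phi_{V_{mn}}$ and $\Phi_{V_m}$ through the auxiliary subsystem $W_n := \Image(\Sym^n V_m \to V_{mn}) \subset V_{mn}$ coming from the $n$-fold multiplication. A direct check gives $\Bs(W_n) = \Bs(V_m)$, since a product $s_1\cdots s_n$ vanishes at $x$ iff some factor does, and $s \in V_m$ with $s(x) \neq 0$ yields $s^n \in W_n$ with $s^n(x) \neq 0$. Since $W_n \subset V_{mn}$, this also yields $\Bs(V_{mn}) \subset \Bs(V_m)$. The compatibility $(s_1 \cdots s_n)(x) = s_1(x) \cdots s_n(x)$ further identifies, on $X \setminus \Bs(V_m)$, the map $\Phi_{W_n}$ with $\nu_n \circ \Phi_{V_m}$, where $\nu_n$ is the $n$-th Veronese $\PP_k(V_m) \hookrightarrow \PP_k(\Sym^n V_m)$ followed by the closed inclusion $\PP_k(W_n) \hookrightarrow \PP_k(\Sym^n V_m)$ induced by $\Sym^n V_m \twoheadrightarrow W_n$. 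As both are closed immersions, $\Phi_{W_n}$ is an immersion at $x \notin \Bs(V_m)$ iff $\Phi_{V_m}$ is.

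The inclusion $W_n \hookrightarrow V_{mn}$ then induces a linear projection $\pi: \PP_k(V_{mn}) \dashrightarrow \PP_k(W_n)$, regular at $\Phi_{V_{mn}}(x)$ precisely when $x \notin \Bs(W_n) = \Bs(V_m)$, and satisfying $\pi \circ \Phi_{V_{mn}} = \Phi_{W_n}$ there. Hence for $x \notin \Exc(\Phi_{V_m}) \cup \Bs(V_m)$, the map $\Phi_{W_n}$ is an immersion in a neighborhood of $x$, and the factorization $\Phi_{W_n} = \pi \circ \Phi_{V_{mn}}$ forces $\Phi_{V_{mn}}$ itself to be an immersion in a (possibly smaller) neighborhood of $x$, since injectivity on points and on tangent vectors are inherited by the factor. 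Combined with $x \notin \Bs(V_{mn})$, this proves (1). For (2), set $E_n := \Exc(\Phi_{V_n}) \cup \Bs(V_n)$; by (1), $m \mid n$ implies $E_n \subset E_m$, so the descending chain $\{E_{k!}\}_{k \geq 1}$ of closed subsets of the Noetherian space $X$ stabilizes at some $E_{n_0!}$. For any $m \geq 1$, part (1) gives $E_{n_0! m} \subset E_m$ while stabilization gives $E_{n_0! m} = E_{n_0!}$, so $E_{n_0!} \subset \bigcap_{m \geq 1} E_m$; the reverse inclusion is trivial, and the equality persists for every $n$ with $n_0! \mid n$.

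The main subtlety is verifying that $\Phi_{V_{mn}}$ is actually a locally closed immersion near $x$, given only that its composition with $\pi$ is: injectivity and unramifiedness pass effortlessly to the factor, but the image being locally closed requires picking an affine chart of $\PP_k(V_{mn})$ adapted to a splitting $V_{mn} = W_n \oplus W_n'$. On such a chart $\pi$ is linear projection to the $W_n$-coordinates; on a small enough affine neighborhood $U$ of $x$, $\Phi_{V_{mn}}|_U$ realizes itself as the graph of the $W_n'$-coordinate functions over the locally closed subscheme $\Phi_{W_n}(U)$, and hence is a locally closed immersion.
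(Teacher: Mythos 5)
Your proof is correct and takes essentially the same approach as the paper: you route the comparison through the multiplication image $W_n=\Image(\Sym^n V_m\to V_{mn})$ and the Veronese, whereas the paper uses $V_m^{\otimes n}\to V_{mn}$ and the Segre embedding of the diagonal, but both boil down to the same factorization $\pi\circ\Phi_{V_{mn}}=\Phi_{W_n}$ and the standard fact that a morphism is an immersion once its composite with a separated morphism is (which you verify by hand via the graph/splitting argument). For (2), your stabilizing chain $\{E_{k!}\}$ and the paper's finite-intersection argument via Noetherianness of $X$ are equivalent.
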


\begin{proof}
(1): Set $Q:=\Coker\left(V_m^{\otimes n}\to V_{mn}\right)$.
Considering the commutative diagram
\[
\xymatrix{ X\setminus\left(\Exc(\Phi_{V_m})\cup\Bs(V_m)\right) \ar[rr]^-{\Phi_{V_{mn}}} \ar[d]_-{\Phi_{V_m}^{\times n}} && \PP_k(V_{mn})\setminus\PP_k(Q) \ar[d] \\ \PP_k(V_m)^{\times n} \ar[rr]^-{\text{Segre emb.}} && \PP_k(V_m^{\otimes n}),
}
\]
we know that $\Phi_{V_{mn}}:X\setminus\left(\Exc(\Phi_{V_m})\cup\Bs(V_m)\right)\to\PP_k(V_{mn})$ is an immersion.

(2): Since $X$ is a Noetherian topological space, one can find positive integers $m_1,\dots,m_r$ such that
\[
 \bigcap_{m\geq 1}\left(\Exc(\Phi_{V_m})\cup\Bs(V_m)\right)=\bigcap_{i=1}^r\left(\Exc(\Phi_{V_{m_i}})\cup\Bs(V_{m_i})\right).
\]
Thus by (1) we have
\[
 \bigcap_{m\geq 1}\left(\Exc(\Phi_{V_m})\cup\Bs(V_m)\right)=\Exc(\Phi_{V_{am_1\cdots m_r}})\cup\Bs(V_{am_1\cdots m_r})
\]
for every $a\geq 1$.
\end{proof}

\begin{lemma}\label{lem:Kodairatype}
For any graded $k$-linear series $V_{\sbullet}$ belonging to $L$, we have
\[
 \Bsp(V_{\sbullet})\supset\bigcap_{m\geq 1}\left(\Exc(\Phi_{V_m})\cup\Bs(V_m)\right).
\]
\end{lemma}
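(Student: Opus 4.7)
The plan is to prove the contrapositive: assuming $x \notin \Bsp(V_{\sbullet})$, exhibit an integer $m \geq 1$ with $x \notin \Exc(\Phi_{V_m}) \cup \Bs(V_m)$. By Lemma~\ref{lem:augbs}(2) I fix an ample line bundle $A$ on $X$ together with $a, n \gg 1$ so that $\Bsp(V_{\sbullet}) = \Bs\Lambda(V_{\sbullet}; nA, na)$, and extract an $s \in \Lambda(V_{\sbullet}; nA, na)$ with $s(x) \neq 0$. The definition of $\Lambda$ guarantees $\Image(\Hz(pnA) \xrightarrow{\otimes s^{\otimes p}} \Hz(pnaL)) \subset V_{pna}$ for every sufficiently large $p$; enlarging $p$ I may moreover assume $pnA$ is very ample. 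Set $m := pna$ and $W := \Image(\Hz(pnA) \xrightarrow{\otimes s^{\otimes p}} V_m) \subset V_m$.

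Since $s \neq 0$, multiplication by $s^{\otimes p}$ yields an isomorphism $\Hz(pnA) \xrightarrow{\sim} W$, and projectively an isomorphism $\iota: \PP_k(\Hz(pnA)) \xrightarrow{\sim} \PP_k(W)$. On the open set $U_x := X \setminus \Supp(\div s)$ the section $s^{\otimes p}$ is nowhere vanishing, so $\Phi_W|_{U_x} = \iota \circ \Phi_{pnA}|_{U_x}$. Because $pnA$ is very ample $\Phi_{pnA}$ is a closed immersion, hence $\Phi_W|_{U_x}$ is a locally closed immersion of $U_x$ onto its image $Z := \Phi_W(U_x) \subset \PP_k(W)$. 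In particular $U_x \cap \Bs(W) = \emptyset$, and since $W \subset V_m$ this forces $x \notin \Bs(V_m)$.

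The crux is to lift this to $\Phi_{V_m}|_{U_x}$ itself. The inclusion $W \hookrightarrow V_m$ induces a linear projection $\pi: \PP_k(V_m) \setminus \PP_k(W^{\perp}) \to \PP_k(W)$, and one checks $\pi \circ \Phi_{V_m}|_{U_x} = \Phi_W|_{U_x}$. Put $Y := \pi^{-1}(Z)$, so $\Phi_{V_m}|_{U_x}$ factors as a morphism $U_x \to Y$. Using the isomorphism $\Phi_W|_{U_x}: U_x \xrightarrow{\sim} Z$, the composite
\[
\sigma := \Phi_{V_m}|_{U_x} \circ (\Phi_W|_{U_x})^{-1}: Z \to Y
\]
is a section of the separated morphism $\pi|_Y: Y \to Z$, and hence a closed immersion. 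Consequently $\Phi_{V_m}|_{U_x} = \sigma \circ \Phi_W|_{U_x}$ is a composition of a locally closed immersion with a closed immersion, so a locally closed immersion itself, yielding $x \notin \Exc(\Phi_{V_m})$.

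The main obstacle is this final section argument: one must recognise $\Phi_{V_m}|_{U_x}$ as a section of the linear projection $\pi$ above $Z$, so that separatedness of $\pi$ automatically forces a closed immersion. Once this is in place, the remaining steps (constructing $W$ from $s$, and identifying $\Phi_W|_{U_x}$ with $\iota \circ \Phi_{pnA}|_{U_x}$ on the trivialising open $U_x$) are straightforward manipulations of Kodaira maps and base loci.
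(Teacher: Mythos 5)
Your proof is correct and rests on the same core mechanism as the paper's: exhibit $\Phi_{V_m}$ as a factor, through a linear projection, of a map already known to be an immersion, and conclude that $\Phi_{V_m}$ is itself an immersion on the relevant open set. The executions differ in how that known immersion is built. The paper fixes an entire basis $s_0,\dots,s_r$ of $\Lambda(V_{\sbullet};A,a)$, forms $\Phi_{pA}\times\Phi$ (with $\Phi$ the morphism to $\PP_k^r$ defined by the $s_i^{\otimes p}$), and passes through the Segre embedding; this produces a \emph{single} $m=pa$ for which $\Phi_{V_m}$ is an immersion over all of $X\setminus\Bsp(V_{\sbullet})$ at once, the uniform version exploited later in Theorem~\ref{thm:charaaugbs}. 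You work pointwise, using one section $s$ with $s(x)\neq 0$; this eliminates the Segre embedding entirely (with a single section, $\Phi$ collapses and $W\cong\Hz(pnA)$), at the cost of letting $m$ depend on $x$ --- perfectly adequate for the lemma, whose intersection formulation is pointwise. Your ``a section of a separated morphism is a closed immersion'' argument is a transparent way to make explicit the deduction the paper leaves implicit in its diagram, namely that if $g\circ f$ is an immersion and $g$ is separated then $f$ is an immersion. Two small repairs: $s$ should be drawn from the \emph{generating set} of $\Lambda(V_{\sbullet};nA,na)$, i.e.\ from sections actually satisfying $\Image\bigl(\Hz(pnA)\xrightarrow{\otimes s^{\otimes p}}\Hz(pnaL)\bigr)\subset V_{pna}$ for $p\gg1$, since this condition is not closed under $k$-linear combination; this is harmless because $\Bs\Lambda$ equals the base locus of the generating set (the paper has the same slip with its basis). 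And in the paper's Grothendieck convention for $\PP_k(\cdot)$, the indeterminacy locus of your projection is $\PP_k(V_m/W)$ rather than $\PP_k(W^{\perp})$.
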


\begin{proof}
We choose a very ample line bundle $A$ on $X$ and an $a\geq 1$ such that $\SBs(V_{\sbullet})=\Bs(V_a)$ and $\Bsp(V_{\sbullet})=\Bs\Lambda(V_{\sbullet};A,a)$.
Fix a basis $s_0,\dots,s_r$ for $\Lambda(V_{\sbullet};A,a)$ and take a $p\geq 1$ such that
\[
 \Image\left(\Hz(pA)\xrightarrow{\otimes s_i^{\otimes p}}\Hz(paL)\right)\subset V_{pa}
\]
for all $i$.
Denote the $k$-morphism defined by $s_0^{\otimes p},\dots,s_r^{\otimes p}$ by $\Phi:X\setminus\Bs\Lambda(V_{\sbullet};A,a)\to\PP_k^r$ and set $Q:=\Coker\left(\Hz(pA)\otimes_k\aSpan{k}{s_0^{\otimes p},\dots,s_r^{\otimes p}}\to V_{pa}\right)$.
Considering a commutative diagram
\[
\xymatrix{ X\setminus\Bs(V_{pa}) \ar[rr]^-{\Phi_{V_{pa}}} & & \PP_k(V_{pa})\setminus\PP_k(Q) \ar[d] \\
 X\setminus\Bs\Lambda(V_{\sbullet};A,a) \ar[r]^-{\Phi_{pA}\times\Phi} \ar[u] & \PP_k(\Hz(pA))\times_k\PP_k^r \ar[r]^-{\text{Segre}} & \PP_k(\Hz(pA)\otimes_k\aSpan{k}{s_0^{\otimes p},\dots,s_r^{\otimes p}})
}
\]
we have that $\Phi_{V_{pa}}$ is an immersion over $X\setminus\Bsp(V_{\sbullet})$.
\end{proof}

The following was proved in \cite{Ein_Laz_Mus_Nak_Pop06,Bou_Cac_Lop13}.

\begin{theorem}\label{thm:charaugbs}
Suppose that $X$ is normal, and let $L$ be a line bundle on $X$.
\begin{enumerate}
\item[\textup{(1)}] $\Bsp(L)$ is characterized as the minimal Zariski closed subset of $X$ such that the restriction of the Kodaira map
\[
 \Phi_{mL}:X\setminus\Bs(mL)\to\PP_k(\Hz(mL))
\]
to $X\setminus\Bsp(L)$ is an immersion for every sufficiently divisible $m\gg 1$.
\item[\textup{(2)}] We have
\[
 \Bsp(L)=\bigcup_{\substack{Z\subset X, \\ \volq{X|Z}(L)=0}}Z.
\]
\end{enumerate}
\end{theorem}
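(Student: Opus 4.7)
The strategy is to reduce both assertions to the single characterization
\[
 \Bsp(L)=\Bs(nL)\cup\Exc(\Phi_{nL})\quad\text{for every sufficiently divisible } n.
\]
The inclusion $\supset$ is immediate from Lemma~\ref{lem:Kodairatype} and Lemma~\ref{lem:excep}(2), and it already shows that $\Phi_{nL}|_{X\setminus\Bsp(L)}$ is an immersion for every sufficiently divisible $n$. For the reverse inclusion, I would argue contrapositively: given a point $x\not\in\Bs(nL)\cup\Exc(\Phi_{nL})$, I must exhibit an ample line bundle $A$, an $a\geq 1$, and a section $s\in\Lambda(R(L);A,a)$ with $s(x)\neq 0$. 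The plan is to set $Y:=\overline{\Phi_{nL}(X\setminus\Bs(nL))}\subset\PP_k(\Hz(nL))$, choose $p\gg 1$ and a section $\bar{s}$ of $\mathcal{O}_Y(p)$ that does not vanish at $\Phi_{nL}(x)$ but vanishes on $Y\setminus\Phi_{nL}(U)$ for a suitable affine neighborhood $U$ of $x$, then lift $\bar{s}$ to $\Sym^p\Hz(nL)$ and view it as a rational section of $pnL$; this section is regular on $X\setminus\Bs(nL)$ and extends to $X$ by normality. Comparison with a well-chosen ample $A$ (whose pullback to the graph $\Gamma\subset X\times_k\PP$ controls the gap $p^*(nL)-q^*\mathcal{O}(1)$, by an application of Lemma~\ref{lem:augbs_main}(2) to $q:\Gamma\to Y$) will then place $\bar{s}$ in $\Lambda(R(L);A,pn)$. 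Minimality in part~(1) follows: any closed $T$ on whose complement all $\Phi_{nL}$ are immersions must contain $\Bs(nL)\cup\Exc(\Phi_{nL})$, whence $\Bsp(L)\subset T$.

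For part~(2), granted part~(1), it suffices to establish the equivalence
\[
 Z\subset\Bsp(L)\quad\Longleftrightarrow\quad \volq{X|Z}(L)=0
\]
for every closed subvariety $Z\subset X$. The direction $\Leftarrow$ goes as follows: if $Z\not\subset\Bsp(L)$, pick an ample $A$, an $a\geq 1$ and $s\in\Hz(aL-A)$ with $s|_Z\neq 0$; then multiplication by $s^{\otimes m}$ embeds the image of $\Hz(mA)\to\Hz(mA|_Z)$ into the image of $\Hz(maL)\to\Hz(maL|_Z)$, yielding $\volq{X|Z}(L)\geq\vol(A|_Z)/a^{\dim Z}>0$ (using that $A|_Z$ is ample on $Z$). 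The converse is the deeper part: if $Z\subset\Bsp(L)$, then every element of $\Lambda(R(L);A,a)$ vanishes on $Z$, and one invokes the Kaveh--Khovanskii / Boucksom theory of Okounkov bodies to conclude that the restricted graded series $\bigoplus_m\Image(\Hz(mL)\to\Hz(mL|_Z))$ has Okounkov body of Lebesgue measure zero. Taking the union over $Z$, together with the fact that $\Bsp(L)$ is itself the finite union of its irreducible components (each of which satisfies $\volq{X|W}(L)=0$ by the already-proved equivalence), gives the asserted equality.

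The main obstacle is the reverse inclusion in part~(1): starting from the purely local information that $\Phi_{nL}$ is an immersion near a point $x$, one must produce a global section of some $aL-A$ that is non-zero at $x$. Getting from sections on the projective image $Y$ back to sections on $X$ while controlling their poles along $\Bs(nL)$ and their pole order relative to an auxiliary ample $A$ is delicate; normality of $X$ is what makes the extension across the base locus possible, and the graph construction (combined with Lemma~\ref{lem:augbs_main}(2)) is what lets us compare $\Bsp$ on $X$ with a genuinely ample situation on the resolution.
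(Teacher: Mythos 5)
The paper proves Theorem~\ref{thm:charaugbs} simply by citing \cite[Theorems~A and~B]{Bou_Cac_Lop13} together with a short base-change argument passing from $k$ to $\Hz(\mathcal{O}_X)$, so you are in effect reproving the cited results from scratch --- a legitimately different route. Your reduction to the identity $\Bsp(L)=\Bs(nL)\cup\Exc(\Phi_{nL})$ for sufficiently divisible $n$ is the right target, and the inclusion ``$\supset$'' does follow from Lemmas~\ref{lem:Kodairatype} and~\ref{lem:excep}(2). However, both of the harder directions contain genuine gaps.

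In part~(1), the explicit-section construction does not do what you need. Lifting $\bar s\in\Hz(\mathcal{O}_Y(p))$ through $\Sym^p_k\Hz(nL)\twoheadrightarrow\Hz(\mathcal{O}_Y(p))$ and multiplying already produces a global section $s\in\Hz(pnL)$; there is no rational section and no extension across $\Bs(nL)$ to perform. More importantly, this $s$ does not thereby lie in $\Lambda(R(L);A,pn)=\Hz(pnL-A)$ for an ample $A$: the preimage under $\Phi_{nL}$ of an ample divisor on $Y$ is not ample on $X$ when $\Phi_{nL}$ is not finite, so the vanishing you imposed on $\bar s$ does not produce an ample piece of $\div(s)$. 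The graph construction you mention in your final sentence is the right remedy, but it must be run at the level of $\Bsp$ rather than of sections: on a normalization $\Gamma$ of the graph, with projections $p,q$, one has $p^*(nL)=q^*\mathcal{O}_{\tilde Y}(1)+E$ with $E$ effective supported over $\Bs(nL)$; combining $\Bsp(q^*\mathcal{O}_{\tilde Y}(1)+E)\subset\Bsp(q^*\mathcal{O}_{\tilde Y}(1))\cup\Supp E$ with Lemma~\ref{lem:augbs_main}(2) applied to both $p$ and $q$ gives $p^{-1}\Bsp(L)\cup\Exc(p)\subset\Exc(q)\cup\Supp E$, whence $\Bsp(L)\subset\Bs(nL)\cup\Exc(\Phi_{nL})$. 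In part~(2), the implication $Z\not\subset\Bsp(L)\Rightarrow\volq{X|Z}(L)>0$ is fine, but the converse --- precisely the content of \cite[Theorem~B]{Bou_Cac_Lop13} --- is left as a black box. ``Every element of $\Lambda(R(L);A,a)$ vanishes on $Z$'' together with a general appeal to Okounkov bodies does not by itself yield $\kappa_{X|Z}(L)<\dim Z$ for an irreducible component $Z$ of $\Bsp(L)$ not contained in $\SBs(L)$; passing from that vanishing to the degeneracy of the restricted algebra $\bigoplus_m\Image\left(\Hz(mL)\to\Hz(mL|_Z)\right)$ is exactly where the work in \cite{Bou_Cac_Lop13} lies, and your sketch omits it.
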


\begin{proof}
If $k=\Hz(\mathcal{O}_X)$, then by passing to the algebraic closure $\overline{k}$, we can apply \cite[Theorems~A and B]{Bou_Cac_Lop13}.
In general, since the natural morphism $\PP_{\Hz(\mathcal{O}_X)}(\Hz(mL))\to\PP_k(\Hz(mL))\times_k\Spec(\Hz(\mathcal{O}_X))$ is a closed immersion, we can easily deduce the result from the case of $k=\Hz(\mathcal{O}_X)$.
\end{proof}

\begin{remark}
In Theorem~\ref{thm:charaaugbs} and Corollary~\ref{cor:propertiesarestvol}, we show analogous results of Theorem~\ref{thm:charaugbs} under the conditions that $X$ is a normal projective variety over a number field $K$ and $V_{\sbullet}=\bigoplus_{m\geq 0}\aSpan{K}{\aHzst(m\overline{L})}$ for an $\overline{L}\in\aPic(X)$.
\end{remark}

\section{A result of Zhang--Moriwaki}\label{sec:reprove}

In this section, we give a simple proof to the Zhang--Moriwaki theorem (see \cite[Corollary~B]{MoriwakiFree} and Theorem~\ref{thm:reprove} below).
The proof is independent of the previous sections except Lemma~\ref{lem:Kodairatype}.
Our method can recover \cite[Theorem~A]{MoriwakiFree} but not so powerful as recovering the arithmetic Nakai-Moishezon criterion (see \cite[Theorem~3.1]{MoriwakiFree}).
Let $X$ be a projective variety over a number field $K$, and let $L$ be a line bundle on $X$.

\begin{lemma}\label{lem:Veronese}
Let $\overline{V}_{\sbullet}$ be an adelically normed graded $K$-linear series belonging to $L$.
If $V_{\sbullet}$ is Noetherian, then the following are equivalent.
\begin{enumerate}
\item[\textup{(1)}] $V_m=\aSpan{K}{\aHzst(\overline{V}_m)}$ for every $m\gg 1$.
\item[\textup{(2)}] For an integer $a\geq 1$, $V_{ma}=\aSpan{K}{\aHzst(\overline{V}_{ma})}$ for every $m\gg 1$.
\end{enumerate}
\end{lemma}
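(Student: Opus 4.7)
The implication $(1) \Rightarrow (2)$ is immediate, since the equality in $(1)$ holding for all $m \gg 1$ trivially holds for all $m$ of the form $ma$ with $m \gg 1$. I will focus on the converse $(2) \Rightarrow (1)$, whose plan is to reduce every residue class modulo $a$ to the Veronese case by a Noetherian module-theoretic argument.

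Since $V_{\sbullet}$ is Noetherian, i.e.\ a finitely generated graded $K$-algebra, it is finitely generated as a graded module over its $a$-th Veronese subring $V^{(a)}_{\sbullet} := \bigoplus_{m \geq 0} V_{ma}$. I pick homogeneous $V^{(a)}$-module generators $t_1, \dots, t_r$ with $\deg t_j = e_j$, and by replacing each $t_j$ with a suitable nonzero $K$-multiple (which does not affect module generation), arrange that $\|t_j\|_v^{\overline{V}_{e_j}} \leq 1$ at every finite place $v$, i.e.\ $t_j \in \aHzf(\overline{V}_{e_j})$. For $n$ sufficiently large with $n \equiv k \pmod{a}$, the module decomposition yields
\[
V_n = \sum_{j :\, e_j \equiv k \!\!\pmod{a}} V_{n - e_j} \cdot t_j,
\]
and each $V_{n - e_j}$ lies in the Veronese, so by hypothesis $(2)$ it equals $\aSpan{K}{\aHzst(\overline{V}_{n - e_j})}$ once $n$ is large enough. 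Thus every element of $V_n$ is a $K$-linear combination of products $\sigma \cdot t_j$ with $\sigma \in \aHzst(\overline{V}_{n - e_j})$.

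The technical crux is then to promote each such product $\sigma t_j$ into $\aSpan{K}{\aHzst(\overline{V}_n)}$. Submultiplicativity of the adelic norms automatically gives $\sigma t_j \in \aHzf(\overline{V}_n)$, but $\|\sigma t_j\|_\infty < \|t_j\|_\infty$ may exceed $1$. To handle this, I would fix a nonzero $s_0 \in \aHzst(\overline{V}_{m_1 a})$ (guaranteed by $(2)$ in some nontrivial Veronese degree), whose powers $s_0^p$ have archimedean norm tending to zero, so that $s_0^p \sigma t_j \in \aHzst(\overline{V}_{n + p m_1 a})$ for $p$ large. \textbf{The main obstacle} is then the descent step: transferring this strict smallness in degree $n + p m_1 a$ back to an expression of $\sigma t_j$ itself as a $K$-linear combination of strictly small sections in degree $n$. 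I expect this to require a careful adelic lattice argument on $\aHzf(\overline{V}_n)$ together with the product formula, exploiting the finiteness of the generator set $\{t_j\}$ so that only finitely many obstructions need uniform control.
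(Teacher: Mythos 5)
Your $(1)\Rightarrow(2)$ is fine, and your opening move for $(2)\Rightarrow(1)$ — writing $V_n=\sum_j V_{n-e_j}t_j$ using finitely many homogeneous module generators $t_j$ of $V_{\sbullet}$ over the Veronese subring, after normalizing $t_j\in\aHzf(\overline{V}_{e_j})$ — is exactly how the paper starts. But you have correctly diagnosed and not resolved a genuine gap, and the way you propose to resolve it cannot work.

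The problem with the descent step you sketch is structural: you want to conclude, from $s_0^{\otimes p}\otimes\sigma\otimes t_j\in\aHzst(\overline{V}_{n+pm_1a})$, that $\sigma\otimes t_j\in\aSpan{K}{\aHzst(\overline{V}_n)}$. There is no mechanism for this. $\aHzst(\overline{V}_n)$ is just a finite subset of $V_n$ determined by norm inequalities in degree $n$; there is no map from degree $n+pm_1a$ back to degree $n$, no cancellation of $s_0^{\otimes p}$, and no adelic lattice or product-formula argument that will produce one. Any attempt to ``divide by $s_0^{\otimes p}$'' takes you out of the graded algebra. So the obstacle you flag in boldface is fatal to your route, not a technicality.

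The fix is to stop invoking hypothesis (2) at the coarse level ``$V_{n-e_j}$ is spanned by strictly small sections'' and instead exploit finer control of how those sections are built. The paper first replaces $a$ by a high multiple so that, by the standard Bourbaki lemma on graded rings, the Veronese $V_{\sbullet}^{(a)}$ is generated over $V_0$ by its degree-one piece $V_a$, and so that (by hypothesis (2)) $V_a=\aSpan{K}{\aHzst(\overline{V}_a)}$. Then every element of a high Veronese piece $V_{ma}$ is a $K$-linear combination of monomials $\alpha\otimes s_1^{\otimes i_1}\otimes\cdots\otimes s_q^{\otimes i_q}$ with $i_1+\cdots+i_q=m$, where $\{s_1,\dots,s_q\}=\aHzst(\overline{V}_a)\setminus\{0\}$ is a fixed finite set with $c:=\max_i\|s_i\|^{\overline{V}_a}_{\infty}<1$, and $\alpha$ ranges over a fixed finite set of generators of $V_0$. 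Multiplying by the finitely many $t_j$, the archimedean norm of the full monomial is bounded by $c^{m}\cdot\max_{k,j}\|\alpha_k\otimes t_j\|_{\infty}$, which is $<1$ once $m$ is large (while the norms at finite places stay $\leq 1$). Thus for $n\gg1$ every generating monomial of $V_n$ already lies in $\aHzst(\overline{V}_n)$, with no descent needed. The ingredient your proposal is missing is precisely the replacement of $a$ by a multiple for which $V_{\sbullet}^{(a)}$ is generated in degree one, which turns ``strictly small of high degree'' into ``a long product of a fixed finite set of strictly small degree-$a$ sections'' whose archimedean norm you can make as small as you like.
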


\begin{proof}
The implication (1) $\Rightarrow$ (2) is clear and we are going to show (2) $\Rightarrow$ (1).

\begin{claim}\label{clm:Veronese}
The Veronese subalgebra
\begin{equation}
 V_{\sbullet}^{(a)}:=\bigoplus_{m\geq 0}V_{ma}
\end{equation}
is also Noetherian and $V_{\sbullet}$ is a finitely generated $V_{\sbullet}^{(a)}$-module.
\end{claim}

\begin{proof}
This is well-known (see \cite[Chap.\ III, \S 1.3, Proposition~2]{BourbakiCA72}).
\end{proof}

By \cite[Chap.\ III, \S 1.3, Proposition~3]{BourbakiCA72}, we can replace $a$ with a high multiple of $a$ and assume that $V_{\sbullet}^{(a)}$ is generated by $V_a$ over $V_0$ and $V_a=\aSpan{K}{\aHzst(\overline{V}_{a})}$.

Let $\alpha_1,\dots,\alpha_p\in V_0$ be generators of $V_0$ over $K$ and let
\[
 \aHzst(\overline{V}_{a})\setminus\{0\}=\left\{s_1,\dots,s_q\right\}.
\]
By Claim~\ref{clm:Veronese}, we can take generators $t_1\in V_{n_1},\dots,t_r\in V_{n_r}$ of $V_{\sbullet}$ as a $V_{\sbullet}^{(a)}$-module.
Then
\[
 V_m=\sum_{\substack{a(i_1+\dots+i_q)+n_j=m}}(K\alpha_1+\dots+K\alpha_q)s_1^{\otimes i_1}\otimes\cdots\otimes s_q^{\otimes i_q}\otimes t_j
\]
for every $m\gg 1$.
If $i_1+\dots+i_q$ is sufficiently large, we can see that
\[
 s_1^{\otimes i_1}\otimes\cdots\otimes s_q^{\otimes i_q}\otimes (\alpha_kt_j)\in\aSpan{K}{\aHzst(\overline{V}_m)}
\]
for every $k,j$.
So we have $V_m=\aSpan{K}{\aHzst(\overline{V}_m)}$ for every $m\gg 1$.
\end{proof}

\begin{theorem}\label{thm:reprove}
Let $X$ be a projective variety over a number field $K$, and let $L$ be a line bundle on $X$.
Let $\overline{V}_{\sbullet}$ be an adelically normed graded $K$-linear series belonging to $L$.
Suppose the following.
\begin{enumerate}
\item[\textup{(a)}] $V_{\sbullet}$ is Noetherian.
\item[\textup{(b)}] $\aSBs(\overline{V}_{\sbullet})=\emptyset$.
\end{enumerate}
Then $V_m=\aSpan{K}{\aHzst(\overline{V}_m)}$ for every $m\gg 1$.
\end{theorem}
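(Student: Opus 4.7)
My plan is to proceed in three stages: a Veronese reduction, a geometric finiteness claim for the subring generated by base-point-free strictly small sections, and a norm estimate that upgrades an $R$-module description of $V_\sbullet$ into a statement about strictly small sections.

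\emph{Stage 1 (Reduction).} By Lemma~\ref{lem:Veronese} it suffices to exhibit one integer $a\geq 1$ with $V_{ma}=W_{ma}$ for all $m\gg 1$, writing $W_m:=\aSpan{K}{\aHzst(\overline{V}_m)}$. Noetherianity of $V_\sbullet$ (the Bourbaki fact cited in the proof of Lemma~\ref{lem:Veronese}) yields $a_0$ with $V_\sbullet^{(a_0)}=K[V_{a_0}]$. On the other hand, $\aSBs(\overline{V}_\sbullet)=\bigcap_m\Bs W_m=\emptyset$ together with the topological Noetherianity of $X$ gives $\Bs W_{m_1}\cap\cdots\cap\Bs W_{m_p}=\emptyset$ for some $m_1,\ldots,m_p$. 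Since tensor products of strictly small sections are strictly small, $W_{m_1\cdots m_p}$ contains the image of $W_{m_i}^{\otimes(m_1\cdots m_p/m_i)}$, and the latter has base locus equal to $\Bs W_{m_i}$; hence $\Bs W_{m_1\cdots m_p}\subset\bigcap_i\Bs W_{m_i}=\emptyset$. Taking $a$ to be any common multiple of $a_0$ and $m_1\cdots m_p$, both $V_\sbullet^{(a)}=K[V_a]$ and $\Bs W_a=\emptyset$ hold.

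\emph{Stage 2 (Module-finiteness).} Pick strictly small sections $s_1,\ldots,s_r\in\aHzst(\overline{V}_a)$ with no common zero on $X$, and set $R:=K[s_1,\ldots,s_r]\subset V_\sbullet^{(a)}$. The key structural claim is that $V_\sbullet^{(a)}$ is a finite $R$-module. To see it, put $Y:=\Proj V_\sbullet^{(a)}\subset\PP_K(V_a)$; since $V_\sbullet^{(a)}=K[V_a]$, the closed subscheme $Y$ is defined by the kernel of $\Sym^\sbullet V_a\twoheadrightarrow V_\sbullet^{(a)}\subset\bigoplus_m\Hz(X,maL)$, which is exactly the ideal of $\Phi_{V_a}(X)$; thus $Y=\Phi_{V_a}(X)$ and $\mathcal{O}_Y(1)$ is very ample. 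Put $Z:=\Proj R\subset\PP_K^{r-1}$. The inclusion $R\hookrightarrow V_\sbullet^{(a)}$ defines a morphism $\pi\colon Y\to Z$, everywhere defined because the no-common-zero hypothesis on $X=\Phi_{V_a}^{-1}(Y)$ transfers to $Y$. Moreover $\pi^*\mathcal{O}_Z(1)=\mathcal{O}_Y(1)$ via the sub-linear system $R_1=\aSpan{K}{s_1,\ldots,s_r}\subset V_a$. If $\pi$ contracted a curve $C\subset Y$ the projection formula would give $\mathcal{O}_Y(1)\cdot C=\mathcal{O}_Z(1)\cdot\pi_*C=0$, contradicting ampleness of $\mathcal{O}_Y(1)$; hence $\pi$ has finite fibres, is finite, and $V_\sbullet^{(a)}$ is finite as an $R$-module.

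\emph{Stage 3 (Norm estimate and conclusion).} Fix homogeneous $R$-module generators $u_1,\ldots,u_N$ of $V_\sbullet^{(a)}$ with $u_i\in V_{n_ia}$; after multiplying each $u_i$ by an element of $K^\times$ I may assume $u_i\in\aHzf(\overline{V}_{n_ia})$. Set $N_0:=\max_i n_i$, $U:=\max_i\|u_i\|_\infty^{\overline{V}_{n_ia}}$, and $\gamma:=\max_j\|s_j\|_\infty^{\overline{V}_a}<1$. For $m\geq N_0$ and any monomial $s^\beta u_i$ with $|\beta|=m-n_i$, sub-multiplicativity of the adelic norms gives $\|s^\beta u_i\|_v^{\overline{V}_{ma}}\leq 1$ for every $v\in M_K^{\rm f}$ (both factors are in $\aHzf$) and $\|s^\beta u_i\|_\infty^{\overline{V}_{ma}}\leq\gamma^{m-n_i}U\leq\gamma^{m-N_0}U$. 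Choosing $m$ so large that $\gamma^{m-N_0}U<1$, every such monomial becomes strictly small, hence lies in $W_{ma}$. Since these monomials $K$-span $\sum_iR_{m-n_i}u_i=V_m^{(a)}=V_{ma}$ by Stage 2, $V_{ma}\subset W_{ma}$ and equality follows from the trivial reverse inclusion; Lemma~\ref{lem:Veronese} then delivers $V_m=W_m$ for all $m\gg 1$. The hardest step is the finiteness of $\pi$ in Stage 2, which rests on identifying $Y$ with $\Phi_{V_a}(X)$ so that the no-common-zero hypothesis transfers, and then invoking the ampleness-through-contracted-curves criterion; once this is in place, the rest is formal algebra and routine norm bookkeeping.
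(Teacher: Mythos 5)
Your proposal is correct and takes a genuinely different route from the paper. The paper's proof (Step~1) works with the image $Y$ of the Kodaira map $\Phi_{V_1}$ and, there, transports the problem into a base-locus statement: it constructs a graded linear series $W_\sbullet$ on $Y$ by pulling back the spans of strictly small sections, verifies $\Bsp(W_\sbullet)=\emptyset$ via two claims about tensoring up strictly small sections, and then invokes Lemma~\ref{lem:Kodairatype} to get a closed immersion $Y\hookrightarrow\PP_K(W_c)$, from which the surjectivity of $\Sym^m W_c\to V_{mc}$ is read off the resulting diagram. It then treats the possibility $V_0\neq K$ in a separate Step~2 by tempering the archimedean norm with $\exp(\varepsilon m)$ and absorbing the finitely many generators of $V_0$. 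Your argument avoids the augmented base locus entirely: after the same Veronese reduction, you prove directly that $V_\sbullet^{(a)}$ is a finite module over the polynomial ring $R=K[s_1,\dots,s_r]$ on a base-point-free set of strictly small sections, and then show by a plain submultiplicativity estimate that every monomial $s^\beta u_i$ of degree $ma$ is strictly small once $m$ is large. This is more elementary and more uniform: the case $V_0\neq K$ is handled silently by allowing some module generators $u_i$ to sit in degree $0$, so no analogue of the paper's Step~2 is needed. Two small points worth flagging. First, the phrase ``$V_\sbullet^{(a_0)}=K[V_{a_0}]$'' in Stage~1 is not literally correct when $V_0\supsetneq K$; the Bourbaki fact gives generation over $V_0$. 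It happens to cause no harm because $V_a$ is a $V_0$-module, so the image of $\Sym_K^m V_a$ in $V_{ma}$ is already $V_0$-stable and the positive-degree parts agree; but the write-up should say so. Second, the step from ``$\pi$ is finite'' to ``$V_\sbullet^{(a)}$ is a finite $R$-module'' is true but needs a word of justification (e.g.\ compare section rings via $\pi_*\mathcal{O}_Y$ and Serre vanishing); alternatively, you can skip the ampleness/contracted-curve argument altogether: the base-point-freeness of $R_1$ on $\Proj V_\sbullet^{(a)}$ already gives $\sqrt{R_+V_\sbullet^{(a)}}=(V_\sbullet^{(a)})_+$, so $V_\sbullet^{(a)}/R_+V_\sbullet^{(a)}$ is finite-dimensional over $K$ by Noetherianity and graded Nakayama yields the module-finiteness directly, which is both shorter and closer in spirit to the rest of your Stage~3.
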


\begin{proof}
Thanks to Lemma~\ref{lem:Veronese}, we can assume $\Bs(V_m)=\aBs(\overline{V}_m)=\emptyset$ for every $m\geq 1$.
We divide the proof into two steps.
\medskip

\paragraph{Step 1.}
First, we suppose that $V_{\sbullet}$ is generated by $V_1$ over $K$, so, in particular, $V_0=K$.
Let $P:=\PP_K(V_1)$, let $Y:=\Phi_{V_1}(X)$ be the image, and decompose the morphism $\Phi_{V_1}$ into $X\xrightarrow{\Phi}Y\to P$.
Let $\mathcal{O}_P(1)$ (respectively $\mathcal{O}_Y(1)$) be the hyperplane line bundle on $P$ (respectively $Y$).
For every $m\geq 1$, the image of the homomorphism
\begin{equation}\label{eqn:reprove1}
 \Phi_{V_1}^*:\Hz(\mathcal{O}_P(m))=\Sym_K^mV_1\xrightarrow{\text{rest.}}\Hz(\mathcal{O}_Y(m))\xrightarrow{\Phi^*}\Hz(mL)
\end{equation}
coincides with $V_{m}$.

For each $m\geq 1$, let
\begin{equation}\label{eqn:reprove2}
 W_m:=(\Phi^*)^{-1}\left(\aSpan{K}{\aHzst(\overline{V}_{m})}\right)
\end{equation}
be the $K$-subspace of $\Hz(\mathcal{O}_Y(m))$.
Then $W_{\sbullet}$ forms a graded $K$-linear series belonging to $\mathcal{O}_Y(1)$.
We can choose an $a\geq 1$ such that the restriction
\begin{equation}\label{eqn:reprove3}
 \Hz(\mathcal{O}_P(m))\twoheadrightarrow\Hz(\mathcal{O}_Y(m))
\end{equation}
is surjective for every $m\geq a$ and the homomorphism
\begin{equation}\label{eqn:reprove4}
 \Sym_K^p\Hz(\mathcal{O}_Y(a))\twoheadrightarrow\Hz(\mathcal{O}_Y(pa))
\end{equation}
is surjective for every $p\geq 1$.

\begin{claim}\label{clm:reprove1}
For any $y\in Y$, there exists an $s\in W_1$ such that $s(y)\neq 0$ and $\Phi^*s\in\aHzst(\overline{V}_1)$.
\end{claim}

\begin{proof}[Proof of Claim~\ref{clm:reprove1}]
We can find an $s_0\in\Hz(\mathcal{O}_P(1))$ such that $s_0(y)\neq 0$ and $\Phi_{V_1}^*s_0\in\aHzst(\overline{V}_1)$.
Then the restriction $s:=s_0|_Y$ has the desired properties.
\end{proof}

\begin{claim}\label{clm:reprove2}
Let $y\in Y$ and $s\in W_1$ be as in Claim~\ref{clm:reprove1}.
There exists an integer $b\geq 1$ such that
\[
 \Image\left(\Hz(\mathcal{O}_Y(pa))\xrightarrow{\otimes s^{\otimes pb}}\Hz(\mathcal{O}_Y(p(a+b)))\right)\subset W_{p(a+b)}
\]
for every $p\gg 1$.
In other words, $s^{\otimes b}\in\Lambda(W_{\sbullet};\mathcal{O}_Y(a),a+b)$.
\end{claim}

\begin{proof}[Proof of Claim~\ref{clm:reprove2}]
Fix a $K$-basis $e_1,\dots,e_r$ for $\Hz(\mathcal{O}_Y(a))$.
By (\ref{eqn:reprove1}) and (\ref{eqn:reprove3}), $\Phi^*e_j\in V_{a}$ for every $j$.
Hence we can find a $b\geq 1$ such that
\[
 \Phi^*(e_j\otimes s^{\otimes b})=(\Phi^*e_j)\otimes (\Phi^*s)^{\otimes b}\in\aSpan{K}{\aHzst(\overline{V}_{a+b})}
\]
for every $j$.
Since by (\ref{eqn:reprove4}) $\Sym_K^p\Hz(\mathcal{O}_Y(a))\to\Hz(\mathcal{O}_Y(pa))$ is surjective for every $p$, we have the claim.
\end{proof}

By Claim~\ref{clm:reprove2}, we have $y\notin\Bs\Lambda(W_{\sbullet};\mathcal{O}_Y(a),a+b)$, and $\Bsp(W_{\sbullet})=\emptyset$.
By Lemma~\ref{lem:Kodairatype}, a $K$-morphism $Y\to Q:=\PP_K(W_{c})$ associated to $W_{c}$ is a closed immersion for some $c\geq 1$.
Let $\mathcal{O}_Q(1)$ be the hyperplane line bundle on $Q$.
Since the upper arrow of the diagram
\[
\xymatrix{
 \Hz(\mathcal{O}_Q(m))=\Sym_K^m(W_{c}) \ar[rr]^-{\rm rest.} \ar[d] && \Hz(\mathcal{O}_Y(mc)) \ar@{->>}[d]^-{\Phi^*} \\
 \aSpan{K}{\aHzst(\overline{V}_{mc})} \ar[rr] && V_{mc}
}
\]
is surjective for every $m\gg 1$, we have $\aSpan{K}{\aHzst(\overline{V}_{mc})}=V_{mc}$ for every $m\gg 1$.
By using Lemma~\ref{lem:Veronese} again, we conclude.
\medskip

\paragraph{Step 2.}
Next, we consider the general case.
By \cite[Chap.\ III, \S 1.3, Proposition~3]{BourbakiCA72} and Lemma~\ref{lem:Veronese}, we can assume without loss of generality that $V_{\sbullet}$ is generated by $V_1$ over $V_0$.
Let $W_{\sbullet}$ be the sub-graded $K$-algebra of $V_{\sbullet}$ generated by $V_1$.
Each $W_m$ ($m\geq 0$) is endowed with the subspace norms $(\|\cdot\|_v^{\overline{W}_m})_{v\in M_K}$ induced from $(\|\cdot\|_v^{\overline{V}_{m}})_{v\in M_K}$.

We choose a suitably small $\varepsilon>0$ and replace $\|\cdot\|_{\infty}^{\overline{W}_m}$ with $\exp(\varepsilon m)\|\cdot\|_{\infty}^{\overline{W}_m}$.
By applying the above arguments, we can find an $m_0\geq 1$ such that, for every $m\geq m_0$, $W_m$ is generated over $K$ by its strictly small sections with $\|\cdot\|_{\infty}^{\overline{W}_m}<\exp(-\varepsilon m)$.
Let $\alpha_1,\dots,\alpha_p\in\aHzf(\overline{V}_0)$ be generators of $V_0$ over $K$.
Then
\[
 V_m=(K\alpha_1+\dots+K\alpha_p)W_m
\]
is generated by its strictly small sections for every $m$ with
\[
 m\geq \max\left\{m_0,\frac{\log\|\alpha_1\|_{\infty}^{\overline{V}_0}}{\varepsilon},\dots,\frac{\log\|\alpha_p\|_{\infty}^{\overline{V}_0}}{\varepsilon}\right\}.
\]
\end{proof}

\begin{remark}
There are many graded linear series $V_{\sbullet}$ such that $V_{\sbullet}$ are Noetherian, $\SBs(V_{\sbullet})=\emptyset$, and $V_m\neq\Hz(mL)$ for every $m\geq 1$.
Suppose that a line bundle $L$ on $X$ is free and let $\mu:Y\to X$ be a morphism such that $\Hz(mL)\to\Hz(\mu^*(mL))$ is not surjective for every $m\geq 1$.
Then $\bigoplus_{m\geq 0}\mu^*(\Hz(mL))$ gives an example.
\end{remark}

In the rest of this section, we apply Theorem~\ref{thm:reprove} to the case of adelically metrized line bundles (Corollary~\ref{cor:reproveZM}).
Suppose that $X$ is normal.
Let $\overline{L}=\left(L,(|\cdot|_v^{\overline{L}})_{v\in M_K}\right)$ be an adelically metrized line bundle on $X$ such that $\aHzst(m\overline{L})\neq\{0\}$ for some $m\geq 1$.
For each $m\geq 1$, let
\begin{equation}
 \widehat{\mathfrak{b}}_m:=\Image\left(\aSpan{K}{\aHzst(m\overline{L})}\otimes_K(-mL)\to\mathcal{O}_X\right)
\end{equation}
be the ideal sheaf on $X$.
Let $\mu_m:X_m\to X$ be the normalized blow-up along $\widehat{\mathfrak{b}}_m$, let $F_m:=\SHom_{\mathcal{O}_{X_m}}(\widehat{\mathfrak{b}}_m\mathcal{O}_{X_m},\mathcal{O}_{X_m})$, and let $1_{F_m}\in\Hz(F_m)$ be the natural inclusion.

\begin{proposition}\label{prop:adelicmetric}
For each $m\geq 1$, we can endow $F_m$ with an adelic metric $(|\cdot|_v^{\overline{F}_m})_{v\in M_K}$ such that, for each $v\in M_K^{\rm f}$ and each $x\in X_{m,v}^{\rm an}$,
\begin{equation}\label{eqn:adelicmetricfin}
 |1_{F_m}|^{\overline{F}_m}_v(x)=\max_{s\in\aHzst(m\overline{L})}\left\{|s|^{m\overline{L}}_v(\mu_{m,v}^{\rm an}(x))\right\}
\end{equation}
and, for each $x\in X_{m,\infty}^{\rm an}$,
\begin{equation}\label{eqn:adelicmetriccomp}
 |1_{F_m}|^{\overline{F}_m}_{\infty}(x)=\max_{s\in\aHzst(m\overline{L})}\left\{\frac{|s|^{m\overline{L}}_{\infty}(\mu_{m,\infty}^{\rm an}(x))}{\|s\|_{\infty,\sup}^{m\overline{L}}}\right\}.
\end{equation}
We set $\overline{F}_m:=\left(F_m,(|\cdot|^{\overline{F}_m}_v)_{v\in M_K}\right)$ and $\overline{M}_m:=\mu_m^*(m\overline{L})-\overline{F}_m$.
\begin{enumerate}
\item[\textup{(1)}] We have
\begin{equation}\label{eqn:adelicmetricsscorresp}
 \aHzst(\overline{M}_m)\overset{\otimes 1_{F_m}}{=}\Image\left(\aHzst(m\overline{L})\to\Hz(\mu_m^*(mL))\right)\quad\text{and}\quad\aBs(\overline{M}_m)=\emptyset.
\end{equation}
\item[\textup{(2)}] $\overline{M}_m$ is a nef adelically metrized line bundle on $X_m$.
\end{enumerate}
\end{proposition}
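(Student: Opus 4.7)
The plan is to check that formulas $(\ref{eqn:adelicmetricfin})$--$(\ref{eqn:adelicmetriccomp})$ really do define an adelic metric on $F_m$, and then treat the two assertions of the proposition separately.

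For the adelicity of $|\cdot|^{\overline{F}_m}$: since $\aHzst(m\overline{L})$ is a finite set, the maxima in $(\ref{eqn:adelicmetricfin})$--$(\ref{eqn:adelicmetriccomp})$ are finite maxima of continuous non-negative functions, so continuous; they are strictly positive everywhere because, by definition of the normalized blow-up, $\widehat{\mathfrak{b}}_m\mathcal{O}_{X_m}$ is locally generated by the image of some $s\in\aHzst(m\overline{L})$. The $F_\infty$-invariance at infinity is immediate from that of $\overline{L}$ and the $K$-rationality of every $s$. For the model-theoretic requirement of Definition~\ref{defn:adelicallymetrizedlinebdl}(b), I start from a relatively normal $U$-model of definition $(\mathscr{X}_U,\mathscr{L}_U)$ for $\overline{L}^{\{\infty\}}$ (Definition~\ref{defn:relativelynormal}); by Lemma~\ref{lem:propertyadelicnorm}(3), the sections $\aHzst(m\overline{L})$ extend to integral sections of $m\mathscr{L}_U$ and generate an ideal sheaf $\widehat{\mathscr{B}}_m$ extending $\widehat{\mathfrak{b}}_m$, whose normalized blow-up yields a $U$-model $\mathscr{X}_{m,U}$ of $X_m$; then $\mathscr{F}_m:=\SHom(\widehat{\mathscr{B}}_m\mathcal{O}_{\mathscr{X}_{m,U}},\mathcal{O}_{\mathscr{X}_{m,U}})$ is a $U$-model of $F_m$, and a local computation using the ultrametric inequality (to reduce the max over $\aSpan{O_{K_P}}{\aHzst(m\overline{L})}$ to the max over $\aHzst(m\overline{L})$) shows that $|\cdot|^{\mathscr{F}_{m,O_{K_P}}}$ agrees with $(\ref{eqn:adelicmetricfin})$ at every $P\in U$.

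For assertion (1): each $s\in\aHzst(m\overline{L})$ gives, via $\mu_m^*$, a section of $\mu_m^*(mL)$ that factors through the subsheaf $\widehat{\mathfrak{b}}_m\mathcal{O}_{X_m}\otimes\mu_m^*(mL)=\mu_m^*(mL)-F_m=M_m$, yielding $\tilde s\in\Hz(M_m)$ with $\tilde s\otimes 1_{F_m}=\mu_m^* s$. The estimates in $(\ref{eqn:adelicmetricfin})$--$(\ref{eqn:adelicmetriccomp})$ give directly
\[
 \|\tilde s\|_{v,\sup}^{\overline{M}_m}\leq 1\quad(v\in M_K^{\rm f})\qquad\text{and}\qquad\|\tilde s\|_{\infty,\sup}^{\overline{M}_m}\leq\|s\|_{\infty,\sup}^{m\overline{L}}<1,
\]
so $\tilde s\in\aHzst(\overline{M}_m)$. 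Conversely, any $\tilde s'\in\aHzst(\overline{M}_m)$ produces $\tilde s'\otimes 1_{F_m}\in\Hz(\mu_m^*(mL))=\Hz(mL)$ (using normality of $X$), and the symmetric sup-norm bound $\|\tilde s'\otimes 1_{F_m}\|_{v,\sup}^{m\overline{L}}\leq\|\tilde s'\|_{v,\sup}^{\overline{M}_m}\cdot\|1_{F_m}\|_{v,\sup}^{\overline{F}_m}$ confirms this section is strictly small for $\overline{L}$. The statement $\aBs(\overline{M}_m)=\emptyset$ follows because, at any $x\in X_m$, we may pick $s_{i_0}\in\aHzst(m\overline{L})$ whose image generates $\widehat{\mathfrak{b}}_m\mathcal{O}_{X_m}$ locally at $x$; then $\tilde s_{i_0}$ is a local frame of $M_m$ at $x$.

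For assertion (2), I verify the four conditions of Definition~\ref{defn:verticallynef} for $\overline{M}_m$ in turn. Condition (a) is immediate: $M_m$ is base-point free by (1), hence semiample, hence nef. For (c), around any $x\in X_{m,\infty}^{\rm an}$, choosing a local frame $\tilde s_{i_0}$ of $M_m$, one computes
\[
 -\log|\tilde s_{i_0}|^{\overline{M}_m}_\infty(x)=\max_{t\in\aHzst(m\overline{L})}\Bigl(\log|\tilde t/\tilde s_{i_0}|(x)-\log\|t\|_{\infty,\sup}^{m\overline{L}}\Bigr),
\]
a maximum of plurisubharmonic functions (each $\tilde t/\tilde s_{i_0}$ is regular where $\tilde s_{i_0}$ is a frame), hence plurisubharmonic; so the curvature current of $\overline{M}_m$ is semipositive. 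Condition (d) is immediate from the sup-norm bounds in (1): at any $x\in X_m(\overline{K})$, choosing $\tilde s$ not vanishing at $x$, every local contribution $-\log|\tilde s|_v^{\overline{M}_m}(x^w)$ is $\geq 0$, whence $h_{\overline{M}_m}(x)\geq 0$.

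The main obstacle is condition (b): producing relatively nef full $O_K$-model approximations of $\overline{M}_m^{\{\infty\}}$ (i.e., verifying one of the equivalent conditions of Proposition~\ref{prop:adelicapproximation}(2)). The $U$-model $\mathscr{M}_{m,U}:=\mu_{m,U}^*(m\mathscr{L}_U)-\mathscr{F}_m$ is already relatively nef over $U$, since its global sections $\mu_{m,U}^*(s)$, for $s\in\aHzst(m\overline{L})$, generate it. For places $P\notin U$, I would apply Proposition~\ref{prop:adelicapproximation}(1) to $\overline{L}^{\{\infty\}}$ to obtain a full $O_K$-model $(\mathscr{X}_\varepsilon,\mathscr{L}_\varepsilon)$ extending $(\mathscr{X}_U,\mathscr{L}_U)$ whose model metric is within $\exp(\varepsilon)$ of $|\cdot|^{\overline{L}}$; then blowing up $\mathscr{X}_\varepsilon$ along the integral extension of $\widehat{\mathscr{B}}_m$ and normalizing produces an $O_K$-model $\mathscr{X}_{m,\varepsilon}$ of $X_m$ with a line bundle $\mathscr{M}_{m,\varepsilon}$ extending $\mathscr{M}_{m,U}$. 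The extensions of $\mu_m^*(s)$ are integral sections of $\mathscr{M}_{m,\varepsilon}$ up to an $\varepsilon$-vertical twist, so after absorbing the perturbation into vertical divisors $\sum_{P\notin U}\varepsilon_P\,\mathcal{O}(\pi_\varepsilon^{-1}(P))$ the twisted line bundle stays globally generated and thus relatively nef. The crux of the proof is to track this perturbation carefully so that the resulting model metric satisfies the required $\exp(\pm\varepsilon)$ bounds at every bad place while preserving relative nefness globally.
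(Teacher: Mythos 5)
Your outline matches the paper for the continuity of the metric, the $U$-model of definition, assertion (1), and conditions (a), (c), (d) of nefness. The genuine gap is exactly where you flag one: condition (b). You observe that the extensions of $\mu_m^*(s)$ are only integral on $\mathscr{X}_\varepsilon$ ``up to an $\varepsilon$-vertical twist'' and then announce that the crux is to track this perturbation while preserving relative nefness---but you never actually do so, and a naive vertical twist would destroy the global generation that makes $\mathscr{M}_{m,\varepsilon}$ relatively nef. That step is not bookkeeping; it is the substance of the proof.

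The resolution used in the paper is to make the perturbation vanish at the source. Apply Proposition~\ref{prop:adelicapproximation}(1) but choose the \emph{one-sided} normalization $|\cdot|^{\mathscr{L}_{\varepsilon,O_{K_P}}}\leq|\cdot|_P^{\overline{L}}\leq\exp(\varepsilon)|\cdot|^{\mathscr{L}_{\varepsilon,O_{K_P}}}$ on $\Spec(O_K)\setminus U$, i.e.\ $\overline{L}^{\{\infty\}}\leq\overline{\mathscr{L}_\varepsilon}^{\rm ad}$ (achievable from the symmetric version by twisting $\mathscr{L}_\varepsilon$ down by a vertical $\QQ$-divisor). By Lemma~\ref{lem:propertyadelicnorm}(3) this forces $\aHzst(m\overline{L})\subset\aHzf(m\overline{L})\subset\Hz(m\mathscr{L}_\varepsilon)$---the strictly small sections are already \emph{integral} on the full $O_K$-model $\mathscr{X}_\varepsilon$, with no $\varepsilon$-twist required. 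Blowing up $\mathscr{X}_\varepsilon$ along $\widehat{\mathfrak{b}}_{m,n,\varepsilon}:=\Image\bigl(\Sym_{O_K}^n\aSpan{O_K}{\aHzst(m\overline{L})}\otimes_{O_K}(-mn\mathscr{L}_\varepsilon)\to\mathcal{O}_{\mathscr{X}_\varepsilon}\bigr)$ (the symmetric power $n$ is needed because $\mathscr{L}_\varepsilon$ is only a $\QQ$-line bundle, a technical point you also skipped) and normalizing yields an $O_K$-model $\mathscr{X}_{m,\varepsilon}$ of $X_m$ on which $n\mathscr{M}_{m,\varepsilon}$ is free---globally generated by those integral sections---hence relatively nef. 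The required $\exp(\pm\varepsilon)$ comparison between $|\cdot|^{\overline{M}_m}_P$ and $|\cdot|^{\mathscr{M}_{m,\varepsilon,O_{K_P}}}$ then follows automatically from the comparison for $\overline{L}$, since both metrics on $F_m$ are explicit maxima of $|s|^{m\overline{L}}_P$ resp.\ $|s|^{m\mathscr{L}_{\varepsilon,O_{K_P}}}_P$ over the same finite set $\aHzst(m\overline{L})$.

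Two smaller remarks. First, the paper builds the model of definition for $\overline{F}_m^{\{\infty\}}$ as the restriction of the single $O_K$-model $\mathscr{X}_{m,\varepsilon}$ to $U$, rather than separately constructing a $U$-model and then worrying about the remaining places; this keeps the verification in one place (Claim in the proof), and your direct $U$-construction, while correct in spirit, would still need a separate argument for the bad places. Second, in assertion (1) you label the two inclusions in a slightly reversed way: the immediate direction from $\|1_{F_m}\|_{v,\sup}^{\overline{F}_m}\leq 1$ is $\aHzst(\overline{M}_m)\subset\{s'\,:\,s\in\aHzst(m\overline{L})\}$, while the reverse direction requires the pointwise estimate $|s'|_v^{\overline{M}_m}(x)=|s|_v^{m\overline{L}}(\mu_m(x))/\max_t|t|_v^{m\overline{L}}(\mu_m(x))\leq 1$ and the normalization at infinity; you have the ingredients but the logical roles are swapped.
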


\begin{proof}
We take an affine open covering $\{W_{\lambda}\}$ of $X_m$ such that $\mu_m^*(mL)|_{W_{\lambda}}$ is trivial with local frame $\eta_{\lambda}$, and $\Supp(1_{F_m})\cap W_{\lambda}$ is defined by a local equation $f_{\lambda}$.
Since any $s\in\aSpan{K}{\aHzst(m\overline{L})}$ satisfies $s\in\Hz(mL\otimes \widehat{\mathfrak{b}}_m)$, we can find an $s'\in\Hz(\mu_m^*(mL)-F_m)$ such that $\mu_m^*s=s'\otimes 1_{F_m}$.
So we can write $\mu_m^*s|_{W_{\lambda}}=\phi_{s,\lambda}\cdot f_{\lambda}\cdot\eta_{\lambda}$ on $W_{\lambda}$, where $\phi_{s,\lambda}$ is a regular function on $W_{\lambda}$ and $\{x\in W_{\lambda}\,|\,\phi_{s,\lambda}(x)=0,\,\forall s\in\aHzst(mL)\}=\emptyset$.
For each $v\in M_K^{\rm f}$ and each $x\in X_{m,v}^{\rm an}$,
\[
 \max_{s\in\aHzst(m\overline{L})}\left\{|s|^{m\overline{L}}_v(\mu_{m,v}^{\rm an}(x))\right\}=\max_{s\in\aHzst(m\overline{L})}\left\{|\phi_{s,\lambda}|(x)\right\}\cdot |f_{\lambda}|(x)\cdot |\eta_{\lambda}|_v^{\mu_m^*(m\overline{L})}(x).
\]
Therefore we can define $|1_{F_m}|^{\overline{F}_m}_v(x)$ by the formula (\ref{eqn:adelicmetricfin}), which defines a continuous metric on $F_{m,v}^{\rm an}$ (see section \ref{sec:adelicallymetrizedlbd}).
By the same way, we can show that the formula (\ref{eqn:adelicmetriccomp}) defines a continuous Hermitian metric on $F_{m,\infty}^{\rm an}$.

Let $(\mathscr{X}_U,\mathscr{L}_U)$ be a $U$-model of definition for $\overline{L}^{\{\infty\}}$ (Definition~\ref{defn:adelicallymetrizedlinebdl}).
Since $X$ is normal, we can assume that $\mathscr{X}_U$ is also normal.
By Proposition~\ref{prop:adelicapproximation}(1), given any $\varepsilon>0$, we can find an $O_K$-model $(\mathscr{X}_{\varepsilon},\mathscr{L}_{\varepsilon})$ such that $\mathscr{X}_{\varepsilon}$ is normal, $\mathscr{X}_{\varepsilon}\times_{\Spec(O_K)}U$ is $U$-isomorphic to $\mathscr{X}_U$, $\mathscr{L}_{\varepsilon}|_{\mathscr{X}_U}=\mathscr{L}_U$ as $\QQ$-line bundles, and
\begin{equation}
 |\cdot|^{\mathscr{L}_{\varepsilon,O_{K_P}}}(x)\leq |\cdot|_P^{\overline{L}}(x)\leq\exp(\varepsilon)|\cdot|^{\mathscr{L}_{\varepsilon,O_{K_P}}}(x)
\end{equation}
for $P\in\Spec(O_K)\setminus U$ and $x\in X_P^{\rm an}$.
For each $\varepsilon$, we fix an integer $n=n_{\varepsilon}\geq 1$ such that $n\mathscr{L}_{\varepsilon}$ is a line bundle on $\mathscr{X}_{\varepsilon}$.
By Lemma~\ref{lem:propertyadelicnorm}(3), we have $\aHzf(mn\overline{L})\subset\Hz(mn\mathscr{L}_{\varepsilon})$.
Let $\nu_{m,n,\varepsilon}:\mathscr{X}_{m,n,\varepsilon}\to\mathscr{X}_{\varepsilon}$ be the normalized blow-up along
\[
 \widehat{\mathfrak{b}}_{m,n,\varepsilon}:=\Image\left(\Sym_{O_K}^n\aSpan{O_K}{\aHzst(m\overline{L})}\otimes_{O_K}(-mn\mathscr{L}_{\varepsilon})\to\mathcal{O}_{\mathscr{X}_{\varepsilon}}\right).
\]
Note that $\widehat{\mathfrak{b}}_{m,n,\varepsilon}\mathcal{O}_X=(\widehat{\mathfrak{b}}_m)^n$.
Choose a normal $O_K$-model $\mathscr{X}_{m,\varepsilon}$ of $X_m$ dominating $\mathscr{X}_{m,n,\varepsilon}$, namely, we have a morphism
\begin{equation}\label{eqn:decomposemum}
 \mu_{m,\varepsilon}:\mathscr{X}_{m,\varepsilon}\xrightarrow{\psi_{m,n,\varepsilon}}\mathscr{X}_{m,n,\varepsilon}\xrightarrow{\nu_{m,n,\varepsilon}} \mathscr{X},
\end{equation}
whose restriction to the generic fiber is $\mu_m:X_m\to X$.
Let
\[
 \mathscr{F}_{m,n,\varepsilon}:=\SHom_{\mathcal{O}_{\mathscr{X}_{m,n,\varepsilon}}}(\widehat{\mathfrak{b}}_{m,n,\varepsilon}\mathcal{O}_{\mathscr{X}_{m,n,\varepsilon}},\mathcal{O}_{\mathscr{X}_{m,n,\varepsilon}})
\]
and let $\mathscr{F}_{m,\varepsilon}:=(1/n)\psi_{m,n,\varepsilon}^*\mathscr{F}_{m,n,\varepsilon}$ and $\mathscr{M}_{m,\varepsilon}:=\mu_{m,\varepsilon}^*(m\mathscr{L}_{\varepsilon})-\mathscr{F}_{m,\varepsilon}$ be the $\QQ$-line bundles on $\mathscr{X}_{m,\varepsilon}$.
Then $(\mathscr{X}_{m,\varepsilon},\mathscr{F}_{m,\varepsilon})$ is an $O_K$-model of $(X_m,F_m)$.

\begin{claim}\label{clm:adelicmetric1}
\begin{enumerate}
\item[\textup{(1)}] For every $P\in \Spec(O_K)$ and for every $x\in X_{m,P}^{\rm an}$,
\[
 |1_{F_m}|^{\mathscr{F}_{m,\varepsilon,O_{K_P}}}(x)=\max_{s\in\aHzst(m\overline{L})}\left\{|s|^{m\mathscr{L}_{\varepsilon,O_{K_P}}}(\mu_{m,P}^{\rm an}(x))\right\}.
\]
\item[\textup{(2)}] For every $P\in U$ and for every $x\in X_{m,P}^{\rm an}$, $|\cdot|^{\overline{F}_m}_P(x)=|\cdot|^{\mathscr{F}_{m,\varepsilon,O_{K_P}}}(x)$.
\item[\textup{(3)}] For every $P\in \Spec(O_K)\setminus U$ and for every $x\in X_{m,P}^{\rm an}$,
\[
 |\cdot|^{\mathscr{F}_{m,\varepsilon,O_{K_P}}}(x)\leq |\cdot|^{\overline{F}_m}_P(x)\leq\exp(\varepsilon)|\cdot|^{\mathscr{F}_{m,\varepsilon,O_{K_P}}}(x).
\]
\end{enumerate}
In particular, $(\mathscr{X}_{m,\varepsilon,U},\mathscr{F}_{m,\varepsilon,U})$ and $(\mathscr{X}_{m,\varepsilon,U},\mathscr{M}_{m,\varepsilon,U})$ give a $U$-model of definition for $\overline{F}_m^{\{\infty\}}$ and $\overline{M}_m^{\{\infty\}}$, respectively.
\end{claim}

\begin{proof}[Proof of Claim~\ref{clm:adelicmetric1}]
It suffices to show the assertion (1).
We decompose $\mu_m$ as $X_m\xrightarrow{\psi_{m,n}}X_{m,n}\xrightarrow{\nu_{m,n}}X$ according to (\ref{eqn:decomposemum}).
For $x\in X_{m,P}^{\rm an}$, we set $x':=\psi_{m,n,P}^{\rm an}(x)$ and $x'':=\mu_{m,P}^{\rm an}(x)$.
By Lemma~\ref{lem:pullbackadelic} and the formula (\ref{eqn:adelicassociatedtomodel}), we have
\begin{align*}
 &|1_{F_m}|^{\mathscr{F}_{m,\varepsilon,O_{K_P}}}(x) \\
 &\qquad =\inf\left\{|f|(x')^{1/n}\,:\,f\in k(\rho_P(x')),\,1_{\mathscr{F}_{m,n,\varepsilon}}\in ft_{x'}^*\mathscr{F}_{m,n,\varepsilon,O_{K_P}}\right\} \\
 &\qquad =\max\left\{|f|(x'')^{1/n}\,:\,f\in k(\rho_P(x'')),\,f\in t_{x''}^*\widehat{\mathfrak{b}}_{m,n,\varepsilon,O_{K_P}}\right\} \\
 &\qquad =\max_{s\in\aHzst(m\overline{L})}\left\{|s|^{m\mathscr{L}_{\varepsilon,O_{K_P}}}(x'')\right\}.
\end{align*}
\end{proof}

By Claim~\ref{clm:adelicmetric1}, the collection $(|\cdot|_v^{\overline{F}_m})_{v\in M_K}$ is in fact an adelic metric on $F_m$.

(1): It is obvious that $\aHzst(\overline{M}_m)\subset\left\{s'\,:\,s\in\aHzst(m\overline{L})\right\}$, and, by Claim~\ref{clm:adelicmetric2} below, we have $\aHzst(\overline{M}_m)=\left\{s'\,:\,s\in\aHzst(m\overline{L})\right\}$.

\begin{claim}\label{clm:adelicmetric2}
To each $s\in\aSpan{K}{\aHzst(m\overline{L})}$, we can associate an $s'\in\Hz(M_m)$ as above.
\begin{enumerate}
\item[\textup{(1)}] For any $v\in M_K$, we have $\|s'\|_{v,\sup}^{\overline{M}_m}\geq\|s\|_{v,\sup}^{m\overline{L}}$.
\item[\textup{(2)}] If $s\in\aHzst(m\overline{L})$, then $\|s'\|_{\infty,\sup}^{\overline{M}_m}=\|s\|_{\infty,\sup}^{m\overline{L}}<1$.
\end{enumerate}
\end{claim}

\begin{proof}[Proof of Claim~\ref{clm:adelicmetric2}]
The assertion (1) is clear since $\|1_{F_m}\|_{v,\sup}^{\overline{F}_m}\leq 1$ for every $v\in M_K$, so we are going to show the assertion (2).
For each $x\in (X_m\setminus\Supp(1_{F_m}))_{\infty}^{\rm an}$,
\[
 |s'|^{\overline{M}_m}_{\infty}(x)=|s|^{m\overline{L}}_{\infty}(\mu_m(x))\cdot\min_{t\in\aHzst(m\overline{L})}\left\{\frac{\|t\|_{\infty,\sup}^{m\overline{L}}}{|t|_{\infty}^{m\overline{L}}(\mu_m(x))}\right\}\leq \|s\|_{\infty,\sup}^{m\overline{L}}.
\]
\end{proof}

(2): We are going to verify the conditions (a)--(d) in Definition~\ref{defn:verticallynef}.
The condition (a) is clear and the condition (d) follows from the formula (\ref{eqn:adelicmetricsscorresp}).
Since $n\mathscr{M}_{m,\varepsilon}$ is free, the condition (b) follows from Claim~\ref{clm:adelicmetric1}.

(c): For each $x\in X_{m,\infty}^{\rm an}$, we choose a $\lambda$ and an $s_0\in\aHzst(m\overline{L})$ such that $x\in W_{\lambda,\infty}^{\rm an}$ and $\phi_{s_0,\lambda}(x)\neq 0$, where $\{W_{\lambda}\}$ and $\{\phi_{s,\lambda}\}$ are chosen as in the proof of Proposition~\ref{prop:adelicmetric}.
Then $s_0'$ gives a local frame of $M_m$ around $x$ and
\[
 -\log|s_0'|^{\overline{M}_m}_{\infty}(x')^2=\max_{s\in\aHzst(m\overline{L})}\left\{\log|\phi_{s,\lambda}/\phi_{s_0,\lambda}|(x')^2-\log\left(\|s\|_{\infty,\sup}^{m\overline{L}}\right)^2\right\}
\]
is plurisubharmonic over $\left\{x'\in W_{\lambda,\infty}^{\rm an}\,:\,\phi_{s_0,\lambda}(x')\neq 0\right\}$. 
\end{proof}

\begin{corollary}\label{cor:reproveZM}
Let $X$ be a normal projective variety that is geometrically irreducible over $K$, and let $\overline{L}$ be an adelically metrized line bundle on $X$.
If $\bigoplus_{m\geq 0}\Hz(mL)$ is a finitely generated $K$-algebra, then the following are equivalent.
\begin{enumerate}
\item[\textup{(a)}] $\Hz(mL)=\aSpan{K}{\aHzst(m\overline{L})}$ holds for every $m\gg 1$.
\item[\textup{(b)}] There exists an $a\geq 1$ such that $\Hz(aL)$ generates $\bigoplus_{m\geq 0}\Hz(maL)$ over $K$ and
\[
 \Image\left(\Hz(aL)\otimes_K(-aL)\to\mathcal{O}_X\right)=\Image\left(\aSpan{K}{\aHzst(a\overline{L})}\otimes_K(-aL)\to\mathcal{O}_X\right).
\]
\end{enumerate}
\end{corollary}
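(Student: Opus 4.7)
The direction (a)~$\Rightarrow$~(b) is straightforward. Assume (a), and fix $m_0$ with $\Hz(mL)=\aSpan{K}{\aHzst(m\overline{L})}$ for every $m\ge m_0$. Since $\bigoplus_{m\ge 0}\Hz(mL)$ is a finitely generated $K$-algebra, the standard Bourbaki fact \cite[Chap.~III, \S 1.3, Proposition~3]{BourbakiCA72} invoked in the proof of Lemma~\ref{lem:Veronese} furnishes some $a\ge m_0$ for which $\Hz(aL)$ generates $\bigoplus_{m\ge 0}\Hz(maL)$ as a $K$-algebra; the image equality in (b) then holds trivially because $\Hz(aL)=\aSpan{K}{\aHzst(a\overline{L})}$.

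For (b)~$\Rightarrow$~(a), I plan to work on the normalized blow-up $\mu=\mu_a\colon X_a\to X$ along $\widehat{\mathfrak{b}}_a$, which by the image equality in (b) coincides with the normalized blow-up of $X$ along the base ideal $\mathfrak{b}(\Hz(aL))$. Proposition~\ref{prop:adelicmetric} then supplies a nef adelically metrized line bundle $\overline{M}_a=\mu^*(a\overline{L})-\overline{F}_a$ on the normal projective variety $X_a$, satisfying $\aBs(\overline{M}_a)=\emptyset$ together with the natural identification $\aHzst(\overline{M}_a)\otimes 1_{F_a}\cong\aHzst(a\overline{L})$.

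The plan is to apply Theorem~\ref{thm:reprove} on $X_a$ to the adelically normed graded $K$-linear series
\[
 \overline{V}_{\sbullet}:=\bigoplus_{k\ge 0}\left(\Hz(kM_a),\,(\|\cdot\|_{v,\sup}^{k\overline{M}_a})_{v\in M_K}\right)
\]
belonging to the line bundle $M_a$. The normality of $X$ yields $\Hz(\mu^*(kaL))=\Hz(kaL)$ via the projection formula, and the generation condition in (b) implies that the assignment $s\mapsto s'$, characterized by $\mu^*s=s'\otimes 1_{F_a}^{\otimes k}$, is a well-defined isomorphism of graded $K$-algebras $\bigoplus_{k\ge 0}\Hz(kaL)\cong\bigoplus_{k\ge 0}\Hz(kM_a)$; in particular $V_{\sbullet}$ is Noetherian and generated in degree one. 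Moreover $\aSBs(\overline{V}_{\sbullet})\subset\Bs\aSpan{K}{\aHzst(\overline{V}_1)}=\aBs(\overline{M}_a)=\emptyset$ by Proposition~\ref{prop:adelicmetric}, so Theorem~\ref{thm:reprove} yields $\Hz(kM_a)=\aSpan{K}{\aHzst(k\overline{M}_a)}$ for every $k\gg 1$.

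It remains to transfer this back to $X$. Given $s'\in\aHzst(k\overline{M}_a)$, let $s\in\Hz(kaL)$ be the section determined by $\mu^*s=s'\otimes 1_{F_a}^{\otimes k}$. The formulas (\ref{eqn:adelicmetricfin}) and (\ref{eqn:adelicmetriccomp}) show that $\|1_{F_a}\|_{v,\sup}^{\overline{F}_a}\le 1$ at every place $v$, whence $\|s\|_{v,\sup}^{ka\overline{L}}\le\|s'\|_{v,\sup}^{k\overline{M}_a}\le 1$ at finite places and $\|s\|_{\infty,\sup}^{ka\overline{L}}\le\|s'\|_{\infty,\sup}^{k\overline{M}_a}<1$; hence $s\in\aHzst(ka\overline{L})$. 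Combining with the identification $\Hz(kaL)=\Hz(kM_a)\otimes 1_{F_a}^{\otimes k}$ from (b), one concludes $\Hz(kaL)=\aSpan{K}{\aHzst(ka\overline{L})}$ for every $k\gg 1$, and Lemma~\ref{lem:Veronese} delivers condition (a). The genuinely delicate point is the estimate $\|1_{F_a}\|_{v,\sup}^{\overline{F}_a}\le 1$, which is what allows the lifting $\otimes 1_{F_a}^{\otimes k}$ to preserve the defining inequalities of strictly small sections at every place, including the strict inequality at infinity.
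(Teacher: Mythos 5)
Your proof is correct and follows essentially the same route as the paper's: both normalize to the blow-up $\mu_a\colon X_a\to X$ along $\mathfrak{b}_a=\widehat{\mathfrak{b}}_a$, invoke Proposition~\ref{prop:adelicmetric} to obtain $\overline{M}_a$ with $\aBs(\overline{M}_a)=\emptyset$, apply Theorem~\ref{thm:reprove} on $X_a$, transfer back through $\otimes 1_{F_a}^{\otimes k}$ using $\|1_{F_a}\|_{v,\sup}^{\overline{F}_a}\leq 1$, and finish with Lemma~\ref{lem:Veronese}. The only cosmetic difference is that you upgrade the paper's one-sided inclusion $\Hz(mM_a)\supset\Hz(maL)$ (obtained from $\mu_{a*}(-mF_a)\supset(\mathfrak{b}_a)^m$) to the full graded-algebra isomorphism $\bigoplus_k\Hz(kaL)\cong\bigoplus_k\Hz(kM_a)$, which is true given $\mathfrak{b}_{ka}=(\mathfrak{b}_a)^k$, but nothing more than the inclusion is actually needed.
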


\begin{proof}
By \cite[Chap.\ III, \S 1.3, Proposition~3]{BourbakiCA72}, there exists an $e\geq 1$ such that $\bigoplus_{m\geq 0}\Hz(mneL)$ is generated by $\Hz(neL)$ over $\Hz(\mathcal{O}_X)=K$ for every $n\geq 1$.
So the implication (a) $\Rightarrow$ (b) is obvious.

We are going show the reverse.
For $m\geq 1$, we set
\[
 \mathfrak{b}_m:=\Image\left(\Hz(mL)\otimes_K(-mL)\to\mathcal{O}_X\right).
\]
Since $\Sym_K^m\Hz(aL)\to\Hz(maL)$ is surjective, we have $\mathfrak{b}_{ma}=(\mathfrak{b}_a)^m$ for every $m\geq 1$.
Let $\mu_a:X_a\to X$, $\overline{M}_a$, $\overline{F}_a$ be as in Proposition~\ref{prop:adelicmetric}.
Since $\aBs(\overline{M}_a)=\emptyset$, Theorem~\ref{thm:reprove} says that $\Hz(mM_a)=\aSpan{K}{\aHzst(m\overline{M}_a)}\subset\aSpan{K}{\aHzst(ma\overline{L})}$ for every $m\gg 1$.
Since $\mu_{a*}(-mF_a)\supset(\mathfrak{b}_a)^m$, we have
\[
 \Hz(mM_a)=\Hz(maL\otimes\mu_{a*}(-mF_a))\supset\Hz(maL\otimes(\mathfrak{b}_a)^m)=\Hz(maL)
\]
for every $m\geq 1$.
Thus $\Hz(maL)=\aSpan{K}{\aHzst(ma\overline{L})}$ for every $m\gg 1$.
We conclude the proof by Lemma~\ref{lem:Veronese}.
\end{proof}

\section{Arithmetic augmented base loci}\label{sec:arithaugbs}

\begin{definition}\label{defn:aaugbs}
Let $X$ be a projective variety over a number field $K$.
Let $L$ be a line bundle on $X$, and let $\overline{V}_{\sbullet}$ be an adelically normed graded $K$-linear series belonging to $L$.
We define the \emph{arithmetic augmented base locus} of $\overline{V}_{\sbullet}$ as
\begin{equation}
 \aBsp(\overline{V}_{\sbullet}):=\Bsp(\aSpan{K}{\aHzst(\overline{V}_{\sbullet})}),
\end{equation}
which clearly does not depend on $K$ (see Definition~\ref{defn:Chens}).
If $\overline{L}$ is an adelically metrized line bundle on $X$ and $\overline{V}_{\sbullet}$ is given by the formula (\ref{eqn:adelicgradedlinser}), then we write $\aBsp(\overline{L}):=\aBsp(\overline{V}_{\sbullet})$.
Since $\aBsp(m\overline{L})=\aBsp(\overline{L})$ for every $m\geq 1$, we can define $\aBsp(\overline{L})$ for every $\overline{L}\in\aPic_{\QQ}(X)$.
Let $\overline{L}$ be an adelically metrized line bundle on $X$.
\begin{description}
\item[(semiample)] We say that $\overline{L}$ is \emph{free} if the homomorphism
\[
 \aSpan{K}{\aHzst(\overline{L})}\otimes_{K}\mathcal{O}_X\to L
\]
is surjective.
We say that an $\overline{L}\in\aPic_{\QQ}(X)$ is \emph{semiample} if some multiple of $\overline{L}$ is free.
\item[(w-ample)] We say that $\overline{L}$ is \emph{weakly ample} or \emph{w-ample} for short if $L$ is ample and $\Hz(mL)=\aSpan{K}{\aHzst(m\overline{L})}$ for every $m\gg 1$.
We say that an $\overline{L}\in\aPic_{\QQ}(X)$ is \emph{w-ample} if some multiple of $\overline{L}$ is w-ample.
If $\overline{L}$ is ample in the sense of Moriwaki (\cite[\S 0.3 Conventions and terminology (7)]{MoriwakiEst}) or ample in the sense of Zhang (\cite[Corollary~(4.8)]{ZhangVar}, \cite[(1.3) and Theorem~(1.8)]{ZhangAdelic}), then $\overline{L}$ is w-ample.
\end{description}

\begin{remark}\label{rem:w-ample}
The notion of w-ampleness does not depend on the choice of $K$.
In fact, by Theorem~\ref{thm:reprove}, the following are equivalent.
\begin{enumerate}
\item[(a)] $\overline{L}$ is w-ample.
\item[(b)] $L$ is ample and there exist an $m\geq 1$ and $s_1,\dots,s_N\in\aHzst(m\overline{L})$ such that
\[
 \left\{x\in X\,:\,s_1(x)=\dots=s_N(x)=0\right\}=\emptyset.
\]
\end{enumerate}
\end{remark}

We denote the Kodaira map
\begin{equation}
 \Phi_{\aSpan{K}{\aHzst(\overline{L})}}:X\setminus\aBs(\overline{L})\to\PP_K(\aSpan{K}{\aHzst(\overline{L})})
\end{equation}
associated to $\aSpan{K}{\aHzst(\overline{L})}$ by $\aPhi_{\overline{L},K}$.
\end{definition}

\begin{lemma}\label{lem:w-ample}
Let $\overline{A}$ be an adelically metrized line bundle on $X$.
\begin{enumerate}
\item[\textup{(1)}] The following are equivalent.
\begin{enumerate}
\item[\textup{(a)}] $\overline{A}$ is w-ample.
\item[\textup{(b)}] $a\overline{A}$ is w-ample for an $a\geq 1$.
\item[\textup{(c)}] For an $a\geq 1$,
\[
 \aPhi_{a\overline{A},K}:X\to\PP_K(\aSpan{K}{\aHzst(a\overline{A})})
\]
is a closed immersion.
\item[\textup{(d)}] Given any adelically metrized line bundle $\overline{L}$ on $X$, $mA+L$ is very ample and $\Hz(mA+L)=\aSpan{K}{\aHzst(m\overline{A}+\overline{L})}$ for every $m\gg 1$.
\item[\textup{(e)}] Given any adelically metrized line bundle $\overline{L}$ on $X$, $m\overline{A}+\overline{L}$ is w-ample for every $m\gg 1$.
\item[\textup{(f)}] Given any adelically metrized line bundle $\overline{L}$ on $X$, $m\overline{A}+\overline{L}$ is semiample for every $m\gg 1$.
\end{enumerate}
\item[\textup{(2)}] If $\overline{A}$ is w-ample and $\overline{F}$ is semiample, then $\overline{A}+\overline{F}$ is w-ample.
\end{enumerate}
\end{lemma}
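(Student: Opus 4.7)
My plan for Part (1) is to prove (a)$\Leftrightarrow$(b)$\Leftrightarrow$(c) together and then close the cycle (a)$\Rightarrow$(d)$\Rightarrow$(e)$\Rightarrow$(f)$\Rightarrow$(a); the decisive step is a single product-of-sections construction, and Part (2) is a direct reuse of the same idea. For (a)$\Leftrightarrow$(b): since $aA$ ample is equivalent to $A$ ample, the section ring $\bigoplus_{m\geq 0}\Hz(mA)$ is Noetherian, and Lemma~\ref{lem:Veronese}---applied with the adelic norms induced from $\overline{A}$---passes the equality $V_{ma}=\aSpan{K}{\aHzst(\overline{V}_{ma})}$ for $m\gg 1$ to the same equality in every large degree. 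The implication (a)$\Rightarrow$(c) is immediate: w-ampleness gives $\aSpan{K}{\aHzst(a\overline{A})}=\Hz(aA)$ for $a\gg 1$ with $aA$ very ample, so $\aPhi_{a\overline{A},K}=\Phi_{\Hz(aA)}$ is a closed immersion. For (c)$\Rightarrow$(b), apply Lemma~\ref{lem:augbs}(5) to the graded $K$-linear series $V_{\sbullet}=\bigoplus_{m\geq 0}\aSpan{K}{\aHzst(m\overline{A})}$---a sub-$K$-algebra of $\bigoplus_m\Hz(mA)$ because the sup-norms are submultiplicative, so products of strictly small sections remain strictly small; the hypothesis says $\Phi_{V_a}$ is a closed immersion, whence the lemma yields $A$ ample and $V_{ma}=\Hz(maA)$ for $m$ sufficiently large, that is, w-ampleness of $a\overline{A}$.

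The core technical step is (a)$\Rightarrow$(d). Fix $a$ large enough that $\Hz(aA)=\aSpan{K}{\aHzst(a\overline{A})}$ and that $\Sym^k\Hz(aA)\twoheadrightarrow\Hz(kaA)$ for every $k\geq 1$, and choose $s_1,\dots,s_N\in\aHzst(a\overline{A})$ spanning $\Hz(aA)$ over $K$; set $c:=\max_i\|s_i\|_{\infty,\sup}^{a\overline{A}}<1$. Take $m_0$ so large that $m'A+L$ is very ample for $m'\geq m_0$ and, by Castelnuovo--Mumford regularity, the multiplication $\Hz(kaA)\otimes\Hz(m_0'A+L)\twoheadrightarrow\Hz((ka+m_0')A+L)$ is surjective for every $m_0'\in\{m_0,\dots,m_0+a-1\}$ and every $k\geq 0$. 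For each such $m_0'$, fix an $O_K$-basis $\{u_l^{(m_0')}\}$ of $\aHzf(m_0'\overline{A}+\overline{L})$ (automatically $K$-spanning $\Hz(m_0'A+L)$) and put $B:=\max_{m_0',l}\|u_l^{(m_0')}\|_{\infty,\sup}$. The products $s_{i_1}\cdots s_{i_k}\otimes u_l^{(m_0')}$ then $K$-span $\Hz((ka+m_0')A+L)$, and once $k>\log B/(-\log c)$ they satisfy $\|\cdot\|_{v,\sup}\leq 1$ at every finite $v$ (since $\|s_{i_j}\|_{v,\sup}\leq 1$ and $\|u_l^{(m_0')}\|_{v,\sup}\leq 1$) and $\|\cdot\|_{\infty,\sup}\leq c^k B<1$, hence belong to $\aHzst((ka+m_0')\overline{A}+\overline{L})$; letting $m_0'$ range over the $a$ residues covers every sufficiently large $m$.

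To close the cycle: (d)$\Rightarrow$(e) comes from (d) forcing $\aBs(m\overline{A}+\overline{L})=\Bs\Hz(mA+L)=\emptyset$ once $mA+L$ is very ample, so $\aSBs(m\overline{A}+\overline{L})=\emptyset$, and Theorem~\ref{thm:reprove} applied to the ample line bundle $mA+L$ gives w-ampleness of $m\overline{A}+\overline{L}$. The step (e)$\Rightarrow$(f) holds because for any w-ample $\overline{M}$, once $nM$ is very ample, $\Hz(nM)$ is both $K$-spanned by $\aHzst(n\overline{M})$ and $\mathcal{O}_X$-generates $nM$, so $n\overline{M}$ is free and $\overline{M}$ is semiample. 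For (f)$\Rightarrow$(a): applying (f) to $\overline{L}=-\overline{A}$ shows $(m-1)\overline{A}$ is semiample for $m\gg 1$, hence $\aSBs(\overline{A})=\emptyset$; applying (f) to $\overline{L}=-\overline{B}$ for any adelically metrized $\overline{B}$ with $B$ ample yields some globally generated $n(mA-B)$, so $nmA=n(mA-B)+nB$ is ample and hence $A$ is ample. Theorem~\ref{thm:reprove} then closes the loop.

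Part (2) is a direct analogue of the product construction: from w-ampleness choose $s_1,\dots,s_N\in\aHzst(a\overline{A})$ globally generating $aA$ with $c:=\max_i\|s_i\|_{\infty,\sup}^{a\overline{A}}<1$, and from semiampleness choose $t_1,\dots,t_r\in\aHzst(k\overline{F})$ globally generating $kF$ with $d:=\max_j\|t_j\|_{\infty,\sup}^{k\overline{F}}<1$; the products $s_i^{\otimes k}\otimes t_j^{\otimes a}\in\Hz(ak(A+F))$ have no common zero on $X$ and infinite-place sup-norm at most $c^k d^a<1$, so $\aSBs(\overline{A}+\overline{F})=\emptyset$, and since $A+F$ is ample Theorem~\ref{thm:reprove} yields w-ampleness. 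The main obstacle throughout is the norm bookkeeping in the product construction: the exponent $k$ in (a)$\Rightarrow$(d) must beat both the Castelnuovo--Mumford surjectivity threshold and the inequality $c^k B<1$ uniformly over the finitely many $(m_0',l)$ pairs, which is why it is essential to choose $m_0$ so that the multiplication maps are surjective for all $k\geq 0$ simultaneously.
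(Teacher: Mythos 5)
Your proof is correct, and it reaches the same destination by a somewhat different route: you lean more heavily and more systematically on Theorem~\ref{thm:reprove} (ample underlying bundle plus $\aSBs=\emptyset$ forces w-ampleness), whereas the paper leans on the Kodaira-map characterization (c) together with a Segre-embedding diagram. Concretely: for (a)$\Rightarrow$(d) the paper bundles the infinite-place slack abstractly by replacing $\overline{A}$ with $\overline{A}-\overline{\mathcal{O}}_X(\varepsilon[\infty])$, whereas you do the same thing by explicit bookkeeping $c^kB<1$; these are equivalent, and your version makes the quantitative content visible. For (d)$\Rightarrow$(e) the paper simply reads off closed immersivity and invokes (c)$\Rightarrow$(a), which is shorter than your route through $\aSBs=\emptyset$ and Theorem~\ref{thm:reprove}, but both are fine. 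The most genuine divergence is the logical order of (f)$\Rightarrow$(a) and part~(2): the paper proves part~(2) first via a Segre-embedding commutative diagram showing $\aPhi_{m(\overline{A}+\overline{F}),K}$ is a closed immersion, and then deduces (f)$\Rightarrow$(a) by writing $m\overline{A}=(m\overline{A}-\overline{B})+\overline{B}$ with $m\overline{A}-\overline{B}$ semiample; you instead prove (f)$\Rightarrow$(a) directly (establish $A$ ample and $\aSBs(\overline{A})=\emptyset$ by choosing $\overline{L}=-\overline{A}$ and $\overline{L}=-\overline{B}$, then apply Theorem~\ref{thm:reprove}), and prove (2) by exhibiting jointly non-vanishing strictly small products $s_i^{\otimes k}\otimes t_j^{\otimes a}$ and invoking Theorem~\ref{thm:reprove} again. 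Your ordering is logically independent of (2), which buys a cleaner deduction tree at the cost of re-running the Theorem~\ref{thm:reprove} machinery; the paper's approach shows (2) with a one-diagram argument and avoids the second appeal. Two small points worth tightening: in (a)$\Rightarrow$(d), $\aHzf(m_0'\overline{A}+\overline{L})$ is a finitely generated projective $O_K$-module, so ``$O_K$-basis'' should read ``finite $O_K$-generating set'' (this does not affect the bound $B$); and in (c)$\Rightarrow$(b), Lemma~\ref{lem:augbs}(5) gives $V_m=\Hz(mA)$ for sufficiently \emph{divisible} $m$, which is exactly w-ampleness of some $a\overline{A}$, so your phrasing should match that.
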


\begin{proof}
(1): The implications (a) $\Rightarrow$ (b), (a) $\Rightarrow$ (c), (e) $\Rightarrow$ (b), and (e) $\Rightarrow$ (f) are trivial.
The implication (b) $\Rightarrow$ (a) is obvious (see for example Remark~\ref{rem:w-ample}).


(c) $\Rightarrow$ (a): By the implication (b) $\Rightarrow$ (a), we can assume $a=1$.
Let $P:=\PP_K(\aSpan{K}{\aHzst(\overline{A})})$, and let $\mathcal{O}_P(1)$ be the hyperplane line bundle on $P$.
Since $A$ is isomorphic to $\aPhi_{\overline{A},K}^*\mathcal{O}_P(1)$, the homomorphism
\[
 \Hz(\mathcal{O}_P(m))=\Sym_K^m\aSpan{K}{\aHzst(\overline{A})}\to\Hz(mA)
\]
is surjective for every $m\gg 1$.
Hence $\aSpan{K}{\aHzst(m\overline{A})}=\Hz(mA)$ for every $m\gg 1$.

(a) $\Rightarrow$ (d): There exists an $\varepsilon>0$ such that $\overline{A}-\mathcal{O}_X(\varepsilon[\infty])$ is also w-ample.
We can find positive integers $a,b$ such that $\aSpan{K}{\aHzst(ma(\overline{A}-\mathcal{O}_X(\varepsilon[\infty]))}=\Hz(maA)$ for every $m\geq 1$, $mA+L$ is very ample for every $m\geq a$, and
\[
 \Hz(maA)\otimes_{K}\Hz((ab+r)A+L)\to\Hz(((m+b)a+r)A+L)
\]
are surjective for all $m,r$ with $m\geq 1$ and $0\leq r<a$.
Then
\[
 \aSpan{K}{\aHzst(ma(\overline{A}-\mathcal{O}_X(\varepsilon[\infty]))}\otimes_K\Hz((ab+r)A+L)\to\Hz(((m+b)a+r)A+L)
\]
are surjective for all $m,r$ with $m\geq 1$ and $0\leq r<a$, so we have $\aSpan{K}{\aHzst(m\overline{A}+\overline{L})}=\Hz(mA+L)$ for every $m\gg 1$.

The implication (d) $\Rightarrow$ (e) is clear by (c) $\Rightarrow$ (a).

Before proving the implication (f) $\Rightarrow$ (a), we show the assertion (2).
Take an $m\geq 1$ such that $\aPhi_{m\overline{A},K}$ is a closed immersion and $m\overline{F}$ is free.
Set
\[
 Q_m:=\Coker\left(\aSpan{K}{\aHzst(m\overline{F})}\otimes_K\aSpan{K}{\aHzst(m\overline{A})}\to\aSpan{K}{\aHzst(m(\overline{A}+\overline{F}))}\right).
\]
By considering the commutative diagram
\[
\xymatrix{ X \ar[rrrr]^-{\aPhi_{m(\overline{A}+\overline{F}),K}} \ar@{=}[d] &&&& \PP_K(\aSpan{K}{\aHzst(m(\overline{A}+\overline{F}))})\setminus\PP_K(Q_m) \ar[d] \\ X \ar[rrrr]^-{(\text{Segre emb.})\circ(\aPhi_{m\overline{A},K}\times\aPhi_{m\overline{F},K})} &&&& \PP_K(\aSpan{K}{\aHzst(m\overline{A})}\otimes_K\aSpan{K}{\aHzst(m\overline{F})}),
}
\]
we know that $\aPhi_{m(\overline{A}+\overline{F}),K}$ is a closed immersion.

(f) $\Rightarrow$ (a): Let $\overline{B}$ be a w-ample adelically metrized line bundle on $X$.
By the condition (f), $m\overline{A}-\overline{B}$ is semiample for every $m\gg 1$.
Thus by the assertion (2), $m\overline{A}$ is w-ample for every $m\gg 1$.
\end{proof}

\begin{proposition}\label{prop:aaugbs}
Let $\overline{L}$ be an adelically metrized line bundle on $X$.
\begin{enumerate}
\item[\textup{(1)}] The augmented base locus of $\overline{L}$ satisfies
\[
 \aBsp(\overline{L})=\bigcap_{\text{$\overline{A}$: w-ample}}\aSBs(\overline{L}-\overline{A})=\Bsp(L)\cup\aSBs(\overline{L}),
\]
where the intersection in the middle is taken over all the w-ample adelically metrized $\QQ$-line bundles $\overline{A}$ on $X$.
In particular, given any w-ample adelically metrized line bundle $\overline{A}$, $\aBsp(\overline{L})=\aSBs(\overline{L}-\varepsilon\overline{A})$ for every sufficiently small rational number $\varepsilon>0$.
\item[\textup{(2)}] Let $\overline{M}$ be another adelically metrized line bundle on $X$.
If $s\in\aHzsm(\overline{M})$, then
\[
 \aBsp(\overline{L}+\overline{M})\subset\aBsp(\overline{L})\cup\Supp(s).
\]
\item[\textup{(3)}] $\overline{L}$ is w-ample if and only if $\aBsp(\overline{L})=\emptyset$.
\item[\textup{(4)}] Let $\overline{A}_1,\dots,\overline{A}_r$ be w-ample adelically metrized line bundles on $X$.
Then there exists an $\varepsilon>0$ such that
\[
 \aBsp(\overline{L})=\aBsp(\overline{L}-\varepsilon_1\overline{A}_1-\dots-\varepsilon_r\overline{A}_r)
\]
for any rational numbers $\varepsilon_1,\dots,\varepsilon_r$ with $0\leq\varepsilon_i\leq\varepsilon$.
\item[\textup{(5)}] Let $\overline{A}_1,\dots,\overline{A}_r$ be adelically metrized line bundles on $X$.
If $\overline{L}$ is w-ample, then there exists a $\delta>0$ such that $\overline{L}+\delta_1\overline{A}_1+\dots+\delta_r\overline{A}_r$ is w-ample for any rational numbers $\delta_1,\dots,\delta_r$ with $|\delta_i|\leq\delta$.
In particular, the w-ampleness is an open condition.
\end{enumerate}
\end{proposition}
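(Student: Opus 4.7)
The plan is to establish (1) first via a cyclic chain of inclusions; parts (2) and (3) will follow as quick corollaries, while parts (5) and (4) will be derived from Lemma~\ref{lem:w-ample} and (1). For (1), I shall prove
\[
 \Bsp(L) \cup \aSBs(\overline{L}) \subset \aBsp(\overline{L}) \subset \bigcap_{\text{$\overline{A}$ w-ample}} \aSBs(\overline{L}-\overline{A}) \subset \Bsp(L) \cup \aSBs(\overline{L}).
\]
The first inclusion is immediate: the containment $V_\bullet := \bigoplus_m \aSpan{K}{\aHzst(m\overline{L})} \subset \bigoplus_m \Hz(mL)$ gives $\aBsp(\overline{L}) = \Bsp(V_\bullet) \supset \Bsp(L)$, while $\aSBs(\overline{L}) \subset \aBsp(\overline{L})$ is clear from the definitions. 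For the second, take $x \notin \aSBs(\overline{L}-\overline{A})$ with $\overline{A}$ w-ample, witnessed by $s \in \aHzst(k(\overline{L}-\overline{A}))$ with $s(x) \neq 0$; after increasing $k$, w-ampleness of $\overline{A}$ ensures $\aSpan{K}{\aHzst(pk\overline{A})} = \Hz(pkA)$ for every $p \geq 1$, and since a tensor product of strictly small sections is strictly small, each $\tau \otimes s^{\otimes p}$ for $\tau \in \Hz(pkA)$ lies in $V_{pk}$; hence $s \in \Lambda(V_\bullet; kA, k)$ and $x \notin \aBsp(\overline{L})$.

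For the third (and most delicate) inclusion, take $x \notin \Bsp(L) \cup \aSBs(\overline{L})$. Lemma~\ref{lem:augbs}(2) supplies an ample $A$, $a \geq 1$, and $s_0 \in \Hz(aL-A)$ with $s_0(x) \neq 0$ and $\Hz(pA) \otimes s_0^{\otimes p} \subset \Hz(paL)$ for $p \gg 1$; the second hypothesis yields $t \in \aHzst(m\overline{L})$ with $t(x) \neq 0$. I choose any w-ample $\QQ$-adelic bundle $\overline{A}$ with underlying $\QQ$-line bundle $(1/a)A$. Because $\aHzf(a\overline{L}-a\overline{A})$ is a finitely generated $O_K$-module spanning $\Hz(aL-A)$ over $K$, I rescale $s_0$ by a non-zero $d \in O_K$ so that $s_0 \in \aHzf(a\overline{L}-a\overline{A})$; then $\|s_0\|_{v,\sup}^{a\overline{L}-a\overline{A}} \leq 1$ at every finite $v$, and $\|s_0\|_{\infty,\sup}^{a\overline{L}-a\overline{A}} =: C$ is finite. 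With $r := \|t\|_{\infty,\sup}^{m\overline{L}} \in (0,1)$, the bounds
\[
 \|s_0 \otimes t^{\otimes q}\|_{\infty,\sup}^{(a+qm)\overline{L}-a\overline{A}} \leq Cr^q \quad\text{and}\quad \|s_0 \otimes t^{\otimes q}\|_{v,\sup}^{(a+qm)\overline{L}-a\overline{A}} \leq 1 \text{ for finite } v
\]
show $s_0 \otimes t^{\otimes q} \in \aHzst((a+qm)\overline{L}-a\overline{A})$ for $q$ sufficiently large, and this section is nonvanishing at $x$; hence $x \notin \aSBs(\overline{L}-(a/(a+qm))\overline{A})$, and $(a/(a+qm))\overline{A}$ is w-ample.

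Parts (2) and (3) are immediate corollaries. For (2), if $x \notin \aBsp(\overline{L}) \cup \Supp(s)$, (1) provides a w-ample $\overline{A}$ and $\sigma \in \aHzst(k(\overline{L}-\overline{A}))$ nonvanishing at $x$; the product $\sigma \otimes s^{\otimes k}$ is a strictly small section of $k((\overline{L}+\overline{M})-\overline{A})$ nonvanishing at $x$, so $x \notin \aBsp(\overline{L}+\overline{M})$ by (1). For (3), w-ampleness of $\overline{L}$ forces $\Bsp(L) = \emptyset = \aSBs(\overline{L})$, hence $\aBsp(\overline{L}) = \emptyset$ by (1); conversely, $\aBsp(\overline{L}) = \emptyset$ means $\Bsp(V_\bullet) = \emptyset$, and Lemma~\ref{lem:augbs}(5) then yields w-ampleness.

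For (5), Lemma~\ref{lem:w-ample}(1)(d) applied with the w-ample $\overline{L}$ and $\overline{L}' = \sum_i \eta_i \overline{A}_i$ for each sign vector $\eta \in \{\pm 1\}^r$ yields an integer $M$ with $M\overline{L} + \sum_i \eta_i \overline{A}_i$ w-ample for every $\eta$; writing $(M\delta_1,\ldots,M\delta_r) \in [-1,1]^r \cap \QQ^r$ as a $\QQ$-convex combination of the $2^r$ vertices of $\{\pm 1\}^r$ expresses $\overline{L} + \sum_i \delta_i \overline{A}_i$ as a positive $\QQ$-combination of w-amples, hence w-ample by iterated use of Lemma~\ref{lem:w-ample}(2) (w-ample implies semiample). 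For (4), monotonicity gives $\aBsp(\overline{L}) \subset \aBsp(\overline{L}-\sum\varepsilon_i\overline{A}_i)$ because $\overline{B} + \sum\varepsilon_i\overline{A}_i$ is w-ample whenever $\overline{B}$ is; for the reverse, Noetherianity of the Zariski topology stabilizes the intersection in (1) on finitely many $\overline{B}_j$, and choosing $\overline{B}_0 := \epsilon\overline{B}$ for a fixed w-ample $\overline{B}$ and small $\epsilon > 0$ (using (5) to make each $\overline{B}_j - \overline{B}_0$ w-ample) yields $\aSBs(\overline{L}-\overline{B}_0) = \aBsp(\overline{L})$; a further application of (5) to $\overline{B}_0$ makes $\overline{B}_0 - \sum\varepsilon_i\overline{A}_i$ w-ample for $\varepsilon_i$ sufficiently small, giving $\aBsp(\overline{L}-\sum\varepsilon_i\overline{A}_i) \subset \aBsp(\overline{L})$ by (1). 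The principal obstacle is producing the strictly small section for the third inclusion in (1); the trick of integralizing $s_0$ via the $O_K$-lattice $\aHzf$, then relying on the geometric decay $r^q$ to overwhelm the archimedean constant, circumvents any per-place adjustment of the metric on $\overline{A}$.
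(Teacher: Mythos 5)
Your argument for parts (1)--(3) is correct and follows essentially the paper's own proof. The three-term chain of inclusions for (1) is the same one the paper uses; for the decisive third inclusion, your trick of rescaling $s_0$ by $d\in O_K$ into the $O_K$-lattice $\aHzf(a\overline{L}-a\overline{A})$ and then overwhelming the archimedean constant $C$ by the geometric factor $r^q$ is exactly the paper's mechanism, with the roles of the two sections swapped (you take a high power of the strictly small section $t$ against a fixed integral $s_0$; the paper takes a high power $s^{\otimes p}$ against a fixed integral $t$). Parts (2) and (3) are proved exactly as in the paper.

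Where you genuinely diverge is in the ordering of (4) and (5). The paper proves (4) directly from the ``in particular'' clause of (1), then obtains (5) as a short corollary of (3) and (4). You instead prove (5) first and independently, via Lemma~\ref{lem:w-ample}(1)(d) applied over the $2^r$ sign vectors, a $\QQ$-convex combination over the vertices of $[-1,1]^r$, and iterated use of Lemma~\ref{lem:w-ample}(2) to collapse a nonnegative rational combination of w-amples into a single w-ample bundle; you then derive (4) from (5) and (1). Both routes are sound. Yours for (5) is more self-contained (no circular reliance on the perturbation lemma (4)) at the cost of the heavier combinatorial step. One bookkeeping remark: the ``in particular'' clause belongs to the statement of (1), but your proof of (1) does not address it; you in fact re-derive it inside (4) (Noetherian stabilization to finitely many $\overline{B}_j$, then (5) to shrink to a single $\aSBs(\overline{L}-\overline{B}_0)$). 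That is logically fine under your ordering, but you should note explicitly that this step completes the proof of (1).
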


\begin{proof}
(1): Let $\overline{V}_{\sbullet}$ be as in (\ref{eqn:adelicgradedlinser}).
For any $a\geq 1$ and for any w-ample adelically metrized line bundle $\overline{A}$ on $X$, we have $\Hz(aL-A)\supset\Lambda(\aSpan{K}{\aHzst(\overline{V}_{\sbullet})};A,a)\supset\aSpan{K}{\aHzst(a\overline{L}-\overline{A})}$.
Thus by Lemma~\ref{lem:augbs}(3) we have
\begin{equation}\label{eqn:aaugbsimplications}
 \Bsp(L)\cup\aSBs(\overline{L})\subset\Bsp(\aSpan{K}{\aHzst(\overline{V}_{\sbullet})})\subset\bigcap_{\text{$\overline{A}$: w-ample}}\aSBs(\overline{L}-\overline{A}).
\end{equation}
Suppose that $x\notin\Bsp(L)\cup\aSBs(\overline{L})$.
We can find a w-ample adelically metrized line bundle $\overline{A}$ on $X$, an $s\in\aHzst(b\overline{L})$, and a $t\in\aHzf(c\overline{L}-\overline{A})$ such that $s(x)\neq 0$ and $t(x)\neq 0$.
For a $p\geq 1$, we have
\[
 s^{\otimes p}\otimes t\in\aHzst((pb+c)\overline{L}-\overline{A}).
\]
Thus $x\notin\aBs((pb+c)\overline{L}-\overline{A})$.
This implies that the equalities hold in (\ref{eqn:aaugbsimplications}).

The assertion (2) is clear since
\[
 \aSBs(\overline{L}+\overline{M}-\overline{A})\subset\aSBs(\overline{L}-\overline{A})\cup\Supp(s)
\]
for every w-ample adelically metrized $\QQ$-line bundle $\overline{A}$.

(3): If $\overline{L}$ is w-ample, then $\aBsp(\overline{L})=\emptyset$.
Conversely, if $\aBsp(\overline{L})=\emptyset$, then there exists a w-ample $\QQ$-line bundle $\overline{A}$ on $X$ such that $\aSBs(\overline{L}-\overline{A})=\emptyset$.
Thus by Lemma~\ref{lem:w-ample}(2), $\overline{L}$ is w-ample.

(4): The inclusion $\subset$ is clear.
Since $\overline{A}_1+\dots+\overline{A}_r$ is w-ample, there exists a rational number $\varepsilon>0$ such that
\[
 \aBsp(\overline{L})=\aSBs\left(\overline{L}-\varepsilon(\overline{A}_1+\dots+\overline{A}_r)\right).
\]
Then for any $\varepsilon_1,\dots,\varepsilon_r$ with $0\leq\varepsilon_i\leq\varepsilon$, we have
\[
 \aBsp(\overline{L})\subset\aSBs(\overline{L}-\varepsilon_1\overline{A}_1-\dots-\varepsilon_r\overline{A}_r)\subset\aSBs\left(\overline{L}-\varepsilon(\overline{A}_1+\dots+\overline{A}_r)\right).
\]
Thus we conclude.

(5): Since every adelically metrized line bundle is a difference of two w-ample adelically metrized line bundles (Lemma~\ref{lem:w-ample}(1)), we can assume that $\overline{A}_1,\dots,\overline{A}_r$ are all w-ample.
By the assertions (3) and (4) above, there exists an $\varepsilon>0$ such that
\[
 \aBsp(\overline{L}-\varepsilon_1\overline{A}_1-\dots-\varepsilon_r\overline{A}_r)=\emptyset
\]
for any $\varepsilon_1,\dots,\varepsilon_r$ with $0\leq\varepsilon_i\leq\varepsilon$.
Then, for any $\delta_1,\dots,\delta_r$ with $\delta_i\geq -\varepsilon$, $\overline{L}+\delta_1\overline{A}_1+\dots+\delta_r\overline{A}_r$ is w-ample.
\end{proof}

\begin{remark}\label{rem:aaugbscompare}
There are several different definitions of the arithmetic augmented base locus.
For an adelically metrized line bundle $\overline{L}$, we set
\[
 \aHzsm(\overline{L}):=\left\{s\in\Hz(L)\,:\,\|s\|_{v,\sup}^{\overline{L}}\leq 1,\,\forall v\in M_K\right\}.
\]
An section in $\aHzsm(\overline{L})$ is referred to as a \emph{small section} of $\overline{L}$.
In \cite[\S 3]{MoriwakiEst}, Moriwaki defined
\[
 \aSBsn_+'(\overline{L}):=\bigcap_{\text{$\overline{A}$: w-ample}}\aSBssm(\overline{L}-\overline{A}),
\]
where the intersection is taken over all the w-ample adelically metrized $\QQ$-line bundles $\overline{A}$ (see section \ref{sec:adelicallymetrizedlbd}).
We can easily see that $\Bsp(L)\cup\aSBs(\overline{L})\subset\aSBsn_+'(\overline{L})\subset\aBsp(\overline{L})$.
Hence Moriwaki's $\aSBsn_+'(\overline{L})$ is identical to our $\aBsp(\overline{L})$.
Of course, $\aSBssm(\overline{L})\subset\aSBs(\overline{L})$ and the equality does not hold in general.

In \cite[\S 4]{ChenSesh}, Chen defined the augmented base locus by using the sections with normalized Arakelov degree not less than zero.
If $\overline{L}$ is associated to a continuous Hermitian line bundle on an $O_K$-model of $X$, then by \cite[Remark~3.8(ii)]{Boucksom_Chen} one can see that Chen's definition also coincides with ours.
\end{remark}

\begin{theorem}\label{thm:charaaugbs}
Let $X$ be a normal projective variety over a number field and let $\overline{L}$ be an adelically metrized line bundle on $X$.
The arithmetic augmented base locus $\aBsp(\overline{L})$ is characterized as the minimal Zariski closed subset of $X$ such that the restriction of the Kodaira map
\[
 \aPhi_{m\overline{L},K}:X\setminus\aBs(m\overline{L})\to\PP_K(\aSpan{K}{\aHzst(m\overline{L})})
\]
to $X\setminus\aBsp(\overline{L})$ is an immersion for every sufficiently divisible $m\gg 1$.
\end{theorem}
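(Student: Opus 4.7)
My plan is to prove the characterization by establishing two containments: that $\aPhi_{m\overline{L},K}$ restricted to $X\setminus\aBsp(\overline{L})$ is an immersion for every sufficiently divisible $m$, and conversely that any closed subset $Z\subset X$ with this immersion property contains $\aBsp(\overline{L})$. Both steps will be reductions to material already developed in Section~\ref{sec:prelim} and Proposition~\ref{prop:aaugbs}.

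For the ``upper bound'' half, I would work with the graded $K$-linear series
\[
 V_{\sbullet}:=\bigoplus_{m\geq 0}\aSpan{K}{\aHzst(m\overline{L})}
\]
belonging to $L$, for which $\aBsp(\overline{L})=\Bsp(V_{\sbullet})$ by definition. Lemma~\ref{lem:Kodairatype} yields
\[
 \aBsp(\overline{L})\supset\bigcap_{m\geq 1}\bigl(\Exc(\aPhi_{m\overline{L},K})\cup\aBs(m\overline{L})\bigr),
\]
and Lemma~\ref{lem:excep}(2) shows that this intersection is realized by any single sufficiently divisible $m$. Consequently $\aBs(m\overline{L})\subset\aBsp(\overline{L})$ and $\aPhi_{m\overline{L},K}|_{X\setminus\aBsp(\overline{L})}$ is an immersion for every sufficiently divisible $m\gg 1$, as required.

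For the minimality, fix a closed $Z\subset X$ on whose complement every sufficiently divisible Kodaira map $\aPhi_{m\overline{L},K}$ is an immersion. The very definedness of $\aPhi_{m\overline{L},K}|_{X\setminus Z}$ forces $\aBs(m\overline{L})\subset Z$ for such $m$, hence $\aSBs(\overline{L})\subset Z$. The key transfer step is then to upgrade this immersion property from $V_m:=\aSpan{K}{\aHzst(m\overline{L})}$ to all of $\Hz(mL)$: if $x\neq y$ lay in $X\setminus Z$ with $\Phi_{mL}(x)=\Phi_{mL}(y)$, then the evaluation hyperplanes in $\Hz(mL)$ would coincide, hence so would their restrictions $\{s\in V_m\,:\,s(x)=0\}$ and $\{s\in V_m\,:\,s(y)=0\}$, giving $\aPhi_{m\overline{L},K}(x)=\aPhi_{m\overline{L},K}(y)$ and contradicting injectivity of the small-section Kodaira map. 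The analogous restriction argument at the infinitesimal level (using $H_{V_m}=V_m\cap H_{\Hz(mL)}$ and the derivative description $s\mapsto\partial_v s(x)$) shows $d\Phi_{mL}|_x$ is injective on $X\setminus Z$. Thus $\Phi_{mL}|_{X\setminus Z}$ is an immersion for every sufficiently divisible $m$, and Theorem~\ref{thm:charaugbs}(1)---applicable since $X$ is normal---gives $\Bsp(L)\subset Z$. Combining this with $\aSBs(\overline{L})\subset Z$ and the identity $\aBsp(\overline{L})=\Bsp(L)\cup\aSBs(\overline{L})$ from Proposition~\ref{prop:aaugbs}(1) yields $\aBsp(\overline{L})\subset Z$, completing the argument.

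The main technical point to handle carefully is the factorization/restriction argument that transports immersions from $\Phi_{V_m}$ to $\Phi_{\Hz(mL)}$. It is elementary, but it is precisely the conceptual bridge connecting the arithmetic graded series $V_{\sbullet}$ to the geometric result Theorem~\ref{thm:charaugbs}; verifying it carefully on points and on tangent vectors is where the proof has its only moving part. Everything else is bookkeeping with Lemmas~\ref{lem:Kodairatype} and~\ref{lem:excep} and the decomposition of $\aBsp$ furnished by Proposition~\ref{prop:aaugbs}(1).
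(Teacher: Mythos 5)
Your proof is correct in overall structure, but it takes a genuinely different route from the paper's. You share with the paper the ``upper bound'' half: Lemma~\ref{lem:Kodairatype} together with Lemma~\ref{lem:excep}(2) gives $\aBsp(\overline{L})\supset\Exc(\aPhi_{m\overline{L},K})\cup\aBs(m\overline{L})$ for every sufficiently divisible $m$, exactly as in the paper's inclusion~(\ref{eqn:charaug0}). Where you diverge is the minimality half. The paper passes through the normalized blow-up $\mu_a:X_a\to X$ of Proposition~\ref{prop:adelicmetric}, applies the Zhang--Moriwaki Theorem~\ref{thm:reprove} and the geometric Theorem~\ref{thm:charaugbs} to the free adelic line bundle $\overline{M}_a$ on $X_a$, and transports back to $X$ via Lemma~\ref{lem:augbs_main}(2) and Proposition~\ref{prop:aaugbs}(2). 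You instead compare the two Kodaira maps $\aPhi_{m\overline{L},K}=\Phi_{V_m}$ and $\Phi_{\Hz(mL)}$ directly on $X$ itself, invoke Theorem~\ref{thm:charaugbs}(1) for $L$, and finish with the decomposition $\aBsp(\overline{L})=\Bsp(L)\cup\aSBs(\overline{L})$ from Proposition~\ref{prop:aaugbs}(1). This avoids Proposition~\ref{prop:adelicmetric}, Theorem~\ref{thm:reprove}, and Lemma~\ref{lem:augbs_main} entirely, and is substantially more elementary; the paper's route is heavier but is built from the same toolkit it already sets up for Proposition~\ref{prop:adelicmetric}.

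The one place you should tighten is exactly the spot you flag: the transfer of the immersion property from $\Phi_{V_m}$ to $\Phi_{\Hz(mL)}$. Arguing via ``separates points and separates tangent vectors'' is not enough here: that criterion gives closed immersion only for \emph{proper} source and over an \emph{algebraically closed} field, and $X\setminus Z$ is neither proper nor over $\overline{K}$. (One can produce injective, unramified morphisms from a non-proper variety whose image fails to be locally closed.) The clean way to make your idea work is the factorization you allude to: the inclusion $V_m\subset\Hz(mL)$ gives a linear projection $\pi:\PP_K(\Hz(mL))\dashrightarrow\PP_K(V_m)$, regular away from $\PP_K(\Hz(mL)/V_m)$, and for $x\notin\Bs(V_m)$ one has $\Phi_{\Hz(mL)}(x)\notin\PP_K(\Hz(mL)/V_m)$ and $\pi\circ\Phi_{\Hz(mL)}=\Phi_{V_m}$ there. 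Then use the general lemma: if $f=g\circ h$ with $f$ an immersion and $g$ separated, then $h$ is an immersion (graph of $h$ is a closed immersion into the fibre product, second projection is a base change of the immersion $f$). Applied with $f=\Phi_{V_m}|_{X\setminus Z}$, $g=\pi$, $h=\Phi_{\Hz(mL)}|_{X\setminus Z}$, this yields that $\Phi_{\Hz(mL)}|_{X\setminus Z}$ is an immersion, which is what you need before invoking Theorem~\ref{thm:charaugbs}(1). With that replacement, the proof is complete.
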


\begin{proof}
We choose an $a\geq 1$ such that $\aSBs(\overline{L})=\aBs(ma\overline{L})$ and
\[
 \Exc(\aPhi_{ma\overline{L},K})\cup\aBs(ma\overline{L})=\bigcap_{n\geq 1}\left(\Exc(\aPhi_{n\overline{L},K})\cup\aBs(n\overline{L})\right)
\]
for every $m\geq 1$ (Lemma~\ref{lem:excep}(2)).
By Lemma~\ref{lem:Kodairatype},
\begin{equation}\label{eqn:charaug0}
 \aBsp(\overline{L})\supset\Exc(\aPhi_{ma\overline{L},K})\cup\aBs(ma\overline{L})
\end{equation}
for every $m\geq 1$.
To show the reverse inclusion, let $\mu_a:X_a\to X$, $\overline{F}_a$, and $\overline{M}_a$ be as in Proposition~\ref{prop:adelicmetric} and let $1_{F_a}\in\aHzsm(\overline{F}_a)$ be the natural inclusion.
Then $\overline{M}_a$ is free.
By Theorems~\ref{thm:reprove} and \ref{thm:charaugbs}, there exists a $p\geq 1$ such that
\begin{equation}\label{eqn:charaug1}
 \aBsp(\overline{M}_a)=\Exc(\aPhi_{p\overline{M}_a,K})=\Bsp(M_a).
\end{equation}
Set $Y:=\aPhi_{pa\overline{L},K}(X)$ and $Y':=\aPhi_{p\overline{M}_a,K}(X_a)$.
We consider the commutative diagram
\[
\xymatrix{ X_a \ar[rr]^-{\aPhi_{p\overline{M}_a,K}} \ar[d]_-{\mu_a} && Y' \ar[d]^-{\nu} \\
 X \ar@{-->}[rr]^-{\aPhi_{pa\overline{L},K}} && Y.
}
\]
By applying Proposition~\ref{prop:aaugbs}(2) to the decomposition $p\mu_a^*(a\overline{L})=p\overline{M}_a+p\overline{F}_a$, we have
\begin{equation}\label{eqn:charaug3}
 \aBsp(p\mu_a^*(a\overline{L}))\subset\aBsp(p\overline{M}_a)\cup\Supp(1_{F_a}^{\otimes p}).
\end{equation}
Since
\[
 X_a\setminus\mu_a^{-1}\left(\Exc(\aPhi_{pa\overline{L},K})\cup\aBs(pa\overline{L})\right)\xrightarrow{\mu_a} X\setminus\left(\Exc(\aPhi_{pa\overline{L},K})\cup\aBs(pa\overline{L})\right)\xrightarrow{\aPhi_{pa\overline{L},K}} Y
\]
is an immersion, we have
\begin{equation}\label{eqn:charaug2}
 \Exc(\aPhi_{p\overline{M}_a,K})\subset\mu_a^{-1}\left(\Exc(\aPhi_{pa\overline{L},K})\cup\aBs(pa\overline{L})\right).
\end{equation}
Moreover, we have
\begin{equation}\label{eqn:charaug4}
 \aBsp(\mu_a^*\overline{L})=\mu_a^{-1}\aBsp(\overline{L})
\end{equation}
thanks to Lemma~\ref{lem:augbs_main}(2).
Therefore,
\[
 \mu_a^{-1}\aBsp(\overline{L})\subset\mu_a^{-1}\left(\Exc(\aPhi_{pa\overline{L},K})\cup\aBs(pa\overline{L})\right)
\]
by (\ref{eqn:charaug1})--(\ref{eqn:charaug4}), and
\[
 \aBsp(\overline{L})\subset\Exc(\aPhi_{pa\overline{L},K})\cup\aBs(pa\overline{L}).
\]
\end{proof}

\section{Yuan's estimation}\label{sec:YuansEst}

The main result of this section is Theorem~\ref{thm:Yuan}, which is the key to prove Theorem~B and to prove fundamental properties of the arithmetic restricted volumes and the arithmetic multiplicities.
The ideas to construct arithmetic Okounkov bodies can be traced back to Yuan's paper \cite{Yuan09}.
Later, Yuan \cite{Yuan12} largely simplified the construction, and Boucksom--Chen \cite{Boucksom_Chen} presented another method.
In this paper, we decide to rewrite the arguments in \cite{Yuan09,MoriwakiEst} with necessary changes.
We remark that most part of the arguments in \cite{Yuan12} is also applicable to the general case except some relations before \cite[Lemma~3.2]{Yuan12}.

The arithmetic restricted volume we study below was first introduced by Moriwaki \cite{MoriwakiEst}.
Let $M$ be a free $\ZZ$-module of finite rank.
A subset $\Gamma$ of $M$ is called a \emph{CL-subset of $M$} if the following equivalent conditions are satisfied (see \cite[Proposition~1.2.1(2)]{MoriwakiEst}).
\begin{enumerate}
\item[(a)] There exist a $\ZZ$-submodule $N$ of $M$ and a convex subset $\Delta\subset M\otimes_{\ZZ}\RR$ such that $\Gamma=N\cap\Delta$.
\item[(b)] Let $\aSpan{\RR}{\Gamma}$ be the $\RR$-vector subspace of $M\otimes_{\ZZ}\RR$ generated by $\Gamma$ and let $\Conv_{\aSpan{\RR}{\Gamma}}(\Gamma)$ be the minimal convex body containing $\Gamma$ in $\aSpan{\RR}{\Gamma}$.
Then
\[
 \Gamma=\aSpan{\ZZ}{\Gamma}\cap\Conv_{\aSpan{\RR}{\Gamma}}(\Gamma).
\]
Note that $\aSpan{\ZZ}{\Gamma}\otimes_{\ZZ}\RR\xrightarrow{\sim}\aSpan{\RR}{\Gamma}$ in this case.
\item[(c)] $\Gamma=\bigcup_{l\geq 1}\left\{\frac{\gamma_1+\cdots+\gamma_l}{l}\in\aSpan{\ZZ}{\Gamma}\,:\,\gamma_1,\dots,\gamma_l\in\Gamma\right\}$.
\end{enumerate}
A subset $S\subset M\otimes_{\ZZ}\RR$ is said to be \emph{symmetric} if $\gamma\in S$ implies $-\gamma\in S$.
Given any subset $S$ in $M$, we define the \emph{CL-hull of $S$ in $M$} as the smallest CL-subset of $M$ containing $S$, which we shall denote by $\CL_M(S)$.
Moreover, we set
\begin{equation}
 m\ast S:=\left\{\gamma_1+\cdots+\gamma_m\,:\,\gamma_1,\dots,\gamma_m\in S\right\}
\end{equation}
for an integer $m\geq 1$ and a subset $S$ of $M$.

\begin{lemma}[\text{\cite[Proposition~2.8]{Yuan09}, \cite[Lemma~1.2.2]{MoriwakiEst}}]\label{lem:Yuansineq}
Let $M$ be a free $\ZZ$-module of finite rank, and let $r:M\to N$ be a surjective homomorphism of $\ZZ$-modules.
\begin{enumerate}
\item[\textup{(1)}] Let $\Gamma$ be a symmetric finite subset of $M$.
Then
\[
 \log\sharp\Gamma-\log\sharp(\Ker(r)\cap(2\ast\Gamma))\leq \log\sharp r(\Gamma)\leq\log\sharp(2\ast\Gamma)-\log(\Ker(r)\cap\Gamma).
\]
\item[\textup{(2)}] Let $\Delta$ be a bounded symmetric convex subset of $M\otimes_{\ZZ}\RR$, and let $a\geq 1$ be a real number.
Then
\[
 0\leq \log\sharp(M\cap a\Delta)-\log\sharp(M\cap\Delta)\leq\log(\lceil 2a\rceil)\rk_{\ZZ}M.
\]
\end{enumerate}
\end{lemma}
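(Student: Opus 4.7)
The plan is to treat the two parts by elementary counting/averaging arguments and a modular reduction trick, both of which are essentially contained in \cite{Yuan09} and \cite{MoriwakiEst}.

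For part (1), I would proceed via fiber decomposition of the map $r$. The key observation is that symmetry of $\Gamma$ gives $\Gamma - \Gamma \subset 2\ast\Gamma$. For the left inequality, on each non-empty fiber $F_\gamma := \Gamma \cap r^{-1}(\gamma)$ pick a base point $\tilde{\gamma}\in F_\gamma$; then the translation map $F_\gamma \to \Ker(r)\cap(2\ast\Gamma)$, $x\mapsto x-\tilde{\gamma}$, is injective (the image lies in $\Ker r$ tautologically and in $2\ast\Gamma$ by symmetry), whence $\sharp\Gamma \leq \sharp r(\Gamma)\cdot\sharp(\Ker(r)\cap(2\ast\Gamma))$. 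For the right inequality, fix a set-theoretic section $\gamma\mapsto\tilde{\gamma}$ of $r|_\Gamma$ and consider
\[
\phi: r(\Gamma)\times(\Ker(r)\cap\Gamma)\to 2\ast\Gamma,\qquad (\gamma,k)\mapsto\tilde{\gamma}+k;
\]
the target is correct because $\tilde{\gamma}+k\in\Gamma+\Gamma$, and $\phi$ is injective since applying $r$ recovers $\gamma$ (hence $\tilde\gamma$ and then $k$). Taking logarithms gives the stated bounds.

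For part (2), the left inequality is immediate: $\Delta$ symmetric and convex forces $0\in\Delta$, so $\Delta\subset a\Delta$ when $a\geq 1$. For the right inequality I set $n:=\lceil 2a\rceil$ and consider the reduction map $\pi: M\cap a\Delta\to M/nM$. The image has cardinality at most $n^{\rk_{\ZZ}M}$, so it suffices to bound each fiber by $\sharp(M\cap\Delta)$. Fix a base point $x_0$ in a given fiber; for any other $x$ in the same fiber, $x-x_0\in nM$, and I claim $(x-x_0)/n\in M\cap\Delta$. Membership in $M$ is clear, and $x,-x_0\in a\Delta$ together with convexity yield $(x-x_0)/2\in a\Delta$, i.e.\ $(x-x_0)/n\in (2a/n)\Delta\subset\Delta$ since $2a/n\leq 1$. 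The map $x\mapsto(x-x_0)/n$ is obviously injective, giving the fiber bound and hence $\sharp(M\cap a\Delta)\leq \lceil 2a\rceil^{\rk_{\ZZ}M}\cdot\sharp(M\cap\Delta)$.

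Neither step poses a serious obstacle; the only care needed is to use the symmetry of $\Gamma$ (resp.\ $\Delta$) precisely where it is required, namely to replace differences by sums ($\Gamma-\Gamma\subset 2\ast\Gamma$) and to guarantee $0\in\Delta$ and that $(x-x_0)/2\in a\Delta$. The choice $n=\lceil 2a\rceil$ is the smallest integer making $2a/n\leq 1$, so the bound is essentially optimal in the exponent of $n$ that the method produces.
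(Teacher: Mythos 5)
Your proof is correct. The paper itself does not prove Lemma~\ref{lem:Yuansineq} (it is quoted from Yuan and Moriwaki), so there is no internal proof to compare against; your argument is a complete and self-contained derivation. The fiber decomposition for part~(1), using symmetry to pass from $x-\tilde\gamma$ to a sum $x+(-\tilde\gamma)\in 2\ast\Gamma$, and the reduction modulo $n=\lceil 2a\rceil$ for part~(2), using symmetry and convexity to place $(x-x_0)/n$ in $\Delta$, are the standard arguments and are applied correctly here (including the observation $0\in\Delta$ needed both for $\Delta\subset a\Delta$ and for the containment $(2a/n)\Delta\subset\Delta$).
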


\begin{lemma}\label{lem:rkcomparison}
Let $K$ be a number field, let $M$ be a projective $O_K$-module of finite rank, and let $\Gamma$ be a finite subset of $M$.
Then
\[
 \rk_{O_K}\aSpan{O_K}{\Gamma}\leq\rk_{\ZZ}\aSpan{\ZZ}{\Gamma}\leq [K:\QQ]\rk_{O_K}\aSpan{O_K}{\Gamma}.
\]
\end{lemma}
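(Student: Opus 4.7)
The plan is to convert both rank statements into dimensions of vector spaces over $\QQ$ and $K$ respectively, and then compare these dimensions inside a common ambient space. Set $M_K := M \otimes_{O_K} K$; since $M$ is projective over $O_K$ and $K = O_K \otimes_\ZZ \QQ$, we have a canonical $\QQ$-linear identification $M \otimes_\ZZ \QQ \cong M_K$. Inside $M_K$ place the two subspaces
\[
 V_\QQ := \aSpan{\QQ}{\Gamma} \quad\text{and}\quad V_K := \aSpan{K}{\Gamma},
\]
and observe the tautological identities $\rk_\ZZ \aSpan{\ZZ}{\Gamma} = \dim_\QQ V_\QQ$ and $\rk_{O_K} \aSpan{O_K}{\Gamma} = \dim_K V_K$, valid since both $\aSpan{\ZZ}{\Gamma}$ and $\aSpan{O_K}{\Gamma}$ are finitely generated modules whose localisation at the generic point recovers $V_\QQ$ and $V_K$.

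The key structural fact is that $V_\QQ \subseteq V_K$ as $\QQ$-vector subspaces of $M_K$, together with the relation $V_K = K\cdot V_\QQ$ (because $\Gamma \subseteq V_\QQ$, hence $K\cdot V_\QQ$ already contains $\Gamma$ and is $K$-stable, so it contains $V_K$; the reverse inclusion is obvious).

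For the first inequality, any $\QQ$-basis of $V_\QQ$ spans $V_\QQ$ over $\QQ$, hence by the identity $V_K = K\cdot V_\QQ$ it also spans $V_K$ over $K$; therefore $\dim_K V_K \leq \dim_\QQ V_\QQ$, which yields $\rk_{O_K}\aSpan{O_K}{\Gamma} \leq \rk_\ZZ \aSpan{\ZZ}{\Gamma}$. For the second inequality, view $V_K$ as a $\QQ$-vector space; since $V_\QQ$ is a $\QQ$-subspace of $V_K$ one has $\dim_\QQ V_\QQ \leq \dim_\QQ V_K = [K:\QQ]\,\dim_K V_K$, which gives $\rk_\ZZ \aSpan{\ZZ}{\Gamma} \leq [K:\QQ]\,\rk_{O_K}\aSpan{O_K}{\Gamma}$.

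There is essentially no obstacle; the only point deserving a moment of care is the identification $M\otimes_\ZZ\QQ \cong M\otimes_{O_K} K$, which requires the fact that localising $O_K$ at the nonzero integers produces $K$, so that the two spans live inside the same ambient $K$-vector space. Once this is set up, both bounds are immediate from elementary linear algebra over a field extension.
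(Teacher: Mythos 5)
Your argument is correct and follows essentially the same route as the paper's: the paper proves the second bound by noting $\aSpan{\ZZ}{\Gamma}\subset\aSpan{O_K}{\Gamma}$ and $\rk_{\ZZ}\aSpan{O_K}{\Gamma}=[K:\QQ]\rk_{O_K}\aSpan{O_K}{\Gamma}$, which is exactly your $V_\QQ\subset V_K$ and $\dim_\QQ V_K=[K:\QQ]\dim_K V_K$ after tensoring up to $\QQ$ and $K$, and the first bound (which the paper merely calls ``clear'') you justify by the observation that a $\QQ$-spanning set of $V_\QQ$ already $K$-spans $V_K$. Passing to the generic fibers $V_\QQ$ and $V_K$ is a clean way to make both steps explicit, but it is not a genuinely different argument.
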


\begin{proof}
The first inequality is clear.
Since $\aSpan{\ZZ}{\Gamma}\subset\aSpan{O_K}{\Gamma}$, we have $\rk_{\ZZ}\aSpan{\ZZ}{\Gamma}\leq\rk_{\ZZ}\aSpan{O_K}{\Gamma}=[K:\QQ]\rk_{O_K}\aSpan{O_K}{\Gamma}$.
\end{proof}

In the rest of this section, let $X$ be a projective variety that is geometrically irreducible over a number field $K$.
Let $U$ be a non-empty open subset of $\Spec(O_K)$ and let $\pi_U:\mathscr{X}_U\to U$ be a $U$-model of $X$, so that $\mathscr{X}_U$ is reduced and irreducible and $\pi_U$ is flat and projective.

\begin{definition}\label{defn:goodflag}
A \emph{flag on $\mathscr{X}_U$} is a sequence of reduced irreducible closed subschemes of $\mathscr{X}_U$,
\[
 F_{\sbullet}:\mathscr{X}_U=F_{-1}\supsetneq F_0\supsetneq F_1\supsetneq\dots\supsetneq F_{\dim X}=\{\xi\},
\]
such that each $F_i$ has codimension $i+1$ in $\mathscr{X}_U$, $F_{\dim X}$ consists of a closed point $\xi\in\mathscr{X}_U$, and each $F_{i+1}$ is locally principal in $F_i$ around $\xi$.
The closed point $\xi=\xi_{F_{\sbullet}}$ is called the \emph{center of the flag $F_{\sbullet}$}.

Let $\Psi$ be a Zariski closed subset of $\mathscr{X}_U$.
We say that $F_{\sbullet}$ is a \emph{$\Psi$-good flag on $\mathscr{X}_U$ over a prime number $p$} if the following conditions are satisfied.
\begin{enumerate}
\item[(a)] There exists a prime ideal $\mathfrak{p}\in U$ such that $\mathfrak{p}\cap\ZZ=p\ZZ$ and $[O_{K_{\mathfrak{p}}}/\mathfrak{p}O_{K_{\mathfrak{p}}}:\FF_p]=1$.
\item[(b)] $F_0=\pi^{-1}(\mathfrak{p})$ and the center $\xi$ is $\FF_p$-rational.
\item[(c)] The center $\xi$ is not contained in $\Psi$.
\end{enumerate}
An $\emptyset$-good flag shall be simply called a \emph{good flag} (see \cite[\S 1.4]{MoriwakiEst}).
Note that $F_0$ is a Cartier divisor on $\mathscr{X}_U$ and $F_0,\dots,F_{\dim X}$ are all geometrically irreducible over $\FF_p$.
\end{definition}

Let $\Rat(X)$ be the field of rational functions on $X$ and let $F_{\sbullet}:\mathscr{X}_U=F_{-1}\supset F_0\supset\dots\supset F_{\dim X}=\{\xi\}$ be any flag on $\mathscr{X}_U$.
We then define the \emph{valuation map} $\bm{w}_{F_{\sbullet}}:\Rat(X)^{\times}\to\ZZ^{\dim X+1}$ associated to $F_{\sbullet}$ as follows.
For each $i=0,\dots,\dim X$, we fix a local equation $f_i$ defining $F_i$ in $F_{i-1}$ around $\xi$.
For $\phi\in\Rat(X)^{\times}$, we set $\phi_0:=\phi$, $w_0(\phi):=\ord_{F_0}(\phi_0)$,
\[
 \phi_i:=\left.\left(f_{i-1}^{-w_{i-1}(\phi)}\phi_{i-1}\right)\right|_{F_{i-1}},\quad\text{and}\quad w_i(\phi):=\ord_{F_i}(\phi_i)
\]
for $i=1,\dots,\dim X$, inductively.
Then define
\begin{equation}
 \bm{w}_{F_{\sbullet}}(\phi):=(w_0(\phi),w_1(\phi),\dots,w_{\dim X}(\phi)),
\end{equation}
which does not depend on the choice of $f_0,\dots,f_{\dim X}$.
Note that
\begin{equation}
 \bm{v}_{F_{\sbullet}}(\phi_1)=(w_1(\phi),\dots,w_{\dim X}(\phi))
\end{equation}
is the valuation vector of $\phi_1\in\Rat(F_0)^{\times}$ associated to the flag $F_0\supset\dots\supset F_{\dim X}$ on $F_0$ (see \cite[\S 1.1]{Lazarsfeld_Mustata08}).

\begin{lemma}\label{lem:imageflag}
Let $\mathscr{X}_U'$ be another $U$-model of $X$ and let $\varphi_U:\mathscr{X}_U'\to\mathscr{X}_U$ be a projective birational $U$-morphism.
\begin{enumerate}
\item[\textup{(1)}] Let $\Psi'$ be a Zariski closed subset of $\mathscr{X}_U'$ and let $F_{\sbullet}':\mathscr{X}_U'\supset F_0'\supset\dots\supset F_{\dim X}'=\{\xi'\}$ be a $\Psi'$-good flag on $\mathscr{X}_U'$ over a prime number $p$.
If $\varphi_U$ is isomorphic around $\xi'$, then the sequence of the images
\[
 \varphi_U(F_{\sbullet}'):\mathscr{X}_U\supset\varphi_U(F_0')\supset\dots\supset\varphi_U(F_{\dim X}')=\{\varphi_U(\xi')\}
\]
is a $\varphi_U(\Psi')$-good flag on $\mathscr{X}_U$ over $p$, and $\bm{w}_{\varphi_U(F_{\sbullet}')}=\bm{w}_{F_{\sbullet}}\circ\varphi_U^*$.
\item[\textup{(2)}] Let $\Psi$ be a Zariski closed subset of $\mathscr{X}_U$ and let $F_{\sbullet}:\mathscr{X}_U\supset F_0\supset\dots\supset F_{\dim X}=\{\xi\}$ be a $\Psi$-good flag on $\mathscr{X}_U$ over a prime number $p$.
If $\varphi_U$ is isomorphic around $\xi$, then the sequence of the strict transforms
\[
 \varphi_{U*}^{-1}(F_{\sbullet}):\mathscr{X}_U'\supset\varphi_{U*}^{-1}(F_0)\supset\dots\supset\varphi_{U*}^{-1}(F_{\dim X})=\{\varphi_U^{-1}(\xi)\}
\]
is a $\varphi_U^{-1}(\Psi)$-good flag on $\mathscr{X}_U'$ over $p$ and $\bm{w}_{\varphi_{U*}^{-1}(F_{\sbullet})}\circ\varphi_U^*=\bm{w}_{F_{\sbullet}}$.
\end{enumerate}
\end{lemma}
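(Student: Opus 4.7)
The plan is to reduce both assertions to the single observation that $\varphi_U$ is, by hypothesis, an isomorphism on an open neighborhood of the flag's center. This reduces the construction of both the flag and the valuation vector to a region in which $\varphi_U$ induces an isomorphism of local rings, so that every piece of data transfers term by term.

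For assertion (1), I would first fix open neighborhoods $V'\subset\mathscr{X}_U'$ of $\xi'$ and $V\subset\mathscr{X}_U$ of $\varphi_U(\xi')$ with $\varphi_U|_{V'}\colon V'\xrightarrow{\sim}V$. Then I would check one by one that each scheme-theoretic image $\varphi_U(F_i')$ is a reduced irreducible closed subscheme of $\mathscr{X}_U$ of codimension $i+1$: irreducibility and reducedness are inherited from $F_i'$ because $\varphi_U$ is closed and generically injective, and the codimension equals that of $F_i'$ since $\varphi_U(F_i')\cap V$ is identified with $F_i'\cap V'$ through the local isomorphism. The identity $\varphi_U(F_0')=\pi_U^{-1}(\mathfrak{p})$ follows from $\pi_U\circ\varphi_U=\pi_U'$ together with $F_0'=(\pi_U')^{-1}(\mathfrak{p})$; the residue field at $\varphi_U(\xi')$, hence the rationality and the conditions on $\mathfrak{p}$, is preserved because local rings at the centers agree. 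The local-principal condition for $\varphi_U(F_{i+1}')$ inside $\varphi_U(F_i')$ around $\varphi_U(\xi')$ is immediate from the corresponding condition for $F_{\sbullet}'$ transported through $(\varphi_U|_{V'})^{-1}$. Finally, $\varphi_U(\xi')\notin\varphi_U(\Psi')$ because $\varphi_U|_{V'}$ is injective and $\xi'\notin\Psi'$. For the valuation identity, the inductive recipe defining $\bm{w}$ from local equations at the center uses only data in $V'$; under the local isomorphism each chosen defining equation $f_i'$ of $F_i'$ in $F_{i-1}'$ corresponds to a defining equation of $\varphi_U(F_i')$ in $\varphi_U(F_{i-1}')$, and each successive pullback $\phi_i$ is identified with its image, giving $\bm{w}_{\varphi_U(F_{\sbullet}')}(\phi)=\bm{w}_{F_{\sbullet}'}(\varphi_U^*\phi)$ for every $\phi\in\Rat(X)^{\times}$.

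For assertion (2), I would proceed symmetrically: the strict transform $\varphi_{U*}^{-1}(F_i)$ is the unique irreducible component of $\varphi_U^{-1}(F_i)$ dominating $F_i$, and on the open neighborhood $V'=\varphi_U^{-1}(V)$ of $\varphi_U^{-1}(\xi)$ on which $\varphi_U$ is an isomorphism one has $\varphi_{U*}^{-1}(F_i)\cap V'=\varphi_U^{-1}(F_i)\cap V'\cong F_i\cap V$. Hence reducedness, irreducibility, codimension, and local-principal structure around $\varphi_U^{-1}(\xi)$ all transfer from $F_{\sbullet}$; the fiber condition $\varphi_{U*}^{-1}(F_0)=\pi_{U'}^{-1}(\mathfrak{p})$ follows because $F_0=\pi_U^{-1}(\mathfrak{p})$ is a Cartier divisor pulled back to $\mathscr{X}_U'$ and contains no components of the exceptional locus; the residue field at $\varphi_U^{-1}(\xi)$ equals that at $\xi$; and $\varphi_U^{-1}(\xi)\notin\varphi_U^{-1}(\Psi)$ because $\xi\notin\Psi$. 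The valuation formula $\bm{w}_{\varphi_{U*}^{-1}(F_{\sbullet})}\circ\varphi_U^*=\bm{w}_{F_{\sbullet}}$ is then again obtained by transporting each local equation $f_i$ through $\varphi_U^*$ and reading off the inductive definition of $\bm{w}$ in either chart.

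The only nontrivial point is to be careful that the scheme-theoretic image (respectively, strict transform) is globally a reduced irreducible closed subscheme of the correct codimension, rather than only locally so; this is guaranteed by projectivity and birationality of $\varphi_U$ together with the fact that the closure of an irreducible reduced locally closed subscheme is irreducible reduced closed. Beyond this mild bookkeeping, the entire proof is a transport-of-structure through the identification $\mathcal{O}_{\mathscr{X}_U',\xi'}\cong\mathcal{O}_{\mathscr{X}_U,\varphi_U(\xi')}$.
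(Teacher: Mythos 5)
The paper states Lemma~\ref{lem:imageflag} without a written proof, presumably regarding it as routine, so there is no printed argument to compare against. Your overall strategy --- transport of structure through the local isomorphism $\mathcal{O}_{\mathscr{X}_U,\varphi_U(\xi')}\cong\mathcal{O}_{\mathscr{X}_U',\xi'}$ at the flag centers --- is the natural one, and your treatment of assertion~(1) is sound: images under a proper map are closed, irreducibility and reducedness are inherited, $\varphi_U(F_0')=\pi_U^{-1}(\mathfrak{p})$ follows from $\pi_U\circ\varphi_U=\pi_{U'}$ together with surjectivity of $\varphi_U$ (which also yields irreducibility of $\pi_U^{-1}(\mathfrak{p})$), and the codimension, local-principal structure, rationality of the center, and valuation vectors are all read off on the chart where $\varphi_U$ is an isomorphism.

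In assertion~(2), however, there is a genuine gap. Definition~\ref{defn:goodflag}(b) requires $F_0$ to be the \emph{entire} fiber $\pi^{-1}(\mathfrak{p})$, so for $\varphi_{U*}^{-1}(F_{\sbullet})$ to be a good flag one needs $\varphi_{U*}^{-1}(F_0)=\pi_{U'}^{-1}(\mathfrak{p})$, that is, the strict transform of $F_0$ to coincide with its total transform $\varphi_U^{-1}(F_0)$. You justify this by asserting that the pullback of the Cartier divisor $F_0$ ``contains no components of the exceptional locus,'' but this is not automatic and fails in simple examples: if $\varphi_U$ blows up a closed point of the special fiber over $\mathfrak{p}$ away from $\xi$ (so that the hypothesis ``$\varphi_U$ is isomorphic around $\xi$'' still holds), then $\Exc(\varphi_U)\subset\pi_{U'}^{-1}(\mathfrak{p})$, the total transform $\varphi_U^{-1}(F_0)$ is reducible, and $\varphi_{U*}^{-1}(F_0)$ is a proper component of $\pi_{U'}^{-1}(\mathfrak{p})$, so condition~(b) fails. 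Everything else you list for~(2) --- irreducibility, reducedness, codimension, local principality near $\varphi_U^{-1}(\xi)$, $\FF_p$-rationality of the center, the statement about $\Psi$, and the valuation identity $\bm{w}_{\varphi_{U*}^{-1}(F_{\sbullet})}\circ\varphi_U^*=\bm{w}_{F_{\sbullet}}$ --- does transfer through the local isomorphism exactly as in~(1). So either an additional hypothesis (e.g.\ that $\varphi_U$ introduces no exceptional components over $\pi_U^{-1}(\mathfrak{p})$) must be imposed to obtain a good flag in the sense of Definition~\ref{defn:goodflag}, or the argument should isolate that the valuation identity holds unconditionally and verify that only this is used downstream.
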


\begin{lemma}[\text{\cite[\S 2.2]{Yuan09}, \cite[Proposition~1.4.1]{MoriwakiEst}}]\label{lem:existgoodflags}
Given any Zariski closed subset $\Psi$ of $\mathscr{X}_U$ such that $\Psi\neq\mathscr{X}_U$, there exist $\Psi$-good flags on $\mathscr{X}$ over all but finitely many prime numbers.
\end{lemma}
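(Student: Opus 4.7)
The plan is to verify the three defining conditions (a)--(c) of a $\Psi$-good flag for all but finitely many primes $p$, by combining Chebotarev's density theorem with generic geometric integrality of fibers of $\pi_U$ and an inductive construction of hypersurface sections through a well-chosen $\FF_p$-rational smooth point.

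First, by Chebotarev's density theorem, for all but finitely many primes $p$ that split completely in $K$ and lie over $U$, there is a prime $\mathfrak{p}\in U$ with $\mathfrak{p}\cap\ZZ=p\ZZ$ and $[O_{K_{\mathfrak{p}}}/\mathfrak{p}O_{K_{\mathfrak{p}}}:\FF_p]=1$. Since $X$ is geometrically irreducible over $K$ and $\pi_U$ is flat and projective of relative dimension $\dim X$, constructibility of geometric integrality in flat projective families implies that the fiber $F_0:=\pi_U^{-1}(\mathfrak{p})$ is geometrically integral over $\FF_p$ for almost all $\mathfrak{p}$. Because $\Psi\subsetneq\mathscr{X}_U$ is a proper closed subset of an irreducible scheme of dimension $\dim X+1$, upper semicontinuity of fiber dimension shows that $\Psi_{\mathfrak{p}}:=\Psi\cap F_0$ has dimension at most $\dim X-1$ for all but finitely many $\mathfrak{p}$. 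Applying the Lang--Weil estimate to $F_0$ gives $\sharp F_0(\FF_p)=p^{\dim X}+O(p^{\dim X-1/2})$, whereas $\sharp(\Psi_{\mathfrak{p}}\cup\Sing(F_0))(\FF_p)=O(p^{\dim X-1})$. Hence for $p$ sufficiently large we may pick an $\FF_p$-rational smooth point $\xi\in F_0\setminus(\Psi\cup\Sing(F_0))$, which secures (a)--(c) as well as the fact that $F_0$ is a Cartier divisor on $\mathscr{X}_U$ at $\xi$ (since $\mathfrak{p}\in U$ is a regular point of the one-dimensional Dedekind scheme $\Spec(O_K)$).

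It remains to construct $F_1\supsetneq\cdots\supsetneq F_{\dim X}=\{\xi\}$ inside $F_0$ so that each $F_{i+1}$ is a geometrically integral Cartier divisor in $F_i$, locally principal around $\xi$. Fix a projective embedding of $F_0$ in some $\PP^N_{\FF_p}$ and inductively assume $F_i$ is geometrically integral and smooth at $\xi$. By Poonen's Bertini theorem over finite fields, among the $\FF_p$-rational hypersurfaces of sufficiently large degree $d$ in $\PP^N_{\FF_p}$ passing through $\xi$, a positive-density subfamily cuts out on $F_i$ a geometrically integral divisor smooth at $\xi$; in particular, such a hypersurface exists over $\FF_p$, and the resulting divisor is the desired $F_{i+1}$. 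Smoothness at $\xi$ guarantees that $F_{i+1}$ is locally cut out in $F_i$ around $\xi$ by a single equation, as required.

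The main obstacle is to achieve simultaneously, at each step of the induction, geometric integrality and smoothness of the hypersurface section at $\xi$ together with $\FF_p$-rationality of the cutting hypersurface; classical Bertini only provides a generic member over an algebraic closure, which need not descend to $\FF_p$. Poonen's Bertini theorem, combined with the observation that passage through the single $\FF_p$-rational point $\xi$ is a codimension-one linear (hence $\FF_p$-rational) condition in the linear system, resolves this difficulty for all but finitely many primes $p$.
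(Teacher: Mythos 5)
Your proposal is correct, but it takes a genuinely different route from the paper's proof, and the comparison is instructive. The paper first passes to a modification $\varphi_U:\mathscr{X}_U'\to\mathscr{X}_U$ with smooth geometrically irreducible generic fiber $X'$, constructs there a \emph{horizontal} chain $\mathscr{X}_U'\supsetneq\mathscr{F}_0\supsetneq\cdots\supsetneq\mathscr{F}_{\dim X-1}$ of subschemes flat over $U$ with smooth generic fibers of increasing codimension (classical Bertini over the infinite field $K$, followed by spreading out over $U$), and then, for all $\mathfrak{p}$ in a cofinite $U_0$, specializes this chain to the fiber over $\mathfrak{p}$; the only place it needs to produce an $\FF_p$-rational point is on the curve $\mathscr{F}_{\dim X-1}\cap\pi'^{-1}(\mathfrak{p})$, which is handled by Weil's bound for curves. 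Finally Lemma~\ref{lem:imageflag}(1) pushes the resulting flag down to $\mathscr{X}_U$. Your proof instead specializes \emph{first}, locates a smooth $\FF_p$-rational point $\xi\in F_0$ avoiding $\Psi$ by Lang--Weil, and then constructs the flag \emph{downward} through $\xi$ by iterated application of Bertini over $\FF_p$. The trade-offs: the paper's route uses only Chebotarev, spreading out, classical Bertini over a field of characteristic zero, and the Hasse--Weil bound for curves; your route avoids the birational modification (you never need the generic fiber to be smooth, only $F_0$ to have a smooth $\FF_p$-point), but requires the considerably deeper Poonen Bertini theorem over finite fields with local conditions \emph{together with} the Charles--Poonen Bertini irreducibility theorem. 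On this last point be a little more careful: what you call ``Poonen's Bertini theorem'' gives smoothness (and can impose the local condition of passing through $\xi$ transversally), but \emph{not} geometric integrality of the hypersurface section; the latter needs Charles--Poonen, and one must observe that since their density tends to $1$ while the density of the local condition at $\xi$ is positive, the intersection of the two families is nonempty for $d\gg 1$. With that citation repaired, the argument is sound and is a legitimate alternative to the paper's.
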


\begin{proof}
Let $\varphi_U:\mathscr{X}_U'\to\mathscr{X}_U$ be a projective birational $U$-morphism such that $X':=\mathscr{X}_U'\times_U\Spec(K)$ is smooth and geometrically irreducible over $K$.
Set $\pi_U':=\pi_U\circ\varphi_U$ and $\widetilde{\Psi}:=\varphi_U^{-1}(\Psi)\cup\Exc(\varphi_U)$.
We can choose a sequence of reduced irreducible closed subschemes of $\mathscr{X}_U'$, $\mathscr{F}_{\sbullet}:\mathscr{X}_U'\supsetneq\mathscr{F}_{0}\supsetneq\dots\supsetneq\mathscr{F}_{\dim X-1}$, such that, for any $i=0,\dots,\dim X-1$, $\pi_U'|_{\mathscr{F}_i}:\mathscr{F}_{i}\to U$ is flat, $\mathscr{F}_{i,K}:=\mathscr{F}_{i}\times_{U}\Spec(K)$ is smooth of codimension $i+1$ in $X$, and $\mathscr{F}_{i}$ is not contained in $\widetilde{\Psi}$.
Let $U_0\subset U\subset\Spec(O_K)$ be the set of all the prime ideals $\mathfrak{p}$ having the properties that
\begin{enumerate}
\item[(a)] the prime ideal $p\ZZ:=\mathfrak{p}\cap\ZZ$ completely splits in $K$,
\item[(b)] $\mathscr{F}_{i}\cap{\pi'}^{-1}(\mathfrak{p})$ is smooth and is not contained in $\widetilde{\Psi}$ for every $i$, and
\item[(c)] $\sharp(\mathscr{F}_{\dim X-1}\cap\widetilde{\Psi})<p+1-2g\sqrt{p}$, where $g$ is the genus of the smooth curve $\mathscr{F}_{\dim X-1,K}$.
\end{enumerate}
Thanks to Chebotarev's density theorem, $\Spec(O_{K})\setminus U_0$ is a finite set.
By the property (c) and Weil's theorem, given any $\mathfrak{p}\in U_0$, we can take a $\xi\in\mathscr{F}_{\dim X-1}(\FF_p)$ that is not contained in $\widetilde{\Psi}$.
Therefore, for each $\mathfrak{p}\in U_0$, the sequence
\[
 {\pi_U'}^{-1}(\mathfrak{p})\supset\mathscr{F}_{0}\cap{\pi_U'}^{-1}(\mathfrak{p})\supset\dots\supset\mathscr{F}_{\dim X-1}\cap{\pi_U'}^{-1}(\mathfrak{p})\supset\{\xi\}
\]
is a $\varphi_U^{-1}(\Psi)$-good flag on $\mathscr{X}_U'$ over $p$ and $\varphi_U$ is isomorphic around $\xi$.
Thus the assertion follows from Lemma~\ref{lem:imageflag}(1).
\end{proof}

Let $R$ be an order of $K$ such that $\Spec(O_K)\to\Spec(R)$ is isomorphic over $U$.
Let $\mathscr{X}$ be an $R$-model of $X$ that extends $\mathscr{X}_U$ (for example, embed $\mathscr{X}_U$ into a projective space $\PP_U$ over $U$ and take $\mathscr{X}$ as the Zariski closure of $\mathscr{X}_U$ in $\PP_R$).
Let $\nu:\mathscr{X}'\to\mathscr{X}$ be a relative normalization in $X$ and let $F_{\sbullet}$ be a flag on $\mathscr{X}_U'$.
Let $\overline{L}$ be an adelically metrized line bundles on $X$ such that $(\mathscr{X}_U,\mathscr{L}_U)$ gives a $U$-model of definition for $\overline{L}^{\{\infty\}}$.
We fix a local frame $\eta$ of $\nu^*\mathscr{L}_U$ around $\xi=\xi_{F_{\sbullet}}$.
Any $s\in\aHzf(\overline{L})\setminus\{0\}$ can be written as $\nu^*s=\phi\eta$ with a non-zero local function $\phi$ around $\xi$.
We define the \emph{valuation map} associated to $F_{\sbullet}$ as
\begin{equation}\label{eqn:definitionvalvector}
 \bm{w}_{F_{\sbullet}}:\aHzf(\overline{L})\to\Hz(\nu^*\mathscr{L}_U)\to\ZZ^{\dim X+1},\quad s\mapsto (w_0(\phi),\dots,w_{\dim X}(\phi)),
\end{equation}
(see (\ref{eqn:injmodelofdef})), which does not depend on the choice of the frame $\eta$.

\begin{lemma}\label{lem:valmapissurj}
\begin{enumerate}
\item[\textup{(1)}] Let $\mathscr{A}$ be any ample line bundle on $\mathscr{X}$.
Then for every $m\gg 1$, the image $\bm{w}_{F_{\sbullet}}\left(\Hz(m\mathscr{A})\setminus\{0\}\right)$ contains all of the vectors
\[
 (0,\dots,0),\,(1,0,\dots,0),\,\dots,\,(0,\dots,0,1)\,\in\ZZ^{\dim X+1}.
\]
\item[\textup{(2)}] The valuation map $\bm{w}_{F_{\sbullet}}:\Rat(X)^{\times}\to\ZZ^{\dim X+1}$ is surjective.
\item[\textup{(3)}] Let $E$ be a subextension of $\Rat(X)/K$.
Then $\bm{w}_{F_{\sbullet}}(E^{\times})$ is a free $\ZZ$-module of rank $\trdeg_{K}E+1$.
\end{enumerate}
\end{lemma}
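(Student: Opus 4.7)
The plan is as follows. For (1), I will use Serre's vanishing theorem as the main input. Since $\mathscr{A}$ is ample on $\mathscr{X}$ and $\nu$ is finite, $\nu^*\mathscr{A}$ is ample on $\mathscr{X}'$; hence for any fixed $N\geq 2$ and $m\gg 1$, the restriction
\[
 \Hz(\mathscr{X}',\nu^*(m\mathscr{A}))\twoheadrightarrow \nu^*(m\mathscr{A})_\xi\big/\mathfrak{m}_\xi^N\nu^*(m\mathscr{A})_\xi
\]
is surjective. Combined with $H^1(\mathscr{X},m\mathscr{A})=0$ for $m$ large and the fact that $\nu_*\mathcal{O}_{\mathscr{X}'}/\mathcal{O}_\mathscr{X}$ is a coherent torsion sheaf, a cokernel argument will yield the same surjectivity for $\Hz(\mathscr{X},m\mathscr{A})$ in place of $\Hz(\mathscr{X}',\nu^*(m\mathscr{A}))$. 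Given a local frame $\eta$ of $\nu^*(m\mathscr{A})$ around $\xi$ and the local parameters $f_0,\dots,f_{\dim X}$ defining the flag, I then lift the classes of $\eta$ and of $f_i\eta$ modulo $\mathfrak{m}_\xi^2$ to global sections $s_*,s_i\in\Hz(\mathscr{X},m\mathscr{A})$; since the lifting perturbations lie in $\mathfrak{m}_\xi^2$, the lexicographic leading term of the valuation vector is preserved, yielding $\bm{w}_{F_\sbullet}(s_*)=\mathbf{0}$ and $\bm{w}_{F_\sbullet}(s_i)=e_i$ for all $i$.

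Assertion (2) then follows immediately: with $s_*,s_0,\dots,s_{\dim X}$ as above, the ratios $\phi_i:=s_i/s_*\in\Rat(X)^\times$ satisfy $\bm{w}_{F_\sbullet}(\phi_i)=e_i$, and these vectors generate $\ZZ^{\dim X+1}$.

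For (3), I will view $\bm{w}_{F_\sbullet}$ as the composite of $\dim X+1$ discrete rank-one valuations of $\Rat(X)$ with terminal residue field $\kappa(\xi)=\FF_p$, while the restriction $\bm{w}_{F_\sbullet}|_K$ is the $\mathfrak{p}$-adic valuation, with residue field $\FF_p$ as well; since $\FF_p$ is a prime field, the residue field of $\bm{w}_{F_\sbullet}|_E$ equals $\FF_p$ for every $K\subset E\subset\Rat(X)$. The relative form of Abhyankar's inequality applied to $E/K$ then gives the upper bound $\rk_\ZZ\bm{w}_{F_\sbullet}(E^\times)-1\leq\trdeg_KE$. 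Applying the same inequality to the extension $\Rat(X)/E$, and using that by (2) the full value group has rank $\dim X+1$, yields $(\dim X+1)-\rk_\ZZ\bm{w}_{F_\sbullet}(E^\times)\leq\dim X-\trdeg_KE$. Combining these forces the equality $\rk_\ZZ\bm{w}_{F_\sbullet}(E^\times)=\trdeg_KE+1$, and freeness is automatic as a subgroup of $\ZZ^{\dim X+1}$. The hardest step will be this lower bound in (3), namely propagating the known rank equality at $\Rat(X)$ down to every intermediate $E$ via the second application of Abhyankar to $\Rat(X)/E$; a secondary technicality in (1) is the transfer of the Serre-vanishing surjectivity from $\mathscr{X}'$ to $\mathscr{X}$ through the finite birational morphism $\nu$.
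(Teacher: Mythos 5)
Your argument for part (3) is essentially the same as the paper's: both apply the Abhyankar rank inequality (Bourbaki, Chap.~VI, \S 10.3, Theorem~1) to the two steps $K\subset E\subset\Rat(X)$, and then use (2) to force equality. (The aside about residue fields being $\FF_p$ is unneeded and not always true for a general flag, but it does not affect the inequality.) Part (2) is fine once (1) is in place. The paper itself gives no argument for (1), citing Moriwaki directly, so your plan is an independent attempt; unfortunately it contains a genuine error.

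The problem is the step ``since the lifting perturbations lie in $\mathfrak{m}_\xi^2$, the lexicographic leading term of the valuation vector is preserved.'' This is false. The flag valuation $\bm{w}_{F_\sbullet}$ takes values in $\ZZ^{\dim X+1}$ with the \emph{lexicographic} order, and $\mathfrak{m}_\xi^2$ contains elements of valuation strictly \emph{smaller} than $e_i$. Writing $\tilde{f}_j\in\mathcal{O}_{\mathscr{X}',\xi}$ for a (generic) local lift of $f_j$, the element $\psi=\tilde{f}_{i+1}^{\,2}\in\mathfrak{m}_\xi^2$ has $\bm{w}_{F_\sbullet}(\psi)=2e_{i+1}$, and $2e_{i+1}<_{\mathrm{lex}}e_i$ because their $i$-th entries are $0$ and $1$; hence $\bm{w}_{F_\sbullet}(\tilde{f}_i+\psi)=\min_{\mathrm{lex}}(e_i,2e_{i+1})=2e_{i+1}\neq e_i$. (The same phenomenon is visible in $k[x,y]$ with the flag $\{x=0\}\supset\{0\}$: $\bm{w}(x)=(1,0)$, but $\bm{w}(x+y^2)=(0,2)$.) So fixing a section modulo $\mathfrak{m}_\xi^2$ does not pin down its valuation vector; one must instead control the section modulo the valuation ideal $\{\psi:\bm{w}_{F_\sbullet}(\psi)>_{\mathrm{lex}}e_i\}$, which is a different and substantially less accessible ideal, or argue inductively along the flag via restrictions to $F_0, F_1,\ldots$ rather than via jets at $\xi$. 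The transfer from $\mathscr{X}'$ to $\mathscr{X}$ is a second, independent difficulty: $H^1(\mathscr{X},m\mathscr{A})=0$ only identifies the cokernel of $\Hz(m\mathscr{A})\hookrightarrow\Hz(\nu^*(m\mathscr{A}))$ with $H^0\bigl((\nu_*\mathcal{O}_{\mathscr{X}'}/\mathcal{O}_{\mathscr{X}})\otimes m\mathscr{A}\bigr)$, which need not vanish; after trivializing, the image of $\Hz(m\mathscr{A})$ in $\nu^*(m\mathscr{A})_\xi/\mathfrak{m}_\xi^N$ is $(\mathcal{O}_{\mathscr{X},\nu(\xi)}+\mathfrak{m}_\xi^N)/\mathfrak{m}_\xi^N$, and there is no reason for $f_i$ to lie in it when $\nu$ is not an isomorphism near $\xi$.
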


\begin{proof}
The assertion (1) is nothing but \cite[Lemma~5.4]{MoriwakiEst} and the assertion (2) follows from the assertion (1).

(3): The restriction of $\bm{w}_{F_{\sbullet}}$ to $K^{\times}$ is the same as the usual $\mathfrak{p}$-adic valuation.
Thus $\rk_{\ZZ}\bm{w}_{F_{\sbullet}}(K^{\times})=1$.
The map $\bm{w}_{F_{\sbullet}}$ satisfies the axiom of valuations \cite[Chap.\ VI, \S 3.1]{BourbakiCA72}, so we have, by the arguments in \cite[Chap.\ VI, \S 10.3, Theorem~1]{BourbakiCA72}, $\rk_{\ZZ}\left(\bm{w}_{F_{\sbullet}}(\Rat(X)^{\times})/\bm{w}_{F_{\sbullet}}(E^{\times})\right)\leq\trdeg_E\Rat(X)$ and $\rk_{\ZZ}\left(\bm{w}_{F_{\sbullet}}(E^{\times})/\bm{w}_{F_{\sbullet}}(K^{\times})\right)\leq\trdeg_{K}E$.
Since
\begin{align*}
 \rk_{\ZZ}\bm{w}_{\sbullet}(\Rat(X)^{\times}) &=\rk_{\ZZ}\bm{w}_{F_{\sbullet}}(E^{\times})+\rk_{\ZZ}\left(\bm{w}_{F_{\sbullet}}(\Rat(X)^{\times})/\bm{w}_{F_{\sbullet}}(E^{\times})\right) \\
 &\leq\trdeg_{K}E+1+\trdeg_E\Rat(X) \\
 &=\trdeg_{K}\Rat(X)+1,
\end{align*}
we have $\rk_{\ZZ}\bm{w}_{\sbullet}(E^{\times})=\trdeg_{K}E+1$.
\end{proof}

Given any adelically metrized line bundle $\overline{L}$ on $X$, we set
\begin{equation}\label{eqn:defnsigmainv}
 \delta(\overline{L}):=\inf_{\overline{A}}\frac{\adeg\left(\overline{L}\cdot\overline{A}^{\cdot\dim X}\right)}{\vol(A)}
\end{equation}
(see Proposition~\ref{prop:aintnum}(1)), where the infimum is taken over all the nef adelically metrized line bundles $\overline{A}$ on $X$ such that $\vol(A)$ is positive.

\begin{theorem}[\text{\cite[\S 2.4]{Yuan09}, \cite[Theorem~2.2]{MoriwakiEst}}]\label{thm:Yuan}
Let $X$ be a projective variety that is geometrically irreducible over $K$ and let $\overline{L}$ be an adelically metrized line bundle on $X$ having a $U$-model of definition $(\mathscr{X}_U,\mathscr{L}_U)$.
Let $F_{\sbullet}$ be a good flag on $\mathscr{X}_U'$ over a prime number $p$.
Let $\Gamma$ be any symmetric CL-subset of $\aHzf(\overline{L})$ such that $\Gamma\neq\{0\}$.
Set $\beta:=p\rk_{O_{K}}\aSpan{O_{K}}{\Gamma}\geq 2$.
We then have
\[
 \left|\log\sharp\Gamma-\sharp\bm{w}_{F_{\sbullet}}\left(\Gamma\setminus\{0\}\right)\log(p)\right|\leq\left(\delta(\overline{L})\log(4)+\log(4p)\log(4\beta)\right)\frac{\rk_{\ZZ}\aSpan{\ZZ}{\Gamma}}{\log(p)}.
\]
\end{theorem}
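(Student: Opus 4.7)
My plan is to adapt the inductive method of Yuan and Moriwaki. The core observation is that the valuation $\bm{w}_{F_{\sbullet}}$, when restricted to the $O_K$-module $\aHzf(\overline{L})$ (or rather its image in $\Hz(\nu^*\mathscr{L}_U)$), stratifies it into successive graded pieces each of rank at most one over the residue field $\FF_p$; the crucial hypothesis $[O_{K_{\mathfrak{p}}}/\mathfrak{p}O_{K_{\mathfrak{p}}}:\FF_p]=1$ and $\FF_p$-rationality of $\xi$ built into the notion of a good flag ensure that this residue field is exactly $\FF_p$ at every step. Consequently, each new valuation vector in $\bm{w}_{F_{\sbullet}}(\Gamma\setminus\{0\})$ contributes approximately $\log p$ to $\log\sharp\Gamma$, and the theorem asserts that the discrepancy from this heuristic is controlled by $\delta(\overline{L})$ and $\rk_{\ZZ}\aSpan{\ZZ}{\Gamma}$.

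More precisely: equip $\ZZ^{\dim X+1}$ with the lexicographic order, and for each $\bm{v}$ in the image set $\Gamma_{\geq\bm{v}}:=\{s\in\Gamma\,:\,\bm{w}_{F_{\sbullet}}(s)\geq\bm{v}\}$. By the valuation property, two nonzero sections with the same valuation vector can be separated in the quotient $\Gamma_{\geq\bm{v}}/\Gamma_{>\bm{v}}$, which embeds into a one-dimensional $\FF_p$-vector space. Since $\Gamma$ is only a CL-subset rather than an $O_K$-submodule, one must pass to symmetric CL-hulls at each step; Lemma~\ref{lem:Yuansineq}(1) applied to the surjections $\Gamma_{\geq\bm{v}}\to\Gamma_{\geq\bm{v}}/\Gamma_{>\bm{v}}$ compares $\log\sharp\Gamma_{\geq\bm{v}}$ with $\log\sharp\Gamma_{>\bm{v}}+\log p$ at a cost of an additive $\log 4$ per level (from the doubling $2\ast(\cdot)$), while Lemma~\ref{lem:Yuansineq}(2) contributes a rank-bounded Minkowski error at each level and Lemma~\ref{lem:rkcomparison} converts between $\rk_{\ZZ}$ and $[K:\QQ]\rk_{O_K}$. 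Telescoping yields a bound of the form $|\log\sharp\Gamma-\sharp\bm{w}_{F_{\sbullet}}(\Gamma\setminus\{0\})\log p|\leq (\text{depth of filtration})\cdot\log(4)+\log(4p)\log(4\beta)\cdot\rk_{\ZZ}\aSpan{\ZZ}{\Gamma}$.

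The invariant $\delta(\overline{L})$ enters by bounding the depth of the $\mathfrak{p}$-adic portion of the filtration: the largest first coordinate of a valuation vector attained on $\Gamma$ is bounded, via the comparison of the adelic $\sup$-norm with the $\mathfrak{p}$-adic model metric and the arithmetic Hilbert--Samuel inequality, by $\adeg(\overline{L}\cdot\overline{A}^{\cdot\dim X})/\vol(A)$ for any nef $\overline{A}$ with $\vol(A)>0$, and taking the infimum yields $\delta(\overline{L})$. Dividing the cumulative additive error by $\log p$ produces the factor $1/\log(p)$ in the stated bound. The main obstacle will be honest bookkeeping: propagating both directions of the estimate through $\dim X+1$ inductive steps while tracking the CL-hull approximation losses and Minkowski errors, and verifying that these combine precisely into the constants $\delta(\overline{L})\log(4)+\log(4p)\log(4\beta)$ rather than something slightly worse. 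In particular, one must ensure that the images of $\Gamma_{\geq\bm{v}}/\Gamma_{>\bm{v}}$ on the successive members of the flag—which are a priori only symmetric finite sets, not CL-subsets—can be replaced by their symmetric CL-hulls without more than the tabulated loss, so that the induction hypothesis on the restricted flag $F_0\supset\cdots\supset F_{\dim X}$ can be applied cleanly.
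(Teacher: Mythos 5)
There is a genuine gap. You propose to stratify $\Gamma$ by the \emph{full} lexicographic filtration of $\ZZ^{\dim X+1}$ and apply Lemma~\ref{lem:Yuansineq}(1) at each graded piece, accruing an additive $\log 4$ per level. But the number of such levels is $\sharp\bm{w}_{F_{\sbullet}}(\Gamma\setminus\{0\})$, which is of the order $\log\sharp\Gamma/\log p$---precisely the quantity you are estimating. Accumulating $\log 4$ per level yields a total error on the order of $\log\sharp\Gamma$, not $O(\rk_{\ZZ}\aSpan{\ZZ}{\Gamma})$ as the statement requires. Moreover, your remark that one must ``pass to symmetric CL-hulls at each step'' would cause the relevant convex body to double its diameter at every level; after a number of levels comparable to $\log\sharp\Gamma/\log p$ the scaling factor is exponential, and the final Minkowski error from Lemma~\ref{lem:Yuansineq}(2) then scales like $(\log\sharp\Gamma)\cdot\rk_{\ZZ}M$ rather than $\log(4\beta)\cdot\rk_{\ZZ}M$.

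The paper escapes both problems by filtering only along the \emph{first} coordinate $w_0$ (the vanishing order along the vertical fiber $F_0$). The number of nonzero levels $\sharp T$ in this one-variable filtration is controlled independently of the size of $\Gamma$: pseudoeffectivity of $\overline{L}-n\overline{F_0}+\overline{\mathcal{O}}_X(\log(2\beta)[\infty])$ together with Lemma~\ref{lem:invofdegree} shows $n\leq(\delta(\overline{L})+\log(2\beta))/\log(p)$, which is exactly where $\delta(\overline{L})$ enters. At each such level $n$ the set $r_n(M_n\cap\Delta)$ lands in a genuine $\FF_p$-vector space, and the Lazarsfeld--Must\u{a}t\u{a} identity (\cite[Lemma~1.4]{Lazarsfeld_Mustata08}) gives $\sharp\bm{v}_{F_{\sbullet}}\left(\aSpan{\FF_p}{r_n(M_n\cap\Delta)}\setminus\{0\}\right)\log(p)=\log\sharp\aSpan{\FF_p}{r_n(M_n\cap\Delta)}$ \emph{exactly}, with no per-level approximation error at all for the remaining $\dim X$ coordinates. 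You never invoke this fact; without it the induction through the flag cannot give the claimed constant. Finally, the paper does not pass to CL-hulls at each level but works with the intersections $M_n\cap a\Delta$ for a bounded set of scales $a\in\{1/\beta,2/\beta,1,\beta,2\beta\}$, so the convex bodies never grow. If you wish to salvage your plan, the essential missing ingredient is the exact valuation-count for linear systems over $\FF_p$, and you should filter only by $w_0$, delegating the remaining coordinates to that exact count rather than to further inductive approximation.
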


\begin{proof}
We borrow the proof from \cite{Yuan09,MoriwakiEst}.
We divide the proof into four steps.
\medskip

\paragraph{Step 1.}
Set $\overline{F_0}:=\overline{\mathcal{O}}_{X}([\mathfrak{p}])$ (Remark~\ref{rem:remadelic}(4)), which is the adelically metrized line bundle on $X$ associated to the Hermitian line bundle $\left(\mathcal{O}_{\mathscr{X}'}(F_0),|\cdot|_{\infty}^{\rm triv}\right)$ on $\mathscr{X}'$.
Set $M:=\aSpan{\ZZ}{\Gamma}\subset\aHzf(\overline{L})$ and $\Delta:=\Conv_{\aSpan{\RR}{\Gamma}}(\Gamma)$.
Then $\Delta$ is a compact symmetric convex body in $\aSpan{\RR}{\Gamma}$.
For each $n\geq 0$, we set
\begin{equation}
 M_n:=\left\{s\in M\,:\,\ord_{F_0}(s)\geq n\right\}=M\cap\aHzf(\overline{L}-n\overline{F_0})
\end{equation}
and let
\[
 r_n:\aHzf(\overline{L}-n\overline{F_0})\to\Hz(\nu^*\mathscr{L}_U-n\mathcal{O}_{\mathscr{X}_U'}(F_0))\xrightarrow{\text{rest.}}\Hz(\nu^*\mathscr{L}_U|_{F_0}-n\mathcal{O}_{\mathscr{X}_U'}(F_0)|_{F_0})
\]
be the natural homomorphism.
Then
\begin{equation}\label{eqn:decompbymult}
 \sharp\bm{w}_{F_{\sbullet}}\left(\Gamma\setminus\{0\}\right)=\sum_{n\geq 0}\sharp\bm{v}_{F_{\sbullet}}\left(r_n(M_n\cap\Delta)\setminus\{0\}\right).
\end{equation}
\medskip

\paragraph{Step 2.}
In this step, we show that for each $n\geq 0$
\begin{multline}
 \log\sharp(M_n\cap(1/\beta)\Delta)-\log\sharp(M_{n+1}\cap(2/\beta)\Delta) \\
 \leq\sharp\bm{v}_{F_{\sbullet}}\left(r_n(M_n\cap\Delta)\setminus\{0\}\right)\log(p) \\
 \leq\log\sharp(M_n\cap 2\beta\Delta)-\log\sharp(M_{n+1}\cap\beta\Delta).\label{eqn:Yuansineqs1}
\end{multline}
First, we have
\begin{align}
 \sharp\bm{v}_{F_{\sbullet}}\left(r_n(M_n\cap\Delta)\setminus\{0\}\right)\log(p) &=\sharp\bm{v}_{F_{\sbullet}}\left(\aSpan{\FF_p}{r_n(M_n\cap\Delta)}\setminus\{0\}\right)\log(p) \label{eqn:Yuansineqs1-1}\\
 &=\log\sharp\aSpan{\FF_p}{r_n(M_n\cap\Delta)}\nonumber
\end{align}
thanks to \cite[Lemma~1.4]{Lazarsfeld_Mustata08}.
We choose $\left\{s_1,\dots,s_l\right\}\subset M_n\cap\Delta$ such that the image forms an $\FF_p$-basis for $\aSpan{\FF_p}{r_n(M_n\cap\Delta)}$.
Since $l\leq\rk_{O_{K}}\aSpan{O_{K}}{\Gamma}$ and $r_n$ maps
\[
 S:=\left\{\sum_{i=1}^l a_is_i\,:\,a_i=0,\dots,p-1\right\}\subset M_n\cap\beta\Delta
\]
onto $\aSpan{\FF_p}{r_n(M_n\cap\Delta)}$, we have
\begin{equation}\label{eqn:Yuansineqs1-2}
 \log\sharp\aSpan{\FF_p}{r_n(M_n\cap\Delta)}\leq\log\sharp r_n(S)\leq\log\sharp r_n(M_n\cap\beta\Delta).
\end{equation}
Note that $2\ast(M_n\cap\beta\Delta)\subset M_n\cap 2\beta\Delta$ and $\Ker(r_n)=M_{n+1}$.
By applying Lemma~\ref{lem:Yuansineq}(1) to $r_n(M_n\cap\beta\Delta)$, we have
\begin{equation}\label{eqn:Yuansineqs1-3}
 \log\sharp r_n(M_n\cap\beta\Delta)\leq\log\sharp(M_n\cap 2\beta\Delta)-\log\sharp(M_{n+1}\cap\beta\Delta).
\end{equation}
By (\ref{eqn:Yuansineqs1-1})--(\ref{eqn:Yuansineqs1-3}), we have the second inequality of (\ref{eqn:Yuansineqs1}).

Next, we choose $\left\{t_1,\dots,t_{l'}\right\}\subset M_n\cap(1/\beta)\Delta$ such that the image forms an $\FF_p$-basis for $\aSpan{\FF_p}{r_n(M_n\cap(1/\beta)\Delta)}$.
Since $l'\leq\rk_{O_{K}}\aSpan{O_{K}}{\Gamma}$ and $r_n$ maps
\[
 S':=\left\{\sum_{j=1}^{l'} a_jt_j\,:\,a_j=0,\dots,p-1\right\}\subset M_n\cap\Delta
\]
onto $\aSpan{\FF_p}{r_n(M_n\cap(1/\beta)\Delta)}$, we have
\begin{align}
 \sharp\bm{v}_{F_{\sbullet}}\left(r_n(M_n\cap\Delta)\setminus\{0\}\right)\log(p) &\geq\sharp\bm{v}_{F_{\sbullet}}\left(r_n(S')\setminus\{0\}\right)\log(p) \label{eqn:Yuansineqs1-4}\\
 &=\sharp\bm{v}_{F_{\sbullet}}\left(\aSpan{\FF_p}{r_n(M_n\cap(1/\beta)\Delta)}\setminus\{0\}\right)\log(p)\nonumber \\
 &=\log\sharp\aSpan{\FF_p}{r_n(M_n\cap(1/\beta)\Delta)}\nonumber \\
 &\geq\log\sharp r_n(M_n\cap(1/\beta)\Delta)\nonumber
\end{align}
thanks to \cite[Lemma~1.4]{Lazarsfeld_Mustata08} again.
By applying Lemma~\ref{lem:Yuansineq}(1) to $r_n(M_n\cap(1/\beta)\Delta)$,
\begin{equation}\label{eqn:Yuansineqs1-5}
 \log\sharp r_n(M_n\cap(1/\beta)\Delta)\geq\log\sharp(M_n\cap (1/\beta)\Delta)-\log\sharp(M_{n+1}\cap(2/\beta)\Delta).
\end{equation}
By (\ref{eqn:Yuansineqs1-4})--(\ref{eqn:Yuansineqs1-5}), we have the first inequality of (\ref{eqn:Yuansineqs1}).
\medskip

\paragraph{Step 3.}
By (\ref{eqn:decompbymult}) and (\ref{eqn:Yuansineqs1}), we have
\begin{align}
 &\log\sharp(M\cap(1/\beta)\Delta)-\sum_{n\geq 1}\left(\log\sharp(M_n\cap(2/\beta)\Delta)-\log\sharp(M_n\cap(1/\beta)\Delta)\right) \label{eqn:Yuansineqs2}\\
 &\qquad\qquad \leq\sharp\bm{w}_{F_{\sbullet}}\left(\Gamma\setminus\{0\}\right)\log(p) \nonumber\\
 &\qquad\qquad \leq\log\sharp(M\cap 2\beta\Delta)+\sum_{n\geq 1}\left(\log\sharp(M_n\cap 2\beta\Delta)-\log\sharp(M_n\cap\beta\Delta)\right).\nonumber
\end{align}
Thanks to Lemma~\ref{lem:Yuansineq}(2), we have, for each $n\geq 1$,
\begin{align}
 &\log\sharp(M\cap 2\beta\Delta)\leq\log\sharp\Gamma+\log(4\beta)\rk_{\ZZ}M, \label{eqn:Yuansineq1}\\
 &\log\sharp(M\cap(1/\beta)\Delta)\geq\log\sharp\Gamma-\log(2\beta)\rk_{\ZZ}M \label{eqn:Yuansineq2}
\end{align}
and
\begin{align}
 &\log\sharp(M_n\cap 2\beta\Delta)-\log\sharp(M_n\cap\beta\Delta)\leq\log(4)\rk_{\ZZ}M, \label{eqn:Yuansineq3}\\
 &\log\sharp(M_n\cap(2/\beta)\Delta)-\log\sharp(M_n\cap(1/\beta)\Delta)\leq\log(4)\rk_{\ZZ}M. \label{eqn:Yuansineq4}
\end{align}
We set $T:=\left\{n\geq 1\,:\,M_n\cap 2\beta\Delta\neq\{0\}\right\}\supset\left\{n\geq 1\,:\,M_n\cap(2/\beta)\Delta\neq\{0\}\right\}$.
Then, by (\ref{eqn:Yuansineqs2})--(\ref{eqn:Yuansineq4}), we have
\begin{multline}
 -\left(\log(2\beta)+\sharp T\log(4)\right)\rk_{\ZZ}M \\
 \leq\sharp\bm{w}_{F_{\sbullet}}\left(\Gamma\setminus\{0\}\right)\log(p)-\log\sharp\Gamma\leq\left(\log(4\beta)+\sharp T\log(4)\right)\rk_{\ZZ}M.\label{eqn:Yuansineqs3}
\end{multline}
\medskip

\paragraph{Step 4.}
If $M_n\cap 2\beta\Delta\neq\{0\}$, then $\overline{L}-n\overline{F_0}+\overline{\mathcal{O}}_X(\log(2\beta)[\infty])$ is pseudoeffective and
\begin{equation}\label{eqn:Yuansineq5}
 \adeg\left(\left(\overline{L}-n\overline{F_0}+\overline{\mathcal{O}}_X(\log(2\beta)[\infty])\right)\cdot \overline{A}^{\cdot\dim X}\right)\geq 0
\end{equation}
for every nef adelically metrized line bundle $\overline{A}$ on $X$ (Proposition~\ref{prop:aintnum}(1)).
Suppose that $\vol(A)>0$.
By (\ref{eqn:Yuansineq5}) and Lemma~\ref{lem:invofdegree} below, we have
\[
 n\leq\left(\frac{\adeg\left(\overline{L}\cdot\overline{A}^{\cdot\dim X}\right)}{\vol(A)}+\log(2\beta)\right)\frac{1}{\log(p)},
\]
so $\sharp T$ has the same upper bound.
Therefore, we have
\begin{align*}
 &\left|\sharp\bm{w}_{F_{\sbullet}}\left(\Gamma\setminus\{0\}\right)\log(p)-\log\sharp\Gamma\right| \\
 &\qquad\qquad \leq\left(\frac{\adeg\left(\overline{L}\cdot\overline{A}^{\cdot\dim X}\right)}{\vol(A)}\log(4)+\log(4p)\log(4\beta)\right)\frac{\rk_{\ZZ}M}{\log(p)}
\end{align*}
for every nef adelically metrized line bundle $\overline{A}$ on $X$ with $\vol(A)>0$.
\end{proof}

\begin{lemma}\label{lem:invofdegree}
Let $X$ be a projective variety that is geometrically irreducible over $K$.
For any $\mathfrak{p}\in\Spec(O_K)$ and for any nef adelically metrized line bundle $\overline{A}$ on $X$, we have
\[
 \adeg\left(\overline{\mathcal{O}}_X([\mathfrak{p}])\cdot\overline{A}^{\cdot\dim X}\right)=\vol(A)\log(\sharp O_K/\mathfrak{p}).
\]
\end{lemma}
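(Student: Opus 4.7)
The plan is to invoke the projection formula from Gillet--Soul\'e arithmetic intersection theory after reducing to the $C^\infty$-Hermitian case. Observe first that, on any $O_K$-model $\pi:\mathscr{X}\to\Spec(O_K)$ of $X$, the vertical divisor $\pi^{-1}(\mathfrak{p})$ equals $\pi^{*}(\mathfrak{p})$ by flatness, so by Remark~\ref{rem:remadelic}(4) the line bundle $\overline{\mathcal{O}}_X([\mathfrak{p}])$ is associated to $\pi^{*}\overline{\mathcal{M}}_{\mathfrak{p}}$, where $\overline{\mathcal{M}}_{\mathfrak{p}}:=\bigl(\mathcal{O}_{\Spec(O_K)}(\mathfrak{p}),|\cdot|_\infty^{\mathrm{triv}}\bigr)$; a direct Arakelov computation (the canonical section $1\in\mathfrak{p}^{-1}$ has length $|\varpi_{\mathfrak{p}}|_{\mathfrak{p}}=(\sharp O_K/\mathfrak{p})^{-1}$ at the place $\mathfrak{p}$ and length $1$ everywhere else) gives $\adeg(\overline{\mathcal{M}}_{\mathfrak{p}})=\log(\sharp O_K/\mathfrak{p})$.

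Following the approximation strategy already used in the proof of Proposition~\ref{prop:aintnum}(3), I would first reduce to the case $\overline{A}=\overline{\mathscr{A}}^{\mathrm{ad}}$ for some nef $C^\infty$-Hermitian $\QQ$-line bundle $\overline{\mathscr{A}}$ on $\mathscr{X}$: by Proposition~\ref{prop:adelicapproximation}(2) one may approximate $\overline{A}^{\{\infty\}}$ by nef model metrics and $C^\infty$-approximate the metric at infinity, then invoke the continuity and multilinearity of $\adeg$ from Proposition~\ref{prop:aintnum}. In this classical setting, the projection formula (as applied in the proof of Proposition~\ref{prop:aintnum}(3) via \cite[Proposition~2.4.1]{Kawaguchi_Moriwaki}) yields
\[
 \adeg\bigl(\pi^{*}\overline{\mathcal{M}}_{\mathfrak{p}}\cdot\overline{\mathscr{A}}^{\cdot\dim X}\bigr) \;=\; \adeg(\overline{\mathcal{M}}_{\mathfrak{p}})\cdot\pi_{*}\bigl(c_1(\mathscr{A})^{\cdot\dim X}\bigr),
\]
and by flatness of $\pi$ the pushforward $\pi_{*}\bigl(c_1(\mathscr{A})^{\cdot\dim X}\bigr)$ equals the geometric intersection number $\deg_K(A^{\cdot\dim X})$ (invariance of intersection numbers in a flat family). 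Since $\overline{A}$ is nef, $A$ itself is nef (Definition~\ref{defn:verticallynef}(a)), hence $\vol(A)=\deg_K(A^{\cdot\dim X})$, and combining these identities produces the desired formula.

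The one genuine technical point is the validity of the projection formula for continuous (rather than $C^\infty$) Hermitian metrics; but this is exactly the step already appearing in the proof of Proposition~\ref{prop:aintnum}(3), and is handled in the same way by $C^\infty$-approximation together with continuity of $\adeg$. Everything else is routine bookkeeping.
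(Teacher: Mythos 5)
Your proposal is correct and takes a slightly different (and in one respect cleaner) route than the paper's. The paper reduces to nef model metrics via Proposition~\ref{prop:adelicapproximation}(2), perturbs by $\delta\overline{\mathscr{H}}^{\rm ad}$ to make the bundles relatively ample, computes the arithmetic intersection with $\overline{\mathcal{O}}_X([\mathfrak{p}])$ as the geometric volume of the line bundle on the special fiber $\pi'^{-1}(\mathfrak{p})$ times $\log(\sharp O_K/\mathfrak{p})$, and invokes ``invariance of degree'' to replace that with $\vol(A+\delta H)$; then it takes $\delta\downarrow 0$ and controls the approximation gap with the correction $\overline{\mathcal{O}}_X(\lambda[\infty])$. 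You instead observe $\overline{\mathcal{O}}_X([\mathfrak{p}])=\pi^*\overline{\mathcal{M}}_{\mathfrak{p}}$ and apply the projection formula for the structure morphism $\pi$ directly, producing $\adeg(\overline{\mathcal{M}}_{\mathfrak{p}})\cdot\deg_K(A^{\cdot\dim X})$ without any detour through the special fiber; the paper's ``invariance of degree'' is the same mathematical content, but your packaging avoids the $\delta\overline{\mathscr{H}}^{\rm ad}$ perturbation entirely.

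Two points to tighten. First, the $C^\infty$-reduction is unnecessary: the projection formula the paper cites (\cite[Proposition~2.4.1]{Kawaguchi_Moriwaki}, invoked in the proof of Proposition~\ref{prop:aintnum}(3)) applies to continuous Hermitian line bundles on an $O_K$-model, and regularizing a semipositive continuous metric to a $C^\infty$ nef one at infinity is a genuinely extra step (Bloc\-ki--Ko\-lo\-dziej) that you would otherwise have to cite and that the paper's proof of this lemma never needs, since $|\cdot|_\infty^{\overline{A}}$ is left untouched throughout. Second, ``continuity and multilinearity of $\adeg$'' is not literally a stated property in Proposition~\ref{prop:aintnum}; what the paper actually does to pass from the model metrics back to $\overline{A}$ is sandwich $\overline{\mathscr{A}}_1^{\rm ad}\preceq\overline{A}\preceq\overline{\mathscr{A}}_2^{\rm ad}$ (up to $\overline{\mathcal{O}}_X(\lambda[\infty])$ with $\lambda=-(\varepsilon/[K:\QQ])\sum_{P\in S}\log|\varpi_P|_P$), use monotonicity of $\adeg$ in nef arguments (Remark following Proposition~\ref{prop:aintnum}), and use the explicit value $\adeg(\overline{\mathcal{O}}_X(\lambda[\infty])\cdot\ldots)=[K:\QQ]\lambda\deg_K(\ldots)$ from Proposition~\ref{prop:aintnum}(2) to see the error term vanishes as $\varepsilon\to 0$. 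This is routine, but it should be made explicit rather than subsumed under ``continuity.''
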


\begin{proof}
Fix a rational number $\varepsilon>0$.
Thanks to Proposition~\ref{prop:adelicapproximation}(2), we can find a finite subset $S\subset M_K^{\rm f}$ and $O_K$-models $(\mathscr{X}',\mathscr{A}_1)$ and $(\mathscr{X}',\mathscr{A}_2)$ of $(X,A)$ such that $\mathscr{A}_i$ are relatively nef and
\begin{equation}
 \left(\mathscr{A}_1^{\rm ad},|\cdot|_{\infty}^{\overline{A}}\right)\leq\overline{A}\leq\left(\mathscr{A}_2^{\rm ad},|\cdot|_{\infty}^{\overline{A}}\right)\leq\left(\mathscr{A}_1^{\rm ad},|\cdot|_{\infty}^{\overline{A}}\right)+\varepsilon\sum_{P\in S}\overline{\mathcal{O}}_X([P]).
\end{equation}
Let $\pi':\mathscr{X}'\to\Spec(O_K)$ denote the structure morphism and fix an adelically metrized line bundle $\overline{H}$ associated to an ample $C^{\infty}$-Hermitian line bundle $\overline{\mathscr{H}}$ on $\mathscr{X}'$.
By invariance of degree, we have
\begin{align*}
 \adeg\left(\overline{\mathcal{O}}_X([\mathfrak{p}])\cdot(\overline{\mathscr{A}}_{i}^{\rm ad}+\delta\overline{\mathscr{H}}^{\rm ad})^{\cdot\dim X}\right) &=\vol((\mathscr{A}_{i}+\delta\mathscr{H})|_{{\pi'}^{-1}(\mathfrak{p})})\log(\sharp O_K/\mathfrak{p}) \\
 &=\vol(A+\delta H)\log(\sharp O_K/\mathfrak{p})
\end{align*}
for every rational number $\delta>0$ and for $i=1,2$.
Therefore, by continuity,
\[
 \adeg\left(\overline{\mathcal{O}}_X([\mathfrak{p}])\cdot(\overline{\mathscr{A}}_{i}^{\rm ad})^{\cdot\dim X}\right)=\vol(A)\log(\sharp O_K/\mathfrak{p})
\]
for $i=1,2$.
Set $\lambda:=-(\varepsilon/[K:\QQ])\sum_{P\in S}\log|\varpi_P|_P$.
Then $\overline{\mathscr{A}}_1^{\rm ad}+\overline{\mathcal{O}}_X(\lambda[\infty])$ is nef.
By Proposition~\ref{prop:aintnum}(1),(2)
\begin{align*}
 \adeg\left(\overline{\mathcal{O}}_X([\mathfrak{p}])\cdot(\overline{\mathscr{A}}_{1}^{\rm ad})^{\cdot\dim X}\right) &=\adeg\left(\overline{\mathcal{O}}_X([\mathfrak{p}])\cdot(\overline{\mathscr{A}}_{1}^{\rm ad}+\overline{\mathcal{O}}_X(\lambda[\infty]))^{\cdot\dim X}\right) \\
 &\leq\adeg\left(\overline{\mathcal{O}}_X([\mathfrak{p}])\cdot(\overline{A}+\overline{\mathcal{O}}_X(\lambda[\infty]))^{\cdot\dim X}\right) \\
 &=\adeg\left(\overline{\mathcal{O}}_X([\mathfrak{p}])\cdot\overline{A}^{\cdot\dim X}\right) \\
 &\leq\adeg\left(\overline{\mathcal{O}}_X([\mathfrak{p}])\cdot(\overline{\mathscr{A}}_{2}^{\rm ad})^{\cdot\dim X}\right).
\end{align*}
Hence we have the assertion.
\end{proof}

\section{Numbers of restricted sections}\label{sec:arithrestvol}

In this section, we study the asymptotic behavior of the numbers of restricted strictly small sections in general, and show Theorem~B (Theorems~\ref{thm:convergence}).

\begin{definition}\label{defn:arithrestvol}
Let $X$ be a projective variety over a number field $K$, let $Y$ be a closed subvariety of $X$ with number field $K_Y:=\Hz(\mathcal{O}_Y)$, and let $\overline{L}$ be an adelically metrized line bundle on $X$.
We set
\begin{equation}
 \aHzsq{?}{X|Y}(\overline{L}):=\Image\left(\aHzs{?}(\overline{L})\to\aHzf(\overline{L}|_Y)\right)
\end{equation}
for $?=\text{s}$ and ss, and set
\begin{equation}
 \CLq{X|Y}(\overline{L}):=\CL_{\aHzf(\overline{L}|_Y)}\left(\aHzstq{X|Y}(\overline{L})\right),
\end{equation}
where $\aHzf(\overline{L}|_Y)$ is regarded as a free $\ZZ$-module containing $\aHzstq{X|Y}(\overline{L})$.
Moreover, we set
\begin{equation}
 \aN_{X|Y}(\overline{L}):=\left\{m\in\NN\,:\,\aHzstq{X|Y}(m\overline{L})\neq\{0\}\right\}
\end{equation}
and
\begin{equation}
 \akappaq{X|Y}(\overline{L}):=\begin{cases} \trdeg_{K_Y}\left(\bigoplus_{m\geq 0}\aSpan{K_Y}{\aHzstq{X|Y}(m\overline{L})}\right)-1 & \text{if $\aN_{X|Y}(\overline{L})\neq\emptyset$,} \\ -\infty & \text{if $\aN_{X|Y}(\overline{L})=\emptyset$.}\end{cases}
\end{equation}
Then we define the \emph{arithmetic restricted volume of $\overline{L}$ along $Y$} as
\begin{equation}
 \avolq{X|Y}(\overline{L}):=\limsup_{m\to+\infty}\frac{\log\sharp\CLq{X|Y}(m\overline{L})}{m^{\dim Y+1}/(\dim Y+1)!}.
\end{equation}
\begin{description}
\item[($Y$-effective)] We say that an adelically metrized line bundle $\overline{L}$ is \emph{$Y$-effective} if there exists an $s\in\aHzsm(\overline{L})$ such that $s|_Y$ is non-zero.
We write $\overline{L}_1\leq_Y\overline{L}_2$ if $\overline{L}_2-\overline{L}_1$ is $Y$-effective.
\item[($Y$-big)] We say that an adelically metrized $\QQ$-line bundle $\overline{L}$ is \emph{$Y$-big} if there exist an $a\geq 1$ and a w-ample adelically metrized line bundle $\overline{A}$ such that $a\overline{L}\geq_Y\overline{A}$.
\item[($Y$-pseudoeffective)] We say that an adelically metrized $\QQ$-line bundle $\overline{L}$ is \emph{$Y$-pseudoeffective} if $\overline{L}+\overline{A}$ is $Y$-big for every $Y$-big adelically metrized $\QQ$-line bundle $\overline{A}$.
We write $\overline{L}_1\preceq_Y\overline{L}_2$ if $\overline{L}_2-\overline{L}_1$ is $Y$-pseudoeffective.
\end{description}
\end{definition}

\begin{remark}
Let $\overline{L}$ be an adelically metrized line bundle on $X$.
\begin{enumerate}
\item[(1)] If $s\in\CLq{X|Y}(m\overline{L})$ and $t\in\CLq{X|Y}(n\overline{L})$, then $s\otimes t\in\CLq{X|Y}((m+n)\overline{L})$.
In fact, we can write $s=\sum a_is_i$ and $t=\sum b_jt_j$, where $a_i,b_j\in\QQ_{\geq 0}$ with $\sum a_i=\sum b_j=1$, $s_i\in\aHzstq{X|Y}(m\overline{L})$, and $t_j\in\aHzstq{X|Y}(n\overline{L})$.
Then $s\otimes t=\sum a_ib_js_i\otimes t_j$ and $\sum a_ib_j=1$.
\item[(2)] For any subfield $K'$ of $K_Y$,
\[
 \trdeg_{K'}\left(\bigoplus_{m\geq 0}\aSpan{K'}{\aHzstq{X|Y}(m\overline{L})}\right)-1
\]
does not depend on the choice of $K'$ and coincides with $\akappaq{X|Y}(\overline{L})$.
\item[(3)] In \cite{MoriwakiEst}, Moriwaki defined the arithmetic restricted volume of $\overline{L}$ along $Y$ as
\[
 \avolsmq{X|Y}(\overline{L}):=\limsup_{m\to+\infty}\frac{\log\sharp\CL_{\aHzf(m\overline{L}|_Y)}(\aHzsmq{X|Y}(m\overline{L}))}{m^{\dim Y+1}/(\dim Y+1)!}.
\]
Obviously, $\avolq{X|Y}(\overline{L})\leq\avolsmq{X|Y}(\overline{L})$ and the equality holds if $\overline{L}$ is $Y$-big.
\end{enumerate}
\end{remark}

\begin{lemma}\label{lem:effectiveincrease}
\begin{enumerate}
\item[\textup{(1)}] Let $\overline{L}_1,\overline{L}_2\in\aPic(X)$.
If $\overline{L}_1\leq_Y\overline{L}_2$, then
\[
 \avolq{X|Y}(\overline{L}_1)\leq\avolq{X|Y}(\overline{L}_2)\quad\text{and}\quad\akappaq{X|Y}(\overline{L}_1)\leq\akappaq{X|Y}(\overline{L}_2).
\]
\item[\textup{(2)}] Suppose that $X$ is normal and let $\overline{L}\in\aPic(X)$.
Let $\varphi:X'\to X$ be a birational projective $K$-morphism and let $Y'$ be a closed subvariety of $X'$ such that $\varphi(Y')=Y$.
Then
\[
 \avolq{X'|Y'}(\varphi^*\overline{L})=\avolq{X|Y}(\overline{L})\quad\text{and}\quad\akappaq{X'|Y'}(\varphi^*\overline{L})=\akappaq{X|Y}(\overline{L}).
\]
\end{enumerate}
\end{lemma}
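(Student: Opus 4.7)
The plan is to derive both assertions from natural maps between spaces of sections: multiplication by a witness section for assertion (1), and pullback of sections for assertion (2). Throughout, the main subtlety is that the arithmetic restricted invariants are built from CL-hulls, so one must track how CL-subsets transform under $\ZZ$-linear maps.

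For assertion (1), fix $t\in\aHzsm(\overline{L}_2-\overline{L}_1)$ with $t|_Y\neq 0$. Submultiplicativity of the adelic sup norms together with $\|t\|_{\infty,\sup}\leq 1$ and $\|s\|_{\infty,\sup}<1$ show that $s\mapsto s\otimes t^{\otimes m}$ sends $\aHzst(m\overline{L}_1)$ into $\aHzst(m\overline{L}_2)$. Restricting to $Y$ gives a $\ZZ$-linear map $f_m\colon\aHzfq{X|Y}(m\overline{L}_1|_Y)\to\aHzfq{X|Y}(m\overline{L}_2|_Y)$, $\overline{s}\mapsto\overline{s}\cdot(t|_Y)^{\otimes m}$. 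Because $Y$ is integral and $t|_Y$ is nonzero, $f_m$ is injective, and it carries $\aHzstq{X|Y}(m\overline{L}_1)$ into $\aHzstq{X|Y}(m\overline{L}_2)$. Since preimages of $\ZZ$-submodules are $\ZZ$-submodules and preimages of convex sets under $\ZZ$-linear maps are convex, $f_m^{-1}(\CLq{X|Y}(m\overline{L}_2))$ is a CL-subset containing $\aHzstq{X|Y}(m\overline{L}_1)$; minimality of the CL-hull then gives $f_m(\CLq{X|Y}(m\overline{L}_1))\subset\CLq{X|Y}(m\overline{L}_2)$. Injectivity of $f_m$ yields $\sharp\CLq{X|Y}(m\overline{L}_1)\leq\sharp\CLq{X|Y}(m\overline{L}_2)$, and the volume inequality follows after dividing by $m^{\dim Y+1}/(\dim Y+1)!$. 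For the second inequality, observe that $(s\otimes t^{\otimes m})\cdot(s'\otimes t^{\otimes n})=(ss')\otimes t^{\otimes (m+n)}$, so $\bigoplus_m f_m$ is an injective graded $K$-algebra homomorphism from $\bigoplus_m\aSpan{K}{\aHzstq{X|Y}(m\overline{L}_1)}$ to $\bigoplus_m\aSpan{K}{\aHzstq{X|Y}(m\overline{L}_2)}$, whence $\akappaq{X|Y}(\overline{L}_1)\leq\akappaq{X|Y}(\overline{L}_2)$ by the remark that this transcendence degree may be computed over any common subfield (such as $K$).

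For assertion (2), normality of $X$ gives $\varphi_*\mathcal{O}_{X'}=\mathcal{O}_X$, whence $\varphi^*\colon\Hz(mL)\xrightarrow{\sim}\Hz(m\varphi^*L)$. Lemma~\ref{lem:pullbackadelic} guarantees that pullback commutes with the adelic metrics (on any model of definition), and surjectivity of the analytification $\varphi^{\rm an}_v\colon X'{}^{\rm an}_v\to X^{\rm an}_v$ at each place $v$ (since $\varphi$ is projective and birational) yields $\|\varphi^*s\|_{v,\sup}^{\varphi^*\overline{L}}=\|s\|_{v,\sup}^{\overline{L}}$. Hence $\varphi^*$ identifies $\aHzs{?}(m\overline{L})$ with $\aHzs{?}(m\varphi^*\overline{L})$ for $?=\text{f},\text{s},\text{ss}$. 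The restriction maps to $Y$ and $Y'$ commute with $\varphi^*$, and since $\varphi|_{Y'}\colon Y'\to Y$ is surjective (with $\dim Y'=\dim Y$ under the implicit assumption that $\varphi|_{Y'}$ is birational onto $Y$) the induced $(\varphi|_{Y'})^*$ on sections is injective. One thus obtains a bijection $\aHzstq{X|Y}(m\overline{L})\xrightarrow{\sim}\aHzstq{X'|Y'}(m\varphi^*\overline{L})$.

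Finally, since $(\varphi|_{Y'})^*$ is an injective $\ZZ$-linear map, the image of a CL-subset $N\cap\Delta\subset\aHzfq{X|Y}(m\overline{L}|_Y)$ is $(\varphi|_{Y'})^*(N)\cap(\varphi|_{Y'})^*(\Delta)$, again a CL-subset. Applying this to $\CLq{X|Y}(m\overline{L})$ and invoking the minimality of the CL-hull in both directions gives $(\varphi|_{Y'})^*(\CLq{X|Y}(m\overline{L}))=\CLq{X'|Y'}(m\varphi^*\overline{L})$, so the cardinalities and the arithmetic restricted volumes coincide. The same bijection induces a graded $K$-algebra isomorphism between $\bigoplus_m\aSpan{K}{\aHzstq{X|Y}(m\overline{L})}$ and $\bigoplus_m\aSpan{K}{\aHzstq{X'|Y'}(m\varphi^*\overline{L})}$, giving $\akappaq{X|Y}(\overline{L})=\akappaq{X'|Y'}(\varphi^*\overline{L})$. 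The main obstacle throughout is the bookkeeping for the CL-hull construction under $\ZZ$-linear maps; once the preimage-of-CL and image-under-injection lemmas are in hand, both assertions reduce to routine sup-norm and algebraic compatibilities.
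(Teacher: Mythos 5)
Your proof is correct and takes essentially the same approach as the paper: multiplication by a witness small section in (1), and the isometry $\Hz(mL)\cong\Hz(m\varphi^*L)$ of adelically normed spaces plus the commuting restriction diagram in (2). You spell out the CL-hull bookkeeping under injective $\ZZ$-linear maps that the paper leaves implicit, and your parenthetical about $\dim Y'=\dim Y$ (needed so the normalizing powers $m^{\dim Y+1}$ and $m^{\dim Y'+1}$ agree) is a genuine subtlety that the paper's statement and proof also leave tacit.
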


\begin{proof}
(1): An $s\in\aHzsm(\overline{L}_2-\overline{L}_1)$ with $s|_Y\neq 0$ determines for all $m\geq 1$ injections $\aHzstq{X|Y}(m\overline{L}_1)\xrightarrow{\otimes (s|_Y)^{\otimes m}}\aHzstq{X|Y}(m\overline{L}_2)$.
So we have the assertion.

(2): Since $X$ is normal, $\Hz(L)=\Hz(\varphi^*L)$ as $K$-vector spaces.
Since $|\cdot|^{\varphi^*\overline{L}}_v(x)=|\cdot|_v^{\overline{L}}(\varphi_v^{\rm an}(x))$ for $x\in {X_v'}^{\rm an}$ and $v\in M_K$,
\begin{equation}
 \varphi^*:\left(\Hz(L),(\|\cdot\|_{v,\sup}^{\overline{L}})_{v\in M_K}\right)\xrightarrow{\sim}\left(\Hz(\varphi^*L),(\|\cdot\|_{v,\sup}^{\varphi^*\overline{L}})_{v\in M_K}\right).
\end{equation}
as adelically normed $K$-vector spaces.
By considering the commutative diagram
\[
\xymatrix{\Hz(mL|_Y) \ar[rr] && \Hz(\varphi^*(mL)|_{Y'}) \\ \Hz(mL) \ar[u] \ar[rr]^-{\sim} && \Hz(\varphi^*(mL)), \ar[u]
}
\]
we have $\aHzstq{X|Y}(m\overline{L})=\aHzstq{X'|Y'}(\varphi^*(m\overline{L}))$ for every $m\geq 1$.
\end{proof}

Set $\Hzq{X|Y}(L):=\Image(\Hz(L)\to\Hz(L|_Y))$,
\begin{equation}
 \mathbf{N}_{X|Y}(L):=\left\{m\in\NN\,:\,\Hzq{X|Y}(mL)\neq\{0\}\right\},
\end{equation}
and
\begin{equation}
 \kappa_{X|Y}(L):=\begin{cases} \trdeg_{K_Y}\left(\bigoplus_{m\geq 0}\aSpan{K_Y}{\Hzq{X|Y}(mL)}\right)-1 & \text{if $\mathbf{N}_{X|Y}(L)\neq\emptyset$,} \\ -\infty & \text{if $\mathbf{N}_{X|Y}(L)=\emptyset$.} \end{cases}
\end{equation}

\begin{lemma}\label{lem:kappaqequal}
We have
\[
\akappaq{X|Y}(\overline{L})=\begin{cases} \kappa_{X|Y}(L) & \text{if $Y\not\subset\aSBs(\overline{L})$,} \\ -\infty & \text{if $Y\subset\aSBs(\overline{L})$.} \end{cases}
\]
\end{lemma}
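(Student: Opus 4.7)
The plan is to verify the two cases of the equality separately. When $Y\subset\aSBs(\overline{L})$, every strictly small section $s\in\aHzst(m\overline{L})$ vanishes identically on $Y$ for every $m\geq 1$, so $\aHzstq{X|Y}(m\overline{L})=\{0\}$ for all $m\geq 1$; hence $\aN_{X|Y}(\overline{L})=\emptyset$ and $\akappaq{X|Y}(\overline{L})=-\infty$ by the convention in Definition~\ref{defn:arithrestvol}.

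Suppose from now on that $Y\not\subset\aSBs(\overline{L})$. Then there exist $m_0\geq 1$ and $t\in\aHzst(m_0\overline{L})$ with $t|_Y\neq 0$, and $u:=t|_Y$ is a nonzero element of $\aHzstq{X|Y}(m_0\overline{L})$. Set
\[
 \widehat{R}:=\bigoplus_{m\geq 0}\aSpan{K_Y}{\aHzstq{X|Y}(m\overline{L})},\qquad R:=\bigoplus_{m\geq 0}\aSpan{K_Y}{\Hzq{X|Y}(mL)}.
\]
Submultiplicativity of the sup norms $\|\cdot\|_{v,\sup}^{\overline{L}}$ shows that the tensor product of two strictly small sections is again strictly small, so $\widehat{R}$ is a graded $K_Y$-subalgebra of $R$. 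The trivial inequality $\akappaq{X|Y}(\overline{L})\leq\kappa_{X|Y}(L)$ follows immediately; the substantial work is the reverse inequality.

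The key step is the following: for every $m\geq 1$ and every $s\in\Hz(mL)$ there exist $c\in O_K\setminus\{0\}$ and an integer $k\geq 1$ such that $cs\otimes t^{\otimes k}\in\aHzst((m+km_0)\overline{L})$. Since $\aHzf(m\overline{L})$ is a finitely generated $O_K$-module spanning $\Hz(mL)$ over $K$, one can choose $c$ so that $cs\in\aHzf(m\overline{L})$; then $\|cs\|_{v,\sup}^{m\overline{L}}\leq 1$ for every $v\in M_K^{\rm f}$, while
\[
 \|cs\otimes t^{\otimes k}\|_{\infty,\sup}^{(m+km_0)\overline{L}}\leq\|cs\|_{\infty,\sup}^{m\overline{L}}\cdot\bigl(\|t\|_{\infty,\sup}^{m_0\overline{L}}\bigr)^k
\]
is $<1$ for large $k$ since $\|t\|_{\infty,\sup}^{m_0\overline{L}}<1$. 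Restricting to $Y$ yields $c\cdot s|_Y\cdot u^k\in\aHzstq{X|Y}((m+km_0)\overline{L})\subset\widehat{R}_{m+km_0}$.

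Since $c^{-1}\in K_Y\subset\widehat{R}_0$, this gives $s|_Y=c^{-1}\cdot(c\cdot s|_Y\cdot u^k)\cdot u^{-k}\in\widehat{R}[u^{-1}]$ inside $\Frac(\widehat{R})$, so $R\subset\widehat{R}[u^{-1}]$. Because inverting a single nonzero homogeneous element does not change the transcendence degree over $K_Y$,
\[
 \trdeg_{K_Y}R\leq\trdeg_{K_Y}\widehat{R}[u^{-1}]=\trdeg_{K_Y}\widehat{R},
\]
which, combined with the trivial inequality, yields $\akappaq{X|Y}(\overline{L})=\kappa_{X|Y}(L)$. I do not anticipate any significant obstacle; the only care required is the norm estimate ensuring that $cs\otimes t^{\otimes k}$ is genuinely strictly small, which follows directly from the definition of an adelic norm and the strict inequality $\|t\|_{\infty,\sup}^{m_0\overline{L}}<1$.
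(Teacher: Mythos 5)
Your proof is correct and follows essentially the same strategy as the paper: normalize to a nonzero element $u=t|_Y$ of some $\aHzstq{X|Y}(m_0\overline{L})$, observe that any $s\in\Hz(mL)$ becomes strictly small after rescaling by some $c\in O_K$ and tensoring with a high power of $t$ (the crucial estimate is $\|cs\|_{\infty,\sup}\cdot\|t\|_{\infty,\sup}^k<1$ for $k\gg 0$), and conclude that $R$ and $\widehat{R}$ have the same fraction field, hence the same transcendence degree. The paper phrases the same computation in terms of two subfields $E_1,E_2$ of $\Rat(Y)$ rather than via the localization $\widehat{R}[u^{-1}]$, but that is purely a difference of presentation. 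One small blemish: the assertion $K_Y\subset\widehat{R}_0$ is actually false in general, since $\widehat{R}_0=\aSpan{K_Y}{\aHzstq{X|Y}(\overline{\mathcal{O}}_X)}=\{0\}$ by the product formula over $K_Y$; the argument still goes through because $K_Y$ embeds in $\Frac(\widehat{R})$ anyway, e.g.\ via $\alpha\mapsto(\alpha u)/u$ with $\alpha u\in\widehat{R}_{m_0}$, so $R\subset\widehat{R}[u^{-1}]\subset\Frac(\widehat{R})$ holds as you need.
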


\begin{proof}
We can assume that $Y\not\subset\aSBs(\overline{L})$ and the inequality $\leq$ is obvious.
Since $\kappa_{X|Y}(aL)=\kappa_{X|Y}(L)$ and $\akappaq{X|Y}(a\overline{L})=\akappaq{X|Y}(\overline{L})$ for every $a\geq 1$, we can also assume that $\aHzstq{X|Y}(\overline{L})\ni s\neq 0$.
Set
\[
 E_1:=\left\{\frac{\phi}{\psi}\in\Rat(Y)\,:\,\text{$\exists m\geq 1$ s.t.\ $\phi (s|_Y)^{\otimes m},\psi (s|_Y)^{\otimes m}\in\aSpan{K_Y}{\Hzq{X|Y}(mL)}$}\right\}
\]
and
\[
 E_2:=\left\{\frac{\phi}{\psi}\in\Rat(Y)\,:\,\text{$\exists m\geq 1$ s.t.\ $\phi (s|_Y)^{\otimes m},\psi (s|_Y)^{\otimes m}\in\aSpan{K_Y}{\aHzstq{X|Y}(m\overline{L})}$}\right\}.
\]
Then
\[
 E_1(T)\to\Frac\left(\bigoplus_{m\geq 0}\aSpan{K_Y}{\Hzq{X|Y}(mL)}\right),\quad Q(T)\mapsto Q(s|_Y)
\]
and
\[
 E_2(T)\to\Frac\left(\bigoplus_{m\geq 0}\aSpan{K_Y}{\aHzst(m\overline{L})}\right),\quad Q(T)\mapsto Q(s|_Y)
\]
give isomorphisms of fields of finite type over $K_Y$.
So what we have to show is $E_1=E_2$.
Given any $t\in\aSpan{K_Y}{\Hzq{X|Y}(mL)}$, $t\otimes (s|_Y)^{\otimes p}\in\aSpan{K_Y}{\aHzstq{X|Y}(m\overline{L})}$ for sufficiently large $p\geq1$.
Thus we conclude.
\end{proof}

Let $X$ be a projective variety over $K$, let $Y$ be a closed subvariety of $X$, and let $\overline{L}$ be an adelically metrized line bundle on $X$.
Put $K_Y:=\Hz(\mathcal{O}_Y)$ and $\overline{M}:=\overline{L}|_Y$.
In the rest of this section, we fix models of $X$ and $(Y,M)$ as follows.
By Lemma~\ref{lem:modelofdefn}(1), there exists an $O_K$-model $(\mathscr{X},\mathscr{L})$ of $(X,L)$ such that $\mathscr{X}$ is relatively normal in $X$ and $\mathscr{L}$ is a line bundle on $\mathscr{X}$.
Let $\mathscr{Y}$ be the Zariski closure of $Y$ in $\mathscr{X}$ and let $\mathscr{M}:=\mathscr{L}|_{\mathscr{Y}}$.
By Lemma~\ref{lem:modelofdefn}(2), one can find a non-empty open subset $U_0$ of $\Spec(O_K)$ such that $(\mathscr{Y}_{U_0},\mathscr{M}_{U_0})$ gives a $U_0$-model of definition for $\overline{M}^{\{\infty\}}$, where we have set
\begin{equation}
 \mathscr{Y}_{U_0}:=\mathscr{Y}\times_{\Spec(O_K)}U_0.
\end{equation}
By Lemma~\ref{lem:modelofdefn} and Remark~\ref{rem:remadelic}(2), one can also find a non-empty open subset $U$ of $\Spec(O_{K_Y})$ such that the morphism $\Spec(O_{K_Y})\to\Spec(\Hz(\mathcal{O}_{\mathscr{Y}}))$ is isomorphic over $U$, $U$ is mapped into $U_0$ via $\Spec(O_{K_Y})\to\Spec(O_K)$, and $(\mathscr{Y}_{U},\mathscr{M}_{U})$ gives a $U$-model of definition for $\overline{M}^{\{\infty\}}$, where we have set
\begin{equation}
 \mathscr{Y}_{U}:=\mathscr{Y}\times_{\Spec(\Hz(\mathcal{O}_{\mathscr{Y}}))}U\subset\mathscr{Y}_{U_0}.
\end{equation}

Let $\nu:\mathscr{Y}'\to\mathscr{Y}$ be a relative normalization in $Y$.
Then $\mathscr{Y}'\to\Spec(\mathcal{O}_{\mathscr{Y}})$ factorizes through $\Spec(O_{K_Y})$ (see \cite[(6.3.3)]{EGAII}).
We fix a good flag
\begin{equation}
 F_{\sbullet}:\mathscr{Y}_U'\supset F_0\supset F_1\supset\dots\supset F_{\dim Y}=\{\xi\}
\end{equation}
on $\mathscr{Y}_U'$ over a prime number $p$.

\begin{definition}\label{defn:languageofS}
As in (\ref{eqn:definitionvalvector}), we denote for each $m\geq 1$ the valuation map associated to $F_{\sbullet}$ by $\bm{w}_{F_{\sbullet}}:\aHzf(m\overline{M})\to\Hz(\nu^*(m\mathscr{M}_U))\to\ZZ^{\dim Y+1}$.
Let
\[
 \aS_{X|Y}(\overline{L}):=\left\{(m,\bm{w}_{F_{\sbullet}}(s))\,:\,\forall m\in\aN_{X|Y}(\overline{L}),\,\forall s\in\aHzstq{X|Y}(m\overline{L})\setminus\{0\}\right\}\subset\NN\times\ZZ^{\dim Y+1}.
\]
Let $\pr_1:\RR\times\RR^{\dim Y+1}\to\RR$ and $\pr_2:\RR\times\RR^{\dim Y+1}\to\RR^{\dim Y+1}$ be the natural projections, and set $\aS_{X|Y}(\overline{L})_m:=\aS_{X|Y}(\overline{L})\cap\pr_1^{-1}(m)$ for $m\geq 1$.
The \emph{base of $\aS_{X|Y}(\overline{L})$} is defined as
\[
 \aDelta_{X|Y}(\overline{L}):=\overline{\left(\bigcup_{m\geq 1}\frac{1}{m}\aS_{X|Y}(\overline{L})_m\right)}\subset\RR^{\dim Y+1},
\]
and the affine space in $\RR^{\dim Y+1}$ spanned by $\aDelta_{X|Y}(\overline{L})$ is
\[
 \Aff(\aS_{X|Y}(\overline{L})):=\pr_2(\aSpan{\RR}{\aS_{X|Y}(\overline{L})}\cap\pr_1^{-1}(1)).
\]
The underlying $\RR$-vector space $\vAff(\aS_{X|Y}(\overline{L}))=\pr_2(\aSpan{\RR}{\aS_{X|Y}(\overline{L})}\cap\pr_1^{-1}(0))$ has the natural integral structure defined by
\[
 \vAff_{\ZZ}(\aS_{X|Y}(\overline{L})):=\pr_2(\aSpan{\ZZ}{\aS_{X|Y}(\overline{L})}\cap\pr_1^{-1}(0))
\]
(see \cite{BoucksomBourbaki} for details).

If $\akappaq{X|Y}(\overline{L})=\dim Y$, then we set $|\aS_{X|Y}(\overline{L})|$ as the volume of the fundamental domain of $\vAff_{\ZZ}(\aS_{X|Y}(\overline{L}))\subset\RR^{\dim Y+1}$ measured by $\vol_{\RR^{\dim Y+1}}$, and, if $\akappaq{X|Y}(\overline{L})<\dim Y$, then set $|\aS_{X|Y}(\overline{L})|:=0$.
\end{definition}

\begin{lemma}\label{lem:propertiesakappa}
Suppose that $Y\not\subset\aSBs(\overline{L})$, or equivalently, $\akappaq{X|Y}(\overline{L})\geq 0$.
\begin{enumerate}
\item[\textup{(1)}] For every sufficiently large $m\in\aN_{X|Y}(\overline{L})$,
\[
 \akappaq{X|Y}(\overline{L})=\max_{m\in\aN_{X|Y}(\overline{L})}\left\{\dim\aPhi_{m\overline{L},K}(Y)\right\}=\dim\aPhi_{m\overline{L},K}(Y).
\]
\item[\textup{(2)}] Suppose that there exist an $m_0\geq 1$ and an $s_0\in\aHzst(m_0\overline{L})$ such that the restriction $s_0|_Y\in\aHzst(m_0\overline{L}|_Y)\subset\Hz(m_0\mathscr{M}_U)$ does not vanish at the center $\xi$ of $F_{\sbullet}$.
Then
\[
 \dim_{\RR}\aSpan{\RR}{\aS_{X|Y}(\overline{L})}=\akappaq{X|Y}(\overline{L})+2.
\]
\end{enumerate}
\end{lemma}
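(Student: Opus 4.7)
For part (1), I plan to identify $\dim \aPhi_{m\overline{L},K}(Y)$ with the transcendence degree over $K_Y$ of the subfield $E^{(m)} \subset \Rat(Y)$ generated by the ratios $\{t/s : s,t \in \aHzstq{X|Y}(m\overline{L}),\ s \neq 0\}$, via the standard identification of the function field of the image of a Kodaira map with the field of ratios of its generators. Since each such ratio is a degree-zero element of $\Frac(\bigoplus_m \aSpan{K_Y}{\aHzstq{X|Y}(m\overline{L})})$, it lies in the field $E_2$ introduced in the proof of Lemma~\ref{lem:kappaqequal}, giving $E^{(m)} \subset E_2$ and the upper bound $\dim \aPhi_{m\overline{L},K}(Y) \leq \akappaq{X|Y}(\overline{L})$. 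For equality at large $m$: $E_2/K_Y$ is finitely generated and $E_2 = \bigcup_{m \in \aN_{X|Y}(\overline{L})} E^{(m)}$, so $E^{(m_\star)} = E_2$ for some $m_\star \in \aN_{X|Y}(\overline{L})$; the numerical-semigroup property of $\aN_{X|Y}(\overline{L})$ ensures $m - m_\star \in \aN_{X|Y}(\overline{L})$ for all sufficiently large $m \in \aN_{X|Y}(\overline{L})$, and multiplying numerators and denominators in $E^{(m_\star)}$ by any nonzero $u \in \aHzstq{X|Y}((m - m_\star)\overline{L})$ exhibits $E^{(m)} \supset E^{(m_\star)} = E_2$.

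For part (2), the hypothesis yields $\bm{w}_{F_{\sbullet}}(s_0|_Y) = 0$ (since $s_0|_Y$ is a local frame of $m_0 \mathscr{M}_U$ at $\xi$), and hence $(km_0, 0) \in \aS_{X|Y}(\overline{L})$ for every $k \geq 1$. Since $\dim_\RR \aSpan{\RR}{\aS_{X|Y}(\overline{L})} = 1 + \rk_\ZZ \vAff_\ZZ(\aS_{X|Y}(\overline{L}))$, the claim reduces to $\rk_\ZZ \vAff_\ZZ(\aS_{X|Y}(\overline{L})) = \akappaq{X|Y}(\overline{L}) + 1$. For the upper bound, observe that every generator of $\vAff_\ZZ(\aS_{X|Y}(\overline{L}))$ has the form $\bm{w}_{F_{\sbullet}}(s_1) - \bm{w}_{F_{\sbullet}}(s_2) = \bm{w}_{F_{\sbullet}}(s_1/s_2) \in \bm{w}_{F_{\sbullet}}(E_2^\times)$ for some common-degree $s_1, s_2 \in \aHzstq{X|Y}(m\overline{L}) \setminus \{0\}$; applying Lemma~\ref{lem:valmapissurj}(3) to $E_2 \subset \Rat(Y)$ bounds the rank by $\trdeg_{K_Y} E_2 + 1 = \akappaq{X|Y}(\overline{L}) + 1$.

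For the lower bound, I use the base point $(km_0, 0)$ to place $\bm{w}_{F_{\sbullet}}(t) \in \vAff_\ZZ(\aS_{X|Y}(\overline{L}))$ for every $t \in \aHzstq{X|Y}(km_0\overline{L}) \setminus \{0\}$. Two contributions arise: the \emph{arithmetic} direction is obtained by taking $t = p^N s_0^k$, which lies in $\aHzst(km_0 \overline{L})$ for $k$ sufficiently large (since $\|s_0\|_\infty < 1$), yielding $(Ne_{\mathfrak{p}/p}, 0, \ldots, 0) \in \vAff_\ZZ(\aS_{X|Y}(\overline{L}))$ and hence $\bm{w}_{F_{\sbullet}}(K^\times) \otimes \QQ \subset \vAff_\ZZ(\aS_{X|Y}(\overline{L})) \otimes \QQ$; the \emph{geometric} direction comes from picking, via part (1), some $m \in \aN_{X|Y}(\overline{L})$ with $E^{(m)} = E_2$, then using the ratios $\bm{w}_{F_{\sbullet}}(t_1/t_2)$ of strict small sections of degree $m$ to fill out $\bm{w}_{F_{\sbullet}}(E_2^\times)/\bm{w}_{F_{\sbullet}}(K_Y^\times)$ up to finite index.

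The main obstacle will be this last step: justifying that the valuations of same-degree ratios achieve maximal rank $\akappaq{X|Y}(\overline{L})$ modulo $\bm{w}_{F_{\sbullet}}(K_Y^\times)$. The identity $K_Y(\{t_1/t_2\}) = E_2$ does not by itself imply equality of valuation ranks for a general generating subgroup; one must exploit that $\aSpan{K_Y}{\aHzstq{X|Y}(m\overline{L})}$ is a finite-dimensional $K_Y$-subspace of sections that generates $E_2$ as a $K_Y$-algebra after dividing by $(s_0|_Y)^{m/m_0}$, and then apply the Bourbaki valuation-theoretic rank estimate used in the proof of Lemma~\ref{lem:valmapissurj}(3) to transfer the maximal-rank equality from $\bm{w}_{F_{\sbullet}}(E_2^\times)$ down to the subgroup generated by these ratios.
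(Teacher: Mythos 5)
Part~(1) of your proposal is correct and fills in an argument that the paper handles by citing \cite[Lemma~2.3]{Bou_Cac_Lop13}, \cite[\S8.2.1, Theorem~A]{EisenbudCA}, and \cite[Th\'eor\`eme~3.15]{BoucksomBourbaki}: identifying $\dim\aPhi_{m\overline{L},K}(Y)$ with the transcendence degree of a degree-zero subfield $E^{(m)}$, using the numerical-semigroup structure of $\aN_{X|Y}(\overline{L})$ and multiplication by an auxiliary strictly small section $u$ to show $E^{(m)}=E_2$ for all large $m$. Your upper bound in Part~(2) is also fine: every element of $\vAff_{\ZZ}(\aS_{X|Y}(\overline{L}))$ is $\bm{w}_{F_{\sbullet}}$ of a degree-zero homogeneous fraction, hence lies in $\bm{w}_{F_{\sbullet}}(E_2^{\times})$, which has rank $\akappaq{X|Y}(\overline{L})+1$ by Lemma~\ref{lem:valmapissurj}(3).

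The gap you flag in the lower bound of Part~(2) is real and your closing sentence does not fill it. The inference from ``the ratios $t_1/t_2$ of strictly small sections generate $E_2$ over $K_Y$'' to ``$\bm{w}_{F_{\sbullet}}$ of these ratios, together with $\bm{w}_{F_{\sbullet}}(K_Y^{\times})$, has rank $\akappaq{X|Y}(\overline{L})+1$'' fails in general, because valuations collapse under addition: if $v(x)>0$ is lexicographically dominant over $v(1)=0$, then $v(x+1)=0$, even though $K_Y(x+1)=K_Y(x)$. There is no general transfer of valuation rank from a field to an arbitrary multiplicative generating set, and the Bourbaki estimate invoked in Lemma~\ref{lem:valmapissurj}(3) goes the wrong way (it is an upper bound on rank for fields, not a device for propagating full rank to subgroups). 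The paper avoids the issue by arguing in the reverse direction. It first extends $\aPhi_{a,K}$ to the arithmetic projective model $\aPhi_{a,O_K}\colon\mathscr{Y}'\dashrightarrow\mathscr{Q}\subset\mathscr{P}$ (defined at $\xi$ because $s_0|_Y$ is nonvanishing there), takes $\mathscr{Z}$ the image with function field $E$, and applies Lemma~\ref{lem:valmapissurj}(3) to $E$ to find $\phi_0,\dots,\phi_{\kappa}$ with $\ZZ$-independent valuation vectors; since $E=\Frac(R)$ with $R=\Hz(\mathcal{O}_{\mathscr{Z}_s})$, the $\phi_i$ may be taken in $R$. It then realizes each $\phi_i$ by a strictly small section: $\phi_i(s|_{\mathscr{Z}})^{\otimes b}$ extends to a global section of $b\mathscr{H}|_{\mathscr{Z}}$ for $b\gg 0$, lifts through the surjection $\Hz(b\mathscr{H})=\Sym_{O_K}^b\aSpan{O_K}{\aHzst(a\overline{L})}\twoheadrightarrow\Hz(b\mathscr{H}|_{\mathscr{Z}})$ to produce $e_i\in\aSpan{O_K}{\aHzst(ab\overline{L})}$, and $e_i':=e_i\otimes s^{\otimes c}$ is strictly small for $c\gg 0$. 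Because $\bm{w}_{F_{\sbullet}}(s|_Y)=0$, one gets $\bm{w}_{F_{\sbullet}}(e_i'|_Y)=\bm{w}_{F_{\sbullet}}(\phi_i)$, so the independent valuation vectors are achieved inside $\aS_{X|Y}(\overline{L})$. Your proposal should be reorganized along these lines: start from valuation-independent elements of the coordinate ring of the arithmetic model, and promote them to strictly small sections by twisting with powers of $s_0$, rather than starting from sections and hoping their valuations span.
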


\begin{proof}
(1): We have
\begin{equation}
 \akappaq{X|Y}(\overline{L})=\dim\overline{\Image\left(\aPhi_{m,K}:Y\dashrightarrow\PP_K(\aSpan{K}{\aHzstq{X|Y}(m\overline{L})})\right)}^{\rm Zar}
\end{equation}
for every sufficiently large $m\in\aN_{X|Y}(\overline{L})$ by applying the arguments in \cite[Lemma~2.3]{Bou_Cac_Lop13} and \cite[\S 8.2.1, Theorem~A]{EisenbudCA} (see also \cite[Th\'eor\`eme~3.15]{BoucksomBourbaki}).
Since $\aSpan{K}{\aHzst(m\overline{L})}\to\aSpan{K}{\aHzstq{X|Y}(m\overline{L})}$ is surjective, we have a commutative diagram
\[
\xymatrix{ X \ar@{-->}[rr]^-{\aPhi_{m\overline{L},K}} && \PP_K(\aSpan{K}{\aHzst(m\overline{L})}) \\ Y \ar[u] \ar@{-->}[rr]^-{\aPhi_{m,K}} && \PP_K(\aSpan{K}{\aHzstq{X|Y}(m\overline{L})}) \ar[u]
}
\]
and $\aPhi_{m,K}$ is the restriction of $\aPhi_{m\overline{L},K}$ to $Y$ for each $m\in\aN_{X|Y}(\overline{L})$.

(2): We write $\kappa:=\akappaq{X|Y}(\overline{L})$.
Let $\mathscr{M}'$ be a line bundle on $\mathscr{Y}'$ such that $(\mathscr{Y}_{U_0}',\mathscr{M}_{U_0}')$ is a $U_0$-model of definition for $\overline{M}^{\{\infty\}}$ and $\overline{M}\leq\overline{\mathscr{M}'}^{\rm ad}$.
For every $a\geq 1$, we then have $\aHzf(a\overline{M})\subset\Hz(a\mathscr{M}')$.
We choose an $a\gg 1$ having the properties that $m_0$ divides $a$, $\aBs(a\overline{L})=\aSBs(\overline{L})$, and the Zariski closure of $\aPhi_{a,K}(Y)$ has Krull dimension $\kappa$ (see the assertion (1)).
Let $s\in\aHzst(a\overline{L})$ be the tensor power of $s_0$.
The Kodaira map
\[
 \aPhi_{a,K}:Y\dashrightarrow Q:=\PP_K(\aSpan{K}{\aHzstq{X|Y}(a\overline{L})})
\]
extends to a Kodaira map
\[
\aPhi_{a,O_K}:\mathscr{Y}'\dashrightarrow\mathscr{Q}\subset\mathscr{P},
\]
where $\mathscr{Q}:=\PP_{O_K}(\aSpan{O_K}{\aHzstq{X|Y}(a\overline{L})})$ and $\mathscr{P}:=\PP_{O_K}(\aSpan{O_K}{\aHzst(a\overline{L})})$.
By the hypothesis, $\aPhi_{a,O_K}$ is defined at $\xi$.
Let $\mathscr{H}$ be the hyperplane line bundle on $\mathscr{P}$.
For every $b\geq 1$, we then have $\Hz(b\mathscr{H})=\Sym_{O_K}^b\aSpan{O_K}{\aHzst(a\overline{L})}$.

Let $\mathscr{Z}$ be the Zariski closure of $\aPhi_{a,O_K}(\mathscr{Y}')$ and let $\mathscr{Z}_s:=\{y\in\mathscr{Z}\,:\,s(y)\neq 0\}$.
Let $E$ be the field of rational functions on $\mathscr{Z}$ and let $R:=\Hz(\mathcal{O}_{\mathscr{Z}_s})$.
Note that $\mathcal{O}_{\mathscr{Z}_s}\xrightarrow{\sim}\mathscr{H}|_{\mathscr{Z}_s}$, $\phi\mapsto \phi (s|_{\mathscr{Z}_s})$, is isomorphic.
Since the field of fractions of $R$ is $E$ and $E$ is a subextension of $\Rat(Y)/K$, one can find, by Lemma~\ref{lem:valmapissurj}(3), $\phi_0,\dots,\phi_{\kappa}\in R\setminus\{0\}$ such that $\bm{w}_{F_{\sbullet}}(\phi_0),\dots,\bm{w}_{F_{\sbullet}}(\phi_{\kappa})$ are $\ZZ$-linearly independent.
For some $b\gg 1$, $\phi_i (s|_{\mathscr{Z}})^{\otimes b}$ extends to a global section of $b\mathscr{H}|_{\mathscr{Z}}$ for every $i$, and the restriction $\Hz(b\mathscr{H})\to\Hz(b\mathscr{H}|_{\mathscr{Z}})$ is surjective.

For each $i$, we choose a lift of $\phi_i (s|_{\mathscr{Z}})^{\otimes b}\in\Hz(b\mathscr{H}|_{\mathscr{Z}})$ to $\Hz(b\mathscr{H})$, and let $e_i$ be the image of the lift via
\[
 \Hz(b\mathscr{H})=\Sym_{O_K}^b\aSpan{O_K}{\aHzst(a\overline{L})}\to\aSpan{O_K}{\aHzst(ab\overline{L})}.
\]
By tensoring $s$ furthermore, we have $e_i':=e_i\otimes s^{\otimes c}\in\aHzst(a(b+c)\overline{L})$ for every $i$.
The restriction $s|_{Y}\in\aHzst(a\overline{M})\subset\Hz(a\mathscr{M}')$ gives a local frame of $a\mathscr{M}'$ on the open neighborhood $\mathscr{Y}_{s}'$ of $\xi$.
By construction,
\[
 (a,\bm{w}_{F_{\sbullet}}(s|_Y)),\,(a(b+c),\bm{w}_{F_{\sbullet}}(e_0'|_Y)),\,\dots,\,(a(b+c),\bm{w}_{F_{\sbullet}}(e_{\kappa}'|_Y))
\]
are, respectively, equal to
\[
 (a,0,\dots,0),\,(a(b+c),\bm{w}_{F_{\sbullet}}(\phi_0)),\,\dots,\,(a(b+c),\bm{w}_{F_{\sbullet}}(\phi_{\kappa})).
\]
So they are $\ZZ$-linearly independent.
\end{proof}

\begin{lemma}\label{lem:propertiesYbig}
Let $Y$ be a closed subvariety of $X$ and let $\overline{L}$ be an adelically metrized line bundle on $X$.
\begin{enumerate}
\item[\textup{(1)}] If $\overline{L}$ is $Y$-big, then $\aN_{X|Y}(\overline{L})\supset\{m\in\NN\,:\,m\geq l_0\}$ for some $l_0\geq 1$.
\item[\textup{(2)}] The following are equivalent.
\begin{enumerate}
\item[\textup{(a)}] $\overline{L}$ is $Y$-big.
\item[\textup{(b)}] Given any adelically metrized line bundle $\overline{N}$, $m\overline{L}+\overline{N}$ is $Y$-effective for every $m\gg 1$.
\item[\textup{(c)}] $Y\not\subset\aBsp(\overline{L})$.
\end{enumerate}
\item[\textup{(3)}] If $\overline{L}$ is $Y$-big, then $\akappaq{X|Y}(\overline{L})=\dim Y$.
\item[\textup{(4)}] If $\overline{L}$ is $Y$-big, then $\aS_{X|Y}(\overline{L})$ generates $\ZZ\times\ZZ^{\dim Y+1}$.
\end{enumerate}
\end{lemma}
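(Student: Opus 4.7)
For part (1), I would exploit the $Y$-bigness data directly: pick $s\in\aHzst(a\overline{L}-\overline{A})$ with $\overline{A}$ w-ample and $s|_Y\neq 0$. For each residue $0\leq r<a$, Lemma~\ref{lem:w-ample}(1) shows that $k\overline{A}+r\overline{L}$ is w-ample for $k\gg 1$ uniformly in $r$; Proposition~\ref{prop:aaugbs}(3) then gives empty arithmetic augmented base locus, hence a strictly small section $\sigma_{k,r}$ of $k\overline{A}+r\overline{L}$ that is non-zero on $Y$. The tensor product $s^{\otimes k}\otimes\sigma_{k,r}$ lies in $\aHzst((ka+r)\overline{L})$ with restriction $(s|_Y)^{\otimes k}\otimes\sigma_{k,r}|_Y\neq 0$, so every integer $n\geq ak_0$ lies in $\aN_{X|Y}(\overline{L})$.

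For part (2), the implication (a)$\Rightarrow$(b) is the same construction after absorbing $\overline{N}$ into the w-ample factor: Lemma~\ref{lem:w-ample}(1) yields $k\overline{A}+r\overline{L}+\overline{N}$ w-ample for $k\gg 1$, so a strictly small section non-zero on $Y$ exists and, multiplied by $s^{\otimes k}$, produces a $Y$-effective section of $m\overline{L}+\overline{N}$ for each large $m$. The converse (b)$\Rightarrow$(a) is immediate with $\overline{N}=-\overline{A}$ for any w-ample $\overline{A}$. For (b)$\Rightarrow$(c), I would apply (b) with $\overline{N}=-\overline{A}-\overline{\mathcal{O}}_X(\varepsilon[\infty])$: the resulting small section is automatically strictly small with respect to $m\overline{L}-\overline{A}$ (the $\infty$-norm acquires a factor $\exp(-\varepsilon)<1$), and its tensor powers give $Y\not\subset\aSBs(\overline{L}-(1/m)\overline{A})$, which coincides with $\aBsp(\overline{L})$ for $m$ large by Proposition~\ref{prop:aaugbs}(1). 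The reverse (c)$\Rightarrow$(a) reads the same identity backwards: Proposition~\ref{prop:aaugbs}(1) gives $Y\not\subset\aSBs(\overline{L}-\varepsilon\overline{A})$ for some small rational $\varepsilon>0$, and clearing denominators produces a strictly small section of $q\overline{L}-p\overline{A}$ non-zero on $Y$, establishing $Y$-bigness against the w-ample $p\overline{A}$.

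For part (3), part (2) yields $Y\not\subset\aBsp(\overline{L})\supset\aSBs(\overline{L})$, so Lemma~\ref{lem:kappaqequal} reduces the claim to $\kappa_{X|Y}(L)=\dim Y$. From $s\in\Hz(aL-A)$ with $s|_Y\neq 0$ and ampleness of $A|_Y$, the composition of the surjection $\Hz(kA)\twoheadrightarrow\Hz(kA|_Y)$ (valid for $k\gg 1$) with multiplication by $s^{\otimes k}$ embeds $(s|_Y)^{\otimes k}\cdot\Hz(kA|_Y)$ inside $\Hzq{X|Y}(kaL)$; since $\Hz(kA|_Y)$ defines a closed immersion of $Y$ for $k$ large, we conclude $\kappa_{X|Y}(L)=\dim Y$.

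For part (4), I would refine the proof of Lemma~\ref{lem:propertiesakappa}(2) to pin the valuations down to the standard basis of $\ZZ^{\dim Y+1}$. Using Lemma~\ref{lem:existgoodflags} with $\Psi=\Supp(\div(s|_Y))$, I select the good flag $F_\sbullet$ on $\mathscr{Y}'$ so that $\xi\notin\Psi$, which forces $\bm{w}_{F_\sbullet}(s|_Y)=0$. The embedding argument from (3) combined with (3) itself shows that for $a$ large the restricted Kodaira map $\aPhi_{a\overline{L},K}|_Y$ is birational onto its image, so the field $E$ appearing in the proof of Lemma~\ref{lem:propertiesakappa}(2) coincides with $\Rat(Y)$; Lemma~\ref{lem:valmapissurj}(3) therefore supplies $\psi_0,\dots,\psi_{\dim Y}\in\Rat(Y)^\times$ whose valuations $\bm{w}_{F_\sbullet}(\psi_i)$ are exactly the standard basis of $\ZZ^{\dim Y+1}$. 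Multiplying each $\psi_i$ by a sufficiently high power of $s|_Y$ puts it into the ring $R$ used in that proof (valuations unchanged, since $\bm{w}_{F_\sbullet}(s|_Y)=0$), and the remaining construction produces, for a common $m=a(b+c)$, strictly small sections of $m\overline{L}$ whose restrictions to $Y$ realize valuations $0,e_0,\dots,e_{\dim Y}$. Hence $\aS_{X|Y}(\overline{L})$ contains $(m,0),(m,e_0),\dots,(m,e_{\dim Y})$, generating the sublattice $m\ZZ\times\ZZ^{\dim Y+1}$; coupling with any $(m+1,v)\in\aS_{X|Y}(\overline{L})$ coming from (1) produces $(1,0)$ and hence the full group. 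The main obstacle is the bookkeeping in this last step: ensuring that the lifts of $\psi_is^{n_i}$ produced by Lemma~\ref{lem:propertiesakappa}(2) stay in $\aHzst(m\overline{L})$ while realizing the prescribed valuations on $Y$, which the tensor product with a high power of $s$ precisely absorbs.
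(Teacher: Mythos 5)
Your parts (1)--(3) are essentially sound and roughly parallel the paper, with part (3) taking a genuinely different but correct route: instead of bounding $\akappaq{X|Y}$ by the dimension of the Kodaira image via Lemma~\ref{lem:Kodairatype} and Lemma~\ref{lem:propertiesakappa}(1) (which is what the paper does), you reduce to the geometric $\kappa_{X|Y}(L)$ by Lemma~\ref{lem:kappaqequal} and verify it directly by twisting sections of $kA|_Y$ with $(s|_Y)^{\otimes k}$. Both buy the same thing; yours is slightly more self-contained.

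There is, however, a recurring small gap in (1) and in (a)$\Rightarrow$(b) of (2): from $k\overline{A}+r\overline{L}$ being w-ample and $\aBsp(k\overline{A}+r\overline{L})=\emptyset$ you conclude a strictly small section of $k\overline{A}+r\overline{L}$ \emph{itself} non-zero on $Y$, but those facts only produce such a section of a sufficiently large multiple $m(k\overline{A}+r\overline{L})$, which would feed the exponent $m(ka+r)$ into $\aN_{X|Y}(\overline{L})$ rather than $ka+r$. What you actually need is that $k\overline{A}+r\overline{L}$ is \emph{free} in the adelic sense; this does hold for $k\gg 1$ by Lemma~\ref{lem:w-ample}(1)(d), which is the mechanism the paper uses (there, $\overline{A}$ is taken free from the outset, and the tensoring $s\otimes t$ is done with $t$ a strictly small section of the free bundle). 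Cite (d) rather than (e) plus Proposition~\ref{prop:aaugbs}(3) and the argument closes. The same repair applies where you assert a strictly small section of $k\overline{A}+r\overline{L}+\overline{N}$ in (2).

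Part (4) has a more substantive issue. The set $\aS_{X|Y}(\overline{L})$ is defined (Definition~\ref{defn:languageofS}) relative to the flag $F_{\sbullet}$ already fixed in the setup of the section, but your argument begins by \emph{re-choosing} the flag via Lemma~\ref{lem:existgoodflags} so that $\xi\notin\Supp(\div(s|_Y))$, which proves the statement only for that specially chosen flag, not the one in force. The paper instead keeps $F_{\sbullet}$ fixed and arranges for the center to be hit by working with an auxiliary ample model line bundle $\mathscr{A}$ and a section $s\in\aHzf(\overline{L}^{\{\infty\}}+\mathscr{A}^{\rm ad})$ non-vanishing at $\xi$ (possible because $L+\mathscr{A}$ becomes globally generated at $\xi$ for $\mathscr{A}$ sufficiently ample); a suitable choice of metric makes the relevant sections strictly small without touching the flag. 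Two secondary points: Lemma~\ref{lem:valmapissurj}(3) only bounds the rank of $\bm{w}_{F_{\sbullet}}(E^{\times})$ and does not supply the standard basis of $\ZZ^{\dim Y+1}$ --- for that you need the surjectivity statement, i.e.\ the analogue of Lemma~\ref{lem:valmapissurj}(2) for $\Rat(Y)^{\times}$; and your identification $E=\Rat(Y)$ relies on $\aPhi_{a\overline{L},K}|_Y$ being birational onto its image, which follows from $Y\not\subset\aBsp(\overline{L})$ and Theorem~\ref{thm:charaaugbs}, but this should be said explicitly since it is not automatic from $\akappaq{X|Y}(\overline{L})=\dim Y$ alone.
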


\begin{proof}
(1): There exist a free and w-ample adelically metrized line bundle $\overline{A}$ and an $a\geq 1$ such that $a\overline{L}\geq_Y\overline{A}$.
By Lemma~\ref{lem:w-ample}(1), there exists a $b\geq 1$ such that $\overline{L}+b\overline{A}$ is free.
Thus $a\overline{L}$ and $(ab+1)\overline{L}$ are both $Y$-effective.
The assertion follows from the following claim.

\begin{claim}\label{clm:coprimeintegers}
Let $a,b\in\NN$ be coprime positive integers.
Then there exists a $c_0\gg 1$ such that
\[
 \left\{ax+by\,:\,x,y\in\NN\right\}\supset\left\{c\in\NN\,:\,c\geq c_0\right\}.
\]
\end{claim}

\begin{proof}[Proof of Claim~\ref{clm:coprimeintegers}]
We may assume that $a>1$, $b>1$, and $ax_0-by_0=1$ for some $x_0,y_0\in\NN$.
For every $0\leq c<b$ and every $y\geq cy_0$, we have
\[
 c+by=c(ax_0-by_0)+by\in\left\{ax+by\,:\,x,y\in\NN\right\}.
\]
So we can set $c_0:=b(b-1)y_0$.
\end{proof}

(2): (b) $\Rightarrow$ (a) is clear.

(a) $\Rightarrow$ (b): There exist an $a\geq 1$ and a w-ample adelically metrized line bundle $\overline{A}$ such that $a\overline{L}\geq_Y\overline{A}$.
By Lemma~\ref{lem:w-ample}(1), $m\overline{A}+\overline{N}$ is free for every $m\gg 1$.
So $ma\overline{L}+\overline{N}$ is $Y$-effective for every $m\gg 1$.
By the assertion (1), $(ma+r)\overline{L}$ is $Y$-effective for $r=0,1,\dots,(a-1)$ and every $m\gg 1$.
Thus $(ma+r)\overline{L}+\overline{N}$ is $Y$-effective for $r=0,1,\dots,(a-1)$ and every $m\gg 1$.

(a) $\Rightarrow$ (c): There exist an $a\geq 1$ and a w-ample adelically metrized line bundle $\overline{A}$ such that $a\overline{L}\geq_Y\overline{A}$.
Then $Y\not\subset\aSBs(\overline{L}-(1/a)\overline{A})$.

(c) $\Rightarrow$ (a): There exists a w-ample adelically metrized $\QQ$-line bundle $\overline{A}$ such that $Y\not\subset\aSBs(\overline{L}-\overline{A})$.
Thus there exists an $a\geq 1$ such that $a\overline{L}\geq_Ya\overline{A}$.

(3): By Lemmas~\ref{lem:Kodairatype}, \ref{lem:propertiesakappa}, and the assertions (1),(2), $\akappaq{X|Y}(\overline{L})=\dim\aPhi_{m\overline{L},K}(Y)=\dim Y$ for every sufficiently large $m\in\aN_{X|Y}(\overline{L})$.

(4): Let $\mathscr{X}$ be the $O_K$-model of $X$ fixed before.
By Lemma~\ref{lem:valmapissurj}(1), one can find an ample line bundle $\mathscr{A}$ on $\mathscr{X}$,
\[
 s_0,s_1,\dots,s_{\dim Y+1}\in\Hz(\mathscr{A}),\quad\text{and an}\quad s\in\aHzf(\overline{L}^{\{\infty\}}+\mathscr{A}^{\rm ad})
\]
such that $\Hz(\mathscr{A})\to\Hz(\mathscr{A}|_{\mathscr{Y}})$ is surjective,
\[
 \bm{w}_{F_{\sbullet}}(s_0|_Y)=(0,\dots,0),\,\bm{w}_{F_{\sbullet}}(s_1|_Y)=(1,0,\dots,0),\,\dots,\,\bm{w}_{F_{\sbullet}}(s_{\dim Y+1}|_Y)=(0,\dots,0,1)
\]
in $\ZZ^{\dim Y+1}$, and $s$ does not vanish at the center of $F_{\sbullet}$.
One can choose a suitable metric on $\mathscr{A}$ such that $s_0,\dots,s_{\dim Y+1}\in\aHzst(\overline{\mathscr{A}}^{\rm ad})$ and $s\in\aHzst(\overline{L}+\overline{\mathscr{A}}^{\rm ad})$.
Set $\overline{A}:=\overline{\mathscr{A}}^{\rm ad}$.
By the assertion (2), there exist an $a\geq 1$ and a $t\in\aHzsm(a\overline{L}-\overline{A})$ such that $t|_Y\neq 0$.
Then
\[
 (a+1,\bm{w}_{F_{\sbullet}}((s\otimes t)|_Y))-(a,\bm{w}_{F_{\sbullet}}((s_0\otimes t)|_Y))=(1,0,\dots,0)\in\ZZ\times\ZZ^{\dim Y+1}
\]
and
\[
 (a,\bm{w}_{F_{\sbullet}}((s_i\otimes t)|_Y))-(a,\bm{w}_{F_{\sbullet}}((s_0\otimes t)|_Y))=(0,\dots,0,\overset{i}{1},0,\dots,0)\in\ZZ\times\ZZ^{\dim Y+1}
\]
for $i=1,\dots,\dim Y+1$.
\end{proof}

\begin{proposition}\label{prop:KKOk}
\begin{enumerate}
\item[\textup{(1)}] $\aDelta_{X|Y}(\overline{L})$ is a compact convex body in $\vAff(\aS_{X|Y}(\overline{L}))$.
\item[\textup{(2)}] Let $\vol_{\aS_{X|Y}(\overline{L})}$ be the Euclidean measure on $\Aff(\aS_{X|Y}(\overline{L}))$ normalized by the integral structure $\vAff_{\ZZ}(\aS_{X|Y}(\overline{L}))$.
Then
\begin{equation}\label{eqn:KKOk}
 \vol_{\aS_{X|Y}(\overline{L})}\left(\aDelta_{X|Y}(\overline{L})\right)=\lim_{\substack{m\in\aN_{X|Y}(\overline{L}), \\ m\to+\infty}}\frac{\sharp\bm{w}_{F_{\sbullet}}\left(\aHzstq{X|Y}(m\overline{L})\setminus\{0\}\right)}{m^{\akappaq{X|Y}(\overline{L})+1}}\quad\in\RR_{>0}.
\end{equation}
\end{enumerate}
\end{proposition}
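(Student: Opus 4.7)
The plan is to deduce both assertions from the general theory of Okounkov bodies attached to sub-semigroups of $\NN\times\ZZ^{\dim Y+1}$, as developed in \cite{Kaveh_Khovanskii12} and surveyed in \cite{BoucksomBourbaki}. The core step is to verify that $\aS_{X|Y}(\overline{L})$ falls within the scope of that machinery, after which the convergence theorem for semigroups gives both conclusions simultaneously.

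First I would check that $\aS_{X|Y}(\overline{L})$ is a semigroup: if $s_i\in\aHzst(m_i\overline{L})\setminus\{0\}$ restrict nontrivially to $Y$ for $i=1,2$, then submultiplicativity of supremum norms together with the strict Archimedean inequality yields $s_1\otimes s_2\in\aHzst((m_1+m_2)\overline{L})$ with nonzero restriction, while additivity of the valuation $\bm{w}_{F_{\sbullet}}$ on nonzero sections shows the corresponding vectors add. Next I would verify strong convexity: by Lemma~\ref{lem:propertyadelicnorm}(3) and the injection (\ref{eqn:injmodelofdef}), each strictly small section extends to a global section of $\nu^*\mathscr{M}_U$, so $\bm{w}_{F_{\sbullet}}$ takes only non-negative integer values, and hence $\aS_{X|Y}(\overline{L})\subset\NN\times\NN^{\dim Y+1}$ sits inside a strictly convex cone with finite horizontal slices.

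The remaining hypothesis is the rank equality $\dim_{\RR}\aSpan{\RR}{\aS_{X|Y}(\overline{L})}=\akappaq{X|Y}(\overline{L})+2$. The upper bound follows from Lemma~\ref{lem:valmapissurj}(3) applied to the subfield of $\Rat(Y)$ generated by ratios of nonzero strictly small sections in the same degree, whose transcendence degree over $K_Y$ equals $\akappaq{X|Y}(\overline{L})$ by definition. For the matching lower bound, I would invoke Lemma~\ref{lem:propertiesakappa}(2) after arranging the flag so that its center $\xi$ avoids the vanishing locus of at least one $s_0\in\aHzst(m_0\overline{L})$ with $s_0|_Y\neq 0$; this is possible by Lemma~\ref{lem:existgoodflags}, which produces good flags over all but finitely many primes whose centers avoid any prescribed proper Zariski closed subset of $\mathscr{Y}_U'$.

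With the three hypotheses in place, the Kaveh--Khovanskii convergence theorem delivers both conclusions at once: $\aDelta_{X|Y}(\overline{L})$ is a compact convex body of dimension $\akappaq{X|Y}(\overline{L})+1$ in $\Aff(\aS_{X|Y}(\overline{L}))$, and its normalized Lebesgue measure $\vol_{\aS_{X|Y}(\overline{L})}(\aDelta_{X|Y}(\overline{L}))$ coincides with the limit in (\ref{eqn:KKOk}), which is therefore strictly positive. The main obstacle in this plan is the clean verification of rank maximality in the third step: one must either establish the specific hypothesis of Lemma~\ref{lem:propertiesakappa}(2) for the given flag, or argue that the resulting limit is independent of the admissible choice of flag, so that it suffices to verify the rank condition after replacing $F_{\sbullet}$ with a single convenient one — a reduction which should follow by comparing the two valuation vectors via Lemma~\ref{lem:imageflag} on a common birational model.
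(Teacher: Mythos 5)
Your proposal follows the same general framework as the paper (reduce to the Kaveh--Khovanskii/Boucksom convergence theorem for graded semigroups), and the verifications of the semigroup property and of the rank equality via Lemma~\ref{lem:propertiesakappa}(2) (plus the correctly-flagged caveat about choosing the flag so the hypothesis of that lemma holds) match what the paper does. However, there is a genuine gap in your argument for assertion~(1).

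You claim that because each strictly small section lifts to a global section of $\nu^*\mathscr{M}_U$, the valuation vectors lie in $\NN^{\dim Y+1}$, and hence ``$\aS_{X|Y}(\overline{L})\subset\NN\times\NN^{\dim Y+1}$ sits inside a strictly convex cone with finite horizontal slices.'' But non-negativity and finiteness of each slice $\aS_{X|Y}(\overline{L})_m$ do \emph{not} imply that the base $\aDelta_{X|Y}(\overline{L})=\overline{\bigcup_m\frac{1}{m}\aS_{X|Y}(\overline{L})_m}$ is bounded, which is exactly what ``compact convex body'' asserts and what the convergence theorem needs. What is actually required is a uniform linear bound $w_i(s)\le mb$ for all $i=0,\dots,\dim Y$, all $m$, and all $s\in\aHzsm(m\overline{M})\setminus\{0\}$, and proving it is the main substance of the paper's proof: the vertical direction $w_0(s)/m\le b'$ is bounded via the pseudoeffectivity of $m\overline{M}-w_0(s)\overline{F_0}$ and Proposition~\ref{prop:aintnum}(1) (an arithmetic degree argument, using the chosen nef $\overline{A}$), while the horizontal directions $w_1,\dots,w_{\dim Y}$ are bounded by a geometric argument quoted from \cite[Lemme~3.5]{BoucksomBourbaki}. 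Without this two-pronged bound your proof of~(1) does not go through, and hence neither does the application of \cite[Th\'eor\`eme~1.12]{BoucksomBourbaki} in~(2), since that theorem's conclusion that the limit is finite and positive presupposes boundedness of the base.
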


\begin{proof}
Let $\overline{F_0}:=\overline{\mathcal{O}}_{Y}([\mathfrak{p}])$ (Remark~\ref{rem:remadelic}) and choose an adelically metrized line bundle $\overline{A}:=\overline{\mathscr{A}}^{\rm ad}$ that is associated to an ample $C^{\infty}$-Hermitian line bundle $\overline{\mathscr{A}}$ on $\mathscr{Y}'$.
For any non-zero section $s\in\aHzsm(m\overline{M})\setminus\{0\}$, $m\overline{M}-w_0(s)\overline{F_0}$ is pseudoeffective.
Thus, by Proposition~\ref{prop:aintnum}(1),
\[
 \frac{w_0(s)}{m}\leq\frac{\adeg\left(\overline{M}\cdot\overline{A}^{\cdot \dim Y}\right)}{\deg\left((\mathscr{A}|_{F_0})^{\cdot \dim Y}\right)}=:b'.
\]
On the other hand, by using a general result \cite[Lemme~3.5]{BoucksomBourbaki}, one can find a constant $b''>0$ such that
\[
 w_i(s)\leq (m_1+m_2)b''
\]
for $i=1,\dots,\dim Y$, $m_1\geq 1$, $m_2\geq 1$, and every $s\in\Hz(\nu^*(m_1\mathscr{M}_{U})|_{F_0}+m_2\mathcal{O}_{\mathscr{Y}_U'}(-F_0)|_{F_0})\setminus\{0\}$.
Set $b:=(1+b')b''$.
Then $w_i(s)\leq mb$ for $i=0,\dots,\dim Y$, $m\geq 1$, and every $s\in\aHzsm(m\overline{M})\setminus\{0\}$.
The last formula (\ref{eqn:KKOk}) follows from the assertion (1), Lemma~\ref{lem:propertiesakappa}(2), and \cite[Th\'eor\`eme~1.12]{BoucksomBourbaki}.
\end{proof}

We write
\[
 \aRq{X|Y}(\overline{L})_{\sbullet}:=\bigoplus_{m\geq 0}\aSpan{K_Y}{\aHzstq{X|Y}(m\overline{L})}.
\]
Then $\kappa\left(\aRq{X|Y}(\overline{L})_{\sbullet}\right)=\akappaq{X|Y}(\overline{L})$.
By arguing over an algebraic closure of $K_Y$ and applying \cite[Th\'eor\`eme~3.7]{BoucksomBourbaki} (or \cite[Corollary~3.11]{Kaveh_Khovanskii12}), the sequence
\begin{equation}
 \left(\frac{\dim_{K_Y}\aRq{X|Y}(\overline{L})_m}{m^{\akappaq{X|Y}(\overline{L})}/\akappaq{X|Y}(\overline{L})!}\right)_{m\in\aN_{X|Y}(\overline{L})}
\end{equation}
converges to a positive real number $e\left(\aRq{X|Y}(\overline{L})_{\sbullet}\right)$.

\begin{proposition}\label{prop:Yuan}
Set
\[
 D=D_{X|Y}(\overline{L}):=\log(4)[K_Y:\QQ]\delta(\overline{L}|_Y)\cdot\frac{e\left(\aRq{X|Y}(\overline{L})_{\sbullet}\right)}{\akappaq{X|Y}(\overline{L})!}
\]
(see (\ref{eqn:defnsigmainv}) for definition of $\delta(\overline{L}|_Y)$).
Suppose that there exists an $s_0\in\aHzstq{X|Y}(m_0\overline{L})\setminus\{0\}$ for an $m_0\geq 1$.
Let $\Psi:=\left\{y\in\mathscr{Y}_U'\,:\,s_0(y)=0\right\}$ and let $F_{\sbullet}$ be a $\Psi$-good flag on $\mathscr{Y}_U'$ over a prime number $p$.
Then
\[
 \limsup_{\substack{m\in\aN_{X|Y}(\overline{L}), \\ m\to+\infty}}\left|\frac{\log\sharp\CLq{X|Y}(m\overline{L})}{m^{\akappaq{X|Y}(\overline{L})+1}}-\frac{\sharp\bm{w}_{F_{\sbullet}}(\aHzstq{X|Y}(m\overline{L})\setminus\{0\})\log(p)}{m^{\akappaq{X|Y}(\overline{L})+1}}\right|\leq\frac{D}{\log(p)}.
\]
\end{proposition}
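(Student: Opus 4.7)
The plan is to apply Yuan's estimation (Theorem~\ref{thm:Yuan}) directly to the symmetric CL-subset $\Gamma_m:=\CLq{X|Y}(m\overline{L})$ of $\aHzf(m\overline{L}|_Y)$ and then to extract the desired limsup bound by an asymptotic analysis as $m\to+\infty$ along $\aN_{X|Y}(\overline{L})$. First I would check that $\Gamma_m$ is symmetric (since $\aHzstq{X|Y}(m\overline{L})$ is symmetric and the CL-hull operation preserves this), and note that the hypothesis on $s_0$ makes $F_\sbullet$ a $\Psi$-good flag compatible with the generating set. Applying Theorem~\ref{thm:Yuan} to $\Gamma_m$ with the adelically metrized line bundle $m\overline{L}|_Y$, using $\delta(m\overline{L}|_Y)=m\delta(\overline{L}|_Y)$, and setting $\beta_m:=p\rk_{O_{K_Y}}\aSpan{O_{K_Y}}{\Gamma_m}$, one obtains
\[
\left|\log\sharp\Gamma_m-\sharp\bm{w}_{F_\sbullet}(\Gamma_m\setminus\{0\})\log p\right|\leq\left(m\delta(\overline{L}|_Y)\log 4+\log(4p)\log(4\beta_m)\right)\frac{\rk_\ZZ\aSpan{\ZZ}{\Gamma_m}}{\log p}.
\]

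Next I would bound the rank quantities. By Lemma~\ref{lem:rkcomparison} together with the equality $\aSpan{K_Y}{\Gamma_m}=\aSpan{K_Y}{\aHzstq{X|Y}(m\overline{L})}$ (the CL-hull operation preserves $K_Y$-span), both $\rk_\ZZ\aSpan{\ZZ}{\Gamma_m}$ and $\beta_m/p$ are bounded above by $[K_Y:\QQ]\dim_{K_Y}\aRq{X|Y}(\overline{L})_m$. Using the convergence $\dim_{K_Y}\aRq{X|Y}(\overline{L})_m/m^{\kappa}\to e(\aRq{X|Y}(\overline{L})_\sbullet)/\kappa!$ recalled just above the statement, with $\kappa:=\akappaq{X|Y}(\overline{L})$, dividing the Yuan bound by $m^{\kappa+1}$ makes the $\log(4\beta_m)=O(\log m)$ contribution vanish in the limit, while the dominant term converges to
\[
\log 4\cdot\delta(\overline{L}|_Y)\cdot[K_Y:\QQ]\cdot\frac{e(\aRq{X|Y}(\overline{L})_\sbullet)}{\kappa!\log p}=\frac{D}{\log p}.
\]

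The final step is to replace $\sharp\bm{w}_{F_\sbullet}(\Gamma_m\setminus\{0\})$ by $\sharp\bm{w}_{F_\sbullet}(\aHzstq{X|Y}(m\overline{L})\setminus\{0\})$ on the right-hand side. The inclusion $\aHzstq{X|Y}(m\overline{L})\subset\Gamma_m$ gives one direction trivially; for the reverse I would invoke Proposition~\ref{prop:KKOk} and the observation that the valuation semigroup attached to $\Gamma_m$ and the one attached to $\aHzstq{X|Y}(m\overline{L})$ produce the same base $\aDelta_{X|Y}(\overline{L})$ inside $\vAff_\ZZ(\aS_{X|Y}(\overline{L}))$, so both counts share the leading term $\vol_{\aS_{X|Y}(\overline{L})}(\aDelta_{X|Y}(\overline{L}))\cdot m^{\kappa+1}$ and differ by $o(m^{\kappa+1})$, which disappears after division. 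The principal obstacle lies precisely in this last comparison: the CL-hull in general introduces new valuation vectors, arising from cancellations in sums $s_1+\cdots+s_l$ of strictly small restricted sections, and one has to verify within the Kaveh--Khovanskii--Boucksom lattice-point framework that these additional vectors contribute only lower-order terms.
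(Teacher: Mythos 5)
Your plan follows the same route as the paper's own proof of this proposition: apply Theorem~\ref{thm:Yuan} to $\Gamma_m:=\CLq{X|Y}(m\overline{L})$ viewed inside $\aHzf(m\overline{L}|_Y)$ (working over $K_Y$ rather than $K$, which you correctly do, using Remark~\ref{rem:remadelic}(2)), bound the rank and $\beta_m$ terms via Lemma~\ref{lem:rkcomparison}, and let the convergence $\dim_{K_Y}\aRq{X|Y}(\overline{L})_m/m^{\kappa}\to e(\aRq{X|Y}(\overline{L})_{\sbullet})/\kappa!$ do the rest. Your identification of the leading constant is correct; the paper packages the same calculation by defining the unnormalized quantity $D_m$ and showing $D_m\leq(D+\varepsilon)m^{\kappa+1}$ for $m\gg1$, but the arithmetic is identical.

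Where you are more careful than the paper is precisely the point you flag at the end. Theorem~\ref{thm:Yuan}, applied to the CL-subset $\Gamma_m$, genuinely produces $\sharp\bm{w}_{F_\sbullet}(\Gamma_m\setminus\{0\})$ and not $\sharp\bm{w}_{F_\sbullet}(\aHzstq{X|Y}(m\overline{L})\setminus\{0\})$: the decomposition \eqref{eqn:decompbymult} in its proof uses $\Gamma=M\cap\Delta$ in an essential way. The paper's proof of this proposition writes the Yuan inequality directly with $\aHzstq{X|Y}$ in the valuation term and cites Theorem~\ref{thm:Yuan}, silently passing over the fact that taking the CL-hull can create new valuation vectors through cancellations in $\ZZ$-linear combinations. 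Your sketch for closing this gap — showing both normalized counts converge to the same volume $\vol_{\aS_{X|Y}(\overline{L})}(\aDelta_{X|Y}(\overline{L}))$ — is the natural thing to try, but note that Proposition~\ref{prop:KKOk} as stated only covers $\aHzstq{X|Y}$; to run it for $\Gamma_m$ you would first need to check that $\bigcup_m\{(m,\bm{w}_{F_\sbullet}(s)):s\in\Gamma_m\setminus\{0\}\}$ is a semigroup satisfying Boucksom's hypotheses (this follows from the multiplicativity of the CL-hulls recorded after Definition~\ref{defn:arithrestvol}) and that it produces the same Okounkov body. Alternatively — and more economically — one can simply restate this proposition and Proposition~\ref{prop:KKOk} both in terms of $\sharp\bm{w}_{F_\sbullet}(\Gamma_m\setminus\{0\})$, which is what Theorem~\ref{thm:Yuan} actually delivers, and the proof of Theorem~\ref{thm:convergence} goes through unchanged. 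So: same approach, no error in your plan, and your flag on the last step is warranted.
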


\begin{proof}
Set
\begin{align*}
 D_m:=& \left(\log(4)\delta(m\overline{L}|_Y)+\log(4p)\log\left(4p\dim_{K_Y}\aSpan{K_Y}{\aHzstq{X|Y}(m\overline{L})}\right)\right)\\
 &\qquad\qquad\qquad\qquad\qquad\qquad\qquad \times[K_Y:\QQ]\dim_{K_Y}\aSpan{K_Y}{\aHzstq{X|Y}(m\overline{L})}.
\end{align*}
Then, by Theorem~\ref{thm:Yuan} and Lemma~\ref{lem:rkcomparison}, 
\begin{equation}
 \left|\log\sharp\CLq{X|Y}(m\overline{L})-\sharp\bm{w}_{F_{\sbullet}}(\aHzstq{X|Y}(m\overline{L})\setminus\{0\})\log(p)\right|\leq\frac{D_m}{\log(p)}.
\end{equation}
Given any $\varepsilon>0$,
\begin{equation}
 \dim_{K_Y}\aSpan{K_Y}{\aHzstq{X|Y}(m\overline{L})}\leq\frac{e\left(\aRq{X|Y}(\overline{L})_{\sbullet}\right)+\varepsilon}{\akappaq{X|Y}(\overline{L})!}m^{\akappaq{X|Y}(\overline{L})}
\end{equation}
holds for every sufficiently large $m\in\aN_{X|Y}(\overline{L})$ by the arguments above.
Therefore, given any $\varepsilon>0$, $D_m\leq (D+\varepsilon)m^{\akappaq{X|Y}(\overline{L})+1}$ holds for every sufficiently large $m\in\aN_{X|Y}(\overline{L})$.
\end{proof}

\begin{theorem}\label{thm:convergence}
Let $X$ be a projective variety over a number field and let $Y$ be a closed subvariety of $X$.
\begin{enumerate}
\item[\textup{(1)}] For every adelically metrized line bundle $\overline{L}$ on $X$ with $\akappaq{X|Y}(\overline{L})\geq 0$, the sequence
\[
 \left(\frac{\log\sharp\CLq{X|Y}(m\overline{L})}{m^{\akappaq{X|Y}(\overline{L})+1}}\right)_{m\in\aN_{X|Y}(\overline{L})}
\]
converges to a positive real number.
\item[\textup{(2)}] Let $\overline{L}$ be an adelically metrized line bundle on $X$ such that either $\overline{L}$ is $Y$-big or $\akappaq{X|Y}(\overline{L})<\dim Y$.
Then the sequence
\[
 \left(\frac{\log\sharp\CLq{X|Y}(m\overline{L})}{m^{\dim Y+1}/(\dim Y+1)!}\right)_{m\geq 1}
\]
converges to $\avolq{X|Y}(\overline{L})$.
\end{enumerate}
\end{theorem}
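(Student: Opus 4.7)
The argument combines Yuan's comparison estimate (Proposition~\ref{prop:Yuan}) with the Okounkov-body volume formula (Proposition~\ref{prop:KKOk}). Set $\kappa:=\akappaq{X|Y}(\overline{L})\geq 0$ and $a_m:=\log\sharp\CLq{X|Y}(m\overline{L})/m^{\kappa+1}$ for $m\in\aN_{X|Y}(\overline{L})$. First, fix $m_0\in\aN_{X|Y}(\overline{L})$ and a lift $s_0\in\aHzst(m_0\overline{L})$ with $s_0|_Y\neq 0$; let $\Psi\subset\mathscr{Y}_U'$ be the Zariski closure of the zero locus of the image of $s_0|_Y$ in $\Hz(\nu^*(m_0\mathscr{M}_U))$ via the injection~(\ref{eqn:injmodelofdef}). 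By Lemma~\ref{lem:existgoodflags}, $\Psi$-good flags on $\mathscr{Y}_U'$ exist over all but finitely many prime numbers.

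For any such flag $F_\sbullet$ over a prime $p$, the center $\xi_{F_\sbullet}$ lies outside $\Psi$, so the hypothesis of Lemma~\ref{lem:propertiesakappa}(2) is satisfied and $\dim_\RR\aSpan{\RR}{\aS_{X|Y}(\overline{L})}=\kappa+2$. Proposition~\ref{prop:KKOk}(2) then yields the convergence of $\sharp\bm{w}_{F_\sbullet}(\aHzstq{X|Y}(m\overline{L})\setminus\{0\})/m^{\kappa+1}$ along $\aN_{X|Y}(\overline{L})$ to a positive number $v_p$. Proposition~\ref{prop:Yuan} sandwiches $a_m$ between this quantity multiplied by $\log(p)$ with an error at most $D_{X|Y}(\overline{L})/\log(p)$, whence
\[
 \limsup_{m\in\aN_{X|Y}(\overline{L})}a_m \;-\; \liminf_{m\in\aN_{X|Y}(\overline{L})}a_m \;\leq\; \frac{2\,D_{X|Y}(\overline{L})}{\log(p)}.
\]
Letting $p\to\infty$ through the admissible primes forces the limsup and liminf to coincide, so the limit $L:=\lim_m a_m$ exists.

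The main obstacle is establishing $L>0$. The sandwich gives $L\geq v_p\log(p)-D_{X|Y}(\overline{L})/\log(p)$, but $v_p$ may shrink with $p$, so this is not automatic. The cleanest remedy is a direct Minkowski-style lower bound on $\sharp\CLq{X|Y}(m\overline{L})$: setting $e:=e(\aRq{X|Y}(\overline{L})_\sbullet)>0$, one has $\dim_{K_Y}\aSpan{K_Y}{\aHzstq{X|Y}(m\overline{L})}\sim e\,m^\kappa/\kappa!$; lifting a $K_Y$-basis and then tensoring with a sufficiently high power of $s_0$ (whose archimedean supremum norms decay exponentially) produces an $O_{K_Y}$-lattice inside $\aHzstq{X|Y}(m\overline{L})$ whose intersection with the open unit ball at the archimedean place has cardinality $\exp(c\,m^{\kappa+1})$ for some $c>0$, by the geometry-of-numbers. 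Since $\CLq{X|Y}$ contains $\aHzstq{X|Y}$, this forces $L\geq c>0$.

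Finally, Part~(2) follows formally from Part~(1). If $\akappaq{X|Y}(\overline{L})=-\infty$, then $\CLq{X|Y}(m\overline{L})=\{0\}$ for every $m$, the sequence in~(2) is identically zero, and $\avolq{X|Y}(\overline{L})=0$. If $0\leq\kappa<\dim Y$, Part~(1) yields $\log\sharp\CLq{X|Y}(m\overline{L})=O(m^{\kappa+1})=o(m^{\dim Y+1})$ on $\aN_{X|Y}(\overline{L})$ and zero off it, so the sequence in~(2) tends to zero, matching $\avolq{X|Y}(\overline{L})=0$. If $\overline{L}$ is $Y$-big, Lemma~\ref{lem:propertiesYbig}(3) gives $\kappa=\dim Y$ and Lemma~\ref{lem:propertiesYbig}(1) gives $\aN_{X|Y}(\overline{L})\supset\{m\geq l_0\}$ for some $l_0$, so the sequence in~(2) equals $(\dim Y+1)!\,a_m$ for $m\geq l_0$ and converges to $(\dim Y+1)!\,L$ by Part~(1); this common limit is automatically $\avolq{X|Y}(\overline{L})$, the limsup of the same sequence.
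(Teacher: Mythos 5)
Your argument for the convergence in Part~(1) and for all of Part~(2) mirrors the paper's own proof step for step: fix $s_0$, take $\Psi$-good flags over primes $p$ with $D/\log p\leq\varepsilon$ (Lemma~\ref{lem:existgoodflags}), combine Proposition~\ref{prop:KKOk} with the sandwich of Proposition~\ref{prop:Yuan} to get $\limsup-\liminf\leq 2\varepsilon$, and then deduce Part~(2) by the trichotomy ($\akappaq{X|Y}=-\infty$, $0\leq\akappaq{X|Y}<\dim Y$, or $Y$-big with $\akappaq{X|Y}=\dim Y$ via Lemma~\ref{lem:propertiesYbig}). So far this is essentially identical to the paper.

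The genuinely interesting difference is your explicit treatment of positivity. You are right to flag this: the paper's own proof only establishes that $\limsup_m a_m-\liminf_m a_m\leq 2\varepsilon$ and never argues that the common value $L$ is strictly positive, even though the theorem asserts it and Definition~\ref{defn:amult} later relies on the two-sided bound $c^{-1}m^{\kappa+1}\leq\log\sharp\CLq{X|Y}(m\overline{L})$. The sandwich $|L-v_p\log p|\leq D/\log p$ together with $v_p>0$ does not by itself rule out $L=0$, since that scenario only forces $v_p\leq D/(\log p)^2$, which is compatible with each $v_p$ being positive. Your proposed remedy — take a $K_Y$-basis of $\aSpan{K_Y}{\aHzstq{X|Y}(\lfloor(1-\varepsilon)m\rfloor\overline{L})}$ (of size $\sim e m^\kappa/\kappa!$), tensor each basis vector with a high power of $s_0|_Y$ to make the archimedean sup norms uniformly $\leq e^{-\delta\varepsilon m}$, and then count $O_{K_Y}$-linear combinations with coefficients of all archimedean absolute values $<e^{\delta\varepsilon m}/N$ — does produce $\exp(c\,m^{\kappa+1})$ strictly small sections and hence $L\geq c>0$, provided one checks (as one must) that the finite-place sup-norm conditions are preserved under $O_{K_Y}$-linear combinations (true, because the relevant norms are ultrametric) and that the count of $\alpha\in O_{K_Y}$ with $\max_\sigma|\sigma(\alpha)|<R$ is $\asymp R^{[K_Y:\QQ]}$. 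This is a sound repair, and it is an observation the paper does not make; a referee should ask the author to supply exactly this kind of lower bound (or cite one) to support the claim ``converges to a \emph{positive} real number.''
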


\begin{definition}\label{defn:amult}
Suppose that $\akappaq{X|Y}(\overline{L})\geq 0$.
We define the \emph{arithmetic multiplicity of $\overline{L}$ along $Y$} as
\[
 \aeq{X|Y}(\overline{L}):=\lim_{\substack{m\in\aN_{X|Y}(\overline{L}), \\ m\to+\infty}}\frac{\log\sharp\CLq{X|Y}(m\overline{L})}{m^{\akappaq{X|Y}(\overline{L})+1}/(\akappaq{X|Y}(\overline{L})+1)!}\quad\in\RR_{>0}.
\]
By Theorem~\ref{thm:convergence}(1), there exists a positive constant $c>0$ such that
\[
 c^{-1}m^{\akappaq{X|Y}(\overline{L})+1}\leq\log\sharp\CLq{X|Y}(m\overline{L})\leq cm^{\akappaq{X|Y}(\overline{L})+1}
\]
holds for every $m\in\aN_{X|Y}(\overline{L})$.
\end{definition}

\begin{proof}[Proof of Theorem~\ref{thm:convergence}]
(1): Let $D:=D_{X|Y}(\overline{L})$ be the constant as in Proposition~\ref{prop:Yuan}.
We can find an $s_0\in\aHzstq{X|Y}(m_0\overline{L})\setminus\{0\}$ for an $m_0\geq 1$.
Set $\Psi:=\left\{y\in\mathscr{Y}_U'\,:\,s_0(y)=0\right\}$.
Given any $\varepsilon>0$, we can find a prime number $p$ such that there exists a $\Psi$-good flag $F_{\sbullet}$ on $\mathscr{Y}_U'$ over $p$ and
\begin{equation}
 \frac{D}{\log(p)}\leq\varepsilon
\end{equation}
(Lemma~\ref{lem:existgoodflags}).
By Propositions~\ref{prop:Yuan} and \ref{prop:KKOk}, we have
\[
 0\leq\limsup_{\substack{m\in\aN_{X|Y}(\overline{L}), \\ m\to+\infty}}\frac{\log\sharp\CLq{X|Y}(m\overline{L})}{m^{\akappaq{X|Y}(\overline{L})+1}}-\liminf_{\substack{m\in\aN_{X|Y}(\overline{L}), \\ m\to+\infty}}\frac{\log\sharp\CLq{X|Y}(m\overline{L})}{m^{\akappaq{X|Y}(\overline{L})+1}}\leq 2\varepsilon.
\]
Hence we conclude.

(2): If $\aHzstq{X|Y}(m\overline{L})=\{0\}$ for every $m$, then the assertion is obvious.
If $\overline{L}$ is $Y$-big, then the assertion is nothing but the assertion (1) (see Lemma~\ref{lem:propertiesYbig}(1)).
If $0\leq\akappaq{X|Y}(\overline{L})<\dim Y$, then by the assertion (1),
\[
 0\leq\frac{\log\sharp\CLq{X|Y}(m\overline{L})}{m^{\dim Y+1}}=\frac{\log\sharp\CLq{X|Y}(m\overline{L})}{m^{\akappaq{X|Y}(\overline{L})+1}}\times\left(\frac{1}{m}\right)^{\dim Y-\akappaq{X|Y}(\overline{L})}\to 0.
\]
\end{proof}

\begin{theorem}\label{thm:propertiesavol}
Let $X$ be a projective variety over a number field, let $Y$ be a closed subvariety of $X$, and let $\overline{L},\overline{M}$ be adelically metrized line bundles on $X$.
\begin{enumerate}
\item[\textup{(1)}] If $\akappaq{X|Y}(\overline{L})\geq 0$, then for every integer $a\geq 1$
\[
 \aeq{X|Y}(a\overline{L})=a^{\akappaq{X|Y}(\overline{L})+1}\cdot\aeq{X|Y}(\overline{L}).
\]
\item[\textup{(2)}] For every integer $a\geq 1$,
\[
 \avolq{X|Y}(a\overline{L})=a^{\dim Y+1}\cdot\avolq{X|Y}(\overline{L}).
\]
\item[\textup{(3)}] If $\akappaq{X|Y}(\overline{L})=\dim Y$ and $\akappaq{X|Y}(\overline{M})\geq 0$, then $\akappaq{X|Y}(\overline{L}+\overline{M})=\dim Y$ and
\begin{align*}
 &\left(|\aS_{X|Y}(\overline{L}+\overline{M})|\cdot\avolq{X|Y}(\overline{L}+\overline{M})\right)^{1/(\dim Y+1)} \\
 &\qquad\qquad \geq \left(|\aS_{X|Y}(\overline{L})|\cdot\avolq{X|Y}(\overline{L})\right)^{1/(\dim Y+1)}+\left(|\aS_{X|Y}(\overline{M})|\cdot\avolq{X|Y}(\overline{M})\right)^{1/(\dim Y+1)},
\end{align*}
where $|\aS_{X|Y}(\cdot)|$ is defined in Definition~\ref{defn:languageofS}.
\end{enumerate}
\end{theorem}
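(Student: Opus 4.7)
For part (1), observe that $\akappaq{X|Y}(a\overline{L})=\akappaq{X|Y}(\overline{L})=:\kappa$ (the two graded algebras differ by passing to an $a$-th Veronese, which preserves transcendence degree), and that $\{ma:ma\in\aN_{X|Y}(\overline{L})\}$ is cofinal in $\aN_{X|Y}(\overline{L})$. Theorem~\ref{thm:convergence}(1) then gives
\[
\aeq{X|Y}(a\overline{L})=\lim_{m}\frac{\log\sharp\CLq{X|Y}(ma\overline{L})\cdot(\kappa+1)!}{m^{\kappa+1}}=a^{\kappa+1}\aeq{X|Y}(\overline{L}).
\]
For part (2), the case $\akappaq{X|Y}(\overline{L})=-\infty$ is trivial since every $\CLq{X|Y}(m\overline{L})$ reduces to $\{0\}$; when $0\leq\akappaq{X|Y}(\overline{L})<\dim Y$, the bound $\log\sharp\CLq{X|Y}(m\overline{L})\leq Cm^{\akappaq{X|Y}(\overline{L})+1}$ from Definition~\ref{defn:amult} forces $\avolq{X|Y}(\overline{L})=\avolq{X|Y}(a\overline{L})=0$; and when $\akappaq{X|Y}(\overline{L})=\dim Y$, the vanishing of $\log\sharp\CLq{X|Y}(m\overline{L})$ for $m\notin\aN_{X|Y}(\overline{L})$ implies the limsup defining $\avolq{X|Y}$ coincides with the convergent limit along $\aN_{X|Y}(\overline{L})$, namely $\aeq{X|Y}(\overline{L})$, whence the scaling follows from~(1).

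For part~(3), set $d:=\dim Y$, fix the $O_K$-model $\mathscr{X}$, the Zariski closure $\mathscr{Y}$, the relative normalization $\nu:\mathscr{Y}'\to\mathscr{Y}$, and the open $U\subset\Spec(O_{K_Y})$ as in the setup preceding Definition~\ref{defn:languageofS}, and pick $s_0\in\aHzst(m_0\overline{L})$ and $t_0\in\aHzst(n_0\overline{M})$ with nonzero restrictions to $Y$ (available by the hypotheses). I first show $\akappaq{X|Y}(\overline{L}+\overline{M})=d$ via Lemma~\ref{lem:propertiesakappa}(2): since $\aSpan{\RR}{\aS_{X|Y}(\overline{L})}$ has dimension $d+2$, one can choose $s_1,\dots,s_{d+1}\in\aHzst(N\overline{L})$ for a common $N\in n_0\NN\cap\aN_{X|Y}(\overline{L})$ whose images in $\aS_{X|Y}(\overline{L})$ span $\RR^{d+2}$; tensoring each $s_i$ with $t_0^{\otimes(N/n_0)}$ yields $d+2$ elements of $\aS_{X|Y}(\overline{L}+\overline{M})$ that remain linearly independent, as the pairwise differences lie above level $0$ and recover the $(d+1)$-dimensional span of $\{\bm{w}_{F_{\sbullet}}(s_i|_Y)-\bm{w}_{F_{\sbullet}}(s_0|_Y)\}$.

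Next, I establish the Okounkov-body inclusion
\[
\aDelta_{X|Y}(\overline{L})+\aDelta_{X|Y}(\overline{M})\subset\aDelta_{X|Y}(\overline{L}+\overline{M})\quad\text{in }\RR^{d+1}.
\]
For rational vectors $\bm{v}_1=(1/m_1)\bm{w}_{F_{\sbullet}}(s)$ with $s\in\aHzstq{X|Y}(m_1\overline{L})\setminus\{0\}$ and $\bm{v}_2=(1/m_2)\bm{w}_{F_{\sbullet}}(t)$ with $t\in\aHzstq{X|Y}(m_2\overline{M})\setminus\{0\}$, the tensor $s^{\otimes m_2}\otimes t^{\otimes m_1}\in\aHzstq{X|Y}(m_1m_2(\overline{L}+\overline{M}))$ has valuation vector $m_1m_2(\bm{v}_1+\bm{v}_2)$, so $\bm{v}_1+\bm{v}_2\in\aDelta_{X|Y}(\overline{L}+\overline{M})$; density of such rational points together with closedness of the target extends the inclusion to all points. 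As $\aDelta_{X|Y}(\overline{L})$ and $\aDelta_{X|Y}(\overline{L}+\overline{M})$ are full-dimensional convex bodies in $\RR^{d+1}$ (by the preceding step and Proposition~\ref{prop:KKOk}(1)), the classical Brunn--Minkowski inequality gives
\[
\vol_{\RR^{d+1}}(\aDelta_{X|Y}(\overline{L}+\overline{M}))^{1/(d+1)}\geq\vol_{\RR^{d+1}}(\aDelta_{X|Y}(\overline{L}))^{1/(d+1)}+\vol_{\RR^{d+1}}(\aDelta_{X|Y}(\overline{M}))^{1/(d+1)},
\]
the last term being $0$ precisely when $\akappaq{X|Y}(\overline{M})<d$, matching $|\aS_{X|Y}(\overline{M})|=0$.

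Finally, given $\varepsilon>0$, Lemma~\ref{lem:existgoodflags} furnishes a prime $p$ and a good flag $F_{\sbullet}$ on $\mathscr{Y}_U'$ over $p$ whose center avoids $\Supp(s_0|_Y)\cup\Supp(t_0|_Y)$, so Lemma~\ref{lem:propertiesakappa}(2) applies simultaneously to $\overline{L}$, $\overline{M}$ (when $\akappaq{X|Y}(\overline{M})=d$), and $\overline{L}+\overline{M}$, and each of the three error bounds $D/\log(p)$ from Proposition~\ref{prop:Yuan} is at most $\varepsilon$. Combining Propositions~\ref{prop:Yuan} and \ref{prop:KKOk}(2), the identity $|\aS_{X|Y}(\cdot)|\cdot\vol_{\aS_{X|Y}(\cdot)}=\vol_{\RR^{d+1}}$ (valid when the body is full-dimensional), and $\avolq{X|Y}=\aeq{X|Y}$ in the full-dimensional case (from~(2)), I obtain for each $\overline{N}\in\{\overline{L},\overline{M},\overline{L}+\overline{M}\}$
\[
\left|\frac{|\aS_{X|Y}(\overline{N})|\cdot\avolq{X|Y}(\overline{N})}{(d+1)!}-\vol_{\RR^{d+1}}(\aDelta_{X|Y}(\overline{N}))\log(p)\right|\leq C\varepsilon,
\]
both sides vanishing for $\overline{N}=\overline{M}$ when $\akappaq{X|Y}(\overline{M})<d$. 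Substituting into the Brunn--Minkowski inequality, multiplying by $((d+1)!)^{1/(d+1)}$, and letting $\varepsilon\to 0$ (equivalently $p\to\infty$) yields the claimed inequality. The main obstacle is this final step: all three Yuan-type error bounds must be controlled uniformly by a single choice of $p$, which is handled by the joint goodness of the flag with respect to zero loci of the distinguished sections of all three bundles.
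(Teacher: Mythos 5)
Your proposal for parts (1) and (2) and for the main estimate in part (3) is essentially the paper's own argument: invariance of $\akappaq{X|Y}$ under scaling plus Theorem~\ref{thm:convergence}(1), the case split on $\akappaq{X|Y}(\overline{L})$ versus $\dim Y$, the Okounkov-body inclusion $\aDelta_{X|Y}(\overline{L})+\aDelta_{X|Y}(\overline{M})\subset\aDelta_{X|Y}(\overline{L}+\overline{M})$, Brunn--Minkowski, and a single $\Psi$-good flag controlling all three Yuan errors via Proposition~\ref{prop:Yuan}. Where you deviate is the opening claim of (3), that $\akappaq{X|Y}(\overline{L}+\overline{M})=\dim Y$: you reconstruct this from Lemma~\ref{lem:propertiesakappa}(2) by producing $d+2$ linearly independent elements of $\aS_{X|Y}(\overline{L}+\overline{M})$ via tensoring with a power of $t_0$. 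That route works, but there is an off-by-one in your count ($s_1,\dots,s_{d+1}$ is only $d+1$ sections, yet you later claim $d+2$ tensored elements; you need $d+2$ sections at a common level, which exist after taking $a$-th powers of any finite spanning set), and it is more elaborate than necessary. The paper simply notes that $\akappaq{X|Y}(\overline{M})\geq 0$ furnishes some $n\geq 1$ with $n\overline{M}$ being $Y$-effective, so $n\overline{L}\leq_Y n(\overline{L}+\overline{M})$ and the monotonicity statement Lemma~\ref{lem:effectiveincrease}(1) gives $\akappaq{X|Y}(\overline{L})\leq\akappaq{X|Y}(\overline{L}+\overline{M})\leq\dim Y$ directly, with no flag, no Okounkov semigroup, and no dimension count. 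You would do well to invoke that lemma here and reserve the flag machinery for the volume estimate where it is actually needed.
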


\begin{proof}
Note that $\akappaq{X|Y}(a\overline{L})=\akappaq{X|Y}(\overline{L})$ by Lemma~\ref{lem:propertiesakappa}(1).
Since
\[
 \lim_{\substack{m\in\aN_{X|Y}(a\overline{L}), \\ m\to+\infty}}\frac{\log\sharp\CLq{X|Y}(ma\overline{L})}{m^{\akappaq{X|Y}(a\overline{L})+1}}=a^{\akappaq{X|Y}(\overline{L})+1}\cdot\lim_{\substack{m\in\aN_{X|Y}(a\overline{L}), \\ m\to+\infty}}\frac{\log\sharp\CLq{X|Y}(ma\overline{L})}{(ma)^{\akappaq{X|Y}(\overline{L})+1}},
\]
we have the assertion (1).

(2): If $\akappaq{X|Y}(\overline{L})<\dim Y$, then the both sides are zero.
If $\akappaq{X|Y}(\overline{L})=\dim Y$, then the assertion is nothing but the assertion (1).

(3): The first assertion follows from Lemma~\ref{lem:effectiveincrease}(1).
If $\akappaq{X|Y}(\overline{M})<\dim Y$, then $\avolq{X|Y}(\overline{M})=0$ and the assertion is clear by Lemma~\ref{lem:effectiveincrease}(1).
Suppose that $\akappaq{X|Y}(\overline{M})=\dim Y$.
Let $D_{X|Y}(\overline{L}+\overline{M})$, $D_{X|Y}(\overline{L})$, and $D_{X|Y}(\overline{M})$ be the constants as in Proposition~\ref{prop:Yuan}, and set $D:=\max\left\{D_{X|Y}(\overline{L}+\overline{M}),D_{X|Y}(\overline{L}),D_{X|Y}(\overline{M})\right\}$.
We can find an $m_0\geq 1$, an $s_0\in\aHzstq{X|Y}(m_0\overline{L})\setminus\{0\}$, and a $t_0\in\aHzstq{X|Y}(m_0\overline{M})\setminus\{0\}$.
Set
\[
 \Psi:=\left\{y\in\mathscr{Y}_U'\,:\,s_0(y)=0\right\}\cup\left\{y\in\mathscr{Y}_U'\,:\,t_0(y)=0\right\}.
\]
Given any $\varepsilon>0$, we can find a prime number $p$ such that there exists a $\Psi$-good flag $F_{\sbullet}$ on $\mathscr{Y}_U'$ over $p$ and
\begin{equation}
 \frac{D}{\log(p)}\leq\varepsilon
\end{equation}
(Lemma~\ref{lem:existgoodflags}).
By using Propositions~\ref{prop:KKOk} and \ref{prop:Yuan} and by applying the Brunn--Minkowski inequality to the convex bodies
\[
 \aDelta_{X|Y}(\overline{L})+\aDelta_{X|Y}(\overline{M})\subset\aDelta_{X|Y}(\overline{L}+\overline{M})
\]
in $\RR^{\dim Y+1}$, we have
\begin{align*}
 &\left(|\aS_{X|Y}(\overline{L}+\overline{M})|\cdot\avolq{X|Y}(\overline{L}+\overline{M})\right)^{1/(\dim Y+1)} \\
\geq &\left(|\aS_{X|Y}(\overline{L})|\cdot\avolq{X|Y}(\overline{L})\right)^{1/(\dim Y+1)}+\left(|\aS_{X|Y}(\overline{M})|\cdot\avolq{X|Y}(\overline{M})\right)^{1/(\dim Y+1)}-3\varepsilon.
\end{align*}
Hence we conclude.
\end{proof}

\begin{corollary}\label{cor:propertiesarestvol}
Let $X$ be a normal projective variety over a number field and let $\overline{L}$ be an adelically metrized line bundle on $X$.
\begin{enumerate}
\item[\textup{(1)}] We have
\[
 \aBsp(\overline{L})=\bigcup_{\substack{Z\subset X, \\ \avolq{X|Z}(\overline{L})=0}}Z=\bigcup_{\substack{Z\subset X, \\ \akappaq{X|Z}(\overline{L})<\dim Z}}Z.
\]
\item[\textup{(2)}] For any prime divisor $Y$ on $X$, $\avolq{X|Y}(\overline{L})>0$ if and only if $\akappaq{X|Y}(\overline{L})=\dim X-1$.
\end{enumerate}
\end{corollary}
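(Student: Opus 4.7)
My first move is to record the pointwise equivalence that unifies both parts of the corollary: for every closed subvariety $Z\subset X$,
\[
 \avolq{X|Z}(\overline{L})>0\iff\akappaq{X|Z}(\overline{L})=\dim Z.
\]
When the right-hand equality holds, Theorem~\ref{thm:convergence}(1) produces a strictly positive limit $\aeq{X|Z}(\overline{L})$, which by comparing the normalizing factors in Definitions~\ref{defn:arithrestvol} and~\ref{defn:amult} is exactly $\avolq{X|Z}(\overline{L})$ (the exponents $\akappaq{X|Z}(\overline{L})+1$ and $\dim Z+1$ coincide); when instead $\akappaq{X|Z}(\overline{L})<\dim Z$ (including the value $-\infty$), Theorem~\ref{thm:convergence}(2) forces $\avolq{X|Z}(\overline{L})=0$. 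Specializing $Z$ to a prime divisor yields assertion~(2), and the equivalence immediately identifies the two unions on the right-hand side of~(1).

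To establish the identity in~(1), I will prove the two inclusions separately. The easier direction, $\bigcup_Z Z\subset\aBsp(\overline{L})$ (where $Z$ ranges over closed subvarieties with $\akappaq{X|Z}(\overline{L})<\dim Z$), is handled by the contrapositive: if $Z\not\subset\aBsp(\overline{L})$, then Lemma~\ref{lem:propertiesYbig}(2) says $\overline{L}$ is $Z$-big, and Lemma~\ref{lem:propertiesYbig}(3) then forces $\akappaq{X|Z}(\overline{L})=\dim Z$.

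The reverse inclusion $\aBsp(\overline{L})\subset\bigcup_Z Z$ is the main obstacle, since at each point of the augmented base locus one must exhibit an actual witnessing subvariety. I split via Proposition~\ref{prop:aaugbs}(1), which gives $\aBsp(\overline{L})=\Bsp(L)\cup\aSBs(\overline{L})$. For $x\in\aSBs(\overline{L})$, set $Z:=\overline{\{x\}}$; because $\aSBs(\overline{L})$ is Zariski closed, $Z\subset\aSBs(\overline{L})$, whence every strictly small section of every $m\overline{L}$ vanishes identically on $Z$, so $\aN_{X|Z}(\overline{L})=\emptyset$ and $\akappaq{X|Z}(\overline{L})=-\infty<\dim Z$. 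For $x\in\Bsp(L)$, I invoke the geometric result Theorem~\ref{thm:charaugbs}(2) to obtain a closed subvariety $Z\ni x$ with $\volq{X|Z}(L)=0$, hence $\kappa_{X|Z}(L)<\dim Z$; Lemma~\ref{lem:kappaqequal} then gives $\akappaq{X|Z}(\overline{L})\leq\kappa_{X|Z}(L)<\dim Z$, completing the proof. The only genuinely external input is Theorem~\ref{thm:charaugbs}(2) from the geometric theory; every other step is a direct consequence of the arithmetic machinery developed in sections~\ref{sec:arithaugbs}--\ref{sec:arithrestvol}.
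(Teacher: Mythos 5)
Your proof is correct and follows essentially the same route as the paper's: you establish the pointwise equivalence $\avolq{X|Z}(\overline{L})>0\iff\akappaq{X|Z}(\overline{L})=\dim Z$ via Theorem~\ref{thm:convergence}, handle the inclusion $\bigcup_Z Z\subset\aBsp(\overline{L})$ by Lemma~\ref{lem:propertiesYbig}(2),(3), and split the reverse inclusion through Proposition~\ref{prop:aaugbs}(1) into the $\aSBs(\overline{L})$ case (where $\akappaq{X|Z}(\overline{L})=-\infty$) and the $\Bsp(L)$ case (settled by the Boucksom--Cacciola--Lopez theorem together with Lemma~\ref{lem:kappaqequal}). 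The only cosmetic difference is that you invoke the restated Theorem~\ref{thm:charaugbs}(2) to select a witnessing subvariety $Z$ with $\volq{X|Z}(L)=0$, whereas the paper works directly with irreducible components of $\Bsp(L)$ and cites the original reference.
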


\begin{proof}
(1): By Proposition~\ref{prop:aaugbs}(1), we have $\aBsp(\overline{L})=\Bsp(L)\cup\aSBs(\overline{L})$.
If $Z\subset\aSBs(\overline{L})$, then clearly $\avolq{X|Z}(\overline{L})=0$.
If $Z$ is a component of $\Bsp(L)$ and is not contained in $\aSBs(\overline{L})$, then by \cite[Theorem~B]{Bou_Cac_Lop13} we have
\[
 \kappa_{X|Z}(\overline{L})<\dim Z.
\]
Thus $\akappaq{X|Z}(\overline{L})<\dim Z$ and $\avolq{X|Z}(\overline{L})=0$.
On the other hand, if $Z\not\subset\aBsp(\overline{L})$, then, by Lemma~\ref{lem:propertiesYbig}(2),(3), we have $\akappaq{X|Z}(\overline{L})=\dim Z$ and $\avolq{X|Z}(\overline{L})>0$.

The assertion (2) results from the assertion (1).
\end{proof}

\begin{corollary}\label{cor:continuity}
\begin{enumerate}
\item[\textup{(1)}] For any $Y$-big adelically metrized $\QQ$-line bundle $\overline{L}$ and for any adelically metrized $\QQ$-line bundles $\overline{A}_1,\dots,\overline{A}_r$,
\[
 \lim_{\varepsilon_1\to 0,\dots,\varepsilon_r\to 0}\avolq{X|Y}(\overline{L}+\varepsilon_1\overline{A}_1+\dots+\varepsilon_r\overline{A}_r)=\avolq{X|Y}(\overline{L}).
\]
\item[\textup{(2)}] If $\overline{L}_1,\overline{L}_2$ are $Y$-big adelically metrized $\QQ$-line bundles and $\overline{L}_2-\overline{L}_1$ is $Y$-pseudoeffective, then
\[
 \avolq{X|Y}(\overline{L}_1)\leq\avolq{X|Y}(\overline{L}_2).
\]
\end{enumerate}
\end{corollary}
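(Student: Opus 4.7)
The strategy is to prove (2) first and then deduce (1) from (2) together with the Brunn--Minkowski inequality in Theorem~\ref{thm:propertiesavol}(3). For (2), I fix a rational $\varepsilon\in(0,1)$. Since $\varepsilon\overline{L}_1$ is $Y$-big, the definition of $Y$-pseudoeffectivity applied to $\overline{L}_2-\overline{L}_1$ forces $\overline{L}_2-(1-\varepsilon)\overline{L}_1$ to be $Y$-big, so after clearing denominators there exist $a\geq 1$ and a w-ample $\overline{C}$ with $a\overline{L}_2\geq_Y(1-\varepsilon)a\overline{L}_1+\overline{C}$ at the integer level. Because $\aBsp(\overline{C})=\emptyset$ (Proposition~\ref{prop:aaugbs}(3)), some high multiple of $\overline{C}$ is $Y$-effective; Lemma~\ref{lem:effectiveincrease}(1) together with the homogeneity of $\avolq{X|Y}$ (Theorem~\ref{thm:propertiesavol}(2)) then yields
\[
 \avolq{X|Y}(\overline{L}_2)\geq(1-\varepsilon)^{\dim Y+1}\avolq{X|Y}(\overline{L}_1),
\]
and letting $\varepsilon\to 0^+$ finishes (2).

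For (1), Lemma~\ref{lem:w-ample}(1)(d) lets me write each $\overline{A}_i$ as a difference of two w-ample adelically metrized $\QQ$-line bundles, so enlarging $r$ I may assume every $\overline{A}_i$ is w-ample; then $\overline{B}:=\sum_i\overline{A}_i$ is also w-ample by iterated Lemma~\ref{lem:w-ample}(2). For any small rational $\delta>0$ with $|\varepsilon_i|\leq\delta$ for all $i$, both differences $\sum_i(\delta\pm\varepsilon_i)\overline{A}_i$ are non-negative $\QQ$-combinations of w-amples, hence $Y$-pseudoeffective. Part~(2) then sandwiches
\[
 \avolq{X|Y}(\overline{L}-\delta\overline{B})\leq\avolq{X|Y}\!\left(\overline{L}+\textstyle\sum_i\varepsilon_i\overline{A}_i\right)\leq\avolq{X|Y}(\overline{L}+\delta\overline{B}),
\]
reducing (1) to proving $\lim_{\delta\to 0^+,\,\delta\in\QQ}\avolq{X|Y}(\overline{L}\pm\delta\overline{B})=\avolq{X|Y}(\overline{L})$.

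To prove this one-parameter limit, set $g(t):=\avolq{X|Y}(\overline{L}+t\overline{B})^{1/(\dim Y+1)}$ on a rational open neighborhood of $0$ on which $\overline{L}+t\overline{B}$ stays $Y$-big; such a neighborhood exists by Lemma~\ref{lem:propertiesYbig}(2) and the openness of w-ampleness (Proposition~\ref{prop:aaugbs}(5)). By (2), $g$ is non-decreasing. Since every bundle in this range is $Y$-big, Lemma~\ref{lem:propertiesYbig}(4) gives $|\aS_{X|Y}(\cdot)|=1$, and Theorem~\ref{thm:propertiesavol}(3) combined with homogeneity upgrades to the rational concavity inequality $g(\lambda t_1+(1-\lambda)t_2)\geq\lambda g(t_1)+(1-\lambda)g(t_2)$ for all rational $\lambda\in[0,1]$ and rational $t_1,t_2$ in the domain. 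A standard three-point chord argument now shows that a monotone, rationally concave function is continuous at every interior rational point: bounding $g(t)$ above by the chord through $(t^*,g(t^*))$ and $(0,g(0))$ for a fixed $t^*<0$ gives $g(t)-g(0)\leq t\cdot C$ for small $t>0$, which combined with $g(t)\geq g(0)$ forces $g(t)\to g(0)$ as $t\to 0^+$; the symmetric chord through $(0,g(0))$ and $(t^{**},g(t^{**}))$ with $t^{**}>0$ fixed handles $t\to 0^-$.

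The main obstacle I anticipate is bookkeeping: in (2) one must carefully absorb the w-ample error term $\overline{C}$ after passing to sufficiently large multiples and use homogeneity to undo the rescaling, and in (1) one must verify that monotonicity plus rational concavity really does suffice for continuity along rational sequences. Conceptually, however, the proof rests on two already-established pillars: the Brunn--Minkowski inequality of Theorem~\ref{thm:propertiesavol}(3), which becomes dimensionally clean on the $Y$-big locus thanks to $|\aS_{X|Y}|=1$ (Lemma~\ref{lem:propertiesYbig}(4)), and the basic monotonicity of Lemma~\ref{lem:effectiveincrease}(1).
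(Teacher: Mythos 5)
Your proposal is correct and, in one respect, genuinely different from the paper's argument. The paper proves (1) by appealing to ``standard arguments'' (citing \cite[Theorem~5.2]{Ein_Laz_Mus_Nak_Pop06} and \cite[Proposition~1.3.1]{MoriwakiEst}) and then derives (2) as an immediate consequence of (1): it shows $\avolq{X|Y}(\overline{L}_1)\leq\avolq{X|Y}(\overline{L}_2+\varepsilon\overline{A})$ and invokes continuity to pass $\varepsilon\to 0^+$. You invert this: you prove (2) directly from the definitions, by absorbing the $Y$-pseudoeffectivity error term $\varepsilon\overline{L}_1$ into $\overline{L}_1$ itself and undoing it with homogeneity (Theorem~\ref{thm:propertiesavol}(2)), so your (2) requires only Lemma~\ref{lem:effectiveincrease}(1) and not continuity. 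This is a clean trick---subtracting a multiple of $\overline{L}_1$ rather than adding an auxiliary $\varepsilon\overline{A}$---and has the advantage of being self-contained. Your proof of (1) is then the expanded version of the ``standard argument'' the paper merely cites: reduce to a one-parameter perturbation by a single w-ample $\overline{B}$, establish concavity of $g(t)=\avolq{X|Y}(\overline{L}+t\overline{B})^{1/(\dim Y+1)}$ via Theorem~\ref{thm:propertiesavol}(3) and $|\aS_{X|Y}(\cdot)|=1$ on the $Y$-big cone (Lemma~\ref{lem:propertiesYbig}(4)), and conclude continuity from concavity plus monotonicity. One small correction to the chord argument: for $t\to 0^-$ the chord through $(0,g(0))$ and $(t^{**},g(t^{**}))$, extended leftward, only gives an \emph{upper} bound on $g(t)$, which monotonicity already provides; what you actually want for $t\in(t^*,0)$ is that $g$ lies \emph{above} the chord joining $(t^*,g(t^*))$ and $(0,g(0))$, i.e.\ $g(t)\geq\tfrac{-t}{-t^*}g(t^*)+\tfrac{t-t^*}{-t^*}g(0)\to g(0)$, which together with $g(t)\leq g(0)$ finishes that side. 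Also note that the monotonicity steps in your sandwich and in $g$ need only Lemma~\ref{lem:effectiveincrease}(1) after clearing denominators, so strictly speaking your (1) does not depend on your (2); nevertheless the logical reordering you propose (proving (2) first, independently) is perfectly sound and arguably tidier than the paper's.
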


\begin{proof}
Note that the cone of all the $Y$-big adelically metrized $\QQ$-line bundles is open by Lemma~\ref{prop:aaugbs}(4).
The assertion (1) results from Theorem~\ref{thm:propertiesavol}(3) and the standard arguments (see \cite[Theorem~5.2]{Ein_Laz_Mus_Nak_Pop06}, \cite[Proposition~1.3.1]{MoriwakiEst}).

(2): Let $\overline{A}$ be a $Y$-big adelically metrized line bundle.
For any $\varepsilon\in\QQ_{>0}$, some sufficiently large multiple of $\overline{L}_2-\overline{L}_1+\varepsilon\overline{A}$ becomes a $Y$-effective adelically metrized line bundle, so
\[
 \avolq{X|Y}(\overline{L}_1)\leq\avolq{X|Y}(\overline{L}_2+\varepsilon\overline{A})
\]
by Lemma~\ref{lem:effectiveincrease}(1).
By taking $\varepsilon\to 0+$, we have the assertion (2).
\end{proof}

\begin{theorem}\label{thm:amultcontinuity}
Let $X$ be a projective variety over a number field, let $Y$ be a closed subvariety of $X$, and let $\overline{L},\overline{M}$ be two adelically metrized line bundles on $X$.
\begin{enumerate}
\item[\textup{(1)}] If $\overline{L}\leq_Y\overline{M}$ and $\akappaq{X|Y}(\overline{L})=\akappaq{X|Y}(\overline{M})$, then $\aeq{X|Y}(\overline{L})\leq\aeq{X|Y}(\overline{M})$.
\item[\textup{(2)}] If $Y\not\subset\aSBs(\overline{L})$, then
\[
 \lim_{\lambda\to 0}\aeq{X|Y}(\overline{L}+\overline{\mathcal{O}}_X(\lambda[\infty]))=\aeq{X|Y}(\overline{L}).
\]
\end{enumerate}
\end{theorem}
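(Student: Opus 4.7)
For part~(1), the plan is to mimic the tensoring trick of Lemma~\ref{lem:effectiveincrease}(1). Choose $t\in\aHzsm(\overline{M}-\overline{L})$ with $t|_Y\neq 0$. For each $m\geq 1$ the map $s\mapsto s\otimes(t|_Y)^{\otimes m}$ carries $\aHzstq{X|Y}(m\overline{L})$ into $\aHzstq{X|Y}(m\overline{M})$, is compatible with $\QQ$-convex combinations on the CL-hull, and is injective since $Y$ is integral. It therefore induces an injection $\CLq{X|Y}(m\overline{L})\hookrightarrow\CLq{X|Y}(m\overline{M})$. Dividing $\log\sharp\CLq{X|Y}(m\overline{L})\leq\log\sharp\CLq{X|Y}(m\overline{M})$ by the common normalising factor $m^{\kappa+1}/(\kappa+1)!$, where $\kappa:=\akappaq{X|Y}(\overline{L})=\akappaq{X|Y}(\overline{M})$, and invoking Theorem~\ref{thm:convergence}(1) on both sides gives the inequality.

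For part~(2), write $\overline{L}_\lambda:=\overline{L}+\overline{\mathcal{O}}_X(\lambda[\infty])$ and $\kappa:=\akappaq{X|Y}(\overline{L})\geq 0$. The plan has three steps. First (stability of $\akappa$), I verify $\akappaq{X|Y}(\overline{L}_\lambda)=\kappa$ for $|\lambda|$ small. Since the underlying line bundle is $L$, Lemma~\ref{lem:kappaqequal} gives $\akappaq{X|Y}(\overline{L}_\lambda)\leq\kappa_{X|Y}(L)=\kappa$. The reverse inequality for $\lambda\geq 0$ is immediate from the inclusion $\aHzstq{X|Y}(m\overline{L})\subseteq\aHzstq{X|Y}(m\overline{L}_\lambda)$; for $\lambda<0$, fix $s_0\in\aHzst(m_0\overline{L})$ with $s_0|_Y\neq 0$, and let $c:=(1/m_0)\log\|s_0\|_{\infty,\sup}^{m_0\overline{L}}<0$. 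For every $\lambda\in(c,0)$ and every $s\in\aHzstq{X|Y}(m\overline{L})$, the tensor product $s\otimes(s_0|_Y)^{\otimes k}$ lies in $\aHzstq{X|Y}((m+km_0)\overline{L}_\lambda)$ once $k$ is large enough in a way independent of $s$; this preserves the graded fraction field and hence the transcendence degree. Second (monotonicity), the constant section $1$ has archimedean sup-norm $e^{-\mu}$ as a section of $\overline{\mathcal{O}}_X(\mu[\infty])$, so it is small for $\mu\geq 0$ and restricts to a non-zero section of $Y$; thus $\overline{L}_{\lambda_1}\leq_Y\overline{L}_{\lambda_2}$ whenever $\lambda_1\leq\lambda_2$, and part~(1) together with Step~1 produces the monotonicity $\aeq{X|Y}(\overline{L}_{\lambda_1})\leq\aeq{X|Y}(\overline{L}_{\lambda_2})$ on a neighbourhood of $0$.

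Third (continuity at $\lambda=0$), monotonicity guarantees the one-sided limits exist, and it remains to show each coincides with $\aeq{X|Y}(\overline{L})$. Set $\Psi:=\{y\in\mathscr{Y}_U':s_0(y)=0\}$ and pick a $\Psi$-good flag $F_{\bullet}$ on $\mathscr{Y}_U'$ over a prime $p$. Apply Proposition~\ref{prop:Yuan} simultaneously to every $\overline{L}_\lambda$ with $|\lambda|$ small: the error constants $D_{X|Y}(\overline{L}_\lambda)$ stay bounded above by a uniform $D$ because $\delta(\overline{L}_\lambda|_Y)$ and $e(\aRq{X|Y}(\overline{L}_\lambda)_{\sbullet})$ vary boundedly as $\lambda$ moves in a small interval. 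On the combinatorial side, the valuation map $\bm{w}_{F_{\sbullet}}$ itself is independent of $\lambda$, and a change $\lambda\to\lambda+\epsilon$ amounts to an archimedean rescaling of the continuous unit ball by $e^{m\epsilon}$; feeding this into Lemma~\ref{lem:Yuansineq}(2) together with the fact that $\dim_{K_Y}\aSpan{K_Y}{\aHzstq{X|Y}(m\overline{L}_\lambda)}=O(m^\kappa)$ yields a Lipschitz-type bound
\[
\bigl|\sharp\bm{w}_{F_{\sbullet}}(\aHzstq{X|Y}(m\overline{L}_{\lambda+\epsilon})\setminus\{0\})-\sharp\bm{w}_{F_{\sbullet}}(\aHzstq{X|Y}(m\overline{L}_{\lambda})\setminus\{0\})\bigr|\leq C_1\,m^{\kappa+1}|\epsilon|
\]
with $C_1$ independent of $m$, $\lambda$, $\epsilon$. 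Combining this with Proposition~\ref{prop:KKOk} and the uniform Yuan estimate produces
\[
\left|\frac{\aeq{X|Y}(\overline{L}_\lambda)}{(\kappa+1)!}-\frac{\aeq{X|Y}(\overline{L})}{(\kappa+1)!}\right|\leq C_1|\lambda|\log p+\frac{C_2\,D}{\log p},
\]
and choosing $p=p(\lambda)$ so that $\log p\sim|\lambda|^{-1/2}$ and then letting $\lambda\to 0$ yields the desired continuity.

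The hardest step is the Lipschitz estimate above. The scaling by $e^{m\epsilon}$ is only literally valid on the continuous unit ball of $\aHzf(m\overline{L})_\RR$, not on the lattice $\aSpan{\ZZ}{\aHzstq{X|Y}(m\overline{L}_\lambda)}$, which itself may grow with $\lambda$, nor on the CL-hull, which may be strictly larger than the image of strictly small sections. Reconciling these discrepancies by sandwiching $\CLq{X|Y}(m\overline{L}_\lambda)$ between the lattice intersection $\aSpan{\ZZ}{\aHzstq{X|Y}(m\overline{L}_{\lambda'})}\cap\pi(\tilde B_\lambda)$ (for $\lambda'$ slightly larger than $\lambda$) and applying Lemma~\ref{lem:Yuansineq}(2) uniformly in $m$ is the principal obstacle.
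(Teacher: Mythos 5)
Part~(1) is essentially the paper's argument: tensoring with a power of the restriction of a small section induces an injection of CL-hulls, and the count comparison plus Theorem~\ref{thm:convergence}(1) finishes. That step is sound.

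Part~(2) has a genuine gap in your Step~3, which you yourself flag as ``the principal obstacle.'' The Lipschitz estimate on $\sharp\bm{w}_{F_{\sbullet}}(\aHzstq{X|Y}(m\overline{L}_\lambda)\setminus\{0\})$ as a function of $\lambda$ is not something you can read off Lemma~\ref{lem:Yuansineq}(2): a change $\lambda\mapsto\lambda+\epsilon$ scales the continuous archimedean unit ball, but the relevant object is a semigroup of valuation vectors attached to a $\lambda$-dependent lattice and a $\lambda$-dependent CL-hull, and there is no a priori reason the rate of convergence in Proposition~\ref{prop:KKOk} is uniform in $\lambda$. You have not supplied the sandwich you gesture at, and without it the final inequality is unsupported. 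The paper sidesteps all of this. Your Steps~1 and~2 already give monotonicity and stability of $\akappa$; the missing ingredient is a $Y$-effectivity relation of the form $m_0\overline{L}\geq_Y\lambda_0\overline{\mathcal{O}}_X([\infty])$ (take a strictly small section $s$ of some $a\overline{L}$ with $s|_Y\neq 0$ and pick $m_0$ with $\exp(\lambda_0)\|s\|_{\infty,\sup}^{m_0}<1$). Scaling this relation yields, for rational $\lambda$ with $|\lambda|<\lambda_0/m_0$,
\[
\left(1-\tfrac{m_0}{\lambda_0}|\lambda|\right)\overline{L}\;\leq_Y\;\overline{L}+\lambda\overline{\mathcal{O}}_X([\infty])\;\leq_Y\;\left(1+\tfrac{m_0}{\lambda_0}|\lambda|\right)\overline{L},
\]
and since $\akappa$ is stable here, part~(1) together with the homogeneity $\aeq{X|Y}(a\overline{L})=a^{\akappaq{X|Y}(\overline{L})+1}\aeq{X|Y}(\overline{L})$ from Theorem~\ref{thm:propertiesavol}(1) gives
\[
\left(1-\tfrac{m_0}{\lambda_0}|\lambda|\right)^{\akappaq{X|Y}(\overline{L})+1}\aeq{X|Y}(\overline{L})\leq\aeq{X|Y}(\overline{L}_\lambda)\leq\left(1+\tfrac{m_0}{\lambda_0}|\lambda|\right)^{\akappaq{X|Y}(\overline{L})+1}\aeq{X|Y}(\overline{L}),
\]
which squeezes to the claim as $\lambda\to 0$. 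This replaces your unproved Lipschitz estimate with elementary algebra and never re-enters the Yuan/Okounkov machinery.
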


\begin{proof}
(1): We may assume that $\akappaq{X|Y}(\overline{L})=\akappaq{X|Y}(\overline{M})\geq 0$.
Since $\CLq{X|Y}(m\overline{L})\subset\CLq{X|Y}(m\overline{M})$ for every $m\geq 1$ and $\akappaq{X|Y}(\overline{L})=\akappaq{X|Y}(\overline{M})$,
\[
 \lim_{\substack{m\in\aN_{X|Y}(\overline{L}), \\ m\to+\infty}}\frac{\log\sharp\CLq{X|Y}(m\overline{L})}{m^{\akappaq{X|Y}(\overline{L})+1}/(\akappaq{X|Y}(\overline{L})+1)!}\leq\lim_{\substack{m\in\aN_{X|Y}(\overline{L}), \\ m\to+\infty}}\frac{\log\sharp\CLq{X|Y}(m\overline{M})}{m^{\akappaq{X|Y}(\overline{M})+1}/(\akappaq{X|Y}(\overline{M})+1)!}.
\]

(2): We start the proof with the following claims.

\begin{claim}\label{clm:amultcontinuity1}
There exists a rational number $\lambda_0>0$ such that, for every $\lambda\in\RR$ with $\lambda>-\lambda_0$,
\[
 \akappaq{X|Y}(\overline{L}+\overline{\mathcal{O}}_X(\lambda[\infty]))=\kappa_{X|Y}(L).
\]
\end{claim}

\begin{proof}
Take an $a\gg 1$ and $s_1,\dots,s_N\in\aHzst(a\overline{L})$ such that $\{x\in X\,:\,s_1(x)=\dots=s_N(x)=0\}=\aSBs(\overline{L})$.
Let $\lambda_0$ be a rational number with $0<\lambda_0\leq\min_i\{-\log\|s_i\|_{\sup}\}$.
For every $\lambda$ with $\lambda>-\lambda_0$, all of $s_i$'s belong to $\aHzst(\overline{L}+\overline{\mathcal{O}}_X(\lambda[\infty]))$ and $\aSBs(\overline{L}+\overline{\mathcal{O}}_X(\lambda[\infty]))\subset\aSBs(\overline{L})$.
Therefore the claim follows from Lemma~\ref{lem:kappaqequal}.
\end{proof}

\begin{claim}\label{clm:amultcontinuity2}
If $Y\not\subset\aSBs(\overline{L})$, then there exists an $m_0\geq 1$ such that, for every $\lambda\in\QQ$ with $\lambda\leq\lambda_0$, $m_0\overline{L}\geq_Y\lambda\overline{\mathcal{O}}_X([\infty])$.
\end{claim}

\begin{proof}
We take an $a\geq 1$ and an $s\in\aHzst(a\overline{L})$ such that $s|_Y\neq 0$.
We can choose an $m_0\geq 1$ such that $\exp(\lambda_0)\|s\|_{\sup}^{m_0}<1$.
Then $m_0\overline{L}\geq_Y\overline{\mathcal{O}}_X(\lambda[\infty])=\lambda\overline{\mathcal{O}}_X([\infty])$ for every $\lambda\in\QQ$ with $\lambda\leq\lambda_0$.
\end{proof}

By Theorem~\ref{thm:propertiesavol}(1), Claims~\ref{clm:amultcontinuity1} and \ref{clm:amultcontinuity2}, and the assertion (1), we have
\begin{align*}
 &\left(1-\frac{m_0}{\lambda_0}|\lambda|\right)^{\akappaq{X|Y}(\overline{L})+1}\cdot\aeq{X|Y}(\overline{L})=\aeq{X|Y}\left(\left(1-\frac{m_0}{\lambda_0}|\lambda|\right)\overline{L}\right) \\
 &\qquad\qquad\qquad \leq\aeq{X|Y}(\overline{L}+\lambda\overline{\mathcal{O}}_X([\infty])) \\
 &\qquad\qquad\qquad \leq\aeq{X|Y}\left(\left(1+\frac{m_0}{\lambda_0}|\lambda|\right)\overline{L}\right)=\left(1+\frac{m_0}{\lambda_0}|\lambda|\right)^{\akappaq{X|Y}(\overline{L})+1}\cdot\aeq{X|Y}(\overline{L})
\end{align*}
for every $\lambda\in\QQ$ with $|\lambda|<\lambda_0/m_0$.
Hence we conclude.
\end{proof}

\section{Generalized Fujita approximation}\label{sec:genFujita}

In this section, we obtain a formula expressing an arithmetic restricted volume as a supremum of heights of projective varieties (Theorem~\ref{thm:genFujita}).
We can regard Theorem~\ref{thm:genFujita} as an arithmetic analogue of the generalized Fujita approximation proved in \cite[Theorem~2.13]{Ein_Laz_Mus_Nak_Pop06} (in our case, $\dim Y$ can be zero).

\begin{proposition}\label{prop:nefcase}
Let $X$ be a smooth projective variety over $K$ and let $\overline{L}$ be an adelically metrized line bundle on $X$.
If $\overline{L}$ is nef and $Y$-big, then
\[
 \avolq{X|Y}(\overline{L})=\adeg\left((\overline{L}|_Y)^{\cdot \dim Y+1}\right).
\]
\end{proposition}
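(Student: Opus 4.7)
\medskip

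The plan is to establish the equality via two opposite inequalities: the upper bound is essentially formal and the lower bound is the substantive content.

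For the upper bound, any element of $\CLq{X|Y}(m\overline{L})$ lies, by definition, in the convex hull (intersected with the lattice) of the image of $\aHzst(m\overline{L})\to\aHzf(m\overline{L}|_Y)$. For any $s\in\aHzst(m\overline{L})$ one has $\|s|_Y\|_{v,\sup}^{m\overline{L}|_Y}\leq\|s\|_{v,\sup}^{m\overline{L}}\leq 1$ at every $v\in M_K$. Because the archimedean norm obeys the triangle inequality and the non-archimedean norms are ultrametric, a convex $\ZZ$-combination of such restrictions still satisfies $\|\cdot\|_{v,\sup}^{m\overline{L}|_Y}\leq 1$ at every place. Hence $\CLq{X|Y}(m\overline{L})\subseteq\aHzsm(m\overline{L}|_Y)$, and passing to the $\limsup$ gives $\avolq{X|Y}(\overline{L})\leq\avol(\overline{L}|_Y)$. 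Nefness of $\overline{L}$ restricts to nefness of $\overline{L}|_Y$, so Proposition~\ref{prop:aintnum}(1)(b) identifies the right-hand side with $\adeg((\overline{L}|_Y)^{\cdot\dim Y+1})$.

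For the lower bound I would exploit the Okounkov-body framework of Sections~\ref{sec:YuansEst}--\ref{sec:arithrestvol}. Since $\overline{L}$ is $Y$-big, $Y\not\subset\aBsp(\overline{L})$ by Lemma~\ref{lem:propertiesYbig}(2), so by Lemma~\ref{lem:existgoodflags} one can find good flags $F_\sbullet$ on $\mathscr{Y}_U'$ over arbitrarily large primes $p$ whose center avoids $\aSBs(\overline{L})|_Y$. Combining Proposition~\ref{prop:KKOk} and Proposition~\ref{prop:Yuan}, after dividing by $m^{\dim Y+1}$ and letting $m\to+\infty$, we obtain
\[
 \left|\frac{\avolq{X|Y}(\overline{L})}{(\dim Y+1)!}-\vol_{\aS_{X|Y}(\overline{L})}(\aDelta_{X|Y}(\overline{L}))\log p\right|\leq\frac{D_{X|Y}(\overline{L})}{\log p},
\]
so the task is reduced to showing that the volume of the arithmetic Okounkov body $\aDelta_{X|Y}(\overline{L})$ (weighted by $\log p$) realizes, in the limit of large $p$, the top intersection number $\adeg((\overline{L}|_Y)^{\cdot\dim Y+1})/(\dim Y+1)!$.

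The remaining identification I would carry out by sandwich approximation. Fixing a $U$-model of definition, by Proposition~\ref{prop:adelicapproximation}(2) there exist for every rational $\varepsilon>0$ relatively nef Hermitian models $\overline{\mathscr{L}}_{\varepsilon,1}\preceq\overline{L}\preceq\overline{\mathscr{L}}_{\varepsilon,2}$ whose metric defects are bounded by $\varepsilon$ at the non-archimedean places. Restriction to the Zariski closure of $Y$ yields relatively nef Hermitian models of $\overline{L}|_Y$. For such integral nef models the classical arithmetic Hilbert--Samuel formula gives the unrestricted volume as the top self-intersection, and a direct $H^1$-vanishing argument on the arithmetic model (together with the standard extension of strictly small sections using nefness) implies that the restriction map from global strictly small sections on the ambient model fills out, up to an $\varepsilon$-controlled defect, all of $\aHzsm$ on the restriction. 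Translating this back through the flag via Proposition~\ref{prop:KKOk} and letting $\varepsilon\to 0$ (using continuity of $\adeg$ in the nef cone from Proposition~\ref{prop:aintnum}(1)(c)) produces the matching lower bound.

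The main obstacle is precisely this last step: the asymptotic surjectivity of $\aHzst(m\overline{L})\to\aHzsm(m\overline{L}|_Y)$, i.e., the arithmetic restriction-extension theorem for nef adelically metrized line bundles. One needs not only the geometric vanishing of $H^1(X,I_Y\otimes mL)$ for $m\gg 1$, but also a simultaneous $L^\infty$ control on the extensions preserving strict smallness at the archimedean place, which must then be matched against the $\varepsilon$-defect coming from the approximating models. The bookkeeping of these two errors and passing to the limit is the delicate part.
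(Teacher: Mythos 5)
Your upper bound is correct and, if anything, more explicit than what appears in the paper: the CL-hull $\CLq{X|Y}(m\overline{L})$ really does lie inside $\aHzst(m\overline{L}|_Y)$ (the convex-combination argument at each place is fine), so $\avolq{X|Y}(\overline{L})\leq\avol(\overline{L}|_Y)$, and nefness of $\overline{L}|_Y$ plus Proposition~\ref{prop:aintnum}(1)(b) gives $\avol(\overline{L}|_Y)=\adeg((\overline{L}|_Y)^{\cdot\dim Y+1})$. That half is sound.

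The lower bound, which is the actual content of the statement, has a genuine gap that you yourself flag. Two specific problems. First, the appeal to the Okounkov-body framework of Propositions~\ref{prop:KKOk} and~\ref{prop:Yuan} is circular as a ``reduction'': the quantity $\vol_{\aS_{X|Y}(\overline{L})}(\aDelta_{X|Y}(\overline{L}))\log p$ is, up to an $O(D/\log p)$ error, precisely $\avolq{X|Y}(\overline{L})/(\dim Y+1)!$, so ``show the Okounkov body volume equals the intersection number in the limit $p\to\infty$'' is a verbatim restatement of the claim, not a reduction of it. Those propositions give convergence of the defining sequence (that is their role in Theorem~\ref{thm:convergence}); they do not by themselves identify the limit. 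Second, your sketch of the final step --- ``a direct $H^1$-vanishing argument on the arithmetic model ... implies that the restriction map from global strictly small sections on the ambient model fills out, up to an $\varepsilon$-controlled defect, all of $\aHzsm$ on the restriction'' --- is not a proof. Geometric $H^1$-vanishing requires ampleness (or some Kodaira-type positivity), not mere nefness of $L$; and even granting geometric surjectivity of $H^0(X,mL)\to H^0(Y,mL|_Y)$, the genuinely hard input is the uniform $L^\infty$ control on extensions of strictly small sections. That is an arithmetic extension theorem in the spirit of Zhang--Moriwaki, and it is exactly what you would need to prove --- you cannot wave at it.

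The paper's own proof sidesteps the Okounkov machinery entirely for this proposition. It reduces in three layers: (i) the case of an ample $C^{\infty}$-Hermitian model is quoted outright from \cite[Corollary~7.2(1)]{MoriwakiEst}, which is where the restriction-extension content lives; (ii) the case of a nef continuous Hermitian model is obtained from (i) by perturbing with $\varepsilon\overline{H}+\overline{\mathcal{O}}_X(\lambda_\varepsilon[\infty])$ into the ample $C^{\infty}$ cone using B{\l}ocki--Ko{\l}odziej regularization and then letting $\varepsilon\downarrow 0$, using continuity of $\avolq{X|Y}$ (Corollary~\ref{cor:continuity}) on the left and monotonicity of $\adeg$ for nef classes (Proposition~\ref{prop:aintnum}(1)(c)) on the right; (iii) the general nef adelic case is squeezed between two relatively nef Hermitian models via Proposition~\ref{prop:adelicapproximation}(2), with the $Y$-effectiveness of the four differences recorded as a short claim, and again finishes by the same continuity on both sides. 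Your ``sandwich approximation'' idea matches step (iii) in spirit, but you have nothing playing the role of (i), so the argument does not close.
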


\begin{proof}
If $\overline{L}$ is associated to an ample $C^{\infty}$-Hermitian line bundle on an $O_K$-model of $X$, then the assertion follows from \cite[Corollary~7.2(1)]{MoriwakiEst}.
We assume that $\overline{L}$ is associated to a nef continuous Hermitian line bundle on some $O_K$-model $\mathscr{X}$ of $X$.
Fix an adelically metrized line bundle $\overline{H}$ associated to a $Y$-effective ample $C^{\infty}$-Hermitian line bundle $\overline{\mathscr{H}}$ on $\mathscr{X}$.
For every rational number $\varepsilon>0$, there exists a non-negative continuous function $\lambda_{\varepsilon}:X_{\infty}^{\rm an}\to\RR_{\geq 0}$ such that $\|\lambda_{\varepsilon}\|_{\sup}<\varepsilon$ and $\overline{L}+\varepsilon\overline{H}+\overline{\mathcal{O}}_X(\lambda_{\varepsilon}[\infty])$ is associated to an ample $C^{\infty}$-Hermitian line bundle on $\mathscr{X}$ (\cite[Theorem~1]{Blocki_Kolo07}).
We then have
\[
 \avolq{X|Y}\left(\overline{L}+\varepsilon\overline{H}+\overline{\mathcal{O}}_X(\lambda_{\varepsilon}[\infty])\right)=\adeg\left((\overline{L}+\varepsilon\overline{H}+\overline{\mathcal{O}}_X(\lambda_{\varepsilon}[\infty]))|_Y^{\cdot \dim Y+1}\right)
\]
for every rational number $\varepsilon>0$.
By Corollary~\ref{cor:continuity},
\begin{align*}
 \avolq{X|Y}(\overline{L}) &\leq\avolq{X|Y}\left(\overline{L}+\varepsilon\overline{H}+\overline{\mathcal{O}}_X(\lambda_{\varepsilon}[\infty])\right) \\
 &\leq\avolq{X|Y}\left(\overline{L}+\varepsilon\overline{H}+\varepsilon\overline{\mathcal{O}}_X([\infty])\right)\xrightarrow{\varepsilon\downarrow 0}\avolq{X|Y}(\overline{L}).
\end{align*}
On the other hand, by Proposition~\ref{prop:aintnum}(1),
\begin{align*}
 &\adeg\left((\overline{L}|_Y)^{\cdot \dim Y+1}\right)\\
 &\qquad\qquad\leq\adeg\left((\overline{L}+\varepsilon\overline{H}+\overline{\mathcal{O}}_X(\lambda_{\varepsilon}[\infty]))|_Y^{\cdot \dim Y+1}\right) \\
 &\qquad\qquad\leq \adeg\left((\overline{L}+\varepsilon\overline{H}+\varepsilon\overline{\mathcal{O}}_X([\infty]))|_Y^{\cdot \dim Y+1}\right)\xrightarrow{\varepsilon\downarrow 0}\adeg\left((\overline{L}|_Y)^{\cdot \dim Y+1}\right).
\end{align*}
Hence we conclude in this case.

In general, thanks to Proposition~\ref{prop:adelicapproximation}(2), there exists a finite subset $S\subset M_K^{\rm f}$ such that, for any rational number $\varepsilon>0$, one can find $O_K$-models $(\mathscr{X}_{\varepsilon},\mathscr{L}_{\varepsilon,1})$ and $(\mathscr{X}_{\varepsilon},\mathscr{L}_{\varepsilon,2})$ of $(X,L)$ such that $\mathscr{L}_{\varepsilon,i}$ are relatively nef and
\[
 \overline{L}-\varepsilon\sum_{P\in S}\overline{\mathcal{O}}_X([P])\leq\left(\mathscr{L}_{\varepsilon,1},|\cdot|_{\infty}^{\overline{L}}\right)^{\rm ad}\leq\overline{L}\leq\left(\mathscr{L}_{\varepsilon,2},|\cdot|_{\infty}^{\overline{L}}\right)^{\rm ad}\leq\overline{L}+\varepsilon\sum_{P\in S}\overline{\mathcal{O}}_X([P]).
\]

\begin{claim}
We have
\begin{equation}\label{eqn:nefcase1}
 \overline{L}-\varepsilon\sum_{P\in S}\overline{\mathcal{O}}_X([P])\leq_Y\overline{\mathscr{L}}_{\varepsilon,1}^{\rm ad}\leq_Y\overline{L}\leq_Y\overline{\mathscr{L}}_{\varepsilon,2}^{\rm ad}\leq_Y\overline{L}+\varepsilon\sum_{P\in S}\overline{\mathcal{O}}_X([P]).
\end{equation}
\end{claim}

\begin{proof}
We can write the four differences in the form of
\[
 \left(\mathcal{O}_X,\sum_{P\in S}\lambda_P[P]\right),
\]
where $\lambda_P:X_P^{\rm an}\to\RR_{\geq 0}$ is a non-negative continuous function on $X_P^{\rm an}$.
Then $1\in\Hz(\mathcal{O}_X)$ is a small section satisfying $1|_Y\neq 0$.
\end{proof}

Set $\lambda:=-(\varepsilon/[K:\QQ])\sum_{P\in S}\log|\varpi_P|_P$.
Then $\overline{\mathscr{L}}_{\varepsilon,1}+\overline{\mathcal{O}}_X(\lambda[\infty])$ is nef.
By (\ref{eqn:nefcase1}) and Proposition~\ref{prop:aintnum}(1),(2), we have
\begin{align*}
 &\avolq{X|Y}(\overline{\mathscr{L}}_{\varepsilon,1}^{\rm ad})+[K:\QQ]\lambda\deg\left((L|_Y)^{\cdot\dim Y}\right) \\
 &\qquad\qquad\qquad=\adeg\left((\overline{\mathscr{L}}_{\varepsilon,1}^{\rm ad}|_Y+\overline{\mathcal{O}}_Y(\lambda[\infty]))^{\cdot \dim Y+1}\right) \\
 &\qquad\qquad\qquad\leq\adeg\left((\overline{L}|_Y+\overline{\mathcal{O}}_Y(\lambda[\infty]))^{\cdot \dim Y+1}\right) \\
 &\qquad\qquad\qquad=\adeg\left((\overline{L}|_Y)^{\cdot\dim Y+1}\right)+[K:\QQ]\lambda\deg\left((L|_Y)^{\cdot\dim Y}\right) \\
 &\qquad\qquad\qquad\leq\adeg\left((\overline{\mathscr{L}}_{\varepsilon,2}^{\rm ad}|_Y)^{\cdot\dim Y+1}\right)+[K:\QQ]\lambda\deg\left((L|_Y)^{\cdot\dim Y}\right) \\
 &\qquad\qquad\qquad=\avolq{X|Y}(\overline{\mathscr{L}}_{\varepsilon,2}^{\rm ad})+[K:\QQ]\lambda\deg\left((L|_Y)^{\cdot\dim Y}\right),
\end{align*}
so
\begin{align*}
 \avolq{X|Y}\left(\overline{L}-\varepsilon\sum_{P\in S}\overline{\mathcal{O}}_X([P])\right) &\leq\avolq{X|Y}(\overline{\mathscr{L}}_{\varepsilon,1}^{\rm ad})\\
 &\leq\adeg\left((\overline{L}|_Y)^{\cdot\dim Y+1}\right)\\
 &\leq\avolq{X|Y}(\overline{\mathscr{L}}_{\varepsilon,2}^{\rm ad})\leq\avolq{X|Y}\left(\overline{L}-\varepsilon\sum_{P\in S}\overline{\mathcal{O}}_X([P])\right)
\end{align*}
for every rational number $\varepsilon>0$.
Therefore, we conclude by Corollary~\ref{cor:continuity}.
\end{proof}

Let $X$ be a normal projective variety over $K$, let $\overline{L}$ be an adelically metrized $\QQ$-line bundle on $X$, and let $Z$ be a closed subvariety of $X$.
A \emph{$Z$-compatible approximation for $\overline{L}$} is a pair $(\mu:X'\to X,\overline{M})$ of a projective birational $K$-morphism $\mu:X'\to X$ and a nef adelically metrized $\QQ$-line bundle $\overline{M}$ on $X'$ having the following properties.
\begin{enumerate}
\item[\textup{(a)}] $X'$ is smooth and $\mu$ is isomorphic around the generic point of $Z$.
\item[\textup{(b)}] Let $\mu_*^{-1}(Z)$ be the strict transform of $Z$ via $\mu$.
Then $\overline{M}$ is $\mu_*^{-1}(Z)$-big and $\mu^*\overline{L}-\overline{M}$ is a $\mu_*^{-1}(Z)$-pseudoeffective adelically metrized $\QQ$-line bundle.
\end{enumerate}
We denote by $\aTheta_Z(\overline{L})$ the set of all the $Z$-compatible approximations for $\overline{L}$.

\begin{lemma}
$\aTheta_Z(\overline{L})\neq\emptyset$ if and only if $\overline{L}$ is $Z$-big.
\end{lemma}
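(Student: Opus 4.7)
I would prove both implications via Lemma~\ref{lem:propertiesYbig}(2), which gives the equivalence between $Y$-bigness, non-containment in the arithmetic augmented base locus, and $Y$-effectivity of $m\overline{L}+\overline{N}$ for $m\gg 1$ and arbitrary $\overline{N}$.

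For the ``only if'' direction, fix $(\mu,\overline{M})\in\aTheta_Z(\overline{L})$ and set $Y':=\mu_*^{-1}(Z)$. Since $\overline{M}$ is $Y'$-big and $\mu^*\overline{L}-\overline{M}$ is $Y'$-pseudoeffective, their sum $\mu^*\overline{L}$ is $Y'$-big (the sum of a $Y$-big and a $Y$-pseudoeffective is $Y$-big by the very definition of pseudoeffectivity). Given any $\overline{N}\in\aPic(X)$, Lemma~\ref{lem:propertiesYbig}(2) applied on $X'$ furnishes, for every $m\gg 1$, a small section $s'\in\aHzsm(\mu^*(m\overline{L}+\overline{N}))$ with $s'|_{Y'}\neq 0$. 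As $X$ is normal, Zariski's main theorem gives $\mu_{\ast}\mathcal{O}_{X'}=\mathcal{O}_X$, and hence $H^0(X,mL+N)\xrightarrow{\sim}H^0(X',\mu^*(mL+N))$ via $\mu^*$; moreover, Lemma~\ref{lem:pullbackadelic} together with the surjectivity of each $\mu^{\rm an}_v$ gives $\|\mu^*t\|_{v,\sup}=\|t\|_{v,\sup}$, so $s'=\mu^*s$ for a unique $s\in\aHzsm(m\overline{L}+\overline{N})$. Because $\mu|_{Y'}\colon Y'\to Z$ is dominant, the restriction $s|_Z$ must be non-zero, and Lemma~\ref{lem:propertiesYbig}(2) then certifies that $\overline{L}$ is $Z$-big.

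For the converse, assume $\overline{L}$ is $Z$-big and pick $a\geq 1$, a w-ample $\overline{A}\in\aPic(X)$, and $s\in\aHzsm(a\overline{L}-\overline{A})$ with $s|_Z\neq 0$. By Hironaka's theorem, choose a projective birational $\mu\colon X'\to X$ with $X'$ smooth and $\mu$ isomorphic around the generic point of $Z$ (implicit here is that $X$ is regular at that point, a condition which is in any case forced by the very definition of a $Z$-compatible approximation). Put $\overline{M}:=(1/a)\mu^*\overline{A}$; it is nef as the pullback of a w-ample, hence nef, $\QQ$-line bundle. Because $\overline{A}$ is w-ample, Lemma~\ref{lem:w-ample}(1)(c) yields a sufficiently divisible $n$ with $\aBs(n\overline{A})=\emptyset$ and $\aPhi_{n\overline{A},K}$ a closed immersion. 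Pullback via $\mu$ preserves sup-norms and, by normality, global sections; so $\aHzst(n\mu^*\overline{A})=\mu^*\aHzst(n\overline{A})$, $\aBs(n\mu^*\overline{A})=\emptyset$, and $\aPhi_{n\mu^*\overline{A},K}=\aPhi_{n\overline{A},K}\circ\mu$. Theorem~\ref{thm:charaaugbs} then identifies $\aBsp(\mu^*\overline{A})=\Exc(\mu)$, which does not contain the generic point of $\mu_*^{-1}(Z)$; by Lemma~\ref{lem:propertiesYbig}(2), $\overline{M}$ is $\mu_*^{-1}(Z)$-big. Finally, $\mu^*\overline{L}-\overline{M}=(1/a)\mu^*(a\overline{L}-\overline{A})$ carries the small section $\mu^*s$ whose restriction to $\mu_*^{-1}(Z)$ is non-zero, so $\mu^*\overline{L}-\overline{M}$ is $\mu_*^{-1}(Z)$-effective and a fortiori pseudoeffective.

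The only genuinely delicate step is the identification $\aBsp(\mu^*\overline{A})=\Exc(\mu)$ for $\overline{A}$ w-ample; its key inputs are Theorem~\ref{thm:charaaugbs} and the fact that strictly small sections of $\overline{A}$ and of $\mu^*\overline{A}$ correspond under pullback, which in turn rests on the normality of $X$ and the preservation of sup-norms (Lemma~\ref{lem:pullbackadelic}). All remaining verifications reduce to formal properties of $\mu^*$ and the dominance of $\mu|_{\mu_*^{-1}(Z)}$.
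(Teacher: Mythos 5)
Your proof of the ``only if'' direction is correct, but it takes a different route from the paper. The paper argues at the level of the augmented base locus: since $\overline{M}$ is $\mu_*^{-1}(Z)$-big and $\mu^*\overline{L}-\overline{M}$ is $\mu_*^{-1}(Z)$-pseudoeffective, $\mu^*\overline{L}$ is $\mu_*^{-1}(Z)$-big, hence (Lemma~\ref{lem:propertiesYbig}(2)) $\mu_*^{-1}(Z)\not\subset\aBsp(\mu^*\overline{L})$; then Lemma~\ref{lem:augbs_main}(2) gives $\aBsp(\mu^*\overline{L})=\mu^{-1}\aBsp(\overline{L})\cup\Exc(\mu)$, so $Z\not\subset\aBsp(\overline{L})$, and Lemma~\ref{lem:propertiesYbig}(2) again yields $Z$-bigness. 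You instead descend individual sections through $\mu$. Both work; the paper's version is shorter because the relevant base-locus machinery is already in place, whereas yours is more hands-on and requires the ancillary check that $\mu^*$ preserves sup-norms and sections, which is fine given normality of $X$.

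In the ``if'' direction, which the paper dismisses as obvious, there is a genuine gap: you set $\overline{M}:=(1/a)\mu^*\overline{A}$ and claim it is nef because $\overline{A}$ is ``w-ample, hence nef.'' The paper nowhere proves that w-ample implies nef, and the implication is false as stated. W-ampleness only requires $L$ ample and $\Hz(mL)=\aSpan{K}{\aHzst(m\overline{L})}$ for large $m$; this says nothing about condition~(c) of Definition~\ref{defn:verticallynef} (semipositivity of the Archimedean curvature current), and indeed one can perturb a nice Archimedean metric by a small bump without losing small sections but destroying curvature positivity. This is precisely why Proposition~\ref{prop:adelicmetric} bothers to renormalize the Archimedean metric when producing $\overline{M}_m$ -- even in the w-ample case, $\overline{M}_m$ generally differs from $m\overline{L}$ at the Archimedean place. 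The fix is either to choose $\overline{A}$ from the outset to be \emph{both} w-ample and nef (e.g.\ arithmetically ample in Zhang's sense; such bundles exist and are w-ample by the remark after the definition), or, more in the spirit of the rest of the paper, to take $m$ with $\aBs(m\overline{L})=\aSBs(\overline{L})$, apply Proposition~\ref{prop:adelicmetric} to obtain the nef $\overline{M}_m$, and use $(1/m)\overline{M}_m$, after further resolution to make $X_m$ smooth. Finally, your parenthetical about regularity of $X$ at the generic point of $Z$ is circular as written: you cannot invoke the definition of a $Z$-compatible approximation to justify regularity while you are trying to construct one. This regularity is a genuine hidden hypothesis needed to combine Hironaka with condition~(a), and should be acknowledged as such rather than derived from the conclusion.
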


\begin{proof}
If $(\mu:X'\to X,\overline{M})\in\aTheta_Z(\overline{L})$, then $\mu^*\overline{L}$ is $\mu_*^{-1}(Z)$-big.
By Lemmas~\ref{lem:augbs_main}(2) and \ref{lem:propertiesYbig}, we have $\mu_*^{-1}(Z)\not\subset\mu^{-1}\aBsp(\overline{L})\cup\Exc(\mu)$.
Thus $Z\not\subset\aBsp(\overline{L})$.
The ``if'' part is obvious.
\end{proof}

\begin{theorem}\label{thm:genFujita}
Let $X$ be a normal projective variety over $K$, let $Z$ be a closed subvariety of $X$, and let $\overline{L}$ be an adelically metrized $\QQ$-line bundle on $X$.
If $\overline{L}$ is $Z$-big, then, for every closed subvariety $Y$ containing $Z$, we have
\[
 \avolq{X|Y}(\overline{L})=\sup_{(\mu,\overline{M})\in\aTheta_Z(\overline{L})}\adeg\left((\overline{M}|_{\mu_*^{-1}(Y)})^{\cdot(\dim Y+1)}\right).
\]
\end{theorem}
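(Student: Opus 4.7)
For the direction $\avolq{X|Y}(\overline{L}) \geq \adeg((\overline{M}|_{\mu_*^{-1}(Y)})^{\cdot(\dim Y+1)})$, the plan is as follows. Fix any $(\mu,\overline{M}) \in \aTheta_Z(\overline{L})$. Since $Z \subset Y$ gives $\mu_*^{-1}(Z) \subset \mu_*^{-1}(Y)$, a small section that does not vanish identically on the smaller subvariety cannot vanish on the larger, so $\mu_*^{-1}(Z)$-bigness upgrades to $\mu_*^{-1}(Y)$-bigness, and $\overline{M}$ is nef and $\mu_*^{-1}(Y)$-big. Proposition~\ref{prop:nefcase} then supplies $\adeg((\overline{M}|_{\mu_*^{-1}(Y)})^{\cdot(\dim Y+1)}) = \avolq{X'|\mu_*^{-1}(Y)}(\overline{M})$. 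Fixing a w-ample $\overline{H}$ on $X'$, the $\mu_*^{-1}(Z)$-pseudoeffectivity of $\mu^*\overline{L} - \overline{M}$ gives, for every rational $\varepsilon > 0$, that $\mu^*\overline{L} - \overline{M} + \varepsilon\overline{H}$ is $\mu_*^{-1}(Z)$-big (hence $\mu_*^{-1}(Y)$-big), so after rescaling $\overline{M} \leq_{\mu_*^{-1}(Y)} \mu^*\overline{L} + \varepsilon\overline{H}$; combining Lemma~\ref{lem:effectiveincrease}(1),(2) with Corollary~\ref{cor:continuity}(1) and letting $\varepsilon \downarrow 0$ closes this inequality.

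For the reverse inequality, the plan is to construct, for each sufficiently large $m$ (first reducing to $\overline{L}$ being a line bundle by rescaling), an explicit $Z$-compatible approximation via Proposition~\ref{prop:adelicmetric}. Take $\mu_m: X_m \to X$ to be the normalized blow-up of $X$ along the ideal $\widehat{\mathfrak{b}}_m$ from Proposition~\ref{prop:adelicmetric}, post-composed with a resolution of singularities (available since $\chr K = 0$), and set $\overline{M}_m := \mu_m^*(m\overline{L}) - \overline{F}_m$; Proposition~\ref{prop:adelicmetric} ensures that $\overline{M}_m$ is nef with $\aBs(\overline{M}_m) = \emptyset$. Since $\overline{L}$ is $Z$-big, $Z \not\subset \aSBs(\overline{L}) = \aBs(m\overline{L})$ for $m \gg 1$, and $\widehat{\mathfrak{b}}_m$ becomes the unit ideal at the generic point of $Z$, so $\mu_m$ is an isomorphism over a Zariski neighborhood there. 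Setting $Z_m := \mu_{m,*}^{-1}(Z)$ and $Y_m := \mu_{m,*}^{-1}(Y)$: on the isomorphism locus $\overline{M}_m = \mu_m^*(m\overline{L})$, giving $Z_m$-bigness of $(1/m)\overline{M}_m$; and the small section $1_{F_m}$ (a local unit at the generic point of $Z_m$) witnesses the $Z_m$-pseudoeffectivity of $\mu_m^*\overline{L} - (1/m)\overline{M}_m = (1/m)\overline{F}_m$. Thus $(\mu_m, (1/m)\overline{M}_m) \in \aTheta_Z(\overline{L})$.

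The main obstacle will be the convergence $\adeg(((1/m)\overline{M}_m|_{Y_m})^{\cdot(\dim Y+1)}) \to \avolq{X|Y}(\overline{L})$ as $m \to \infty$. Applying Proposition~\ref{prop:nefcase} (the nef plus $Y_m$-big hypothesis holds since $Z_m$-bigness implies $Y_m$-bigness), this reduces to showing $\avolq{X_m|Y_m}(\overline{M}_m)/m^{\dim Y+1} \to \avolq{X|Y}(\overline{L})$. One inequality is easy: the assignment $t \mapsto \sigma$ with $\mu_m^*\sigma = t \otimes 1_{F_m}^{\otimes p}$ injects $\aHzst(p\overline{M}_m)$ into $\aHzst(pm\overline{L})$ because $\|1_{F_m}\|^{\overline{F}_m}_{v,\sup} \leq 1$ is visible from~(\ref{eqn:adelicmetricfin}) and~(\ref{eqn:adelicmetriccomp}); after restriction, this descends to $\CLq{X_m|Y_m}(p\overline{M}_m) \hookrightarrow \CLq{X|Y}(pm\overline{L})$, yielding $\avolq{X_m|Y_m}(\overline{M}_m) \leq m^{\dim Y+1}\avolq{X|Y}(\overline{L})$. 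The hard part is the matching asymptotic lower bound; for this I would invoke the arithmetic Okounkov-body framework of Section~\ref{sec:YuansEst}. The above inclusion places the scaled body $(1/m)\aDelta_{X_m|Y_m}(\overline{M}_m)$ inside $\aDelta_{X|Y}(\overline{L})$, and one proves that these bodies exhaust $\aDelta_{X|Y}(\overline{L})$ in the Hausdorff sense as $m \to \infty$, using that products of strictly small sections of $m\overline{L}$ generate the ideal $\widehat{\mathfrak{b}}_{pm}$ asymptotically (up to arbitrarily small relative error in $m$); Propositions~\ref{prop:KKOk} and~\ref{prop:Yuan} then transfer the Hausdorff convergence of convex bodies into the required convergence of arithmetic volumes.
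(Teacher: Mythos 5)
Your proposal takes essentially the same route as the paper. Both directions are set up the same way, and the engine driving the $\leq$ inequality — the normalized blow-ups $\mu_m$ of Proposition~\ref{prop:adelicmetric}, the free nef bundles $\overline{M}_m$, and the Okounkov-body machinery of Section~\ref{sec:YuansEst} — is exactly what the paper uses in its auxiliary Proposition~\ref{prop:genFujita}. A few remarks on where the details differ.

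For the $\geq$ direction, the paper dismisses this as obvious from Lemma~\ref{lem:effectiveincrease}(1),(2) and Proposition~\ref{prop:nefcase}; you correctly fill in that $\mu_*^{-1}(Z)$-pseudoeffectivity of $\mu^*\overline{L}-\overline{M}$ gives $\overline{M}\leq_{\mu_*^{-1}(Y)}\mu^*\overline{L}+\varepsilon\overline{H}$ for an auxiliary w-ample $\overline{H}$, and then pass to the limit via Corollary~\ref{cor:continuity}. (The paper's Corollary~\ref{cor:continuity}(2) would give this in one step.) The upgrade from $\mu_*^{-1}(Z)$-bigness to $\mu_*^{-1}(Y)$-bigness is used correctly.

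For the $\leq$ direction, your construction of $(\mu_m,(1/m)\overline{M}_m)\in\aTheta_Z(\overline{L})$ and the easy upper bound $\avolq{X_m|Y_m}(\overline{M}_m)\leq m^{\dim Y+1}\avolq{X|Y}(\overline{L})$ via the small section $1_{F_m}$ match the paper. Where your write-up is genuinely thinner is the asymptotic lower bound. You invoke Hausdorff convergence of the scaled bodies, justified only by the informal statement that strictly small sections of $m\overline{L}$ "generate $\widehat{\mathfrak{b}}_{pm}$ asymptotically." The actual mechanism is cleaner: Proposition~\ref{prop:adelicmetric}(1) gives the exact identification $\aHzst(\overline{M}_{ma})=\{s':s\in\aHzst(ma\overline{L})\}$, which yields the inclusion $\aS_{X|Y}(a\overline{L})_m\subset T(m)$ between the semigroups in Definition~\ref{defn:languageofS}; one then needs a Fekete-type convergence theorem for sub-semigroups of $\NN\times\ZZ^{\dim Y+1}$ (the paper cites Th\'eor\`eme~1.15 of [Boucksom, {\it Corps d'Okounkov}]) together with Propositions~\ref{prop:KKOk} and~\ref{prop:Yuan}. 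Also, you need a compatible choice of flag $F_{\sbullet}$ on a model of $Y$ and its strict transforms on models of $Y_m$ (Lemma~\ref{lem:imageflag}(2)) for the bodies to live in the same $\RR^{\dim Y+1}$ in the first place; this is not mentioned in your sketch. Finally, the paper first establishes that the sequence $\avolq{X_m|Y_m}(\overline{M}_m)/m^{\dim Y+1}$ converges via a separate superadditivity/Brunn--Minkowski argument (Claim~\ref{clm:genFujitaconv}), which your proposal omits; this is dispensable once the Okounkov-body argument is carried out in full, so it is a presentational rather than a logical difference.
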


\begin{proof}
The inequality $\geq$ is obvious (Lemma~\ref{lem:effectiveincrease}(1),(2) and Proposition~\ref{prop:nefcase}), so it suffices to show that
\[
 \avolq{X|Y}(\overline{L})\leq\sup_{(\mu,\overline{M})\in\aTheta_Z(\overline{L})}\avolq{X'|\mu_*^{-1}(Y)}(\overline{M}).
\]
We can assume that $\overline{L}$ is an adelically metrized line bundle on $X$.
Let $\mu_m:X_m\to X$, $\overline{M}_m$, and $\overline{F}_m$ be as in Proposition~\ref{prop:adelicmetric}.
Without loss of generality, we can assume that $X_m$ are all smooth.
Let $Z_m:=\mu_{m*}^{-1}(Z)$ (respectively, $Y_m:=\mu_{m*}^{-1}(Y)$) be the strict transform of $Z$ (respectively, $Y$) via $\mu_m$.
If $\aBs(m\overline{L})=\aSBs(\overline{L})$, then $Z_m$ is contained neither in $\aBsp(\overline{M}_m)\subset\mu_m^{-1}\aBsp(\overline{L})$ nor in $\aSBs(\overline{F}_m)\subset\mu_m^{-1}\aSBs(\overline{L})$.
So $\overline{M}_m$ is $Z_m$-big and $\overline{F}_m$ is $Z_m$-effective.
The theorem follows from Proposition~\ref{prop:genFujita} below.
\end{proof}

\begin{proposition}\label{prop:genFujita}
We have
\[
 \avolq{X|Y}(\overline{L})=\lim_{\substack{\aBs(m\overline{L})=\aSBs(\overline{L}), \\ m\to+\infty}}\frac{\avolq{X_m|Y_m}(\overline{M}_m)}{m^{\dim Y+1}}.
\]
\end{proposition}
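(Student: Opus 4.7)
The plan is to prove both inequalities $\limsup_m \avolq{X_m|Y_m}(\overline{M}_m)/m^{\dim Y+1} \leq \avolq{X|Y}(\overline{L})$ and the reverse $\liminf$ inequality. Throughout I may assume $Y \not\subset \aSBs(\overline{L})$ (otherwise the convention makes $Y_m$ ill-defined and both sides vanish), and for the lower bound I may further assume $\overline{L}$ is $Y$-big, since otherwise $\avolq{X|Y}(\overline{L}) = 0$ and the inequality is trivial. For $m$ in the specified filter, $1_{F_m}|_{Y_m} \in \aHzsm(\overline{F}_m|_{Y_m})$ is nonzero by the assumption on $Y$, which makes the tensor operations below injective.

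For the upper bound, the $\ZZ$-linear injection $t \mapsto t \otimes (1_{F_m}|_{Y_m})^{\otimes \ell}$ preserves CL-subsets and embeds $\CLq{X_m|Y_m}(\ell \overline{M}_m)$ into $\CLq{X_m|Y_m}(\mu_m^*(\ell m \overline{L})) = \CLq{X|Y}(\ell m \overline{L})$, where the latter identification follows from Lemma~\ref{lem:effectiveincrease}(2). Comparing log-cardinalities and letting $\ell \to +\infty$ yields $\avolq{X_m|Y_m}(\overline{M}_m) \leq \avolq{X|Y}(m\overline{L}) = m^{\dim Y+1}\avolq{X|Y}(\overline{L})$ via Theorem~\ref{thm:propertiesavol}(2), from which the upper bound follows.

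For the lower bound, Proposition~\ref{prop:adelicmetric}(1) provides the key identity: for $s_1,\ldots,s_\ell \in \aHzst(m\overline{L})$, one has $\mu_m^*(s_1 \otimes \cdots \otimes s_\ell) = (s_1' \otimes \cdots \otimes s_\ell') \otimes 1_{F_m}^{\otimes \ell}$ with each $s_i' \in \aHzst(\overline{M}_m)$, and submultiplicativity of sup-norms places $s_1' \otimes \cdots \otimes s_\ell' \in \aHzst(\ell \overline{M}_m)$. Inverting the previous injection gives
\begin{equation*}
\sharp\CLq{X_m|Y_m}(\ell\overline{M}_m) \ge \sharp\CL_{\aHzf(\ell m \overline{L}|_Y)}\!\bigl(\ell \ast \aHzstq{X|Y}(m\overline{L})\bigr).
\end{equation*}
Letting $A_m$ denote the normalized $\ell \to +\infty$ limit of the right-hand side, one obtains $\avolq{X_m|Y_m}(\overline{M}_m) \ge A_m$, which reduces the lower bound to the arithmetic Fujita-type approximation $\lim_m A_m/m^{\dim Y+1} = \avolq{X|Y}(\overline{L})$. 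To establish this limit I fix a good flag $F_\bullet$ on a model of $Y$ (Lemma~\ref{lem:existgoodflags}) and apply Yuan's estimation (Theorem~\ref{thm:Yuan}) to convert log-cardinalities into valuation counts: the right-hand side above is comparable to $\sharp\bm{w}_{F_\bullet}(\ell \ast \aHzstq{X|Y}(m\overline{L}) \setminus \{0\})\log p$ up to an error of order $\ell^{\dim Y}$. Since $\bm{w}_{F_\bullet}(\ell \ast S) = \ell \ast \bm{w}_{F_\bullet}(S)$, the $\ell \to +\infty$ growth rate recovers the Euclidean volume of $\Conv(\bm{w}_{F_\bullet}(\aHzstq{X|Y}(m\overline{L})))$, whose $1/m$-rescalings exhaust $\aDelta_{X|Y}(\overline{L})$ by the very definition of the arithmetic Okounkov body. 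Proposition~\ref{prop:KKOk}(2) combined with continuity of the Lebesgue measure on compact convex bodies then closes the argument.

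The principal technical obstacle will be the uniform control of both the Yuan-type error term (forcing $p$ to be chosen sufficiently large via Lemma~\ref{lem:existgoodflags}, uniformly across the filter of admissible $m$) and the volume-convergence of $\frac{1}{m}\Conv(\bm{w}_{F_\bullet}(\aHzstq{X|Y}(m\overline{L})))$ to $\aDelta_{X|Y}(\overline{L})$; the latter is a consequence of the semigroup structure of $\aS_{X|Y}(\overline{L})$ together with the density inherent in the definition of arithmetic Okounkov bodies.
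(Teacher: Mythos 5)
Your overall architecture is sound: the direct embedding $t\mapsto t\otimes(1_{F_m}|_{Y_m})^{\otimes\ell}$ gives the upper bound cleanly (the paper instead obtains $\leq$ only through Theorem~\ref{thm:genFujita}'s easy half together with the superadditivity in Claim~\ref{clm:genFujitaconv}), and the Yuan--Okounkov reduction is indeed the engine of the lower bound. However, there is a genuine gap at precisely the point you flag as ``the principal technical obstacle.'' Applying Theorem~\ref{thm:Yuan}/Proposition~\ref{prop:Yuan} to $\overline{M}_{ma}$ incurs an error proportional to $D_{X_{ma}|Y_{ma}}(\overline{M}_{ma})/\log p$, and before dividing by $(ma)^{\dim Y+1}$ you must show that $D_{X_{ma}|Y_{ma}}(\overline{M}_{ma})$ grows no faster than $m^{\dim Y+1}$. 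This is nontrivial because the constant $D$ bundles together $\delta(\overline{M}_{ma}|_{Y_{ma}})$ and the multiplicity $e(\aRq{X_{ma}|Y_{ma}}(\overline{M}_{ma})_{\sbullet})/\akappaq{X_{ma}|Y_{ma}}(\overline{M}_{ma})!$, each of which lives on a different blow-up $X_{ma}$. The paper handles exactly this via two separate estimates (Claims~\ref{clm:compare_sigma} and~\ref{clm:compare_e}): the projection formula in Proposition~\ref{prop:aintnum}(3) gives $\delta(\overline{M}_m|_{Y_m})\leq m\,\delta(\overline{L}|_Y)$, and the graded-algebra inclusion $\aRq{X_{ma}|Y_{ma}}(\overline{M}_{ma})_{\sbullet}\subset\aRq{X|Y}(ma\overline{L})_{\sbullet}$ bounds the multiplicity. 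Without these, ``choosing $p$ large uniformly across the filter'' does not close the argument --- a single large $p$ only helps if the numerator is already $O(m^{\dim Y+1})$. Note also that the good flag $F_{\sbullet}$ must be fixed once on $\mathscr{Y}_U'$ and transported by strict transform (Lemma~\ref{lem:imageflag}(2)) to all the $X_m$; if $p$ were allowed to drift with $m$ the valuation maps would no longer be compatible and $\aDelta_{X|Y}(\overline{L})$ would not be a fixed target.

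Two smaller points. First, your $\ell\ast\aHzstq{X|Y}(m\overline{L})$ misuses the paper's notation: $\ell\ast S$ denotes $\ell$-fold \emph{sums}, which stay in $\aHzf(m\overline{L}|_Y)$; you want the set of $\ell$-fold tensor products, which lands in $\aHzf(\ell m\overline{L}|_Y)$. The two agree only after applying $\bm{w}_{F_{\sbullet}}$, which converts tensor into sum, so the slip is repairable but should be made explicit. Second, the assertion that the rescaled convex hulls $\tfrac{1}{m}\Conv(\bm{w}_{F_{\sbullet}}(\aHzstq{X|Y}(m\overline{L})))$ exhaust $\aDelta_{X|Y}(\overline{L})$ in volume is a real theorem, not a definitional fact; the paper invokes it as \cite[Th\'eor\`eme~1.15]{BoucksomBourbaki}, after carefully pinning the auxiliary semigroup $T(m)$ between $\aS_{X|Y}(a\overline{L})_m$ and $\aS_{X|Y}(a\overline{L})$. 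You should cite that result (or re-prove it) rather than appeal to ``the density inherent in the definition.''
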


\begin{proof}
First, we show the convergence of the sequence.

\begin{claim}\label{clm:genFujitaconv}
The sequence
\[
 \left(\frac{\avolq{X_m|Y_m}(\overline{M}_m)}{m^{\dim Y+1}}\right)_{\aBs(m\overline{L})=\aSBs(\overline{L})}
\]
converges.
\end{claim}

\begin{proof}
Take positive integers $m,n$ such that $\aBs(m\overline{L})=\aBs(n\overline{L})=\aSBs(\overline{L})$.
Then $\aBs((m+n)\overline{L})=\aSBs(\overline{L})$.
Let $\widetilde{\mu}:\widetilde{X}\to X$ be a birational projective $K$-morphism such that $\widetilde{\mu}$ dominates all of $\mu_m$, $\mu_n$, and $\mu_{m+n}$ and $\widetilde{\mu}$ is isomorphic over $X\setminus\aSBs(\overline{L})$.
In particular, we have a diagram
\[
\xymatrix{ & X_m \ar[dr]^-{\mu_m} & \\ X' \ar[ur]^-{\nu_1} \ar[r]^-{\nu_3} \ar[dr]_-{\nu_2} & X_{m+n} \ar[r]^-{\mu_{m+n}} & X. \\ & X_n \ar[ur]_-{\mu_n} &
}
\]
Let $\widetilde{Z}$ be the strict transform of $Z$ via $\widetilde{\mu}$.
Then what we are going to show is
\begin{equation}\label{eqn:genFujitaineq}
 \nu_1^*\overline{F}_m+\nu_2^*\overline{F}_n\geq_{\widetilde{Z}}\nu_3^*\overline{F}_{m+n}.
\end{equation}
Since $\nu_1^*1_{F_m}\otimes\nu_2^*1_{F_n}$ vanishes along $\nu_3^{-1}\Supp(1_{F_{m+n}})$, there exists a section $\iota\in\Hz(\nu_1^*F_m+\nu_2^*F_n-\nu_3^*F_{m+n})$ such that
\[
 \nu_1^*1_{F_m}\otimes\nu_2^*1_{F_n}=\nu_3^*1_{F_{m+n}}\otimes\iota\quad\text{and}\quad\Supp(\iota)\subset{\widetilde{\mu}}^{-1}\aSBs(\overline{L}).
\]
For each $v\in M_K^{\rm f}$ and $x\in{X'}_v^{\rm an}$,
\begin{align*}
 &|\iota|_v^{\nu_1^*\overline{F}_m+\nu_2^*\overline{F}_n-\nu_3^*\overline{F}_{m+n}}(x) \\
 &\qquad =\frac{\max_{s\in\aHzst(m\overline{L})}\left\{|s|_v^{m\overline{L}}({\widetilde{\mu}}_v^{\rm an}(x))\right\}\cdot\max_{t\in\aHzst(n\overline{L})}\left\{|t|_v^{n\overline{L}}({\widetilde{\mu}}_v^{\rm an}(x))\right\}}{\max_{u\in\aHzst((m+n)\overline{L})}\left\{|u|_v^{(m+n)\overline{L}}({\widetilde{\mu}}_v^{\rm an}(x))\right\}}\leq 1.
\end{align*}
In the same way, we can see $\|\iota\|_{\infty}^{\nu_1^*\overline{F}_m+\nu_2^*\overline{F}_n-\nu_3^*\overline{F}_{m+n}}\leq 1$.
Therefore, we obtain the formula (\ref{eqn:genFujitaineq}).

By Theorem~\ref{thm:propertiesavol}(3), Lemma~\ref{lem:effectiveincrease}, and the formula (\ref{eqn:genFujitaineq}), we have
\begin{align*}
 &\avolq{X_m|Y_m}(\overline{M}_m)^{1/(\dim Y+1)}+\avolq{X_n|Y_n}(\overline{M}_n)^{1/(\dim Y+1)} \\
 &\qquad\qquad\qquad\qquad\qquad\qquad \leq\avolq{X_{m+n}|Y_{m+n}}(\overline{M}_{m+n})^{1/(\dim Y+1)}.
\end{align*}
By the standard arguments, the sequence converges to its supremum.
\end{proof}

We fix an integer $a\geq 1$ such that $\aBs(a\overline{L})=\aSBs(\overline{L})$ and there exists an $s_0\in\aHzstq{X|Y}(a\overline{L})\setminus\{0\}$.
By Claim~\ref{clm:genFujitaconv}, it suffices to show the equation
\begin{equation}
 \avolq{X|Y}(\overline{L})=\lim_{m\to+\infty}\frac{\avolq{X_{ma}|Y_{ma}}(\overline{M}_{ma})}{(ma)^{\dim Y+1}}.
\end{equation}
We fix models and flags as follows.
Let $(\mathscr{X},\mathscr{L})$ be an $O_K$-model of $(X,L)$ such that $\mathscr{X}$ is normal, $\mathscr{L}$ is a line bundle on $\mathscr{X}$, and $\overline{L}\leq\overline{\mathscr{L}}^{\rm ad}$.
Let $U_0$ be a non-empty open subset of $\Spec(O_K)$ such that $(\mathscr{X}_{U_0},\mathscr{L}_{U_0})$ gives a $U_0$-model of definition for $\overline{L}^{\{\infty\}}$.
For each $m\geq 1$, let
\[
 \widehat{\mathfrak{b}}_{m,O_K}:=\Image\left(\aSpan{O_K}{\aHzst(m\overline{L})}\otimes_{O_K}(-m\mathscr{L})\to\mathcal{O}_{\mathscr{X}}\right)
\]
and let $\mu_{m,O_K}:\mathscr{X}_m\to\mathscr{X}$ be a normalized blow-up such that $\mathscr{X}_{m,K}$ is smooth and $\widehat{\mathfrak{b}}_{m,O_K}\mathcal{O}_{\mathscr{X}_m}$ is Cartier.
Let $\mathscr{F}_m:=\SHom_{\mathcal{O}_{\mathscr{X}_m}}(\widehat{\mathfrak{b}}_{m,O_K}\mathcal{O}_{\mathscr{X}_m},\mathcal{O}_{\mathscr{X}_m})$ and let $\mathscr{M}_m:=\mu_{m,O_K}^*(m\mathscr{L})-\mathscr{F}_m$.
Then $(\mathscr{X}_{m,U_0},\mathscr{F}_{m,U_0})$ and $(\mathscr{X}_{m,U_0},\mathscr{M}_{m,U_0})$ give $U_0$-models of definition for $\overline{F}_m^{\{\infty\}}$ and $\overline{M}_m^{\{\infty\}}$, respectively (Claim~\ref{clm:adelicmetric1}).

Let $\mathscr{Y}$ (respectively, $\mathscr{Y}_m$) be the Zariski closure of $Y$ (respectively, $Y_m$) in $\mathscr{X}$ (respectively, $\mathscr{X}_m$).
Let
\[
 \nu_{\mathscr{Y}}:\mathscr{Y}'\to\mathscr{Y}\quad\text{and}\quad\nu_{\mathscr{Y}_m}:\mathscr{Y}_m'\to\mathscr{Y}_m
\]
be the relative normalizations in $Y$ and in $Y_m$, respectively, and let $\mu_m':\mathscr{Y}_m'\to\mathscr{Y}'$ be the induced morphism.
Let $U$ be the inverse image of $U_0$ via $\Spec(O_{K_Y})\to\Spec(O_K)$.
Then $(\mathscr{Y}_U,\mathscr{L}|_{\mathscr{Y}_U})$ and $(\mathscr{Y}_{m,U},\mathscr{M}_m|_{\mathscr{Y}_{m,U}})$ give $U$-models of definition for $(\overline{L}|_Y)^{\{\infty\}}$ and $(\overline{M}_m|_{Y_m})^{\{\infty\}}$, respectively.

Let $D:=D_{X|Y}(a\overline{L})$ be the constant as in Proposition~\ref{prop:Yuan}, and let $\Psi:=\left\{y\in\mathscr{Y}_U'\,:\,s_0(y)=0\right\}$.
Given any $\varepsilon>0$, we can find a prime number $p$ such that there exists a $\Psi$-good flag $F_{\sbullet}$ on $\mathscr{Y}_U'$ over $p$ and
\begin{equation}\label{eqn:genFujita1}
 \frac{D}{\log(p)}\leq\frac{\varepsilon}{3}
\end{equation}
(Lemma~\ref{lem:existgoodflags}).
By Lemma~\ref{lem:imageflag}(2), ${\mu'}_{m*}^{-1}(F_{\sbullet})$ is a ${\mu'}_m^{-1}(\Psi)$-good flag on $\mathscr{Y}_m'$ for every $m\geq 1$ and ${\mu'}_{ma}^{-1}(\Psi)=\left\{y\in\mathscr{Y}_{ma}'\,:\,{\mu'}_{ma}^*(s_0^{\otimes m})(y)=0\right\}$ (see Proposition~\ref{prop:adelicmetric}).

\begin{claim}\label{clm:compare_sigma}
For every $m\geq 1$,
\[
 \delta(\overline{M}_m|_{Y_m})\leq m\delta(\overline{L}|_Y).
\]
\end{claim}

\begin{proof}[Proof of Claim~\ref{clm:compare_sigma}]
For any nef adelically metrized line bundle $\overline{A}$ on $Y$ with $\vol(A)>0$, we have
\[
 \delta(\overline{M}_m|_{Y_m})\leq\frac{\adeg\left(\overline{M}_m|_{Y_m}\cdot(\mu_m|_{Y_m})^*\overline{A}^{\cdot\dim Y}\right)}{\vol((\mu_m|_{Y_m})^*A)}\leq\frac{\adeg\left(m\overline{L}|_{Y}\cdot\overline{A}^{\cdot\dim Y}\right)}{\vol(A)}
\]
by Proposition~\ref{prop:aintnum}(3).
\end{proof}

\begin{claim}\label{clm:compare_e}
For every $m\geq 1$,
\[
 \frac{e\left(\aRq{X_{ma}|Y_{ma}}(\overline{M}_{ma})_{\sbullet}\right)}{\akappa_{X_{ma}|Y_{ma}}(\overline{M}_{ma})!}\leq m^{\dim Y}\cdot\frac{e\left(\aRq{X|Y}(a\overline{L})_{\sbullet}\right)}{\akappa_{X|Y}(a\overline{L})!}.
\]
\end{claim}

\begin{proof}[Proof of Claim~\ref{clm:compare_e}]
Note that we have $\akappa_{X|Y}(a\overline{L})=\akappa_{X_{ma}|Y_{ma}}(\overline{M}_{ma})=\dim Y$.
Since
\[
 \aRq{X_{ma}|Y_{ma}}(\overline{M}_{ma})_{\sbullet}\subset\aRq{X_{ma}|Y_{ma}}(\mu_{ma}^*(ma\overline{L}))_{\sbullet}=\aRq{X|Y}(ma\overline{L})_{\sbullet}
\]
as graded $K_Y$-algebras, we have the assertion.
\end{proof}

We set for $m\geq 1$
\begin{align*}
 &T(m):=\Bigl\{(km,\bm{w}_{{\mu'}_{ma*}^{-1}(F_{\sbullet})}(s\otimes (1_{F_{ma}}|_{Y_{ma}})^{\otimes k}))\,: \\
 &\qquad\qquad\qquad\qquad\qquad\qquad \forall k\geq 0,\,\forall s\in \CLq{X_{ma}|Y_{ma}}(k\overline{M}_{ma})\setminus\{0\}\Bigr\}
\end{align*}
(Lemma~\ref{lem:imageflag}(2)) and set
\[
 \Delta(m):=\overline{\left(\bigcup_{k\geq 1}\frac{1}{km}\bm{w}_{{\mu'}_{ma*}^{-1}(F_{\sbullet})}\left((\CLq{X_{ma}|Y_{ma}}(k\overline{M}_{ma})\setminus\{0\})\otimes (1_{F_{ma}}|_{Y_{ma}})^{\otimes k}\right)\right)}.
\]
Then $T(m)\subset\aS_{X|Y}(a\overline{L})$ and $\aS_{X|Y}(a\overline{L})_m\subset T(m)$ by Proposition~\ref{prop:adelicmetric} and Lemma~\ref{lem:imageflag}(2).
Thanks to \cite[Th{\'e}or{\`e}me~1.15]{BoucksomBourbaki}, there exists an $m_0\geq 1$ such that
\begin{equation}\label{eqn:genFujita2}
 \vol_{T(m)}\left(\Delta(m)\right)\log(p)\geq\vol_{\aS_{X|Y}(a\overline{L})}\left(\aDelta_{X|Y}(a\overline{L})\right)\log(p)-\frac{\varepsilon}{3}
\end{equation}
for every $m\geq m_0$.
By Proposition~\ref{prop:Yuan}, Claims~\ref{clm:compare_sigma} and \ref{clm:compare_e}, and the inequality (\ref{eqn:genFujita1}), we have
\begin{equation}\label{eqn:genFujita3}
 \vol_{\aS_{X|Y}(a\overline{L})}\left(\aDelta_{X|Y}(a\overline{L})\right)\log(p)\geq\avolq{X|Y}(a\overline{L})-\frac{\varepsilon}{3}
\end{equation}
and
\begin{equation}\label{eqn:genFujita4}
 \frac{\avolq{X_{ma}|Y_{ma}}(\overline{M}_{ma})}{m^{\dim Y+1}}\geq\vol_{T(m)}\left(\Delta(m)\right)\log(p)-\frac{\varepsilon}{3}.
\end{equation}
Therefore, by (\ref{eqn:genFujita2})--(\ref{eqn:genFujita4}),
\[
 \frac{\avolq{X_{ma}|Y_{ma}}(\overline{M}_{ma})}{(ma)^{\dim Y+1}}\geq\avolq{X|Y}(\overline{L})-\varepsilon
\]
for every $m\geq m_0$.
\end{proof}

\section*{Acknowledgement}

The author is very grateful to anonymous referees for carefully reading the manuscript, for pointing out many errors in it, and for giving many helpful comments.
The author is grateful to Professors Kawaguchi, Moriwaki, Yamaki, and Yoshikawa for giving him an opportunity to talk about the results at Kyoto.

\bibliography{ikoma}

\def\cprime{$'$}
\begin{thebibliography}{10}

\bibitem{BerkovichBook}
Vladimir~G. Berkovich.
\newblock {\em Spectral theory and analytic geometry over non-{A}rchimedean
  fields}, volume~33 of {\em Mathematical Surveys and Monographs}.
\newblock American Mathematical Society, Providence, RI, 1990.

\bibitem{Blocki_Kolo07}
Zbigniew B{\l}ocki and S{\l}awomir Ko{\l}odziej.
\newblock On regularization of plurisubharmonic functions on manifolds.
\newblock {\em Proc. Amer. Math. Soc.}, 135(7):2089--2093 (electronic), 2007.

\bibitem{BoucksomBourbaki}
S{\'e}bastien Boucksom.
\newblock Corps d'{O}kounkov (d'apr\`es {O}kounkov, {L}azarsfeld-{M}usta\c t\v
  a et {K}aveh-{K}hovanskii).
\newblock {\em Ast\'erisque}, 361:Exp. No. 1059, vii, 1--41, 2014.

\bibitem{Bou_Bro_Pac13}
S{\'e}bastien Boucksom, Ama{\"e}l Broustet, and Gianluca Pacienza.
\newblock Uniruledness of stable base loci of adjoint linear systems via {M}ori
  theory.
\newblock {\em Math. Z.}, 275(1-2):499--507, 2013.

\bibitem{Bou_Cac_Lop13}
S{\'e}bastien Boucksom, Salvatore Cacciola, and Angelo~Felice Lopez.
\newblock Augmented base loci and restricted volumes on normal varieties.
\newblock {\em Math. Z.}, 278(3-4):979--985, 2014.

\bibitem{Boucksom_Chen}
S{\'e}bastien Boucksom and Huayi Chen.
\newblock Okounkov bodies of filtered linear series.
\newblock {\em Compositio Mathematica}, 147(4):1205--1229, 2011.

\bibitem{Bou_Fav_Mat11}
S{\'e}bastien Boucksom, Charles Favre, and Mattias Jonsson.
\newblock Singular semipositive metrics in non-{A}rchimedean geometry.
\newblock {\em J. Algebraic Geom.}, 25(1):77--139, 2016.

\bibitem{BourbakiCA72}
Nicolas Bourbaki.
\newblock {\em Elements of mathematics. {C}ommutative algebra}.
\newblock Hermann, Paris; Addison-Wesley Publishing Co., Reading, Mass., 1972.
\newblock Translated from the French.

\bibitem{ChenSesh}
Huayi Chen.
\newblock Fonction de {S}eshadri arithm\'etique en g\'eom\'etrie d'{A}rakelov.
\newblock {\em Ann. Fac. Sci. Toulouse Math. (6)}, 23(3):561--590, 2014.

\bibitem{Ein_Laz_Mus_Nak_Pop06}
Lawrence Ein, Robert Lazarsfeld, Mircea Musta{\c{t}}{\u{a}}, Michael Nakamaye,
  and Mihnea Popa.
\newblock Restricted volumes and base loci of linear series.
\newblock {\em Amer. J. Math.}, 131(3):607--651, 2009.

\bibitem{EisenbudCA}
David Eisenbud.
\newblock {\em Commutative algebra}, volume 150 of {\em Graduate Texts in
  Mathematics}.
\newblock Springer-Verlag, New York, 1995.
\newblock With a view toward algebraic geometry.

\bibitem{Gaudron07}
{\'E}ric Gaudron.
\newblock Pentes des fibr\'es vectoriels ad\'eliques sur un corps global.
\newblock {\em Rend. Semin. Mat. Univ. Padova}, 119:21--95, 2008.

\bibitem{EGAII}
Alexander Grothendieck and Jean Dieudonn\'e.
\newblock \'{E}l\'ements de g\'eom\'etrie alg\'ebrique. {II}. \'{E}tude globale
  \'el\'ementaire de quelques classes de morphismes.
\newblock {\em Institut des Hautes \'Etudes Scientifiques. Publications
  Math\'ematiques}, 8:222, 1961.

\bibitem{EGAIII_1}
Alexander Grothendieck and Jean Dieudonn\'e.
\newblock \'{E}l\'ements de g\'eom\'etrie alg\'ebrique. {III}. \'{E}tude
  cohomologique des faisceaux coh\'erents. {I}.
\newblock {\em Institut des Hautes \'Etudes Scientifiques. Publications
  Math\'ematiques}, 11:167, 1961.

\bibitem{EGAIV_3}
Alexander Grothendieck and Jean Dieudonn\'e.
\newblock \'{E}l\'ements de g\'eom\'etrie alg\'ebrique. {IV}. \'{E}tude locale
  des sch\'emas et des morphismes de sch\'emas. {III}.
\newblock {\em Institut des Hautes \'Etudes Scientifiques. Publications
  Math\'ematiques}, 28:255, 1966.

\bibitem{IkomaCon}
Hideaki Ikoma.
\newblock On the concavity of the arithmetic volumes.
\newblock {\em Int. Math. Res. Not. IMRN}, 2015(16):7063--7109, 2015.

\bibitem{Kaveh_Khovanskii12}
Kiumars Kaveh and A.~G. Khovanskii.
\newblock Newton-{O}kounkov bodies, semigroups of integral points, graded
  algebras and intersection theory.
\newblock {\em Ann. of Math. (2)}, 176(2):925--978, 2012.

\bibitem{Kawaguchi_Moriwaki}
Shu Kawaguchi and Atsushi Moriwaki.
\newblock Inequalities for semistable families of arithmetic varieties.
\newblock {\em J. Math. Kyoto Univ.}, 41(1):97--182, 2001.

\bibitem{Kollar_Mori}
J{\'a}nos Koll{\'a}r and Shigefumi Mori.
\newblock {\em Birational geometry of algebraic varieties}, volume 134 of {\em
  Cambridge Tracts in Mathematics}.
\newblock Cambridge University Press, Cambridge, 1998.
\newblock With the collaboration of C. H. Clemens and A. Corti, Translated from
  the 1998 Japanese original.

\bibitem{Lazarsfeld_Mustata08}
Robert Lazarsfeld and Mircea Musta{\c{t}}{\u{a}}.
\newblock Convex bodies associated to linear series.
\newblock {\em Annales Scientifiques de l'\'Ecole Normale Sup\'erieure.
  Quatri\`eme S\'erie}, 42(5):783--835, 2009.

\bibitem{MatsumuraBook}
Hideyuki Matsumura.
\newblock {\em Commutative ring theory}, volume~8 of {\em Cambridge Studies in
  Advanced Mathematics}.
\newblock Cambridge University Press, Cambridge, second edition, 1989.
\newblock Translated from the Japanese by M. Reid.

\bibitem{MoriwakiEst}
Atsushi Moriwaki.
\newblock Estimation of arithmetic linear series.
\newblock {\em Kyoto J. Math.}, 50(4):685--725, 2010.

\bibitem{MoriwakiFree}
Atsushi Moriwaki.
\newblock Free basis consisting of strictly small sections.
\newblock {\em Int. Math. Res. Not. IMRN}, 2011(6):1245--1267, 2011.

\bibitem{Moriwaki13}
Atsushi Moriwaki.
\newblock Adelic divisors on arithmetic varieties.
\newblock to be published in Memoirs of the American Mathematical Society (also
  available at \url{http://front.math.ucdavis.edu/1302.1922}), 2016.

\bibitem{Yuan09}
Xinyi Yuan.
\newblock On volumes of arithmetic line bundles.
\newblock {\em Compositio Mathematica}, 145(6):1447--1464, 2009.

\bibitem{Yuan12}
Xinyi Yuan.
\newblock Volumes of arithmetic {O}kounkov bodies.
\newblock {\em Math. Z.}, 280(3-4):1075--1084, 2015.

\bibitem{ZhangVar}
Shouwu Zhang.
\newblock Positive line bundles on arithmetic varieties.
\newblock {\em J. Amer. Math. Soc.}, 8(1):187--221, 1995.

\bibitem{ZhangAdelic}
Shouwu Zhang.
\newblock Small points and adelic metrics.
\newblock {\em J. Algebraic Geom.}, 4(2):281--300, 1995.

\end{thebibliography}
\bibliographystyle{plain}

\end{document}